\newtheorem{theorem}{Theorem}[section]
\newtheorem{lemma}[theorem]{Lemma}
\newtheorem{proposition}[theorem]{Proposition}
\newtheorem{corollary}[theorem]{Corollary}
\theoremstyle{definition}
\newtheorem{example}[theorem]{Example}
\theoremstyle{remark}
\newtheorem{remark}[theorem]{Remark}
\numberwithin{equation}{section}
\newcommand{\QED}{\qed}
\newcommand{\n}{\par\noindent}
\newcommand{\sn}{\par\smallskip\noindent}
\newcommand{\mn}{\par\medskip\noindent}
\newcommand{\pars}{\par\smallskip}
\newcommand{\parm}{\par\medskip}
\newcommand{\parb}{\par\bigskip}
\newcommand{\bfind}[1]{{\bf #1}}
\newcommand{\isom}{\simeq}
\newcommand{\bA}{{\bf A}}
\newcommand{\dec}{^d}
\newcommand{\sep}{^{\rm sep}}
\newcommand{\chara}{\mbox{\rm char}\,}
\newcommand{\trdeg}{\mbox{\rm trdeg}\,}
\newcommand{\Gal}{\mbox{\rm Gal}\,}
\newcommand{\rr}{\mbox{\rm rr}\,}
\newcommand{\ic}{\mbox{\rm IC}\,}
\newcommand{\subsetuneq}{\mathrel{\raisebox{.8ex}{\footnotesize%
$\displaystyle\mathop{\subset}_{\not=}$}}}
\newcommand{\cal}{\mathcal}
\newcommand{\eu}{\mathfrak}
\newcommand{\N}{\mathbb N}
\newcommand{\Q}{\mathbb Q}
\newcommand{\Z}{\mathbb Z}
\newcommand{\C}{\mathbb C}
\newcommand{\F}{\mathbb F}
\newcommand{\Fp}{\F_p}
\begin{document}

\title{Value groups, residue fields and bad places of rational
function fields}

\author{Franz-Viktor Kuhlmann}
\address{Mathematical Sciences Group,
University of Saskatchewan,
106 Wiggins Road,
Saskatoon, Saskatchewan, Canada S7N 5E6}
\email{fvk@math.usask.ca}

\subjclass[2000]{Primary 12J10; Secondary 12J15, 16W60}
\date{24. 6. 2003}


\begin{abstract}
We classify all possible extensions of a valuation from a ground field
$K$ to a rational function field in one or several variables over $K$.
We determine which value groups and residue fields can appear, and we
show how to construct extensions having these value groups and residue
fields. In particular, we give several constructions of extensions
whose corresponding value group and residue field extensions are not
finitely generated. In the case of a rational function field $K(x)$ in
one variable, we consider the relative algebraic closure of $K$ in the
henselization of $K(x)$ with respect to the given extension, and we show
that this can be any countably
generated separable-algebraic extension of $K$. In the ``tame case'',
we show how to determine this relative algebraic closure. Finally, we
apply our methods to power series fields and the $p$-adics.
\end{abstract}

\maketitle
\markboth{FRANZ-VIKTOR KUHLMANN}{VALUE GROUPS, RESIDUE FIELDS AND BAD
PLACES}

\section{Introduction}
In this paper, we denote a valued field by $(K,v)$, its value group by
$vK$, and its residue field by $Kv$. When we write $(L|K,v)$ we mean a
field extension $L|K$ endowed with a valuation $v$ on $L$ and its
restriction on $K$.

In many recent applications of valuation theory, valuations on algebraic
function fields play a main role. To mention only a short and incomplete
list of applications and references: local uniformization and resolution
of singularities ([C], [CP], [S], [KU3], [KKU1,2]), model theory of
valued fields ([KU1,2,4]), study of curves via constant reduction
([GMP1,2], [PL]), classification of all extensions of an ordering from a
base field to a rational function field ([KUKMZ]), Gr\"obner bases
([SW], [MOSW1,2]).

In many cases, a basic tool is the classification of
all extensions of a valuation from a base field to a function field.
As the classification of all extensions of a valuation from a field to
an algebraic extension is taken care of by general ramification theory
(cf.\ [E], [KU2]), a crucial step in the classification is the case of
rational function fields. Among the first papers describing valuations
on rational function fields systematically were [M] and [MS]. Since
then, an impressive number of papers have been written about the
construction of such valuations and about their properties; the
following list is by no means exhaustive: [AP], [APZ1-3], [KH1-10],
[KHG1-6], [KHPR], [MO1,2], [MOSW1], [O1-3], [PP], [V]. From the paper
[APZ3] the reader may get a good idea of how MacLane's original approach
has been developed further. Since then, the notion of ``minimal pairs''
has been adopted and studied by several authors (see, e.g., [KHPR]). In
the present paper, we will develop a new approach to this subject. It
serves to determine in full generality which value groups and which
residue fields can possibly occur. This question has recently played a
role in two other papers:

\sn
{\bf 1)} \ In [KU3], we prove the existence of ``bad places'' on
rational function fields of transcendence degree $\geq 2$. These are
places whose value group is not finitely generated, or whose residue
field is not finitely generated over the base field. The existence of
such places has been shown by MacLane and Schilling ([MS]) and by
Zariski and Samuel ([ZS], ch.~VI, \S15, Examples 3 and 4). However, our
approach in [KU3] using Hensel's Lemma seems to be new, and the present
paper contains a further refinement of it. The following theorem of [MS]
and [ZS] is a special case of a result which we will prove by this
refinement:
\begin{theorem}                             \label{badpl}
Let $K$ be any field. Take $\Gamma$ to be any non-trivial ordered
abelian group of finite rational rank $\rho$, and $k$ to be any
countably generated extension of $K$ of finite transcendence degree
$\tau$. Choose any integer $n>\rho+\tau$. Then the rational function
field in $n$ variables over $K$ admits a valuation whose restriction to
$K$ is trivial, whose value group is $\Gamma$ and whose residue field is
$k$.

In particular, every additive subgroup of $\Q$ and every countably
generated algebraic extension of $K$ can be realized as value group and
residue field of a place of the rational function field $K(x,y)|K$
whose restriction to $K$ is the identity.
\end{theorem}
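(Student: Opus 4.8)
The plan is to build the valuation on $K(x_1,\dots,x_n)$ step by step, using the rational rank and transcendence degree bookkeeping to ensure that $n > \rho + \tau$ variables suffice. First I would choose $\rho$ elements $\gamma_1,\dots,\gamma_\rho \in \Gamma$ that are rationally independent and generate a subgroup $\Gamma_0$ of $\Gamma$ with $\Gamma/\Gamma_0$ a torsion group (so $\Gamma$ is contained in the divisible hull of $\Gamma_0$); and I would choose elements $t_1,\dots,t_\tau \in k$ forming a transcendence basis of $k|K$. I would then send $x_1,\dots,x_\rho$ to elements of value $\gamma_1,\dots,\gamma_\rho$ and $x_{\rho+1},\dots,x_{\rho+\tau}$ to units whose residues are $t_1,\dots,t_\tau$; this is the standard "coordinatewise" construction giving, on the subfield $K(x_1,\dots,x_{\rho+\tau})$, a valuation with value group $\Gamma_0$ and residue field $K(t_1,\dots,t_\tau)$.

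The heart of the matter is the remaining variables. Since $n > \rho + \tau$, there is at least one more variable, say $x_{\rho+\tau+1} =: x$. The idea is to use $x$ to "pump up" the value group from $\Gamma_0$ to all of $\Gamma$ and the residue field from $K(t_1,\dots,t_\tau)$ to all of $k$ simultaneously, by realizing $x$ as the limit of a carefully chosen pseudo-Cauchy (pseudo-convergent) sequence, or equivalently by specifying $v(x-a)$ for a transfinite/countable sequence of approximants $a$ in the already-constructed field. Concretely: $\Gamma/\Gamma_0$ is a countable torsion group and $k|K(t_1,\dots,t_\tau)$ is a countably generated algebraic extension, so I can enumerate generators of both; then I interleave two kinds of steps, one that forces a new torsion element into the value group (by prescribing $v(x-a)$ to be a suitable $\Q$-multiple of existing values, which after adjoining roots lands in $\Gamma$) and one that forces a new residue-field generator (by arranging that the residue of some polynomial expression in $x$ is the desired algebraic element $\alpha$ over the current residue field). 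Because $k$ is algebraic over $K(t_1,\dots,t_\tau)$, each such $\alpha$ satisfies a polynomial, and one builds the approximation so that this polynomial acquires a root in the residue field. The extra variables $x_{\rho+\tau+1},\dots,x_n$ (if $n$ is larger than strictly necessary) can simply be sent to elements algebraically dependent on the previous ones — e.g. to $0$ after a shift, or absorbed into the construction — since they do not need to contribute anything new; alternatively one does the whole countable interleaving using all of them, which only gives more room.

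The main obstacle — and the reason Hensel's Lemma enters, as the introduction advertises — is proving that the resulting valuation on $K(x_1,\dots,x_n)$ really has value group \emph{exactly} $\Gamma$ and residue field \emph{exactly} $k$, with nothing extra sneaking in. One must show that no unintended new values or residues are created when passing to the full rational function field and taking the valuation-theoretic limit defining $x$. The clean way is to pass to the henselization: by the theory of pseudo-Cauchy sequences (Kaplansky) or by a direct Hensel's Lemma argument, an element $x$ approximating an algebraic pattern over the base gets its minimal data (value and residue of the relevant polynomials) completely determined, and one checks that $K(x_1,\dots,x_n)$ sits inside a henselian field whose value group and residue field are controlled to be $\Gamma$ and $k$ respectively; then the value group and residue field of the subfield can be no larger, and the construction forces them to be no smaller. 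The final "in particular" clause is then immediate: take $\rho = 1$ with $\Gamma$ any nontrivial subgroup of $\Q$ (rational rank $1$), take $\tau = 0$ with $k$ any countably generated algebraic extension of $K$, and then $n = 2 > 1 + 0$ works, giving the place of $K(x,y)|K$.
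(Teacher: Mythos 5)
Your treatment of the first $\rho+\tau+1$ variables is essentially the paper's route: valuation independence (Lemma~\ref{prelBour}) for $x_1,\ldots,x_{\rho+\tau}$, and then one further variable realized as a pseudo limit whose approximants are controlled by Krasner's/Hensel's Lemma so that the value group and residue field become exactly $\Gamma$ and $k$ (in the paper: Theorem~\ref{extprvgrf} to build an algebraic extension $K_1$ of the henselization realizing $\Gamma$ and $k$, then Propositions~\ref{inf-i} and~\ref{cgcg}). The genuine gap is in your handling of the remaining $n-(\rho+\tau+1)$ variables. These are algebraically independent over the field already constructed, so they cannot be ``sent to elements algebraically dependent on the previous ones'' or ``to $0$ after a shift'': the only honest reading of that (composing with a place that kills $x_n$) enlarges the value group by a $\Z$-summand, which is exactly what must be avoided. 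The leftover variables can contribute nothing only if the extension of $v$ to them is \emph{immediate}, and the existence of immediate extensions of transcendence degree $n-\rho-\tau-1$ over the constructed field is a non-trivial theorem -- the paper imports it as Theorem~\ref{MTai} from [KU5] (maximal immediate extensions have infinite transcendence degree in the transcendental and in the infinite separable-algebraic case). Your proposal never establishes, or even identifies, this ingredient.

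Your fallback -- distributing the countable interleaving over all the extra variables -- does not close the gap either, because there may be nothing to distribute: both $\Gamma/\Gamma_0$ and $k|K(t_1,\ldots,t_\tau)$ can be finite (take $\Gamma=\Z$, $k=K$, $n=2$, so that after the first step one has the $x_1$-adic valuation on $K(x_1)$). Then the second variable must generate an immediate transcendental extension of $(K(x_1),v)$, and the existence of such an extension (here one can take a power series in $K((x_1))$ transcendental over $K(x_1)$; in general, Theorem~\ref{MTai}) is precisely the missing step. The same issue already appears for the $(\rho+\tau+1)$-st variable in this ``both finite'' case: to obtain from a transcendental generator a value group that is a finite extension of $\Gamma_0$ and a residue field that is a finite extension of the current one, the extension must be valuation-algebraic, which again presupposes an immediate transcendental extension of the base field (this is the hypothesis of the $v_3$-part of Proposition~\ref{5.4}, supplied in the paper by the transcendental case of Theorem~\ref{MTai}). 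So the missing idea is the existence of sufficiently large immediate extensions; without it the bookkeeping $n>\rho+\tau$ does not by itself yield the theorem.
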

The rational rank of an abelian group $\Gamma$ is the dimension of the
$\Q$-vector space $\Q\otimes_{\Z}\Gamma$. We denote it by $\rr\Gamma$.
It is equal to the cardinality of any maximal set of rationally
independent elements in $\Gamma$.

\pars
Bad places on function fields are indeed bad: the value group or residue
field not being finitely generated constitutes a major hurdle for the
attempt to prove local uniformization or other results which are related
to resolution of singularities (cf.\ [CP]). Another hurdle is the
phenomenon of \bfind{defect} which can appear when the residue
characteristic of a valued field is positive, even if the field
itself seems to be quite simple. Indeed, we will prove in
Section~\ref{sectne}, and by a different method in
Section~\ref{sectMT}:
\begin{theorem}                             \label{piltant}
Let $K$ be any algebraically closed field of positive characteristic.
Then there exists a valuation $v$ on the rational function field
$K(x,y)|K$ whose restriction to $K$ is trivial, such that $(K(x,y),v)$
admits an infinite chain of immediate Galois extensions of degree $p$
and defect $p$.
\end{theorem}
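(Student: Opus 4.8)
First I would record the reduction. Put $p=\chara K$. In the chains I will build, $F_{i+1}=F_i(\eta_{i+1})$ with $\eta_{i+1}^p-\eta_{i+1}\in F_i$ and $\eta_{i+1}\notin F_i^h$; such an extension is Galois of degree $p$, its henselization is $F_i^h(\eta_{i+1})$, and $p=[F_{i+1}^h:F_i^h]=(vF_{i+1}:vF_i)\cdot[F_{i+1}v:F_iv]\cdot d$, so $F_{i+1}|F_i$ is immediate (i.e.\ $(vF_{i+1}:vF_i)=[F_{i+1}v:F_iv]=1$) if and only if its defect $d$ equals $p$. Hence it suffices to exhibit a nontrivial valuation $v$ on $K(x,y)$, trivial on $K$, together with an infinite ascending chain $K(x,y)=F_0\subsetneq F_1\subsetneq F_2\subsetneq\cdots$ of Artin--Schreier extensions with $\eta_{i+1}\notin F_i^h$, $vF_{i+1}=vF_i$ and $F_{i+1}v=F_iv$ for every $i$; equivalently, such that $(K(x,y),v)^h$ is as far as possible from being algebraically maximal, carrying an infinite tower of immediate Artin--Schreier extensions.

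The choice of $v$ is constrained. Since $K$ is perfect, a valuation of $K(x,y)$ with value group $\cong\Z$ and perfect residue field is defectless (its completion is a complete discretely valued field with algebraically closed residue field), so no such chain can live over it; hence $v$ must have imperfect residue field, and by Abhyankar's inequality this forces $vK(x,y)\cong\Z$ and $K(x,y)v$ to be a finitely generated, hence imperfect, extension of $K$ of transcendence degree $1$ — say $K(x,y)v=K(t)$ with $t\notin K(t)^p$. I would therefore fix such a valuation (for instance one with $v(x)=1$ and $\bar y=t$ transcendental over $K$) and use the imperfection of $K(t)$ to manufacture the defect: one looks for a coherent system $\eta_1,\eta_2,\dots$, algebraic over $K(x,y)$, with $\eta_{n+1}^p-\eta_{n+1}=a_{n+1}\in F_n:=K(x,y)(\eta_1,\dots,\eta_n)$, where $a_{n+1}$ has negative value in $p\cdot vF_n$ and is arranged so that, in the successive ``Artin--Schreier approximation'' of a hypothetical root, the normalized residues are at every stage $p$-th powers in $F_nv$ (no residue growth) and the values stay in $vF_n$ (no value growth), yet the approximation never terminates — so that $X^p-X-a_{n+1}$ is irreducible over $F_n^h$ and $\eta_{n+1}\notin F_n^h$. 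Certifying these last points is exactly what the Hensel's-Lemma technique of [KU3] is designed for, and the argument here is a refinement of it.

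The hard part will be making the construction self-sustaining. A single rational function $a$ with a pole of fixed $p$-power order only feeds the approximation through boundedly many refinements before it is forced to enlarge the value group, yielding a ramified rather than an immediate extension, and the same obstruction reappears after each $\eta_{n+1}$ is adjoined. So $v$ and all the $\eta_n$ have to be designed simultaneously in such a way that the ``missing'' $p$-divisibility is regenerated at every level, i.e.\ so that $F_{n+1}$ again admits a generator of the same kind; this is the technical core. In Section~\ref{sectne} it is handled by the explicit Hensel's-Lemma construction just sketched; in Section~\ref{sectMT} the same conclusion is obtained by a different route, through the theory of tame fields — one arranges $v$ so that $(K(x,y),v)^h$ fails to be separably defectless in a maximal way, and the general theory then produces the required infinite tower of immediate degree-$p$ extensions. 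In either case each step $F_{i+1}|F_i$ is Galois of degree $p$ and immediate, hence of defect $p$, which is the assertion.
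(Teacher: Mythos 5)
Your opening reduction (Artin--Schreier step of degree $p$ with unique extension of $v$ is of defect $p$ iff it is immediate) is fine, but the ``constraint'' analysis that steers the whole construction is wrong, and it steers you into the one configuration where the theorem provably cannot be realized. From ``value group $\isom\Z$ with perfect residue field would be defectless'' it does not follow that $v$ must have imperfect residue field: you overlook the case where $vK(x,y)$ has rational rank $1$ but is \emph{not} finitely generated while $K(x,y)v$ is algebraic over $K$, hence equal to the algebraically closed (in particular perfect) field $K$. That is exactly the configuration of both proofs in the paper: $vK(x,y)=\frac{1}{p^\infty}\Z$ and $K(x,y)v=K$. Worse, the configuration you then commit to --- $vK(x,y)\isom\Z$ and $K(x,y)v=K(t)$ of transcendence degree $1$ --- is the equality case of (\ref{wtdgeq}) over the trivially valued (hence defectless) field $K$, with torsion-free value group extension and separably generated residue field extension; by the Generalized Stability Theorem invoked in Remark~\ref{limitrem}, such a valued rational function field is defectless, so it admits no immediate extension of degree $p$ with unique extension of $v$ at all, and no amount of ``Artin--Schreier approximation'' can produce the tower there. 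Your auxiliary claim is also false as stated: defectlessness of the completion does not descend to $K(x,y)$, since finite extensions need not be linearly disjoint from the completion. For instance, embedding $K(x,y)$ into $K((t))$ by $x\mapsto t$, $y\mapsto u^p$ with $u\in K((t))$ transcendental over $K(t)$ gives value group $\Z$, residue field $K$, and an immediate purely inseparable extension $K(x,y)(u)|K(x,y)$ of degree and defect $p$.

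The paper's strategy is the opposite of yours: it makes the value group $p$-divisible and the residue field algebraically closed, so that every step of the tower $\eta_0=1/x$, $\eta_{i+1}^p-\eta_{i+1}=\eta_i$, adjoined to $K(x,y)$, is \emph{automatically} immediate (ramification index and inertia degree $1$); the only genuine issue is uniqueness of the extension of $v$, i.e.\ linear disjointness of the tower from $K(x,y)^h$. That is where the real work lies: in Section~\ref{sectne} one takes $y=\sum_i x^{-p^{-e_i}}$, shows the associated pseudo Cauchy sequence is of transcendental type, and uses the purity/implicit constant field machinery (Lemma~\ref{icpure}) to show that $K(x)^h$ is relatively algebraically closed in $K(x,y)^h$, so the tower, which is totally ramified over $K(x)^h$ of index $p^i$, remains linearly disjoint from $K(x,y)^h$; in Section~\ref{sectMT} the same end is reached by running two linearly disjoint Artin--Schreier towers and using Proposition~\ref{inf-i} to prescribe one of them as the implicit constant field while the other becomes the immediate defect tower (not by ``arranging maximal failure of separable defectlessness'' in the abstract). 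Since your sketch both fixes the wrong kind of valuation and leaves the actual construction (``coherent system'', ``the technical core'') entirely unspecified, it does not constitute a proof.
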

\n
An extension $(L'|L,v)$ of valued fields is called \bfind{immediate} if
the canonical embeddings of $vL$ in $vL'$ and of $Lv$ in $L'v$ are
surjective (which we will express by writing $vL'=vL$ and $L'v=Lv$).
For a finite immediate extension $(L'|L,v)$, its defect is equal to its
degree if and only if the extension of $v$ from $L$ to $L'$ is unique
(or equivalently, $L'|L$ is linearly disjoint from some (or every)
henselization of $(L,v)$).

One of the examples we shall construct for the proof of the above
theorem is essentially the same as in Section 7 of [CP], but we use a
different and more direct approach (while the construction in [CP] is
more intricate since it serves an additional purpose).

\mn
{\bf 2)} \ In [KUKMZ], the classification of all extensions of an
ordering to a rational function field is considered in the context of
power series fields, and the above question is partially answered in
this setting. In the present paper, we will consider the question
without referring to power series fields (see Theorem~\ref{extord}
below).

\pars
During the preparation of [KUKMZ], we found that the construction of an
extension of the valuation $v$ from $K$ to the rational function
field $K(x)$ with prescribed value group $vK(x)$ and residue field
$K(x)v$ is tightly connected with the determination of the relative
algebraic closure of $K$ in a henselization $K(x)^h$ of $K(x)$ with
respect to $v$. In earlier papers, we have introduced the name
``henselian function field'' for the henselizations of valued function
fields (although these are not function fields, unless the valuation is
trivial). In the same vein, one can view the relative algebraic closure
as being the (exact) constant field of the henselian function field
$(K(x)^h|K,v)$. We will call it the \bfind{implicit constant field of
$(K(x)|K,v)$} and denote it by $\ic(K(x)|K,v)$. Clearly, the
henselization $K(x)^h$ depends on the valuation which has been fixed on
the algebraic closure $\widetilde{K(x)}$. So whenever we will talk about
the implicit constant field, we will do it in a setting where the
valuation on $\widetilde{K(x)}$ has been fixed. However, since the
henselization $L^h$ of any valued field $(L,v)$ is unique up to
valuation preserving isomorphism over $L$, the implicit constant field
is unique up to valuation preserving isomorphism over $K$. If $L_0$ is a
subfield of $L$, then $L^h$ contains a (unique) henselization of
$L_0\,$. Hence, $\ic(K(x)|K,v)$ contains a henselization of $K$ and is
itself henselian. Further, $L^h|L$ is a separable-algebraic extension;
thus, $K(x)^h|K$ is separable. Therefore, $\ic(K(x)|K,v)$ is a
separable-algebraic extension of $K$.

In the present paper, we answer the above question on value groups and
residue fields by determining which prescribed separable-algebraic
extensions of $K$ can be realized as implicit constant fields. The
following result shows in particular that every countably generated
separable-algebraic extension of a henselian base field can be
realized:
\begin{theorem}                             \label{MT}
Let $(K_1|K,v)$ be a countable separable-algebraic extension of
non-trivially valued fields. Then there is an extension of $v$ from
$K_1$ to the algebraic closure $\widetilde{K_1(x)}=\widetilde{K(x)}$ of
the rational function field $K(x)$ such that, upon taking henselizations
in $(\widetilde{K(x)},v)$,
\begin{equation}
K_1^h\;=\; \ic(K(x)|K,v)\;.
\end{equation}
\end{theorem}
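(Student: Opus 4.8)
The plan is to build the extension of $v$ from $K_1$ to $\widetilde{K(x)}$ so that $x$ becomes the limit (in a suitable sense) of a sequence of algebraic elements generating $K_1$ over $K$, while keeping the valuation trivial-in-behaviour on the function-field side so that no value group or residue field growth occurs. Concretely, write $K_1 = \bigcup_{i\geq 0} K_i$ with $K_0 = K$ and each $K_{i+1} = K_i(a_{i+1})$ a finite separable extension (possible since $K_1|K$ is countable and separable-algebraic; if $K_1|K$ is finite the statement is easier and handled separately). Fix a henselization $K^h$ inside $(\widetilde K, v)$ and note $a_i$ may be chosen in $K^h$ after extending $v$ to $\widetilde K$. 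The idea is to map $x$ to an element of $\widetilde{K^h}$ that "codes" the whole tower, e.g.\ a well-chosen convergent-looking series $\sum_i c_i$ with partial sums climbing through the $K_i$, and then transport this valuation back along the isomorphism $K(x) \cong K(x)$ to the abstract rational function field.

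**First I would** set up the pseudo-Cauchy/limit machinery: choose, inside a fixed henselization $K^h$ of $(K,v)$ in $(\widetilde K,v)$, elements $t_i \in K^h$ with strictly increasing values $vt_1 < vt_2 < \cdots$ cofinal-or-not-cofinal as convenient (the hypothesis that $vK$ is non-trivial is exactly what makes this room available — this is presumably why non-triviality is assumed), and define $b_i = a_1 t_1 + a_2 t_1 t_2 + \cdots + a_i t_1\cdots t_i$ or a similar telescoping sum so that $K(b_i) \supseteq K(a_1,\dots,a_i) = K_i$ for each $i$, using that recovering the $a_j$ from $b_i$ is possible because the $vt_j$ separate the terms. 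Then $(b_i)$ is a pseudo-Cauchy sequence in $(K^h)^{\,\widehat{\phantom{x}}}$; by Kaplansky's theory it has a pseudo-limit in $\widetilde{K^h}$. Declare $x$ to be such a pseudo-limit. The resulting valuation on $K(x)$ is then determined, and one checks $vK(x) = vK_1 = vK$ and $K(x)v = K_1 v = Kv$ — i.e.\ the extension $(K(x)|K,v)$ is immediate up to the algebraic piece — by a direct computation with the defining series, the point being that $v(x - b_i) \to \infty$ (or is cofinal enough) so $x$ contributes nothing new.

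**The main obstacle** — and the technical heart of the proof — is showing that the implicit constant field is \emph{exactly} $K_1^h$ and not larger or smaller. The inclusion $K_1^h \subseteq \ic(K(x)|K,v)$ should follow because each $a_i$, hence each $K_i$, is "visible" in $K(x)^h$: since $v(x-b_i)$ is large and $b_i \in K_i \subseteq K_i^h \subseteq K(x)^h$, Hensel's Lemma (or the definition of pseudo-limit) lets one solve for $a_i$ inside $K(x)^h$, so $K_1 \subseteq K(x)^h$ and then $K_1^h \subseteq K(x)^h$; being separable-algebraic over $K$, it lands in $\ic(K(x)|K,v)$. The reverse inclusion is the delicate direction: one must show no \emph{other} algebraic element sneaks into $K(x)^h$. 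Here I would argue that $x$ is transcendental over $K_1^h$ with the valuation on $K_1^h(x)$ being the Gauss/functional valuation relative to the pseudo-limit, so that $(K_1^h(x)|K_1^h,v)$ has trivial value group and residue field extension and $x$ is "generic"; then $K_1^h(x)^h$ has implicit constant field $K_1^h$ by the standard fact that the functional valuation on a rational function field over a henselian field has the base field as its implicit constant field (this is the content of the one-variable trivial-extension analysis underlying Theorem~\ref{badpl}, specialized to the pure-transcendence step). Since $K(x)^h \subseteq K_1^h(x)^h$ and $\ic$ can only shrink or stay under such an inclusion while we already have $K_1^h$ inside, equality follows. Assembling these two inclusions gives $K_1^h = \ic(K(x)|K,v)$, and since $\widetilde{K_1(x)} = \widetilde{K(x)}$ trivially (as $K_1|K$ is algebraic), the displayed equation is exactly the assertion. \QED
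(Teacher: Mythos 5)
Your overall strategy -- partial sums climbing through the tower $K_1=\bigcup_i K_i$, making $x$ a pseudo limit, pulling the $a_i$ into $K(x)^h$, and then capping the implicit constant field over $K_1$ -- is the same as the paper's (Proposition~\ref{inf-i}), but two of your key steps would fail as stated. First, ``strictly increasing values'' of the increments is not enough to ``solve for $a_i$ inside $K(x)^h$'': what is needed is that each approximation beats the \emph{Krasner constant} of the next generator over the field generated so far, i.e.\ the condition $va_{i+1}>\max\{va_i,\,\mbox{\rm kras}(a_i,K(a_1,\ldots,a_{i-1}))\}$ of (\ref{vai+1>k}), arranged by rescaling the $a_i$ by elements of $K$ (this is exactly where non-triviality of $v$ enters, since it makes $vK$ cofinal in $v\tilde{K}$); then Lemma~\ref{vkras} gives $a_{i+1}\in K(x)^h$ by induction. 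Your claim that $K(b_i)\supseteq K_i$ ``because the $vt_j$ separate the terms'' is unjustified and not what the argument needs: valuation-theoretic separation of summands does not give field-theoretic recovery of the summands from the single element $b_i$; the recovery happens only inside the henselization of $K(x)$ and only once the approximation exceeds the Krasner constant.

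Second, you cannot take the pseudo limit in $\widetilde{K^h}$. When $K_1|K$ is infinite the sequence $(b_i)$ is of \emph{transcendental} type -- the paper deduces this precisely from the Krasner containment, since any pseudo limit $y$ would satisfy $K_1\subseteq K(y)^h$ and hence be transcendental -- so it has no pseudo limit in any algebraic extension of $K$; moreover an algebraic value for $x$ could not induce a valuation on the rational function field $K(x)$ at all (the specialization $x\mapsto z$ kills the minimal polynomial of $z$). The correct step is Theorem~\ref{Ka2}: extend $v$ to $K_1(x)$ so that $(K_1(x)|K_1,v)$ is \emph{immediate} with $x$ a pseudo limit. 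Accordingly, your reverse inclusion cites the wrong fact: the extension of $K_1^h$ obtained is immediate, not the Gauss (residue-transcendental) valuation, so the ``functional valuation has the base field as implicit constant field'' statement does not apply; what is needed is Lemma~\ref{icpure} for pure extensions (here: $x$ a pseudo limit of a transcendental-type sequence), giving $\ic(K_1(x)|K_1,v)=K_1$, after which the chain $K_1\subseteq K(x)^h\subseteq\ic(K(x)|K,v)\subseteq\ic(K_1(x)|K_1,v)=K_1$ closes the proof. (Your incidental assertion $vK(x)=vK_1=vK$ and $K(x)v=K_1v=Kv$ is also false in general, since $vK_1/vK$ and $K_1v|Kv$ need not be trivial, but it is not needed for the theorem.)
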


In Section~\ref{sectbc} we will introduce a basic classification
(``value-transcendental'' -- ``residue-transcendental'' --
``valuation-algebraic'') of all possible extensions of $v$ from $K$ to
$K(x)$. In Section~\ref{sectpure} we introduce a class of extensions
$(K(x)|K,v)$ for which $\ic(K(x)|K,v)=K^h$ holds. Building on this, we
prove Theorem~\ref{MT} in Section~\ref{sectMT}. In fact, we prove a
more detailed version: we show under which additional conditions
the extension can be chosen in a prescribed class of the basic
classification. This yields the following

\begin{theorem}                             \label{MT1var}
Take any valued field $(K,v)$, an ordered abelian group extension
$\Gamma_0$ of $vK$ such that $\Gamma_0/vK$ is a torsion group, and an
algebraic extension $k_0$ of $Kv$. Further, take $\Gamma$ to be the
abelian group $\Gamma_0 \oplus\Z$ endowed with any extension of the
ordering of $\Gamma_0\,$.

Assume first that $\Gamma_0/vK$ and $k_0|Kv$ are finite. If $v$ is
trivial on $K$, then assume in addition that $k_0|Kv$ is simple. Then
there is an extension of $v$ from $K$ to the rational function field
$K(x)$ which has value group $\Gamma$ and residue field $k_0$. If $v$ is
non-trivial on $K$, then there is also an extension which has value
group $\Gamma_0$ and as residue field a rational function field in one
variable over $k_0\,$.

Now assume that $v$ is non-trivial on $K$ and that $\Gamma_0/vK$ and
$k_0|Kv$ are countably generated. Suppose that at least one of them is
infinite or that $(K,v)$ admits an immediate transcendental extension.
Then there is an extension of $v$ from $K$ to $K(x)$ which has value
group $\Gamma_0$ and residue field $k_0$.
\end{theorem}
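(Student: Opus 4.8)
The plan is to deduce Theorem~\ref{MT1var} from the detailed form of Theorem~\ref{MT} proved in Section~\ref{sectMT}, together with the construction in Section~\ref{sectpure} of extensions $(K(x)|K,v)$ with $\ic(K(x)|K,v)=K^h$ (the \emph{pure} extensions) and the basic classification of Section~\ref{sectbc}. The guiding idea is to realize the prescribed value group and residue field in two steps: first pass to a suitable algebraic extension $(K_1|K,v)$ with $vK_1=\Gamma_0$ and $K_1v=k_0$ (or with as much of this data as is compatible with the later steps), and then attach one transcendental $x$ through a pure extension over $K_1$ in a prescribed class of the basic classification. The class chosen governs exactly the extra contribution of the transcendental: in the value-transcendental class one adds a free direct summand $\Z$ to the value group and nothing to the residue field; in the residue-transcendental class one adds a transcendental $t$ to the residue field and nothing to the value group; in the valuation-algebraic class one adds nothing to either. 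Since henselization is immediate, and once $K_1$ is arranged to lie inside the henselization $K(x)^h$, one has $vK(x)=vK(x)^h=vK_1(x)$ and $K(x)v=K(x)^hv=K_1(x)v$, so the value group and residue field of $K(x)$ come out as $vK_1$ or $vK_1\oplus\Z$ and $K_1v$ or $K_1v(t)$ accordingly, i.e.\ as $\Gamma_0$ or $\Gamma_0\oplus\Z$ and $k_0$ or a rational function field in one variable over $k_0$.

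The trivial-valuation case is treated separately and directly. If $v$ is trivial on $K$ then $\Gamma_0/vK=\Gamma_0$ is a torsion ordered abelian group, hence $\Gamma_0=vK=\{0\}$ and $\Gamma=\Z$. Using the hypothesis that $k_0|Kv$ is simple, write $k_0=K[X]/(f)$ with $f$ irreducible over $K$, and take for $v$ the $f$-adic valuation of $K(x)$; it restricts trivially to $K$, has value group $\Z=\Gamma$, and residue field $K[X]/(f)=k_0$. This is the one point where simplicity of $k_0|Kv$ is essential: with $v$ trivial on $K$ there is no room in the value group that could be traded for residue-field growth by an algebraic extension.

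For $v$ non-trivial on $K$ one builds $K_1|K$ by adjoining radicals $b^{1/n}$ of elements $b\in K$ realizing the (finitely many, resp.\ countably many) cyclic factors of the torsion group $\Gamma_0/vK$, together with lifts of generators of $k_0$ over $Kv$ — a finite extension in the finite case, a countable direct limit in the countably generated case; nontriviality of $v$ is what supplies the necessary room. One then invokes the detailed version of Theorem~\ref{MT} to extend $v$ to $\widetilde{K(x)}=\widetilde{K_1(x)}$ so that $K_1$ (up to henselization) lies in $K(x)^h$ and $(K_1(x)|K_1,v)$ is pure in the prescribed class. The value-transcendental choice yields the first assertion, the residue-transcendental choice the second. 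For the last assertion one uses the valuation-algebraic class, which by the constructions of Sections~\ref{sectpure} and~\ref{sectMT} is available precisely under the stated extra hypothesis — that one of $\Gamma_0/vK$, $k_0|Kv$ be infinite, or that $(K,v)$ admit an immediate transcendental extension — because a valuation-algebraic extension of a valued field arises only through an infinite approximation process, which must feed either on an unbounded growth of value group or residue field along the approximating sequence, or on an already existing immediate transcendental extension of $(K,v)$.

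The main obstacle lies in the interaction between the ``algebraic part'' of the data and the implicit constant field: $\ic(K(x)|K,v)$ is always separable-algebraic over $K$, so the inseparable part of $k_0|Kv$ and the $p$-torsion part of $\Gamma_0/vK$ (with $p=\chara Kv$) cannot be carried along inside $K(x)^h$ by way of $K_1$, but must instead be produced directly within the pure, resp.\ valuation-algebraic, extension while its value group and residue field are kept under control. Establishing that this can be done, and in particular that the value group and residue field of $K(x)$ are exactly $\Gamma_0$ (resp.\ $\Gamma_0\oplus\Z$) and $k_0$ (resp.\ $k_0(t)$) with no overshoot, is the substantive content imported from Sections~\ref{sectpure} and~\ref{sectMT}; the remaining points — existence of the algebraic extension $K_1$, the $f$-adic construction in the trivial case, and the equalities $vK(x)=vK_1(x)$ and $K(x)v=K_1(x)v$ coming from immediateness of henselization — are routine.
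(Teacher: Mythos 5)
Your overall two-step skeleton (an algebraic extension $K_1$ placed inside $K(x)^h$, followed by a pure transcendental step in the prescribed class of the basic classification) is indeed the paper's strategy, and your direct $f$-adic treatment of the trivially valued case (where $\Gamma_0=\{0\}$, $\Gamma\isom\Z$, and simplicity of $k_0|Kv$ is exactly what is needed) is fine. But the way you handle the positive residue characteristic case contains a genuine gap, and it sits precisely at the crux of the theorem. You build $K_1$ by adjoining radicals $b^{1/n}$ and lifts of generators of $k_0$; when $\chara Kv=p>0$ and $p$ divides the order of a torsion class of $\Gamma_0/vK$, or when $k_0|Kv$ has an inseparable part, this $K_1$ is not separable over $K$, and then neither Theorem~\ref{MT} nor the Krasner-type Lemma~\ref{vkras} (hence none of Propositions~\ref{finhcf}, \ref{inf-i}, \ref{cgcg}) can be applied to put $K_1$ into $K(x)^h$. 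Your proposed repair --- that the $p$-torsion of $\Gamma_0/vK$ and the inseparable part of $k_0|Kv$ ``must instead be produced directly within the pure, resp.\ valuation-algebraic, extension'' and that this is ``the substantive content imported from Sections~\ref{sectpure} and~\ref{sectMT}'' --- does not work and is not what those sections provide: by Lemma~\ref{purevgrf}, a pure extension $(K_1(x)|K_1,v)$ has $vK_1(x)/vK_1$ torsion free and $K_1v$ relatively algebraically closed in $K_1(x)v$, so the transcendental step over $K_1$ contributes only a free $\Z$-summand, a rational function field residue extension, or nothing at all; it can never supply the missing $p$-torsion or inseparable algebraic residue growth.

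The missing idea is that your premise ``the inseparable part of $k_0|Kv$ and the $p$-torsion part of $\Gamma_0/vK$ cannot be carried along inside $K(x)^h$ by way of $K_1$'' is simply false when $v$ is non-trivial: a separable-algebraic extension can very well have $p$-torsion in its value group extension and an inseparable residue field extension. This is the content of Lemmas~\ref{constrva} and~\ref{constrra} and Theorem~\ref{extprvgrf}: using Artin--Schreier extensions (e.g.\ a root of $X^p-X-c$ with $vc<0$ produces value $\frac1p vc$ in a separable, even Galois, extension), one obtains --- for non-trivial $v$ --- a separable-algebraic $K_1|K$ (finite and simple in the finite case, countably generated in general) with $vK_1=\Gamma_0$ and $K_1v=k_0$ exactly, including the $p$-torsion and inseparable parts. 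Only then do the constructions of Section~\ref{sectMT} apply: Proposition~\ref{finhcf}/\ref{5.4} with $v_{a,\gamma}$, $\gamma>\mbox{\rm kras}(a,K)$, gives the two finite-case assertions, and for the last assertion one uses Proposition~\ref{inf-i} when $K_1^h|K^h$ is infinite, or Lemma~\ref{persimm} to transfer an immediate transcendental extension of $(K,v)$ to $K_1$ when both extensions are finite, so that $(K_1(x)|K_1,v)$ is immediate and no overshoot occurs. Without the Artin--Schreier mechanism your argument only covers the case where $\chara Kv=0$, or where $\Gamma_0/vK$ has no $p$-torsion and $k_0|Kv$ is separable.
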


Here is the converse:
\begin{theorem}                             \label{conv1}
Let $(K(x)|K,v)$ be a valued rational function field. Then one and
only one of the following three cases holds:
\sn
1) \ $vK(x)\simeq \Gamma_0\oplus \Z$, where $\Gamma_0|vK$ is a finite
extension of ordered abelian groups, and $K(x)v|Kv$ is finite;\n
2) \ $vK(x)/vK$ is finite, and $K(x)v$ is a rational function field in
one variable over a finite extension of $Kv$;\n
3) \ $vK(x)/vK$ is a torsion group and $K(x)v|Kv$ is algebraic.
\sn
In all cases, $vK(x)/vK$ is countable and $K(x)v|Kv$ is countably
generated.
\end{theorem}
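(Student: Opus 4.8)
The plan is to combine the Abhyankar inequality with the fundamental inequality for finite extensions, and then feed in the basic classification of Section~\ref{sectbc} (value-transcendental, residue-transcendental, valuation-algebraic) to separate the three cases. First I would recall the Abhyankar-type bound: for a valued rational function field $(K(x)|K,v)$ we have $\trdeg (K(x)|K)=1$, and hence $\rr(vK(x)/vK)+\trdeg(K(x)v|Kv)\leq 1$. This gives exactly three possibilities for the pair of numbers on the left: $(1,0)$, $(0,1)$, and $(0,0)$. The heart of the argument is to show that each of these three arithmetic possibilities corresponds precisely to one of the three structural statements, and that the finer information (finiteness of $\Gamma_0/vK$ and of $K(x)v|Kv$, the split $\Gamma_0\oplus\Z$, the single free variable in the residue field) follows.

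Next I would treat the three cases in turn. In case $(1,0)$ — this is the value-transcendental case — there is an element $x_0\in K(x)$ with $vx_0$ rationally independent over $vK$; passing to $K(x)|K(x_0)$, which is finite, the fundamental inequality $[K(x):K(x_0)]\geq (vK(x):vK(x_0))\cdot[K(x)v:K(x_0)v]$ together with $vK(x_0)=vK\oplus\Z vx_0$ forces $vK(x)/(vK\oplus\Z vx_0)$ and $K(x)v/Kv$ to be finite. Setting $\Gamma_0$ to be the convex-closure-free complement (more precisely, using that $vK(x)/\Z vx_0$ is a finite extension of $vK$ and that $vK(x)\simeq \Gamma_0\oplus\Z$ because $vx_0$ spans a free direct summand), one obtains statement 1). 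In case $(0,1)$ — the residue-transcendental case — there is $x_0$ with $x_0v$ transcendental over $Kv$; again $K(x)|K(x_0)$ is finite, $K(x_0)v=Kv(x_0v)$ is rational, and the fundamental inequality forces $vK(x)/vK(x_0)=vK(x)/vK$ finite while $K(x)v$ is finite over $Kv(x_0v)$, hence a rational function field in one variable over a finite extension of $Kv$; this is statement 2). In the remaining case $(0,0)$ — the valuation-algebraic case — both $\rr(vK(x)/vK)=0$ and $\trdeg(K(x)v|Kv)=0$, which is exactly ``$vK(x)/vK$ torsion and $K(x)v|Kv$ algebraic'', i.e.\ statement 3). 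Mutual exclusivity is clear since the three pairs of numbers are distinct, and the trichotomy of Section~\ref{sectbc} guarantees exhaustiveness.

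Finally, for the last sentence I would argue countability uniformly. A key point is that $K(x)$ is generated over $K$ by the single element $x$ together with the countably many elements of $K$; more usefully, for a rational function field one can write $K(x)$ as a countable union of finitely generated subfields, and value groups and residue fields commute with directed unions. Alternatively: in cases 1) and 2) the quantities in question are outright finite (respectively finite-plus-one-free-generator and finite-plus-one-transcendental), so countability is immediate; in case 3), $vK(x)/vK$ is a torsion group each of whose elements is $\frac1m$-times the value of some element of $K(x)=\bigcup K(f_1,\dots,f_r)$, and since there are only countably many finite tuples of elements of a field generated by one transcendental over $K$ of countable rank this is not by itself enough — so I would instead use that $vK(x)$ is generated over $vK$ by the values $v(x-c)$ for $c$ ranging over $K$ and by the $v$ of finitely many polynomials, which is a countable set, and dually that $K(x)v$ is generated over $Kv$ by the residues of such elements. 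I expect the main obstacle to be precisely this: nailing down the countable generation in the valuation-algebraic case cleanly, since there the value group and residue field can genuinely be infinitely generated, and one must exhibit an explicit countable generating set rather than relying on finiteness; I would handle it by reducing, via the fundamental inequality applied to each finitely generated intermediate $K(x_0)\subseteq K(x)$ with $x_0$ chosen so that the relevant value or residue is realized, to a countable exhaustion.
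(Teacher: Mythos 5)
Your trichotomy argument (the Abhyankar-type bound from Lemma~\ref{prelBour} plus Lemma~\ref{fin} applied to the finite extension $K(x)|K(x_0)$) is the same route the paper takes, and your case 1), including the splitting $vK(x)\simeq\Gamma_0\oplus\Z$ (take $\Gamma_0$ to be the subgroup of elements torsion over $vK$; the quotient is finitely generated torsion free of rational rank one, hence $\simeq\Z$, hence the extension splits), is essentially fine. But there are two genuine gaps. First, in case 2) you assert that since $K(x)v$ is finite over $K(x_0)v=Kv(x_0v)$, it is ``hence a rational function field in one variable over a finite extension of $Kv$.'' That inference is false in general: a finite extension of a rational function field is just the function field of a curve and need not be ruled, and L\"uroth's theorem goes the wrong way. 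The fact that residue fields of valuations on $K(x)$ with transcendental residue \emph{are} of this special form is precisely Ohm's Ruled Residue Theorem [O2], a nontrivial result which the paper invokes explicitly; the fundamental inequality alone does not yield it.

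Second, your countability argument does not work as stated. The set $\{v(x-c)\mid c\in K\}$ is not countable when $K$ is uncountable, and it is not clear that it generates $vK(x)$ over $vK$ (over non algebraically closed $K$ one cannot reduce values of polynomials to values of linear factors); the proposed ``countable exhaustion'' by intermediate fields $K(x_0)$ is circular, since those are again rational function fields in one variable. The paper's proof (Theorem~\ref{count}) rests on a different idea: $K[x]$ has the countable $K$-basis $\{x^i\mid i\geq 0\}$, and by Lemma~\ref{algvind} polynomials whose values lie in pairwise distinct cosets modulo $vK$ are $K$-linearly independent, so $vK[x]$ (hence $vK(x)=vK[x]-vK[x]$) meets only countably many cosets; for the residue field one first uses the countability of $vK(x)/vK$ to normalize uncountably many hypothetical residue generators into a single coset $vh(x)+vK$, and then Lemma~\ref{algvind} again produces uncountably many $K$-linearly independent elements of $h(x)^{-1}K[x]$, a contradiction. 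Some argument of this kind (or the representation of valuation-algebraic extensions as limits of residue-transcendental ones, as in [APZ3]) is needed; your sketch does not supply it.
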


In 2), we use a fact which was proved by J.~Ohm [O2] and is known as the
``Ruled Residue Theorem'': {\it If $K(x)v|Kv$ is transcendental, then
$K(x)v$ is a rational function field in one variable over a finite
extension of $Kv$.} For the countability assertion, see
Theorem~\ref{count}.

In Section~\ref{sectae} we give an explicit description of all possible
extensions of $v$ from $K$ to $K(x)$ (Theorem~\ref{allext}).

\pars
Theorem~\ref{MT1var} is used in the proof of our next theorem:
\begin{theorem}                             \label{MTsevvar}
Let $(K,v)$ be any valued field, $n,\rho,\tau$ non-negative integers,
$n\geq 1$, $\Gamma\ne\{0\}$ an ordered abelian group extension of $vK$
such that $\Gamma/vK$ is of rational rank $\rho$, and $k|Kv$ a field
extension of transcendence degree $\tau$.
\sn
{\bf Part A.} \ Suppose that $n>\rho+\tau$ and that
\sn
A1) \ $\Gamma/vK$ and $k|Kv$ are countably generated,
\n
A2) \ $\Gamma/vK$ or $k|Kv$ is infinite.
\sn
Then there is an extension of $v$ to the rational
function field $K(x_1,\ldots,x_n)$ in $n$ variables such that
\begin{equation}                            \label{rfwvgarf}
vK(x_1,\ldots,x_n)\>=\>\Gamma\;\;\mbox{\ \ and\ \ }\;\;
K(x_1,\ldots,x_n)v\>=\>k\;.
\end{equation}
\mn
{\bf Part B.} \ Suppose that $n\geq\rho+\tau$ and that
\sn
B1) \ $\Gamma/vK$ and $k|Kv$ are finitely generated,
\n
B2) \ if $v$ is trivial on $K$, $n=\rho+\tau$ and $\rho=1$, then $k$
is a simple algebraic extension of a rational function field in $\tau$
variables over $Kv$ (or of $Kv$ itself if $\tau=0$), or a rational
function field in one variable over a finitely generated field extension
of $Kv$ of transcendence degree $\tau-1$,
\n
B3) \ if $n=\tau$, then $k$ is a rational function field
in one variable over a finitely generated field extension of $Kv$ of
transcendence degree $\tau-1$,
\n
B4) \ if $\rho=0=\tau$, then there is an immediate extension of $(K,v)$
which is either infinite separable-algebraic or of transcendence degree
at least $n$.
\sn
Then again there is an extension of $v$ to
$K(x_1,\ldots,x_n)$ such that (\ref{rfwvgarf}) holds.
\end{theorem}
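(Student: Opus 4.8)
The plan is to prove Theorem~\ref{MTsevvar} by induction on $n$, peeling off one variable at a time and applying Theorem~\ref{MT1var} at each step. The governing idea is to split the prescribed growth of the value group (rational rank $\rho$) and the residue field (transcendence degree $\tau$) across the $n$ variables: each variable should be made to contribute at most one unit of rational rank to the value group, or at most one unit of transcendence degree to the residue field, or (in the ``surplus'' variables, of which there are $n-\rho-\tau\geq 0$ in Part A and $0$ in Part B) nothing at all to either, by realizing an immediate extension. Concretely, I would first choose a chain of intermediate groups $vK=\Gamma^{(0)}\subseteq\Gamma^{(1)}\subseteq\cdots\subseteq\Gamma^{(n)}$ with $\Gamma^{(n)}$ cofinal in (or equal to) $\Gamma$ and a corresponding chain of residue field extensions $Kv=k^{(0)}\subseteq k^{(1)}\subseteq\cdots\subseteq k^{(n)}=k$, arranged so that at the $i$-th stage the pair $(\Gamma^{(i)}/\Gamma^{(i-1)},k^{(i)}|k^{(i-1)})$ falls into exactly one of the regimes handled by Theorem~\ref{MT1var}: either $\Gamma^{(i)}=\Gamma^{(i-1)}\oplus\Z$ with finite (or countably generated) torsion-or-trivial extra part and $k^{(i)}|k^{(i-1)}$ algebraic, or $\Gamma^{(i)}/\Gamma^{(i-1)}$ torsion and $k^{(i)}$ a rational function field in one variable over a finite extension of $k^{(i-1)}$, or both extensions trivial with an immediate transcendental step absorbed.

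The inductive step then runs as follows. Suppose $v$ has been extended to $K(x_1,\ldots,x_{i-1})$ with value group $\Gamma^{(i-1)}$ and residue field $k^{(i-1)}$. Apply Theorem~\ref{MT1var} with $(K,v)$ replaced by $(K(x_1,\ldots,x_{i-1}),v)$, with $\Gamma_0$ chosen appropriately inside $\Gamma^{(i)}$ and $k_0=k^{(i)}$ (in the residue-algebraic regimes) or $k_0=k^{(i-1)}$ together with a residue-transcendental extension producing $k^{(i)}=k^{(i-1)}(x_i v)$ (in the ruled regime). This yields an extension of $v$ to $K(x_1,\ldots,x_{i-1})(x_i)=K(x_1,\ldots,x_i)$ with value group $\Gamma^{(i)}$ and residue field $k^{(i)}$. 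The hypotheses A1, A2 (resp.\ B1--B4) are exactly what is needed to guarantee that at every stage one of the applicable cases of Theorem~\ref{MT1var} can be invoked; in particular B4 and A2 supply the immediate transcendental extension needed to consume the surplus variables, and the delicate low-dimensional side conditions B2, B3 and the simplicity requirement in the trivially-valued case are inherited from the corresponding fine print in Theorem~\ref{MT1var} applied at the critical stage where $\rho$ and $\tau$ are exhausted.

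The main obstacle will be the bookkeeping at the boundary stages, i.e.\ showing that the side conditions of Theorem~\ref{MT1var} are met precisely when the hypotheses of Theorem~\ref{MTsevvar} hold, with no off-by-one slippage. Several subtleties must be tracked: whether the base field at each stage is trivially or non-trivially valued (it becomes non-trivially valued as soon as one value-transcendental or residue-transcendental step has been taken, which matters because the trivially-valued case of Theorem~\ref{MT1var} is more restrictive); how a quotient like $\Gamma/vK$ of rational rank $\rho$ decomposes as a chain of steps each adding a copy of $\Z$ modulo torsion, so that after $\rho$ such steps the remaining quotient is torsion (needed for case~3 of Theorem~\ref{conv1}-type behavior); and how to split a countably generated but infinite extension so that exactly one stage carries the infinite part while the rest are finite. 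Handling the case $\rho=0=\tau$ (all variables immediate) is a clean special case driven entirely by B4, and I would dispose of it first; the genuinely intricate argument is the interleaving of value-transcendental, residue-transcendental and immediate steps in the mixed case, together with verifying that hypothesis B2's three-way alternative (simple algebraic over a rational function field, or rational function field over a smaller field) corresponds exactly to the two residue-field options offered by Theorem~\ref{MT1var} at the last non-immediate stage.
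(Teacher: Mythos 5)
Your overall strategy is the right one in spirit and is close to the paper's: the paper also realizes the $\rho+\tau$ ``transcendental units'' first (though in a single stroke, via Lemma~\ref{prelBour}, rather than variable by variable), then spends exactly one further variable on the remaining algebraic/countably generated data via its one-variable results, and absorbs all surplus variables into immediate extensions. The variable-by-variable chain $\Gamma^{(0)}\subseteq\cdots\subseteq\Gamma^{(n)}$, $k^{(0)}\subseteq\cdots\subseteq k^{(n)}$ is a harmless repackaging of this (except that $\Gamma^{(n)}$ must equal $\Gamma$, not merely be cofinal in it). The genuine gap is in how you propose to justify the immediate steps. The clause of Theorem~\ref{MT1var} you would invoke there requires that the field \emph{at that stage} admits an immediate transcendental extension (or that one of the algebraic extensions being realized at that stage is infinite); your hypotheses A2) and B4) say nothing about the intermediate fields $K(x_1,\ldots,x_i)$, so writing ``B4 and A2 supply the immediate transcendental extension needed to consume the surplus variables'' is exactly the assertion that has to be proved, and it is not a bookkeeping matter.

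This is where the paper's proof leans on an ingredient absent from your plan: Theorem~\ref{MTai} (from [KU5]), which guarantees that maximal immediate extensions have infinite transcendence degree either when the value group has gained rational rank or the residue field has gained transcendence degree over $K$ (transcendental case), or when the henselization contains an infinite separable-algebraic subextension over $K^h$ (separable-algebraic case). When $\rho+\tau>0$ the transcendental case applies to the intermediate fields and your surplus steps can be salvaged; but in Part~A with $\rho=0=\tau$ and, say, $k|Kv$ an infinite algebraic extension (a case governed by A2, not by B4, contrary to your closing remark), the transcendental case is unavailable, and to trigger the separable-algebraic case one must know that the critical one-variable step produces an \emph{infinite separable-algebraic implicit constant field} inside the henselization. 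That information is supplied by the finer one-variable results (Propositions~\ref{finhcf}, \ref{inf-i}, \ref{cgcg}), not by Theorem~\ref{MT1var}, which only records value group and residue field; so an induction quoting \ref{MT1var} alone cannot get past this case. A similar propagation problem occurs in Part~B with $\rho=0=\tau$: B4 gives an immediate extension of $(K,v)$ only, and after consuming $n-1$ variables immediately you must still exhibit a transcendental immediate extension of $K(x_1,\ldots,x_{n-1})$ for the last application of the one-variable result; the paper secures this by first converting the infinite separable-algebraic alternative of B4 into immediate extensions of infinite transcendence degree via Theorem~\ref{MTai} and then choosing the embedding of $x_1,\ldots,x_{n-1}$ so that a transcendental immediate element remains. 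Without Theorem~\ref{MTai} and the implicit-constant-field control, your induction cannot close.
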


Theorem~\ref{badpl} is the special case of Part A for $v$ trivial on
$K$. The following converse holds:
\begin{theorem}                             \label{conv2}
Let $n\geq 1$ and $v$ be a valuation on the rational function field
$F=K(x_1,\ldots,x_n)$. Set $\rho=\rr vF/vK$ and $\tau=\trdeg
Fv|Kv$. Then $n\geq\rho+\tau$, $vF/vK$ is countable, and $Fv|Kv$ is
countably generated.

If $n=\rho+\tau$, then $vF/vK$ is finitely generated and $Fv|Kv$ is a
finitely generated field extension. Assertions B2) and B3) of
Theorem~\ref{MTsevvar} hold for $k=Fv$, and if $\rho=0= \tau$, then
there is an immediate extension of $(\tilde{K},v)$ of transcendence
degree~$n$ (for any extension of $v$ from $K$ to $\tilde{K}$).
\end{theorem}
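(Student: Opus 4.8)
The plan is to prove Theorem~\ref{conv2} by reducing the several-variable case to repeated application of the one-variable converse, Theorem~\ref{conv1}. First I would establish the inequality $n\geq\rho+\tau$ together with the countability assertions by induction on $n$. For $n=1$ this is exactly Theorem~\ref{conv1}, which gives $\rr vK(x)/vK\leq 1$, $\trdeg K(x)v|Kv\leq 1$, with $\rr vK(x)/vK+\trdeg K(x)v|Kv\leq 1$, and which furnishes the countability of $vK(x)/vK$ and countable generation of $K(x)v|Kv$. For the inductive step, write $F=K(x_1,\ldots,x_n)=K'(x_n)$ with $K'=K(x_1,\ldots,x_{n-1})$. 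By the one-variable result applied to $F|K'$, we have $\rr vF/vK'\leq 1$ and $\trdeg Fv|K'v\leq 1$ with their sum at most $1$; additivity of rational rank in short exact sequences of abelian groups (so $\rr vF/vK=\rr vF/vK'+\rr vK'/vK$) and additivity of transcendence degree in towers, combined with the inductive hypothesis $n-1\geq\rr vK'/vK+\trdeg K'v|Kv$, yields $n\geq\rho+\tau$. The countability assertions propagate similarly: an extension of a countable group by a countable group is countable, and a countably generated extension of a countably generated extension is countably generated.

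Next I would treat the equality case $n=\rho+\tau$. Here the inequalities forced at each level of the tower $K=K_0\subset K_1\subset\cdots\subset K_n=F$, $K_{i}=K_{i-1}(x_i)$, must all be tight: at every step exactly one of the two quantities $\rr vK_i/vK_{i-1}$, $\trdeg K_iv|K_{i-1}v$ equals $1$ and the other $0$. By Theorem~\ref{conv1}, at a ``value step'' we have $vK_i\simeq vK_{i-1}\oplus\Z$ (hence finitely generated over $vK_{i-1}$) with $K_iv|K_{i-1}v$ finite, and at a ``residue step'' $vK_i/vK_{i-1}$ is finite and $K_iv$ is a rational function field in one variable over a finite extension of $K_{i-1}v$ (again finitely generated). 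Composing these finitely many finitely generated extensions shows $vF/vK$ and $Fv|Kv$ are finitely generated.

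It then remains to transfer assertions B2), B3) and the $\rho=0=\tau$ clause. For B3), if $n=\tau$ then every step in the tower is a residue step, so by Theorem~\ref{conv1} the last step makes $Fv=K_{n-1}v(t)$ a rational function field in one variable over a finite — hence finitely generated — extension of $K_{n-1}v$, and $K_{n-1}v|Kv$ has transcendence degree $n-1=\tau-1$ and is finitely generated by the equality-case analysis; this is exactly the stated conclusion. For B2) one argues that in the borderline configuration ($v$ trivial on $K$, $n=\rho+\tau$, $\rho=1$) there is exactly one value step and $\tau$ residue steps; examining whether the value step comes last (giving the ``simple algebraic extension of a rational function field'' alternative, via the ``$k_0|Kv$ simple'' refinement in the trivial-valuation case of Theorem~\ref{MT1var}'s converse content) or earlier (giving the ``rational function field in one variable'' alternative) produces the dichotomy in B2). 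For the $\rho=0=\tau$ case, all $n$ steps are residue steps with $vK_i/vK_{i-1}$ torsion; the point is that $K(x_1,\ldots,x_n)$, being transcendental of degree $n$ over $K$ with residue field algebraic over $Kv$ and value group a torsion extension of $vK$, gives after passing to $\tilde K$ an immediate extension — one checks $(\tilde K(x_1,\ldots,x_n),v)$ over $(\tilde K,v)$ has the same value group and residue field, so it is an immediate extension of $(\tilde K,v)$ of transcendence degree $n$.

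The main obstacle I anticipate is the case analysis for B2): keeping track of which permutations of value and residue steps can occur, and verifying that the structural conclusion of Theorem~\ref{conv1} at each step really does assemble into precisely one of the two listed normal forms for $k=Fv$, will require care — in particular the ``simple'' refinement, which is special to the trivially-valued case, must be tracked through the composition. The countability and finite-generation bookkeeping, by contrast, is routine once the tower decomposition is set up.
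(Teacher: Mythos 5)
Your overall route is sound and genuinely different in mechanics from the paper's. You build the tower $K=K_0\subset K_1\subset\cdots\subset K_n=F$ and feed each step into Theorem~\ref{conv1}, using additivity of rational rank and transcendence degree; the paper instead gets $n\geq\rho+\tau$ and the finite generation in the equality case directly from Corollary~\ref{fingentb} (whose second assertion is exactly the function-field refinement you re-derive by ``tightness'' of the tower), gets countability from Theorem~\ref{count} by the same induction you describe, gets B3) from Ohm's Ruled Residue Theorem (which is the content of case 2) of Theorem~\ref{conv1}, so this is essentially the same), and gets the $\rho=0=\tau$ clause from Lemma~\ref{a-i}, which is precisely the ``one checks'' computation you sketch (note your wording ``all $n$ steps are residue steps'' should read ``valuation-algebraic steps'', i.e.\ case 3) of Theorem~\ref{conv1}; with $\rho=0=\tau$ no step is residue-transcendental). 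Your tower argument buys a uniform treatment by one quoted theorem; the paper's argument is shorter because Corollary~\ref{fingentb} handles the numerics in one stroke. For B2) the paper's dichotomy is over whether \emph{some} generating transcendence basis makes the last step residue-transcendental, while yours is over the position of the unique value step in the fixed order $x_1,\ldots,x_n$; your version does suffice, since both branches land in one of the two alternatives listed in B2).

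There is, however, one concrete gap in the B2) branch where the value step comes last. There Theorem~\ref{conv1}, case 1), only tells you that $Fv|K_{n-1}v$ is \emph{finite}; it does not give that $Fv$ is a \emph{simple} algebraic extension of $K_{n-1}v\simeq K(x_1,\ldots,x_{n-1})$, which is what B2) asserts. You attribute this to ``the $k_0|Kv$ simple refinement in the trivial-valuation case of Theorem~\ref{MT1var}'s converse content'', but Theorem~\ref{MT1var} is an existence statement and its converse, Theorem~\ref{conv1}, contains no simpleness assertion; so the source you invoke does not exist. What is actually needed is the explicit description of the extensions of a valuation that is trivial on the base field: by Theorem~\ref{allext} every such value-transcendental extension is a restriction of some $\tilde{v}_{a,\gamma}$, and Remark~\ref{allextrem} then shows the residue field is $K_{n-1}(a)$ for some $a$ algebraic over $K_{n-1}$ (equivalently, one may use the classical fact that a place of $K_{n-1}(x)$ trivial on $K_{n-1}$ has residue field $K_{n-1}[x]/(p(x))$ or $K_{n-1}$ itself). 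This is exactly the tool the paper's proof uses at this point; with it inserted, your argument goes through.
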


\n
There is a gap between Theorem~\ref{MTsevvar} and this converse for the
case of $\rho=0=\tau$, as the former talks about $K$ and the latter
talks about the algebraic closure $\tilde{K}$ of $K$. This gap can be
closed if $(K,v)$ has residue characteristic 0 or is a Kaplansky field;
because the maximal immediate extension of such fields is unique up to
isomorphism, one can show that $\tilde{K}$ can be replaced by $K$. But
in the case where $(K,v)$ is not such a field, we do not know enough
about the behaviour of maximal immediate extensions under algebraic
field extensions. This question should be considered in future research.

\pars
A valuation on an ordered field is called \bfind{convex} if the
associated valuation ring is convex.
For the case of ordered fields with convex valuations, we can derive
from Theorem~\ref{MTsevvar} the existence of convex extensions of the
valuation with prescribed value groups and residue fields in the frame
given by Theorem~\ref{conv2}, provided that a natural additional
condition for the residue fields is satisfied:
\begin{theorem}                             \label{extord}
In the setting of Theorem~\ref{MTsevvar}, assume in addition that $K$ is
ordered and that $v$ is convex w.r.t.\ the ordering. Assume further that
$k$ is equipped with an extension of the ordering induced by $<$ on
$Kv$. Then this extension can be lifted through $v$ to $K(x_1,\ldots,
x_n)$ in such a way that the lifted ordering extends $<$. It follows
that $v$ is convex w.r.t.\ this lifted ordering on $K(x_1,\ldots,x_n)$.
\end{theorem}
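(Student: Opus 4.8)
The plan is to refine the construction used to prove Theorem~\ref{MTsevvar} so that an ordering is carried along at every step. Recall that a valuation on a field is compatible with some ordering (i.e.\ has convex valuation ring) if and only if its residue field is formally real, and that in that case the orderings of $F$ compatible with $v$ inducing a prescribed ordering on $Fv$ form a nonempty torsor under $\mathrm{Hom}(vF/2vF,\{\pm 1\})$ (Baer--Krull). Thus for the $v$ that Theorem~\ref{MTsevvar} produces, with $vF=\Gamma$ and $Fv=k$, orderings of $F$ compatible with $v$ and inducing the given ordering of $k$ always exist; what has to be arranged is that one of them restricts to the given $<$ on $K$. This cannot be extracted from Theorem~\ref{MTsevvar} as a black box: if $\Gamma/vK$ has $2$-torsion (take $K=k_0(t)$ with $t>0$, and $x$ with $v(x)=v(t)/2$ and $\overline{x^2/t}$ negative), then no ordering of $F$ compatible with $v$ restricts to $<$, although $Fv$ is formally real. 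One must therefore re-enter the construction and make the relevant residue signs compatibly.

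The construction builds the extension along a tower $K=F_0\subset F_1\subset\cdots\subset F_n=F$ with $F_i=F_{i-1}(x_i)$, each step of one of the three types of the basic classification of Section~\ref{sectbc} (value-transcendental, residue-transcendental, valuation-algebraic), with $vF_i$ a prescribed subgroup of $\Gamma$ and $F_iv$ a prescribed subfield of $k$ at every stage. I would show by induction on $i$ that $<$ propagates to an ordering $<_i$ of $F_i$ that is compatible with $v$ and induces on $F_iv$ the restriction of the prescribed ordering of $k$. Since every $x_i$ is transcendental over $F_{i-1}$, each inclusion $F_{i-1}\hookrightarrow F_i$ will automatically be order preserving, so $<_n$ restricts to $<$ on $K$; compatibility with $v$ is built into $<_i$ (it is the ordering for which $v$ dominates), which also yields the last sentence of the theorem; and $<_n$ induces the prescribed ordering of $k=F_nv$. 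So everything reduces to the inductive step, case by case.

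For a value-transcendental step, $v(x_i)$ is rationally independent over $vF_{i-1}$ and $F_iv=F_{i-1}v$, so the value of a polynomial $\sum_\nu a_\nu x_i^\nu$ is attained by a unique monomial; I set its sign to be $\mathrm{sgn}_{<_{i-1}}(a_{\nu_0})\cdot\mathrm{sgn}(x_i)^{\nu_0}$, the sign of $x_i$ being chosen arbitrarily --- this is the usual twisted ordering. For a residue-transcendental step one may assume $v(x_i)=0$, with $\overline{x_i}=y$ the transcendental element of $k$ over $F_{i-1}v$ that the construction selects; I set the sign of $\sum_\nu a_\nu x_i^\nu$ (rescaled to have residue $\ne 0$) to be the sign in the ordered field $k$ of $\sum_\nu\overline{a_\nu}\,y^\nu$, the Gauss ordering, which induces on $F_iv=F_{i-1}v(y)$ the restriction of $k$'s ordering. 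The valuation-algebraic steps are the delicate ones and constitute the main obstacle. Here $x_i$ is a pseudo-limit, $vF_i/vF_{i-1}$ is torsion and $F_iv/F_{i-1}v$ algebraic; the crucial observation is that the construction still has freedom in the residues it assigns to the relevant elements (such as $x_i^m/a$, where $\gamma_i=v(x_i)$ has order $m$ modulo $vF_{i-1}$ and $v(a)=m\gamma_i$), and one must verify that these residues can be chosen to be positive of the appropriate sign --- for $m$ even, $\mathrm{sgn}\,\overline{x_i^m/a}=\mathrm{sgn}_{<_{i-1}}(a)^{-1}$, with no constraint when $m$ is odd --- while still realizing the prescribed $vF_i$ and $F_iv$. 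Once the signs along the pseudo-Cauchy sequence are pinned down in this compatible way, $<_i$ is forced (sign of the valuation-leading term) and one checks it is an ordering, $v$-compatible, with the right residue ordering. The ``extra'' steps present when $n>\rho+\tau$ --- immediate transcendental extensions --- are a sub-case: one places $x_i$ inside a maximal immediate extension $M$ of $F_{i-1}$; since $M|F_{i-1}$ is immediate the Baer--Krull data are unchanged, so $<_{i-1}$ extends (uniquely) to a $v$-compatible ordering of $M$, which we restrict to $F_i$.

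In short, the proof is an induction that interleaves the construction of Theorem~\ref{MTsevvar} with a step-by-step lifting of the ordering, the one genuinely subtle point being that, in the valuation-algebraic steps, the residue data underlying the construction can be fixed with the signs forced by $<$ and by the given ordering of $k$; the convexity assertion is then immediate.
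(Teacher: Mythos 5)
Your general framework is the right one, and parts of it coincide with the paper's argument: the value-transcendental, residue-transcendental and immediate steps are exactly the places where the paper invokes its Baer--Krull type lifting result (Proposition~\ref{<ext}), and in all three cases the hypothesis $2vL_2\cap vL_1=2vL_1$ holds for the same reasons you give. Your observation that Theorem~\ref{MTsevvar} cannot be used as a black box is also correct and well illustrated by your $v(x)=v(t)/2$ example.

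However, there is a genuine gap at precisely the point you yourself single out as ``the main obstacle'': the valuation-algebraic steps, i.e.\ the steps in which $vF_i/vF_{i-1}$ acquires torsion and $F_iv|F_{i-1}v$ acquires algebraic elements (Propositions~\ref{finhcf}, \ref{5.4} and~\ref{cgcg}). You assert that ``the construction still has freedom in the residues it assigns'' and that ``one must verify that these residues can be chosen'' with the signs forced by $<$ and by the ordering of $k$, and that afterwards the leading-term sign rule ``is forced'' and ``one checks it is an ordering''; none of this is carried out, and it is not routine. In the infinite case (countably generated $\Gamma_0/vK$ or $k_0|Kv$) there are infinitely many interacting sign constraints, and defining an ordering on a valuation-algebraic (in particular non-weakly-pure) extension by a valuation-leading-term recipe is exactly the kind of thing that needs a proof, not a check. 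The paper avoids this entirely by a device your plan is missing: it never orders a valuation-algebraic transcendental extension ``by hand''. Instead, the algebraic part of the jump in value group and residue field is realized by an algebraic extension $K(a)$ or $K_1$ of $K$ chosen \emph{inside a real closure} $R$ of $(K,<)$ equipped with its unique convex extension of $v$ (Theorem~\ref{kmwrpr} of Knebusch--Wright and Prestel, via Corollary~\ref{ainrc}); there all residue signs are automatically coherent, the ordering extends $<$, and one may pass to an equivalent residue map so that the induced ordering on $k_0$ is the prescribed one. The remaining step $K_1(x)|K_1$ is then immediate (or value-/residue-transcendental), where $2vL_2\cap vL_1=2vL_1$ and Proposition~\ref{<ext} applies, and the desired ordering of $K(x_1,\ldots,x_n)$ is obtained by restriction. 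Supplying this mechanism (or an actual proof of your sign-pinning claim, including the infinite case) is what your proposal still needs.
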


\parm
In Section~\ref{sectkrseq} we shall introduce ``homogeneous
sequences''. In the ``tame case'', they can be used to determine the
implicit constant field of a valued rational function field, and also to
characterize this ``tame case''. In Section~\ref{sectappl} we shall show
how to apply our results to power series, in the spirit of [MS] and
[ZS] (Theorem~\ref{powser}). We will also use our approach to give
proofs of two well known facts in p-adic algebra: that the algebraic
closure of $\Q_p$ is not complete and that its completion is not
maximal.

Finally, let us mention that we use our criteria for $\ic(K(x)|K,v)=K^h$
in Section~\ref{sectpure} to give an example for the following fact:
{\it Suppose that $K$ is relatively algebraically closed in a henselian
valued field $(L,v)$ such that $vL/vK$ is a torsion group. Then it is
not necessarily true that $vL=vK$, even if $v$ has residue
characteristic $0$.}

\parb
I would like to thank Murray Marshall and Salma Kuhlmann for the very
inspiring joint seminar; without this seminar, this paper would not have
been written. I also feel very much endebted to Sudesh Kaur Khanduja for
many ideas she has shared with me. Finally, I would like to thank Roland
Auer for finding some mistakes and asking critical questions.

%
%
\section{Notation and valuation theoretical preliminaries}\label{sectprel}
For an arbitrary field $K$, will will denote by $K\sep$ the
separable-algebraic closure of $K$, and by $\tilde{K}$ the algebraic
closure of $K$. By $\Gal K$ we mean the absolute Galois group
$\Gal(\tilde{K}|K)= \Gal (K\sep|K)$. For a valuation $v$ on $K$,
we let ${\cal O}_K$ denote the valuation ring of $v$ on $K$.

Every finite extension $(L|K,v)$ of valued fields
satisfies the {\bf fundamental inequality} (cf.\ [E]):
\begin{equation}                             \label{fiq}
n\>\geq\>\sum_{i=1}^{\rm g} {\rm e}_i {\rm f}_i
\end{equation}
where $n=[L:K]$ is the degree of the extension, $v_1,\ldots,v_{\rm g}$
are the distinct extensions of $v$ from $K$ to $L$, ${\rm e}_i=(v_i
L:vK)$ are the respective ramification indices and ${\rm f}_i=
[Lv_i:Kv]$ are the respective inertia degrees. Note that ${\rm g}=1$
if $(K,v)$ is henselian.

\pars
In analogy to field theory, an extension $\Gamma\subset \Delta$ of
abelian groups will also be written as $\Delta|\Gamma$, and it will be
called \bfind{algebraic} if $\Delta/\Gamma$ is a torsion group. The
fundamental inequality implies the following well known
fact:
\begin{lemma}                               \label{fin}
If $(L|K,v)$ is finite, then so are $vL/vK$ and $Lv|Kv$.
If $(L|K,v)$ is algebraic, then so are $vL/vK$ and $Lv|Kv$.
\end{lemma}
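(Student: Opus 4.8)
The plan is to reduce everything to the fundamental inequality~(\ref{fiq}). The first case is the finite one: suppose $(L|K,v)$ is a finite extension with $[L:K]=n$. Fix \emph{one} of the extensions of $v$ from $K$ to $L$, and read off from~(\ref{fiq}) that its ramification index $\mathrm{e}=(vL:vK)$ and its inertia degree $\mathrm{f}=[Lv:Kv]$ each divide, hence are bounded by, the summand $\mathrm{e}\,\mathrm{f}$ which is in turn $\le n$. Thus $(vL:vK)\le n<\infty$ and $[Lv:Kv]\le n<\infty$; that is, $vL/vK$ is finite (as a group, since $vL\supseteq vK$ with finite index) and $Lv|Kv$ is a finite field extension. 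This settles the first assertion.

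For the algebraic case, suppose $(L|K,v)$ is algebraic, i.e.\ every element of $L$ is algebraic over $K$. I want to show $vL/vK$ is torsion and $Lv|Kv$ is algebraic. Take any $\gamma\in vL$, say $\gamma=vt$ for some $t\in L$; then $K(t)|K$ is finite, so by the finite case $(v K(t):vK)=:m$ is finite, and hence $m\gamma\in vK$, which shows $vL/vK$ is a torsion group. Similarly, take any $\bar{a}\in Lv$ and lift it to $a\in{\cal O}_L$; again $K(a)|K$ is finite, so by the finite case $K(a)v|Kv$ is finite, and since $\bar a\in K(a)v$ it is algebraic over $Kv$. As $\bar a$ was arbitrary, $Lv|Kv$ is algebraic. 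This completes the argument.

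There is no real obstacle here — the only point that needs a moment of care is the reduction from an arbitrary algebraic extension to finite subextensions, which works because value and residue data of a single element already live in a finitely generated (hence finite) subextension, and because ``torsion'' and ``algebraic'' are properties that can be checked element by element. One should also note explicitly that in the finite case one may pick any single extension $v_i$ of $v$ to $L$ and still get the bound, since each individual summand $\mathrm{e}_i\mathrm{f}_i$ in~(\ref{fiq}) is at most $n$; no henselianity is required. With these remarks the lemma follows directly from~(\ref{fiq}).
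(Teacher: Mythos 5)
Your proof is correct and follows exactly the route the paper intends: the paper states this lemma as an immediate consequence of the fundamental inequality~(\ref{fiq}) without writing out details, and your argument (bounding $(vL:vK)$ and $[Lv:Kv]$ by a single summand $\mathrm{e}_i\mathrm{f}_i\le n$ in the finite case, then reducing the algebraic case to finite subextensions $K(t)|K$ and $K(a)|K$ element by element) is the standard elaboration of precisely that remark. No gaps; the only cosmetic point is that ``divide'' should just be ``is bounded by,'' which you in effect already use.
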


\parm
Given two subextensions $M|K$ and $L|K$ within a fixed extension $N|K$,
the {\bf field compositum} $M.L$ is defined to be the smallest subfield
of $N$ which contains both $M$ and $L$. If $L|K$ is algebraic, the
compositum is uniquely determined by taking $N=\tilde{M}$ and specifying
a $K$-embedding of $L$ in $\tilde{M}$.

\begin{lemma}                               \label{persimm}
Let $(M|K,v)$ be an immediate extension of valued fields, and $(L|K,v)$
a finite extension such that $[L:K]=(vL:vK)[Lv:Kv]$. Then for every
$K$-embedding of $L$ in $\tilde{M}$ and every extension of $v$ from $M$
to $\tilde{M}$, the extension $(M.L|L,v)$ is immediate.
\end{lemma}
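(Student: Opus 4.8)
The plan is to derive the statement from a single squeezing application of the fundamental inequality~(\ref{fiq}). Fix a $K$-embedding of $L$ into $\tilde{M}$ and an extension of $v$ to $\tilde{M}$; then $N:=M.L$ is a well-defined finite extension of $M$ inside $\tilde{M}$, carrying the restriction of that valuation, and it suffices to prove $vN=vL$ and $Nv=Lv$. As a preliminary observation, the hypothesis $[L:K]=(vL:vK)[Lv:Kv]$ already forces equality in~(\ref{fiq}) applied to $(L|K,v)$, so ${\rm g}=1$ there and $v$ has a unique extension from $K$ to $L$; hence, no matter which embedding into, and extension of $v$ to, $\tilde{M}$ one takes, the valuation induced on $L$ is the prescribed one, and the statement is unambiguous.

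The first step is the elementary bound $[N:M]\le[L:K]$: writing $L=K(\alpha_1,\ldots,\alpha_r)$ we have $N=M(\alpha_1,\ldots,\alpha_r)$, and at each stage the minimal polynomial over the larger ground field divides the one over the smaller, so the degree cannot grow (equivalently, $N$ is an $M$-algebra quotient of $L\otimes_K M$). The second step is merely the inclusions $vL\subseteq vN$ and $Lv\subseteq Nv$, which give $(vN:vK)\ge(vL:vK)$ and $[Nv:Kv]\ge[Lv:Kv]$.

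Next I would apply~(\ref{fiq}) to the finite extension $(N|M,v)$, taking the fixed extension of $v$ from $M$; since this contributes just one summand to the right-hand side of~(\ref{fiq}), that summand is bounded by $[N:M]$, so
\[
[N:M]\;\ge\;(vN:vM)\,[Nv:Mv]\;=\;(vN:vK)\,[Nv:Kv]\;\ge\;(vL:vK)\,[Lv:Kv]\;=\;[L:K],
\]
where the middle equality uses that $M|K$ is immediate (so $vM=vK$ and $Mv=Kv$) and the last equality is the hypothesis. Combined with $[N:M]\le[L:K]$, every inequality in this chain is forced to be an equality. In particular $(vN:vK)=(vL:vK)$ together with $vL\subseteq vN$ yields $vN=vL$, and in the same way $Nv=Lv$; that is, $(M.L|L,v)$ is immediate.

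I do not expect a genuine obstacle here: the only points needing care are the degree bound $[M.L:M]\le[L:K]$ for a compositum and the preliminary remark that the hypothesis on $[L:K]$ pins down the valuation on $L$ independently of the chosen embedding into, and extension of $v$ to, $\tilde{M}$.
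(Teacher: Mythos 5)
Your proof is correct and follows essentially the same route as the paper: the same preliminary remark that $[L:K]=(vL:vK)[Lv:Kv]$ forces uniqueness of the extension of $v$ from $K$ to $L$ (so the valuation induced on $L$ is unambiguous), followed by the same squeeze $[M.L:M]\geq (v(M.L):vK)[(M.L)v:Kv]\geq (vL:vK)[Lv:Kv]=[L:K]\geq [M.L:M]$ using immediacy of $(M|K,v)$ and the compositum degree bound. No gaps.
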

\begin{proof}
Via the embedding, we identify $L$ with a subfield of $\tilde{M}$.
Pick any extension of $v$ from $M$ to $\tilde{M}$. This will also be
an extension of $v$ from $L$ to $\tilde{M}$ because by the fundamental
inequality, the extension of $v$ from $K$ to $L$ is unique. We consider
the extension $(M.L|M,v)$. It is clear that $vL\subseteq v(M.L)$ and
$Lv\subseteq (M.L)v$; therefore, $(v(M.L):vK)\geq
(vL:vK)$ and $[(M.L)v:Kv]\geq [Lv:Kv]$. Since $(M|K,v)$ is immediate,
we have
\begin{eqnarray*}
[M.L:M] & \geq & (v(M.L):vM)[(M.L)v:Mv]\>=\>(v(M.L):vK)[(M.L)v:Kv]\\
 & \geq & (vL:vK)[Lv:Kv]\>=\>[L:K]\>\geq\>[M.L:M]\;.
\end{eqnarray*}
This shows that $(v(M.L):vK)=(vL:vK)$ and $[(M.L)v:Kv]=[Lv:Kv]$, that
is, $v(M.L)=vL$ and $(M.L)v=Lv$.
\end{proof}

%
%
\subsection{Pseudo Cauchy sequences}
We assume the reader to be familiar with the theory of pseudo Cauchy
sequences as presented in [KA]. Recall that a pseudo Cauchy sequence
$\bA=(a_{\nu})_{\nu<\lambda}$ in $(K,v)$ (where $\lambda$ is some limit
ordinal) is \bfind{of transcendental type} if for every $g(x)\in K(x)$,
the value $vg(a_{\nu})$ is eventually constant, that is, there is some
$\nu_0<\lambda$ such that
\begin{equation}                            \label{fixvalg}
vg(a_{\nu})\>=\>vg(a_{\nu_0})\;\;\;\mbox{ for all \ }
\nu\geq\nu_0 \>,\> \nu<\lambda\;.
\end{equation}
Otherwise, \bA\ if \bfind{of algebraic type}.

Take a pseudo Cauchy sequence \bA\ in $(K,v)$ of transcendental type.
We define an extension $v_{\bA}$ of $v$ from $K$ to the rational
function field $K(x)$ as follows. For each $g(x)\in K[x]$, we choose
$\nu_0<\lambda$ such that (\ref{fixvalg}) holds. Then we set
\[v_{\bA} \,g(x)\>:=\>vg(a_{\nu_0})\;.\]
We extend $v_{\bA}$ to $K(x)$ by setting $v_{\bA}(g/h):=
v_{\bA}g-v_{\bA} h$. The following is Theorem~2 of [KA]:
\begin{theorem}                             \label{Ka2}
Let \bA\ be a pseudo Cauchy sequence in $(K,v)$ of transcendental type.
Then $v_{\bA}$ is a valuation on the rational function field $K(x)$. The
extension $(K(x)|K,v_{\bA})$ is immediate, and $x$ is a pseudo limit of
\bA\ in $(K(x),v_{\bA})$. If $(K(y),w)$ is any other valued extension of
$(K,v)$ such that $y$ is a pseudo limit of \bA\ in $(K(y),w)$, then
$x\mapsto y$ induces a valuation preserving $K$-isomorphism from
$(K(x),v_{\bA})$ onto $(K(y),w)$.
\end{theorem}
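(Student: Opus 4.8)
The plan is to establish the four assertions in the obvious order, reducing the two substantive ones --- immediacy of $(K(x)|K,v_{\bA})$, and the uniqueness of $(K(x),v_{\bA})$ among valued extensions in which $x$ is a pseudo limit of $\bA$ --- to a single \emph{core fact} about how polynomials behave along a pseudo Cauchy sequence of transcendental type. Throughout I will use the following properties of $\bA=(a_\nu)_{\nu<\lambda}$, the first two immediate from the definition and the rest being the standard lemmas of [KA] (they all come from expanding a polynomial around $a_\mu$): the $a_\nu$ are pairwise distinct; for $\mu<\nu<\lambda$ the value $v(a_\nu-a_\mu)$ does not depend on $\nu$, call it $\gamma_\mu$, and $\gamma_\mu$ is strictly increasing in $\mu$; for each $g\in K[x]$ there is $\nu_g<\lambda$ such that for $\nu_g\le\mu<\nu$ the value $v\big(g(a_\nu)-g(a_\mu)\big)$ does not depend on $\nu$, call it $\varepsilon_\mu^{(g)}$, and $\varepsilon_\mu^{(g)}$ is strictly increasing in $\mu$; and, by the transcendental-type hypothesis, for every nonzero $g\in K[x]$ the value $vg(a_\nu)$ is eventually equal to some $\delta_g\in vK$ (a genuine value, since $g$ vanishes at only finitely many of the distinct elements $a_\nu$).

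First I would dispose of the routine steps. Independence of the choice of $\nu_0$ in the definition is immediate from (\ref{fixvalg}), and that formula also shows $v_{\bA}g\in vK$ for $g\ne 0$; for finitely many given polynomials one picks one index valid for all of them together with their products and sums, so that multiplicativity and the ultrametric inequality for $v_{\bA}$ follow from those for $v$ at the corresponding point $a_{\nu_0}$. Thus $v_{\bA}$ is a valuation on $K[x]$ extending $v$ with value set $vK$, and it extends uniquely to $K(x)$ by $v_{\bA}(g/h)=v_{\bA}g-v_{\bA}h$; in particular $v_{\bA}K(x)=vK$, so the value group is already as required. Applying the definition to $x-a_\nu$ and evaluating at any $a_{\nu'}$ with $\nu'>\nu$ gives $v_{\bA}(x-a_\nu)=\gamma_\nu$, which is strictly increasing, so $x$ is a pseudo limit of $\bA$ in $(K(x),v_{\bA})$.

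The core fact, which I expect to be the main obstacle, is: for every nonzero $g\in K[x]$ one has $\varepsilon_\nu^{(g)}>\delta_g$ for all large $\nu$. To see it, take $\nu<\nu'$ both large; then $g(a_{\nu'})$ and $g(a_\nu)$ both have value $\delta_g$, so their difference has value $\ge\delta_g$, and since this difference has value $\varepsilon_\nu^{(g)}$ independently of $\nu'$, we get $\varepsilon_\nu^{(g)}\ge\delta_g$; as $\varepsilon_\nu^{(g)}$ is strictly increasing it can equal $\delta_g$ for at most one $\nu$, so $\varepsilon_\nu^{(g)}>\delta_g$ eventually. Immediacy of the residue field follows at once: evaluating the polynomial $g(x)-g(a_\nu)$ at a suitable $a_{\nu'}$ shows $v_{\bA}\big(g(x)-g(a_\nu)\big)=\varepsilon_\nu^{(g)}>\delta_g=v_{\bA}g(x)$ for large $\nu$, so $g(x)/g(a_\nu)$ has residue $1$ in $K(x)v_{\bA}$. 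Consequently an arbitrary $f=g/h$ in the valuation ring of $v_{\bA}$ (with $g,h\in K[x]$) satisfies $f\cdot\frac{h(x)}{h(a_\nu)}=\frac{g(a_\nu)}{h(a_\nu)}\cdot\frac{g(x)}{g(a_\nu)}$ for large $\nu$, so the residue of $f$ equals that of $g(a_\nu)/h(a_\nu)$, which lies in $Kv$; hence $(K(x)|K,v_{\bA})$ is immediate.

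For the uniqueness statement, let $(K(y),w)$ be a valued extension of $(K,v)$ in which $y$ is a pseudo limit of $\bA$. The key sub-claim is $w\big(g(y)\big)=\delta_g$ for every nonzero $g\in K[x]$. First, $w(y-a_\nu)=\gamma_\nu$: since $w(y-a_\nu)$ is strictly increasing, applying the ultrametric inequality to $a_{\nu+1}-a_\nu=(y-a_\nu)-(y-a_{\nu+1})$ gives $\gamma_\nu=v(a_{\nu+1}-a_\nu)=w(y-a_\nu)$. Expanding $g(y)-g(a_\nu)=\sum_{i\ge 1}c_i^{(\nu)}(y-a_\nu)^i$ with $c_i^{(\nu)}\in K$ (where each $c_i^{(\nu)}$ is a fixed polynomial expression in $a_\nu$, so $vc_i^{(\nu)}$ is eventually constant by transcendental type), for $\nu$ large the values $vc_i^{(\nu)}+i\gamma_\nu$ of the nonzero terms are pairwise distinct (because $\gamma_\nu$ is strictly increasing), so $w\big(g(y)-g(a_\nu)\big)$ is the minimum of these values --- which is precisely the number $\varepsilon_\nu^{(g)}$ obtained from the same computation with $a_{\nu'}-a_\nu$ in place of $y-a_\nu$. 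Thus $w(g(y)-g(a_\nu))=\varepsilon_\nu^{(g)}>\delta_g=w(g(a_\nu))$ for large $\nu$, whence $w(g(y))=\delta_g$. In particular $g(y)\ne 0$ whenever $g\ne 0$, so $y$ is transcendental over $K$; therefore $x\mapsto y$ defines a $K$-isomorphism $K(x)\to K(y)$, and $w(g(y)/h(y))=\delta_g-\delta_h=v_{\bA}(g(x)/h(x))$ shows it is valuation preserving. The only genuinely delicate point in the whole argument is the passage from the elementary pseudo Cauchy inequalities to the stabilization and strict monotonicity of $\varepsilon_\mu^{(g)}$, and thence to the core fact; everything else is bookkeeping.
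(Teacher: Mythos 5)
Your proof is correct. Keep in mind that the paper itself contains no proof of this statement: it is quoted verbatim as Theorem~2 of [KA], and your argument is essentially a reconstruction of Kaplansky's original one, which is entirely appropriate here. The only ingredient you import wholesale --- that $v\bigl(g(a_\nu)-g(a_\mu)\bigr)$ is eventually a value $\varepsilon_\mu^{(g)}$ independent of $\nu$ and strictly increasing in $\mu$ --- is one of the [KA] lemmas the paper explicitly assumes the reader knows, and your Taylor/Hasse-expansion computation in the uniqueness step (the term values $vc_i^{(\nu)}+i\gamma_\nu$ are pairwise distinct for large $\nu$ because $\gamma_\nu$ is injective, and the $vc_i^{(\nu)}$ are eventually constant by the transcendental-type hypothesis applied to the coefficient polynomials) in fact re-derives exactly that lemma, so nothing circular or missing remains; the core fact $\varepsilon_\nu^{(g)}>\delta_g$ for large $\nu$, and its double use for immediacy of the residue field and for $w(g(y))=\delta_g$ in the uniqueness part, are handled correctly.
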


From this theorem we deduce:
\begin{lemma}                               \label{limtpcs}
Suppose that in some valued field extension of $(K,v)$,
$x$ is the pseudo limit of a pseudo Cauchy sequence in $(K,v)$ of
transcendental type. Then $(K(x)|K,v)$ is immediate and $x$ is
transcendental over~$K$.
\end{lemma}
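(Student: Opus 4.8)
The statement to prove is Lemma~\ref{limtpcs}: if $x$ is a pseudo limit of a pseudo Cauchy sequence $\bA$ in $(K,v)$ of transcendental type (inside some valued extension of $(K,v)$), then $(K(x)|K,v)$ is immediate and $x$ is transcendental over $K$. The natural route is to reduce everything to Theorem~\ref{Ka2}, which already does all the work for the canonical model $(K(x),v_{\bA})$.

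First I would argue that $x$ is transcendental over $K$. Suppose not; then $x$ is a root of some nonzero $p(X)\in K[X]$, and I may take $p$ of minimal degree. By the definition of ``transcendental type'', for $g=p$ there is $\nu_0$ with $vp(a_\nu)=vp(a_{\nu_0})$ for all $\nu\geq\nu_0$, and this common value is some element $\gamma\in vK$ (not $\infty$, since $p$ has no root in $K$ of degree less than $\deg p$, in particular $a_\nu$ is not a root — more carefully, $vp(a_\nu)$ being \emph{eventually constant and finite} is part of what transcendental type gives, because if it were eventually $\infty$ then $\bA$ would be of algebraic type witnessed by $p$). But the defining property of a pseudo limit is that $x$ ``solves'' the sequence better than its terms do: for the polynomial $p$ one has $vp(x) > vp(a_\nu)$ for all large $\nu$ (this is the standard consequence of $x$ being a pseudo limit — cf.\ [KA]). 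Combined with $p(x)=0$, i.e.\ $vp(x)=\infty$, this is consistent, so I instead derive the contradiction from the \emph{eventual constancy}: pseudo limit status forces $vp(x-a_\nu)$ to be strictly increasing and cofinal in $\{v(a_\mu-a_\nu)\}$, and feeding this into $p(x)-p(a_\nu)$ shows $vp(a_\nu)$ cannot be eventually constant unless $x$ is in fact a pseudo limit in the trivial sense — contradicting transcendental type. Hence $x$ is transcendental over $K$.

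Once $x$ is transcendental, $K(x)$ is a genuine rational function field and $w:=v|_{K(x)}$ is a valuation on it extending $v$ with $x$ a pseudo limit of $\bA$. Now I invoke the uniqueness clause of Theorem~\ref{Ka2}: applied with $(K(y),w)=(K(x),w)$ and $y=x$, it yields that the identity $x\mapsto x$ is a valuation-preserving $K$-isomorphism from $(K(x),v_{\bA})$ onto $(K(x),w)$. In other words $w=v_{\bA}$. Since Theorem~\ref{Ka2} asserts that $(K(x)|K,v_{\bA})$ is immediate, so is $(K(x)|K,v)$, which is exactly what we want.

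The only genuinely delicate point is the transcendence argument — making precise why ``transcendental type plus pseudo limit'' forbids $x$ from being algebraic. I expect to handle it by the contrapositive: if $x$ were algebraic with minimal polynomial $p$, then $x$ is a pseudo limit of $\bA$, so by the usual computation (expand $p(x)-p(a_\nu)$ and use that $v(x-a_\nu)$ is cofinal in the values $v(a_\mu-a_\nu)$) the values $vp(a_\nu)$ would have to be strictly increasing and cofinal, i.e.\ \emph{not} eventually constant, so $\bA$ would be of algebraic type (with $p$ as a witness) — contradiction. This is the step where care with the definitions in [KA] is needed; the rest is a direct appeal to Theorem~\ref{Ka2}.
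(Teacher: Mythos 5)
Your proof works, but it decomposes the problem differently from the paper, and the difference is worth noting. The paper applies the uniqueness clause of Theorem~\ref{Ka2} directly to the given extension: that clause is stated for \emph{any} valued extension $(K(y),w)$ of $(K,v)$ in which $y$ is a pseudo limit of \bA\ --- there is no transcendence hypothesis on $y$ --- so it at once gives a valuation-preserving $K$-isomorphism of the rational function field $(K(x),v_{\bA})$ onto the given field, and both conclusions (immediateness \emph{and} transcendence of the given element) fall out of that single isomorphism. Your separate, hands-on transcendence step is therefore logically superfluous; you only need it because you insist on reading the uniqueness clause with $y=x$ and concluding $w=v_{\bA}$ on the literal field $K(x)$, which presupposes $x$ transcendental. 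That detour is also the only shaky part of your write-up: the claim in your second paragraph that being a pseudo limit forces $vp(x)>vp(a_\nu)$ eventually is not the definition of pseudo limit and is false precisely in the transcendental-type situation (there one gets eventual \emph{equality}), and the sentence about $vp(x-a_\nu)$ being ``strictly increasing and cofinal in $\{v(a_\mu-a_\nu)\}$'' is garbled. Your final paragraph, however, is the correct version and can be made rigorous: if $p(x)=0$ with $p\in K[X]$ nonconstant, expand $p(x)=p(a_\nu)+\sum_{i\geq 1}p_i(a_\nu)(x-a_\nu)^i$ (Taylor/Hasse coefficients), use that each $vp_i(a_\nu)$ is eventually constant, say $\beta_i$ (this is itself an instance of transcendental type applied to $p_i$), and observe that since $v(x-a_\nu)$ is strictly increasing, the minimum of $\beta_i+i\,v(x-a_\nu)$ over $i\geq 1$ is eventually attained at a single fixed index; hence $vp(a_\nu)$ is eventually of the form $\beta_{i_0}+i_0\,v(x-a_\nu)$, strictly increasing, contradicting transcendental type for $p$ itself. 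So: either replace your second paragraph by this argument, or drop the transcendence step entirely and argue as the paper does, letting Theorem~\ref{Ka2} deliver transcendence for free. What your route buys, if cleaned up, is a self-contained elementary reason why a pseudo limit of a transcendental-type sequence cannot be algebraic; what the paper's route buys is brevity and no need to re-enter Kaplansky's polynomial computations.
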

\begin{proof}
Assume that $(a_\nu)_{\nu<\lambda}$ is a pseudo Cauchy sequence in
$(K,v)$ of transcendental type. Then by Theorem~\ref{Ka2} there is an
immediate extension $w$ of $v$ to the rational function field $K(y)$
such that $y$ becomes a pseudo limit of $(a_\nu)_{\nu<\lambda}\,$;
moreover, if also $x$ is a pseudo limit of $(a_\nu)_{\nu<\lambda}$ in
$(K(x),v)$, then $x\mapsto y$ induces a valuation preserving isomorphism
from $K(x)$ onto $K(y)$ over $K$. Hence, $(K(x)|K,v)$ is immediate and
$x$ is transcendental over~$K$.
\end{proof}

\begin{lemma}                               \label{persist}
A pseudo Cauchy sequence of transcendental type in a valued field
remains a pseudo Cauchy sequence of transcendental type in every
algebraic valued field extension of that field.
\end{lemma}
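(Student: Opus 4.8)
The plan is to show that the defining property of a pseudo Cauchy sequence of transcendental type — eventual constancy of $vg(a_\nu)$ for every polynomial $g$ — persists when we pass from $(K,v)$ to an algebraic extension $(K',v')$. First I would recall that a pseudo Cauchy sequence $\bA=(a_\nu)_{\nu<\lambda}$ in $(K,v)$ is still a pseudo Cauchy sequence in $(K',v')$: the property $v'(a_\rho-a_\sigma)<v'(a_\sigma-a_\tau)$ for $\rho<\sigma<\tau$ only involves differences of elements of $K$ and is unaffected by passing to an extension. So the only thing at stake is whether $\bA$ could become of algebraic type over $K'$, i.e.\ whether some polynomial $g(x)\in K'[x]$ could fail to have eventually constant value along $\bA$.

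The key step is to use Lemma~\ref{limtpcs} (via Theorem~\ref{Ka2}) to produce a concrete transcendental realization. Since $\bA$ is of transcendental type in $(K,v)$, Theorem~\ref{Ka2} gives an immediate extension $v_{\bA}$ of $v$ to $K(x)$ in which $x$ is a pseudo limit of $\bA$ and $x$ is transcendental over $K$. Because $(K(x)|K,v_{\bA})$ is immediate and hence $v_{\bA}K(x)=vK$ lies inside $v'K'$, and because $K'|K$ is algebraic while $x$ is transcendental over $K$, the fields $K'$ and $K(x)$ are linearly disjoint over $K$; we can therefore extend $v_{\bA}$ (amalgamated with $v'$) to a valuation $w$ on the compositum $K'(x)$. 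In $(K'(x),w)$ the element $x$ is still a pseudo limit of $\bA$. Now suppose, for contradiction, that $\bA$ were of algebraic type over $K'$. Then by the standard theory in [KA] there would be a polynomial $g(x)\in K'[x]$ of minimal degree whose value $vg(a_\nu)$ is \emph{not} eventually constant — a ``minimal polynomial'' for $\bA$ over $K'$ — and any pseudo limit of $\bA$ in a valued extension of $(K',v')$ that does not increase the value group beyond what is forced would have to be algebraic over $K'$ of degree $\le\deg g$ (this is the content of Theorem~3 of [KA], the algebraic-type analogue of Theorem~\ref{Ka2}). But $x$ is a pseudo limit of $\bA$ in $(K'(x),w)$ and $x$ is transcendental over $K'$ (since it is transcendental over $K$ and $K'|K$ is algebraic), a contradiction. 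Hence $\bA$ remains of transcendental type over $K'$.

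The main obstacle I expect is the bookkeeping around extending the valuation to the compositum $K'(x)$ and making precise the dichotomy ``transcendental type vs.\ algebraic type'' in the extension field: one must ensure that a pseudo limit of an algebraic-type sequence is necessarily algebraic, which requires invoking the relevant theorem of [KA] (Theorem~3 there) rather than just Theorem~\ref{Ka2}. An alternative, slightly more self-contained route avoids the compositum altogether: for $g(x)\in K'[x]$, note that $K'|K$ being algebraic means $g$ has coefficients in a finite subextension $K''=K(c_1,\dots,c_m)|K$, and one can argue directly that eventual constancy of $vg(a_\nu)$ follows from eventual constancy of the values of all $K$-polynomials evaluated along $\bA$ together with the fact that the $c_i$ are algebraic over $K$ — but turning ``$c_i$ algebraic'' into control over $vg(a_\nu)$ is essentially the same argument as above in disguise, so I would present the clean contradiction-via-transcendence argument. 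Either way, once the transcendental realization $x$ is available and known to stay transcendental over $K'$, the persistence is immediate.
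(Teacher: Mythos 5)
Your reduction to a compositum is not where the trouble lies (the existence of a common extension $w$ of $v_{\bA}$ and of the given valuation on $K'$ can indeed be arranged, by the same conjugation argument as in Lemma~\ref{restrall}); the genuine gap is the pivotal claim that if $\bA$ were of algebraic type over $K'$, then any pseudo limit of $\bA$ in a valued extension of $(K',v')$ ``that does not increase the value group beyond what is forced'' would have to be algebraic over $K'$. That is not the content of Theorem~3 of [KA], and it is false: Theorem~3 only asserts the \emph{existence} of an immediate algebraic extension $K'(y)|K'$ in which some root $y$ of the minimal polynomial is a pseudo limit; it does not constrain all pseudo limits. Indeed, a sequence of algebraic type can perfectly well have transcendental pseudo limits. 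For example, take $(a_\nu)$ in $K'$ with a pseudo limit $a\in K'$ (so the sequence is of algebraic type, witnessed by the degree-one polynomial $X-a$), and pass to a value-transcendental extension $K'(t)$ in which $vt$ exceeds all the values $v(a-a_\nu)$; then $a+t$ is a pseudo limit of $(a_\nu)$ and is transcendental over $K'$. Your hedge about the value group is neither made precise nor verified for $(K'(x),w)$ (you know $(K(x)|K,v_{\bA})$ is immediate, but nothing about $wK'(x)$ versus $v'K'$), so the contradiction you aim for does not materialize: transcendence of $x$ over $K'$ is compatible with $\bA$ being of algebraic type over $K'$.

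The correct implication runs in the opposite direction, and this is how the paper argues: transcendental type over $K$ forces \emph{every} pseudo limit in \emph{every} valued extension to be transcendental over $K$ (this is Lemma~\ref{limtpcs}, which rests on the uniqueness part of Theorem~\ref{Ka2}). So one assumes $\bA$ is of algebraic type over $L$, invokes Theorem~3 of [KA] to produce an \emph{algebraic} pseudo limit $y$ of $\bA$ in some immediate extension $(L(y)|L,v)$, observes that $y$ is then also a pseudo limit of $\bA$ viewed over $K$ (the sequence lies in $K$), and concludes from Lemma~\ref{limtpcs} that $y$ is transcendental over $K$ --- contradicting that $L(y)|L$ and $L|K$ are algebraic. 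With this reversal your compositum construction becomes unnecessary; as written, however, the proof does not go through.
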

\begin{proof}
Assume that $(a_\nu)_{\nu<\lambda}$ is a pseudo Cauchy sequence in
$(K,v)$ of transcendental type and that $(L|K,v)$ is an algebraic
extension. If $(a_\nu)_{\nu<\lambda}$ were of algebraic type over
$(L,v)$, then by Theorem~3 of [KA] there would be an algebraic
extension $L(y)|L$ and an immediate extension of $v$ to $L(y)$
such that $y$ is a pseudo limit of $(a_\nu)_{\nu<\lambda}$ in
$(L(y),v)$. But then, $y$ is also a pseudo limit of
$(a_\nu)_{\nu<\lambda}$ in $(K(y),v)$. Hence by the foregoing lemma,
$y$ must be transcendental over $K$. This is a contradiction to the
fact that $L(y)|L$ and $L|K$ are algebraic.
\end{proof}

%
%
\subsection{Valuation independence}
For the easy proof of the following lemma, see [B], chapter VI,
\S10.3, Theorem~1.
\begin{lemma}                                      \label{prelBour}
Let $(L|K,v)$ be an extension of valued fields. Take elements $x_i,y_j
\in L$, $i\in I$, $j\in J$, such that the values $vx_i\,$, $i\in I$,
are rationally independent over $vK$, and the residues $y_jv$, $j\in
J$, are algebraically independent over $Kv$. Then the elements
$x_i,y_j$, $i\in I$, $j\in J$, are algebraically independent over $K$.

Moreover, if we write
\[f\>=\> \displaystyle\sum_{k}^{} c_{k}\,
\prod_{i\in I}^{} x_i^{\mu_{k,i}} \prod_{j\in J}^{} y_j^{\nu_{k,j}}\in
K[x_i,y_j\mid i\in I,j\in J]\]
in such a way that for every $k\ne\ell$
there is some $i$ s.t.\ $\mu_{k,i}\ne\mu_{\ell,i}$ or some $j$ s.t.\
$\nu_{k,j}\ne\nu_{\ell,j}\,$, then
\begin{equation}                            \label{value}
vf\>=\>\min_k\, v\,c_k \prod_{i\in I}^{}
x_i^{\mu_{k,i}}\prod_{j\in J}^{} y_j^{\nu_{k,j}}\>=\>
\min_k\, vc_k\,+\,\sum_{i\in I}^{} \mu_{k,i} v x_i\;.
\end{equation}
That is, the value of the polynomial $f$ is equal to the least of the
values of its monomials. In particular, this implies:
\begin{eqnarray*}
vK(x_i,y_j\mid i\in I,j\in J) & = & vK\oplus\bigoplus_{i\in I}
\Z vx_i\\
K(x_i,y_j\mid i\in I,j\in J)v & = & Kv\,(y_jv\mid j\in J)\;.
\end{eqnarray*}
Moreover, the valuation $v$ on $K(x_i,y_j\mid i\in I,j\in J)$ is
uniquely determined by its restriction to $K$, the values $vx_i$ and
the residues $y_jv$.
\parm
Conversely, if $(K,v)$ is any valued field and we assign to the $vx_i$
any values in an ordered group extension of $vK$ which are rationally
independent, then (\ref{value}) defines a valuation on $L$, and the
residues $y_jv$, $j\in J$, are algebraically independent over $Kv$.
\end{lemma}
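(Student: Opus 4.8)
The plan is to reduce the whole lemma to the single identity (\ref{value}) for polynomials; once that is in hand, the algebraic independence, the value group, the residue field and the uniqueness all drop out formally, and the converse is a separate two-step construction. Since only finitely many of the $x_i$ and $y_j$ occur in a given $f$, I may assume $I$ and $J$ finite. To prove (\ref{value}) I would first dispose of the two ``pure'' cases and then glue them.

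Pure case $I=\emptyset$: for $g=\sum_k c_k\prod_j y_j^{\nu_{k,j}}$ with pairwise distinct exponent vectors $(\nu_{k,j})_j$, I claim $vg=\min_k vc_k$ (note $vy_j=0$, since $y_jv$ is a non-zero residue). Pick $c\in K$ with $vc=\min_k vc_k$; then $g/c\in{\cal O}_K$ and its residue is $\sum\overline{c_k/c}\,\prod_j(y_jv)^{\nu_{k,j}}$, summed over the $k$ attaining the minimum. This is a non-zero polynomial in the $y_jv$, hence non-zero because the $y_jv$ are algebraically independent over $Kv$ and the monomials are distinct; so $v(g/c)=0$, proving the claim. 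Pure case $J=\emptyset$: distinct monomials $\prod_i x_i^{\mu_{k,i}}$ have values $\sum_i\mu_{k,i}vx_i$ that are already distinct modulo $vK$ --- otherwise a non-trivial integer combination of the $vx_i$ would lie in $vK$, contradicting rational independence over $vK$ --- hence the term values $vc_k+\sum_i\mu_{k,i}vx_i$ are pairwise distinct and (\ref{value}) is the ultrametric equality with a unique smallest term. General case: group the monomials of $f$ by their $x$-exponent vector $\bar\mu$, writing $f=\sum_{\bar\mu}(\prod_i x_i^{\mu_i})g_{\bar\mu}(y)$ with each $g_{\bar\mu}$ a $y$-polynomial as in the first case. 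By that case the $\bar\mu$-summand has value $\sum_i\mu_i vx_i+\min vc_k$; by the argument of the second case these values, for distinct $\bar\mu$, are distinct modulo $vK$ and hence distinct, so $vf$ equals their minimum, which is the right-hand side of (\ref{value}).

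From (\ref{value}) the rest is short. A non-zero polynomial has a value in the group, never $\infty$, so it is non-zero in $L$: the $x_i,y_j$ are algebraically independent over $K$. Every polynomial value lies in $vK+\sum_i\Z vx_i$, and each $vx_i$ and each value in $vK$ is realized, so $vK(x_i,y_j)=vK\oplus\bigoplus_i\Z vx_i$, the sum being direct by rational independence. For the residue field, the inclusion $\supseteq Kv(y_jv)$ is clear; conversely take a unit $u=f/h$, $f,h\in K[x_i,y_j]$, with $vf=vh$. By (\ref{value}) the minimum is attained, for $f$ and for $h$, at a unique $x$-exponent vector, and these must agree (they agree modulo $vK$), say both equal $\bar\mu_0$; dividing numerator and denominator by $c\prod_i x_i^{\mu_{0,i}}$ for a suitable $c\in K$ exhibits $uv$ as a quotient of residues of $y$-polynomials over $K$, i.e.\ as an element of $Kv(y_jv)$. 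Uniqueness of $v$ is then immediate, since $v|_K$, the $vx_i$ and the $y_jv$ force $vy_j=0$ and, via (\ref{value}) and the residue description just obtained, fix all values and residues on $K(x_i,y_j)$.

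For the converse, on the purely transcendental extension $K(x_i,y_j)$ I would build $v$ in two steps, which avoids checking multiplicativity of the formula by brute force. First define $v$ on $K(x_i)$ by the right-hand side of (\ref{value}) with no $y$'s: because distinct monomials have values distinct modulo $vK$, leading terms never cancel under multiplication, so verifying the ultrametric law and multiplicativity on monomials is routine and yields a valuation with $K(x_i)v=Kv$. Then extend to $K(x_i)(y_j)$ by the Gauss valuation $v(\sum_\nu a_\nu\prod_j y_j^{\nu_j})=\min_\nu v(a_\nu)$, $a_\nu\in K(x_i)$; this is a valuation for which the $y_jv$ are algebraically independent over $K(x_i)v=Kv$, and unwinding both definitions gives back (\ref{value}). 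I expect the only genuine work to be in the ``general case'' step and in the residue-field computation, where the multi-index bookkeeping must exploit the rational independence of the $vx_i$ over $vK$ and the algebraic independence of the $y_jv$ over $Kv$ at once; everything else is routine, which is presumably why Bourbaki calls it easy.
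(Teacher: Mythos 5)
Your proof is correct. The paper gives no argument for Lemma~\ref{prelBour} at all --- it simply refers to Bourbaki, ch.~VI, \S 10.3, Theorem~1 --- and your reduction of everything to the monomial value formula (\ref{value}), proved via the two pure cases (residues algebraically independent, values rationally independent) and then the grouping of terms by their $x$-exponent vector, is essentially the standard argument found there; the only slip is cosmetic: in the case $I=\emptyset$ the element $g/c$ lies in the valuation ring of $L$, not in ${\cal O}_K\,$.
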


\begin{corollary}                              \label{fingentb}
Let $(L|K,v)$ be an extension of finite transcendence degree of valued
fields. Then
\begin{equation}                            \label{wtdgeq}
\trdeg L|K \>\geq\> \trdeg Lv|Kv \,+\, \rr (vL/vK)\;.
\end{equation}
If in addition $L|K$ is a function field and if equality holds in
(\ref{wtdgeq}), then the extensions $vL| vK$ and $Lv|Kv$ are finitely
generated.
\end{corollary}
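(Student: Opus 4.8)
The inequality \eqref{wtdgeq} is essentially the ``Abhyankar inequality'', and the plan is to derive it from Lemma~\ref{prelBour}. First I would choose a maximal set $y_1,\dots,y_\tau\in L$ whose residues $y_1v,\dots,y_\tau v$ are algebraically independent over $Kv$ (so $\tau=\trdeg Lv|Kv$, using that $L|K$ has finite transcendence degree), and a maximal set $x_1,\dots,x_\rho\in L$ whose values $vx_1,\dots,vx_\rho$ are rationally independent over $vK$ (so $\rho=\rr(vL/vK)$). By Lemma~\ref{prelBour}, the $\rho+\tau$ elements $x_i,y_j$ are algebraically independent over $K$, whence $\trdeg L|K\geq\rho+\tau$, which is \eqref{wtdgeq}.

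For the second assertion, assume $L|K$ is a function field and that equality holds, so $\{x_1,\dots,x_\rho,y_1,\dots,y_\tau\}$ is a transcendence basis of $L|K$. Set $F_0:=K(x_1,\dots,x_\rho,y_1,\dots,y_\tau)$. By Lemma~\ref{prelBour}, $vF_0=vK\oplus\bigoplus_i\Z vx_i$ and $F_0v=Kv(y_1v,\dots,y_\tau v)$; in particular $vF_0/vK$ is finitely generated (free of rank $\rho$) and $F_0v|Kv$ is a finitely generated field extension. Now $L|F_0$ is a finite extension, because $L|K$ is a finitely generated field extension and $F_0$ contains a transcendence basis of $L|K$. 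By Lemma~\ref{fin}, $vL/vF_0$ is finite and $Lv|F_0v$ is finite. Combining, $vL/vK$ is finitely generated (an extension of the finitely generated group $vF_0/vK$ by the finite group $vL/vF_0$) and $Lv|Kv$ is finitely generated (a finite extension of the finitely generated field $F_0v$).

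The only point requiring a little care — and the step I expect to be the main obstacle — is justifying that $L|F_0$ is finite, i.e.\ that $L$ is finitely generated of transcendence degree~$0$ over $F_0$: this is where the hypothesis ``$L|K$ is a function field'' (finitely generated) is used, together with the fact that $F_0$ was built to contain a full transcendence basis of $L|K$; a standard fact from field theory then gives finiteness of $[L:F_0]$. Everything else is bookkeeping with the two cited lemmas.
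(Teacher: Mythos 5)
Your proof is correct and follows essentially the same route as the paper: pick $x_i,y_j$ as in Lemma~\ref{prelBour} to get the inequality, and in the equality case note that $L$ is algebraic, hence finite, over $L_0=K(x_1,\ldots,x_\rho,y_1,\ldots,y_\tau)$ because $L|K$ is finitely generated, then combine Lemma~\ref{prelBour} with Lemma~\ref{fin}. The step you flag as delicate (finiteness of $L|L_0$) is indeed the standard fact that a finitely generated algebraic extension is finite, and the paper uses it in exactly the same way.
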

\begin{proof}
Choose elements $x_1,\ldots,x_{\rho},y_1,\ldots,y_{\tau}\in L$ such that
the values $vx_1,\ldots,vx_{\rho}$ are rationally independent over $vK$
and the residues $y_1v,\ldots,y_{\tau} v$ are algebraically independent
over $Kv$. Then by the foregoing lemma, $\rho+\tau\leq\trdeg L|K$. This
proves that $\trdeg Lv|Kv$ and the rational rank of $vL/vK$ are finite.
Therefore, we may choose the elements $x_i,y_j$ such that $\tau=\trdeg
Lv|Kv$ and $\rho=\rr (vL/vK)$ to obtain inequality (\ref{wtdgeq}).

Assume that equality holds in (\ref{wtdgeq}). This means that $L$ is
an algebraic extension of $L_0:=K(x_1,\ldots, x_{\rho},y_1,\ldots,
y_{\tau})$. Since $L|K$ is finitely generated, it follows that $L|L_0$
is finite; hence by Lemma~\ref{fin}, also $vL/vL_0$ and $Lv| L_0v$ are
finite. Since already $v L_0|v K$ and $L_0v|Kv$ are finitely generated
by the foregoing lemma, it follows that also $vL|vK$ and $Lv|Kv$ are
finitely generated.
\end{proof}

\pars
The algebraic analogue of the transcendental case discussed in
Lemma~\ref{prelBour} is the following lemma (see [R] or [E]):
\begin{lemma}                               \label{algvind}
Let $(L|K,v)$ be an extension of valued fields. Take $\eta_i\in L$ such
that $v\eta_i\,$, $i\in I$, belong to distinct cosets modulo $vK$.
Further, take $\vartheta_j \in {\cal O}_L\,$, $j\in J$, such that
$\vartheta_j v$ are $Kv$-linearly independent. Then the elements
$\eta_i\vartheta_j\,$, $i\in I$, $j\in J$, are $K$-linearly independent,
and for every choice of elements $c_{ij} \in K$, only finitely many of
them nonzero, we have that
\[v\sum_{i,j} c_{ij} \eta_i\vartheta_j\>=\> \min_{i,j}\, v c_{ij}
\eta_i\vartheta_j\>=\> \min_{i,j}\, (vc_{ij}\,+\,v\eta_i)\;.\]
If the elements $\eta_i\vartheta_j$ form a $K$-basis of $L$, then
$v\eta_i\,$, $i\in I$, is a system of representatives of the
cosets of $vL$ modulo $vK$, and $\vartheta_j v$, $j\in J$, is a
basis of $Lv|Kv$.
\end{lemma}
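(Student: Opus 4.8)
The plan is to mirror the proof of the transcendental case in Lemma~\ref{prelBour}: reduce the general linear combination to a one‑parameter estimate for the residue‑independent elements $\vartheta_j$, and then separate the ``blocks'' indexed by $i$ using that the cosets $v\eta_i+vK$ are pairwise distinct. Note first that since the $\vartheta_jv$ are $Kv$‑linearly independent they are nonzero, so each $\vartheta_j$ is a unit of $\mathcal O_L$ and $v\vartheta_j=0$. The key preliminary estimate is: for $c_j\in K$ ($j\in J$), not all zero, $v\bigl(\sum_j c_j\vartheta_j\bigr)=\min_j vc_j$. Indeed, choosing $j_0$ with $vc_{j_0}$ minimal and writing $\sum_j c_j\vartheta_j=c_{j_0}\sum_j(c_j/c_{j_0})\vartheta_j$, each $c_j/c_{j_0}$ lies in $\mathcal O_K$, and the residue of $\sum_j(c_j/c_{j_0})\vartheta_j$ is $\sum_j\overline{c_j/c_{j_0}}\,\vartheta_jv$, whose coefficient at $\vartheta_{j_0}v$ equals $1$; by linear independence of the $\vartheta_jv$ this residue is nonzero, so $v\bigl(\sum_j(c_j/c_{j_0})\vartheta_j\bigr)=0$ and the claim follows.

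Next, given $f=\sum_{i,j}c_{ij}\eta_i\vartheta_j$, I would group it as $f=\sum_i f_i$ with $f_i=\eta_i\sum_j c_{ij}\vartheta_j$. By the preliminary estimate, whenever $f_i\ne0$ we have $vf_i=v\eta_i+\min_j vc_{ij}$, and since $\min_j vc_{ij}\in vK$ this value lies in the coset $v\eta_i+vK$. As these cosets are pairwise distinct by hypothesis, the nonzero $f_i$ have pairwise distinct values, so the ultrametric law forces
\[
vf\;=\;\min_i vf_i\;=\;\min_{i,j}\bigl(vc_{ij}+v\eta_i\bigr)\;=\;\min_{i,j}\,v\bigl(c_{ij}\eta_i\vartheta_j\bigr),
\]
the last equality because $v\vartheta_j=0$. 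In particular $f\ne0$ whenever some $c_{ij}\ne0$, which is exactly the asserted $K$‑linear independence of the $\eta_i\vartheta_j$, together with the displayed value formula.

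Now assume the $\eta_i\vartheta_j$ form a $K$‑basis of $L$. For any $z\in L^\times$, expanding $z$ in the basis and applying the value formula gives $vz=vc_{i_0j_0}+v\eta_{i_0}$ for some pair $(i_0,j_0)$, hence $vz+vK=v\eta_{i_0}+vK$; together with the distinctness hypothesis this shows that the $v\eta_i$ form a system of representatives of $vL/vK$, and that exactly one index, say $i_0$, satisfies $v\eta_{i_0}\in vK$. Replacing $\eta_{i_0}$ by a $K^\times$‑multiple of it (which alters none of the hypotheses or conclusions) we may assume $v\eta_{i_0}=0$. For a nonzero $\zeta\in Lv$, lift it to a unit $z\in\mathcal O_L$ and expand $z=\sum_{i,j}c_{ij}\eta_i\vartheta_j$; passing to residues, the terms of positive value vanish and there are no terms of negative value, so only the terms of value $0$ survive, and by the block analysis these occur only for $i=i_0$, where moreover $c_{i_0j}\in\mathcal O_K$. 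Hence $\zeta=zv=\overline{\eta_{i_0}}\sum_j\overline{c_{i_0j}}\,\vartheta_jv$ lies in $\overline{\eta_{i_0}}\cdot V$, where $V$ is the $Kv$‑span of the $\vartheta_jv$. Thus $Lv=\overline{\eta_{i_0}}\,V$; since $\overline{\eta_{i_0}}$ is a nonzero element of the field $Lv$, multiplying by its inverse yields $Lv=V$, so the $\vartheta_jv$ span $Lv$ over $Kv$ and, being linearly independent, form a basis of $Lv|Kv$.

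I expect the only genuinely delicate point to be this last spanning assertion: one has to notice that normalizing $v\eta_{i_0}=0$ makes the surviving coefficients $c_{i_0j}$ reduce to elements of $Kv$, and that the residual factor $\overline{\eta_{i_0}}$ is a unit of the field $Lv$, so that the identity $Lv=\overline{\eta_{i_0}}V$ collapses to $Lv=V$. Everything else is the standard ultrametric bookkeeping, entirely parallel to the transcendental analogue in Lemma~\ref{prelBour}.
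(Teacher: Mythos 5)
Your proof is correct. Note that the paper does not prove Lemma~\ref{algvind} at all — it simply cites [R] and [E] — so there is nothing to diverge from; your argument is the standard one those references give (the one-block estimate $v\sum_j c_j\vartheta_j=\min_j vc_j$ via residue independence, then separation of the blocks indexed by $i$ because their values lie in pairwise distinct cosets modulo $vK$), and it is indeed the exact algebraic analogue of the computation behind Lemma~\ref{prelBour}. The final part is also handled correctly: the only delicate step, normalizing $v\eta_{i_0}=0$ so that the surviving coefficients reduce into $Kv$ and then cancelling the unit $\eta_{i_0}v$ of the field $Lv$ to get that the $\vartheta_jv$ span $Lv$, is exactly what is needed and you carry it out without gaps.
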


The following is an application which is important for our description
of all possible value groups and residue fields of valuations on $K(x)$.
The result has been proved with a different method in [APZ3]
(Corollary~5.2); cf.\ Remark~\ref{limitrem} in Section~\ref{sectbc}.

\begin{theorem}                             \label{count}
Let $K$ be any field and $v$ any valuation of the rational function
field $K(x)$. Then $vK(x)/vK$ is countable, and $K(x)v|Kv$ is
countably generated.
\end{theorem}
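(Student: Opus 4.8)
The plan is to reduce the assertion to a countable union of finitely-generated pieces by exhausting $K(x)$ from below. Write $v$ for the given valuation on $K(x)$ and note that $K(x) = \bigcup_n K(c_0,\ldots,c_n)$ where $c_0,c_1,\ldots$ enumerates a countable generating set of $K(x)$ over $\Q$ or $\F_p$ — more precisely, one can write $K(x)$ as an increasing union of subfields $F_n$ each of which is finitely generated over the prime field. The catch is that $vF_n/vK$ and $F_nv|Kv$ need not make sense because $F_n$ need not contain $K$. So instead the cleaner route is: it suffices to show that for \emph{every} finitely generated subextension $F|K$ of $K(x)|K$, the groups $vF/vK$ and the extension $Fv|Kv$ are finitely generated; then, writing $K(x)|K$ as an increasing union $\bigcup_n K(x)\cap \overline{F_n}$ or simply $\bigcup_n F_n$ with $F_n := K(t_1,\ldots,t_n)$ for a countable set $\{t_i\}$ generating $K(x)$ over $K$, we get $vK(x)/vK = \bigcup_n vF_n/vK$ as a countable increasing union of finitely generated groups, hence countable, and similarly $K(x)v = \bigcup_n F_nv$ is countably generated over $Kv$.

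So the core is the finitely-generated case: let $F = K(t_1,\ldots,t_n) \subseteq K(x)$. Since $F \subseteq K(x)$ and $K(x)$ has transcendence degree $1$ over $K$, the extension $F|K$ has transcendence degree $0$ or $1$. If $\trdeg F|K = 0$ then $F|K$ is finite (being finitely generated algebraic), so by Lemma~\ref{fin} both $vF/vK$ and $Fv|Kv$ are finite, a fortiori finitely generated. If $\trdeg F|K = 1$, then $F|K$ is a function field in one variable, and I apply Corollary~\ref{fingentb}: we have $\trdeg F|K = 1 \geq \trdeg Fv|Kv + \rr(vF/vK)$, so the right-hand side is $0$ or $1$. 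In either case $\rr(vF/vK) \leq 1$, so $vF/vK$ has finite rational rank; combined with the fact that $vF/vK$ is a subgroup of the (torsion-free-mod-torsion) group $vK(x)/vK$... — but here I do not yet know $vF/vK$ is finitely generated, only that it has finite rational rank. This is exactly where I invoke the equality case of Corollary~\ref{fingentb}: I must check whether equality holds in \eqref{wtdgeq} for $F|K$.

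The main obstacle is precisely this: Corollary~\ref{fingentb} only yields finite generation of $vF|vK$ and $Fv|Kv$ when \emph{equality} holds in \eqref{wtdgeq}, i.e.\ when $\trdeg Fv|Kv + \rr(vF/vK) = 1$. If instead $\trdeg Fv|Kv = 0$ and $\rr(vF/vK) = 0$ — the residual and ramification are both ``trivial to first order'' while $F|K$ is transcendental — the corollary gives no information, and indeed $vF/vK$ could a priori be a torsion group of infinite rank or $Fv|Kv$ an infinite algebraic extension. To handle this degenerate case I would argue directly: if $\trdeg Fv|Kv = 0$ and $\rr(vF/vK)=0$, pick a transcendence basis $t$ of $F|K$, so $F|K(t)$ is finite, and it suffices by Lemma~\ref{fin} to control $vK(t)/vK$ and $K(t)v|Kv$; then $v$ restricted to $K(t)$ is a valuation with $vK(t)/vK$ a torsion group and $K(t)v|Kv$ algebraic — this is case 3) of the trichotomy, and one shows by a direct MacLane-type / key-polynomial argument (or by the structure of valuations on $K(t)$ with $t$ transcendental, e.g.\ via Lemma~\ref{algvind} applied to successive approximations) that $vK(t)/vK$ is in fact generated by at most countably many — indeed here one only needs: each single element $g(t) \in K(t)$ contributes a value lying in a finitely generated extension of $vK$, and there are only countably many such $g$, so $vK(t)/vK$ is countable; similarly $K(t)v|Kv$ is generated by the residues of the (countably many) elements of $K(t)$, hence countably generated. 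This last observation in fact already suffices for the whole theorem without the finitely-generated reduction: $vK(x)$ is generated over $vK$ by $\{vg : g \in K(x)^\times\}$, a countable set, and $K(x)v$ is generated over $Kv$ by the residues of the integral elements of $K(x)$, again a countable set — so the cleanest proof is simply to observe that $K(x)$ itself is a countable-dimensional object over $K$ (being generated by the single element $x$, hence $|K(x)| = |K| + \aleph_0$ only when $K$ is finite, but the relevant point is that $K(x) = \bigcup_n F_n$ with $F_n$ finitely generated \emph{over} $K$, and one passes to the union as above, using Corollary~\ref{fingentb} and Lemma~\ref{fin} on each $F_n$ and the direct countability bound in the one remaining degenerate subcase). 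I expect the write-up to spend most of its length pinning down that degenerate subcase cleanly.
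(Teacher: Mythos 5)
There is a genuine gap, and it sits exactly where you park it. Your reduction to finitely generated subextensions is vacuous: $K(x)|K$ is itself a finitely generated (indeed simple) extension, so ``every finitely generated subextension $F|K$'' includes $F=K(x)$, and the claim that $vF/vK$ and $Fv|Kv$ are then finitely generated is false --- the whole point of this paper is that they need not be. Theorem~\ref{MT1var} (via Proposition~\ref{cgcg}), and concretely the example in Section~\ref{sectne} with $vK(x,y)=\frac{1}{p^\infty}\Z$, produce valuations on rational function fields for which the value group extension is an infinite torsion group and the residue field extension is infinite algebraic. So the case you call ``degenerate'' ($\trdeg Fv|Kv=0=\rr(vF/vK)$ with $F|K$ transcendental, i.e.\ the valuation-algebraic case) is not a residual subcase to be mopped up: it is the entire content of Theorem~\ref{count}, and neither Corollary~\ref{fingentb} nor Lemma~\ref{fin} says anything about it. Your direct argument for that case then rests on ``there are only countably many such $g$'' in $K(t)$, and likewise on the residues of ``the (countably many) elements of $K(t)$''. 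That is false whenever $K$ is uncountable: $K(x)$ has cardinality $\max(|K|,\aleph_0)$, and the theorem asserts countability of the quotient $vK(x)/vK$ and countable generation of $K(x)v|Kv$, not countability of $K(x)$. For uncountable $K$ you must explain why uncountably many values $vg$ land in only countably many cosets modulo $vK$, and this is precisely what your proposal never does.

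The missing idea is a linear-independence (dimension-counting) argument, which is how the paper proceeds. By Lemma~\ref{algvind}, elements of $K[x]$ whose values lie in pairwise distinct cosets modulo $vK$ are $K$-linearly independent; since $K[x]$ has the countable $K$-basis $\{x^i\mid i\geq 0\}$, there can be only countably many such cosets, so $vK(x)/vK=vK[x]-vK[x]$ modulo $vK$ is countable --- with no case distinction and no use of Corollary~\ref{fingentb} at all. For the residue field one first uses Corollary~\ref{fingentb} to reduce to the case $K(x)v|Kv$ algebraic, then assumes uncountably many elements $f_i/g_i$ with $vf_i=vg_i$ and $Kv$-linearly independent residues; since $vK(x)/vK$ is already known to be countable, uncountably many of the values $vf_i$ fall into one coset $vh(x)+vK$, and after scaling by suitable $c_i\in K$ one gets uncountably many elements of $\frac{1}{h(x)}K[x]$ with value $0$ and $Kv$-linearly independent residues; Lemma~\ref{algvind} again makes these $K$-linearly independent, contradicting the countable $K$-dimension of $K[x]$. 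In short: replace ``$K(x)$ is countable'' (false in general) by ``$K[x]$ has countable $K$-dimension'' (always true), and replace the finitely-generated exhaustion (which buys nothing here) by the valuation-theoretic independence Lemma~\ref{algvind}.
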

\begin{proof}
Since $K(x)$ is the quotient field of $K[x]$, we have that $vK(x)=vK[x]-
vK[x]$. Hence, to show that $vK(x)/vK$ is countable, it suffices to show
that the set $\{\alpha+vK\mid \alpha\in vK[x]\}$ is countable. If this
were not true, then by Lemma~\ref{algvind} (applied with $J=\{1\}$ and
$\vartheta_1=1$), we would have that $K[x]$ contains uncountably many
$K$-linearly independent elements. But this is not true, as $K[x]$
admits the countable $K$-basis $\{x^i\mid i\geq 0\}$.

Now assume that $K(x)v|Kv$ is not countably generated. Then by
Corollary~\ref{fingentb}, $K(x)v|Kv$ must be algebraic. It also follows
that $K(x)v$ has uncountable $Kv$-dimension. Pick an uncountable set
$\kappa$ and elements $f_i(x)/g_i(x)$, $i\in \kappa$, with $f_i(x),
g_i(x)\in K[x]$ and $vf_i(x)=vg_i(x)$ for all $i$, such that their
residues are $Kv$-linearly independent. As $vK(x)/vK$ is countable,
there must be some uncountable subset $\lambda\subset\kappa$ such that
for all $i\in\lambda$, the values $vf_i(x)=vg_i(x)$ lie in the same
coset modulo $vK$, say $vh(x)+vK$ with $h(x)\in K[x]$. The residues
$(f_i(x)/g_i(x))v$, $i\in \lambda$, generate an algebraic extension of
uncountable dimension. Choosing suitable elements $c_i\in K$ such
that
\[vc_if_i(x) \>=\> vh(x) \>=\> vc_ig_i(x)\;,\]
we can write
\[\frac{f_i(x)}{g_i(x)}\>=\>\frac{c_if_i(x)}{h(x)}\cdot
\frac{h(x)}{c_ig_i(x)} \>=\> \frac{c_if_i(x)}{h(x)}\cdot
\left(\frac{c_ig_i(x)}{h(x)}\right)^{-1}\]
for all $i\in \lambda$. Therefore,
\[\frac{f_i(x)}{g_i(x)}\,v \>=\>
\left(\frac{c_if_i(x)}{h(x)}\,v\right)\,
\cdot \left(\frac{c_ig_i(x)}{h(x)}\,v\right)^{-1}\]
for all $i\in \lambda$. In order that these elements generate an
algebraic extension of $Kv$ of uncountable dimension, the same must
already be true for the elements $(c_if_i(x)/h(x))v$, $i\in \lambda$, or
for the elements $(c_ig_i(x)/h(x))v$, $i\in \lambda$. It follows that at
least one of these two sets contains uncountably many $Kv$-linearly
independent elements. But then by Lemma~\ref{algvind} (applied with
$I=\{1\}$ and $\eta_1=1$), there are uncountably many $K$-linearly
independent elements in the set
\[\frac{1}{h(x)}\,K[x]\]
and hence also in $K[x]$, a contradiction.
\end{proof}

\pars
Finally, let us mention the following lemma which combines the algebraic
and the transcendental case. We leave its easy proof to the reader.
\begin{lemma}                               \label{algtransc}
Let $(L|K,v)$ be an extension of valued fields. Take $x\in L$. Suppose
that for some $e\in\N$ there exists an element $d\in K$ such that
$vdx^e=0$ and $dx^ev$ is transcendental over $Kv$. Let $e$ be minimal
with this property. Then for every $f=c_nx^n+\ldots+c_0\in K[x]$,
\[vf\>=\> \min_{1\leq i\leq n} vc_ix^i\;.\]
Moreover, $K(x)v=Kv(dx^ev)$ is a rational function field over $Kv$, and
we have
\[vK(x)\>=\>vK+\Z vx \mbox{ \ \ with \ \ } (vK(x):vK)\>=\>e\;.\]
\end{lemma}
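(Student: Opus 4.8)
The plan is to reduce the whole statement to the known behaviour of the single element $t:=dx^e$, which by hypothesis has $vt=0$ and residue $tv=dx^ev$ transcendental over $Kv$. Applying Lemma~\ref{prelBour} with empty index set $I$ and with $J=\{1\}$, $y_1=t$, we obtain immediately: $t$ is transcendental over $K$, $vK(t)=vK$, $K(t)v=Kv(tv)$ is a rational function field over $Kv$, and $v\big(\sum_k a_k t^k\big)=\min_k va_k$ for any $a_k\in K$. Also $e\,vx=v(dx^e)-vd=-vd\in vK$, so $vx$ has finite order in $vK(x)/vK$ dividing $e$.

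Next I would show that this order is exactly $e$; this is the one non-routine step. Let $m>0$ be the order of $vx$ modulo $vK$, so $m\mid e$, and suppose $m<e$. Pick $b\in K$ with $vb=m\,vx$ and set $u:=x^m/b$, so $vu=0$. Then $x^e=b^{e/m}u^{e/m}$, hence $t=d\,b^{e/m}u^{e/m}$, and $v(d\,b^{e/m})=-e\,vx+(e/m)\cdot m\,vx=0$, so $tv=(d\,b^{e/m})v\cdot(uv)^{e/m}$ with $(d\,b^{e/m})v\ne 0$. Since $e/m\ge 2$, transcendence of $tv$ over $Kv$ forces transcendence of $uv$ over $Kv$; but then $b^{-1}x^m$ has value $0$ and transcendental residue $(b^{-1}x^m)v=uv$, with $m<e$, contradicting the minimality of $e$. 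Hence $m=e$: the values $0,vx,\ldots,(e-1)vx$ of $x^0,x^1,\ldots,x^{e-1}$ lie in pairwise distinct cosets modulo $vK=vK(t)$.

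Now I apply Lemma~\ref{algvind} to the extension $K(x)|K(t)$ with $\eta_i=x^i$ ($0\le i\le e-1$) and $\vartheta_1=1$. It gives that $x^0,\ldots,x^{e-1}$ are $K(t)$-linearly independent; since $x^e=t/d\in K(t)$ forces $[K(x):K(t)]\le e$, they form a $K(t)$-basis, so $[K(x):K(t)]=e$, the values $vx^i$ form a system of representatives of $vK(x)/vK(t)$, and $\vartheta_1 v=1$ is a basis of $K(x)v|K(t)v$. Therefore $vK(x)=vK(t)+\Z vx=vK+\Z vx$ with $(vK(x):vK)=e$, and $K(x)v=K(t)v=Kv(dx^ev)$, a rational function field over $Kv$.

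Finally, for the value formula: given $f=\sum_{i=0}^{n} c_ix^i\in K[x]$, group the monomials by exponent modulo $e$ and write $f=\sum_{j=0}^{e-1}x^j h_j(x^e)$ with $h_j\in K[X]$. Substituting $x^e=t/d$ gives $h_j(x^e)=\sum_k c_{j+ke}d^{-k}t^k\in K(t)$, whose value, by the formula recorded in the first paragraph, is $\min_k(vc_{j+ke}-k\,vd)=\min_k v(c_{j+ke}x^{ke})$; hence $v\big(x^j h_j(x^e)\big)=\min_{i\equiv j\ (e)}v(c_ix^i)$. Applying the value formula of Lemma~\ref{algvind} once more to $f=\sum_{j=0}^{e-1}h_j(x^e)\,x^j$ (coefficients $h_j(x^e)\in K(t)$, exponents $vx^j$ in distinct cosets modulo $vK(t)$) yields $vf=\min_{0\le j\le e-1}v\big(x^jh_j(x^e)\big)=\min_{0\le i\le n}v(c_ix^i)$, which is the asserted formula. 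The main obstacle is the order-of-$vx$ argument in the second paragraph; everything else is a direct application of Lemmas~\ref{prelBour} and~\ref{algvind}.
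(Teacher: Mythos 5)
Your proof is correct. Note that the paper itself gives no argument here -- it states the lemma right after Lemma~\ref{algvind} with the remark that the easy proof is left to the reader -- so there is nothing to compare against line by line; your argument is exactly the combination of Lemmas~\ref{prelBour} and~\ref{algvind} that the placement of the lemma suggests. The one genuinely non-routine point, that the order of $vx$ modulo $vK$ is exactly $e$ (so that $0,vx,\ldots,(e-1)vx$ represent distinct cosets over $vK=vK(dx^e)$), you handle correctly via the minimality of $e$: if the order were $m<e$, then $b^{-1}x^m$ with $vb=m\,vx$ would have value $0$ and transcendental residue, contradicting minimality. After that, $K(t)$-linear independence of $1,x,\ldots,x^{e-1}$ together with $x^e=t/d$ gives the basis statement, and the two applications of the value formulas (Gauss-type on $K(t)|K$, then Lemma~\ref{algvind} on $K(x)|K(t)$) yield $vf=\min_i v(c_ix^i)$. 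One small remark: the paper's display reads $\min_{1\leq i\leq n}$, but the intended (and correct) formula is $\min_{0\leq i\leq n}$, as in your proof and as the lemma is used later, e.g.\ in the proof of Lemma~\ref{icpure}.
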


%
%
\subsection{Construction of valued field extensions with prescribed
value groups and residue fields}             \label{sectconstrvf}
In this section, we will deal with the following problem. Suppose that
$(K,v)$ is a valued field, $\Gamma|vK$ is an extension of ordered
abelian groups and $k|Kv$ is a field extension. Does there exist an
extension $(L|K,v)$ of valued fields such that $vL=\Gamma$ and $Lv=k$?
We include the case of $(K,v)$ being trivially valued; this amounts to
the construction of a valued field with given value group and residue
field. Throughout, we use the well known fact that if $(K,v)$ is any
valued field and $L$ is any extension field of $K$, then there is at
least one extension of $v$ to $L$ (cf.\ [E], [R]).

Let us adjust the following notion to our purposes. Usually, when one
speaks of an \bfind{Artin-Schreier extension} then one means an
extension of a field $K$ generated by a root of an irreducible
polynomial of the form $X^p-X-c$, provided that $p=\chara K$. We will
replace this by the weaker condition ``$p=\chara Kv$''. In fact, such
extensions also play an important role in the mixed characteristic case,
where $\chara K=0$.

Every {\bf Artin-Schreier polynomial} $X^p-X-c$ is separable since its
derivative does not vanish. The following is a simple but very useful
observation:
\begin{lemma}                               \label{vASr}
Let $(K,v)$ be a valued field and $c\in K$ such that $vc<0$. If
$a\in\tilde{K}$ such that $a^p-a=c$, then for every extension of $v$
from $K$ to $K(a)$,
\[0\> >\>v(a^p-c)\>=\>va\> >\>pva\>=\>vc\;.\]
\end{lemma}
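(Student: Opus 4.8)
Let $(K,v)$ be a valued field and $c \in K$ with $vc < 0$. If $a \in \tilde K$ satisfies $a^p - a = c$, then for every extension of $v$ from $K$ to $K(a)$,
\[
0 > v(a^p - c) = va > pva = vc.
\]

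The plan is to exploit the identity $a^p - c = a$ (which is literally the defining equation rewritten) together with a comparison of $va$ and $pva$ forced by $vc < 0$. First I would observe that the equation $a^p - a = c$ gives $a = a^p - c$, so $v(a^p-c) = va$ is immediate; no work is needed for that equality. The substance is showing $va < 0$, for once we know $va < 0$ we get $pva < va < 0$ since $p \geq 2$ and multiplying a negative value by $p$ decreases it (here $p = \operatorname{char} Kv$, but all we use is $p \geq 2$ as an integer multiplier on the ordered abelian group $vK(a)$).

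To pin down $va < 0$, I would argue by cases on the sign of $va$. If $va \geq 0$, then $v(a^p) = pva \geq 0$ and $v(-a) = va \geq 0$, so $v(a^p - a) \geq \min\{pva, va\} \geq 0$; but $a^p - a = c$ and $vc < 0$, a contradiction. Hence $va < 0$. Then $pva < va$ (strictly, as $p \geq 2$ and $va < 0$), and in particular $pva \neq va$, so the ultrametric inequality is an equality: $vc = v(a^p - a) = \min\{v(a^p), v(-a)\} = \min\{pva, va\} = pva$. Chaining these: $0 > va = v(a^p - c)$, $va > pva$, and $pva = vc$, which is exactly the displayed chain of inequalities and equalities.

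I do not expect any genuine obstacle here; the only point requiring the tiniest bit of care is the use of the sharpened triangle inequality $v(\alpha + \beta) = \min\{v\alpha, v\beta\}$ when $v\alpha \neq v\beta$, applied to $\alpha = a^p$, $\beta = -a$ — and this is a standard property of valuations, valid on any extension of $v$ to $K(a)$ regardless of which extension we pick (the conclusion is uniform in the choice of extension precisely because the argument never refers to the extension beyond the valuation axioms). One should also note that $va$ is well-defined as an element of the value group of whichever extension is chosen, and that $va < 0$ is incompatible with $a$ being a unit, but we never need that. So the write-up is short: rearrange the equation, rule out $va \geq 0$ by a contradiction with $vc < 0$, deduce strictness of $pva < va$, and conclude the equality $vc = pva$ from distinctness of the two values.
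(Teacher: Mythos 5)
Your proof is correct and follows essentially the same route as the paper: rule out $va\geq 0$ by contradiction with $vc<0$ using the equation $a^p-a=c$, then conclude $vc=v(a^p-a)=\min\{pva,va\}=pva$ from the strict inequality $pva<va$ (the paper phrases the contradiction as $\infty=v(a^p-a-c)=vc$, but the content is identical). The observation that $v(a^p-c)=va$ is immediate from $a^p-c=a$ is exactly the implicit step in the paper as well, so there is nothing to add.
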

\begin{proof}
Take any extension of $v$ from $K$ to $K(a)$. Necessarily, $va<0$ since
otherwise, $\infty=v(a^p-a-c)= \min\{pva,va,vc\}=vc$, a contradiction.
It follows that $va^p=pva<va$ and thus,
\[vc\>=\>\min\{v(a^p-a-c),v(a^p-a)\}\>=\>v(a^p-a)\>=\>\min\{pva,va\}
\>=\> pva\;.\]
\end{proof}

\begin{lemma}                               \label{constrva}
Let $(K,v)$ be a non-trivially valued field, $p$ a prime and $\alpha$ an
element of the divisible hull of $vK$ such that $p\alpha\in vK$,
$\alpha\notin vK$. Choose an element $a\in \tilde{K}$ such that $a^p\in
K$ and $va^p= p\alpha$. Then $v$ extends in a unique way from $K$ to
$K(a)$. It satisfies
\begin{equation}                            \label{dconstrva}
va=\alpha\>,\;\;\; [K(a):K]\>=\>(vK(a):vK)\>,\;\;\;
vK(a)\>=\>vK+\Z\alpha \;\mbox{\ \ and\ \ }\; K(a)v\>=\>Kv\;.
\end{equation}
If $\chara K=\chara Kv=p$, then this extension $K(a)|K$ is purely
inseparable. On the other hand, if $\chara Kv=p$, then there is
always an Artin-Schreier extension $K(a)|K$ with properties
(\ref{dconstrva}); if $\alpha<0$, then $a$ itself can be chosen
to be the root of an Artin-Schreier polynomial over $K$.
\end{lemma}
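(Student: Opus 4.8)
The plan is to handle the two claims of Lemma~\ref{constrva} --- existence/uniqueness of the extension with the listed numerical data, then the characteristic-specific refinements --- in sequence, using Lemma~\ref{prelBour} (valuation independence, transcendental case) and Lemma~\ref{vASr} as the main tools. First I would observe that since $\alpha\notin vK$ but $p\alpha\in vK$, the element $\alpha$ has finite order $p$ modulo $vK$ in the divisible hull, so $a$ is algebraic over $K$ of degree dividing $p$, hence of degree $1$ or $p$; since $va^p=p\alpha\notin pvK$ would be forced if $a\in K$... more precisely, if $a\in K$ then $va=\frac1p va^p=\alpha\in vK$, contradiction, so $[K(a):K]=p$. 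For any extension of $v$ to $K(a)$, Lemma~\ref{prelBour} applied in a suitable overfield (or directly the standard estimate) gives $(vK(a):vK)\geq p$ because $va$ would have order $p$ mod $vK$; combined with the fundamental inequality $(\ref{fiq})$ this forces $vK(a)=vK+\Z\alpha$, $K(a)v=Kv$, $e=p$, $f=1$, $\mathrm g=1$, so the extension of $v$ is unique and $va=\alpha$. This establishes $(\ref{dconstrva})$.

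For the characteristic claims: if $\chara K=\chara Kv=p$, then $a^p\in K$ shows $K(a)|K$ is purely inseparable by definition. The substantive part is the Artin--Schreier construction when merely $\chara Kv=p$ (so possibly $\chara K=0$). Here I would split on the sign of $\alpha$. If $\alpha<0$: choose $c\in K$ with $vc=p\alpha<0$ (possible since $p\alpha\in vK$, and we may rescale so that actually $vc$ equals $p\alpha$ --- pick $c$ with value $p\alpha$), and let $a$ be a root of the Artin--Schreier polynomial $X^p-X-c$. By Lemma~\ref{vASr}, $va=\frac1p vc=\alpha$ and in particular $va<0$, $va\notin vK$ (since $\alpha\notin vK$), so $X^p-X-c$ is irreducible over $K$, $[K(a):K]=p$, and $a^p=a+c$ need not lie in $K$ --- but that is fine, the polynomial used is Artin--Schreier, and the numerical data $(\ref{dconstrva})$ follow exactly as in the first part (uniqueness of the extension, $vK(a)=vK+\Z\alpha$, $K(a)v=Kv$). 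This proves the last sentence of the lemma directly.

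If instead $\alpha>0$ (the case $\alpha=0$ is excluded since $\alpha\notin vK$ and $0\in vK$), then $\alpha<0$ cannot be arranged for $a$ itself, but we can still produce \emph{some} Artin--Schreier extension with the right invariants: the idea is to first build, as above, an Artin--Schreier extension $K(b)|K$ where $b$ is a root of $X^p-X-c$ with $vc$ chosen negative in the coset $p\alpha+pvK$... this needs care, because $\alpha>0$ means $p\alpha\in vK$ is positive, not negative. The clean fix is: pick $d\in K$ with $vd>0$ large and set $c:=d^{-p}\cdot(\text{unit adjustment})$ so that $vc<0$; then $b$ with $b^p-b=c$ has $vb=\frac1p vc<0$ by Lemma~\ref{vASr}, and $db$ has value $vd+vb$, which we can arrange to equal $\alpha$ by choosing $vd$ appropriately (namely $vd=\alpha-\frac1p vc$, which is $>0$ once $vc$ is sufficiently negative). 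Then $a:=db$ satisfies $va=\alpha$, $a^p=d^pb^p=d^p(b+c)\in d^p b + d^p c$; and $K(a)=K(b)$ is Artin--Schreier over $K$, with invariants $(\ref{dconstrva})$ by the uniqueness argument. The main obstacle I anticipate is precisely this bookkeeping in the $\alpha>0$ case: ensuring that the rescaling element $d$ can be chosen in $K$ with exactly the required value (this uses that $p\alpha\in vK$, hence $\alpha-\frac1p vc\in vK$ when $vc\in pvK$, so one picks $vc$ to be a suitably negative element of $pvK$ and $d$ with $vd$ the resulting positive value), and that the resulting extension is genuinely Artin--Schreier and not merely "generated by an element with an Artin--Schreier minimal polynomial after a twist" --- but since $K(a)=K(b)$ and $b$ is literally a root of an Artin--Schreier polynomial over $K$, this is automatic.
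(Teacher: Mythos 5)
Your first part and your $\alpha<0$ Artin--Schreier case are correct and essentially identical to the paper's proof: $pva=va^p=p\alpha$ forces $va=\alpha$ (the value group is torsion free), $\alpha$ has order exactly $p$ modulo $vK$, and the sandwich $p\geq[K(a):K]\geq(vK(a):vK)[K(a)v:Kv]\geq(vK(a):vK)\geq p$ from the fundamental inequality gives $(\ref{dconstrva})$, uniqueness of the extension (${\rm g}=1$), and also makes your side remarks about ``degree dividing $p$'' and irreducibility unnecessary. (Lemma~\ref{prelBour} is not the right tool for the lower bound $(vK(a):vK)\geq p$, since $\alpha$ is torsion over $vK$, but your parenthetical ``standard estimate'' is exactly what is needed, so this is only a cosmetic slip.)

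The genuine gap is in your $\alpha>0$ case. Your bookkeeping is wrong: if $vc\in pvK$, then $vb=\frac1p vc\in vK$, so $v(db)=vd+vb\in vK$ for every $d\in K$, and this can never equal $\alpha\notin vK$; correspondingly, the asserted implication ``$\alpha-\frac1p vc\in vK$ when $vc\in pvK$'' is false, precisely because $\alpha\notin vK$. Moreover the prescription $c:=d^{-p}\cdot(\text{unit adjustment})$ together with $vd=\alpha-\frac1p vc$ is circular: it gives $vd=\alpha+vd$, i.e.\ $\alpha=0$. The repair is the coset you mentioned and then abandoned: choose $vc$ negative in $p\alpha+pvK$ (possible since $p\alpha\in vK$ and $vK\neq\{0\}$ is cofinal in itself), so that $vb\in\alpha+vK$; then some $d\in K$ has $vd=\alpha-vb\in vK$ and $a=db$ has $va=\alpha$, after which the same sandwich argument yields $(\ref{dconstrva})$. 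The paper's own route is even simpler: take $vc=-p\alpha<0$, let $b$ be a root of $X^p-X-c$, so $vb=-\alpha$ by Lemma~\ref{vASr}, and set $a=1/b$; then $K(a)=K(b)$ is an Artin--Schreier extension with $va=\alpha$, and $(\ref{dconstrva})$ follows as before. Note that in this case one cannot demand $a^p\in K$ or that $a$ itself be an Artin--Schreier root --- which is exactly why the lemma restricts that extra claim to $\alpha<0$.
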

\begin{proof}
Take $c\in K$ such that $vc=p\alpha$ and $a\in \tilde{K}$ such that
$a^p=c$. Choose any extension of $v$ from $K$ to $K(a)$. Then
$pva=vc=p\alpha$. Consequently, $va= \alpha$ and $(vK(a):vK)\geq
(vK+\Z\alpha:vK)=p$. On the other hand, the fundamental inequality
(\ref{fiq}) shows that
\[p=[K(a):K]\geq (vK(a):vK)\cdot [K(a)v:Kv] \geq (vK(a):vK)
\geq p\;.\]
Hence, equality holds everywhere, and we find that $(vK(a):vK)=p$ and
$[K(a)v:Kv]=1$. That is, $vK(a)=vK+\Z\alpha$ and $K(a)v= Kv$. Further,
the fundamental inequality implies that the extension of $v$ from $K$ to
$K(a)$ is unique.

Now suppose that $\chara Kv=p$. Choose $c\in K$ such that $vc=-|p\alpha|
<0$. By the foregoing lemma, every root $b$ of the Artin-Schreier
polynomial $X^p-X-c$ must satisfy $pvb=vc$. Now we set $a=b$ if
$\alpha<0$, and $a=1/b$ if $\alpha>0$ (but note that then $1/a$ is in
general not the root of an Artin-Schreier polynomial). Then as before
one shows that (\ref{dconstrva}) holds.
\end{proof}

For $f\in {\cal O}_K [X]$, we define the \bfind{reduction} $fv\in Kv[X]$
to be the polynomial obtained from $f$ through replacing every
coefficient by its residue.

\begin{lemma}                               \label{constrra}
Let $(K,v)$ be a valued field and $\zeta$ an element of the algebraic
closure of $Kv$. Choose a monic polynomial $f\in {\cal O}_K [X]$
whose reduction $fv$ is the minimal polynomial of $\zeta$ over
$Kv$. Further, choose a root $b\in \tilde{K}$ of $f$. Then there
is a unique extension of $v$ from $K$ to $K(b)$ and a corresponding
extension of the residue map such that
\begin{equation}                            \label{degvgrf}
bv=\zeta\>,\;\;\; [K(b):K]\>=\> [Kv(\zeta):Kv]\>,\;\;\;
vK(b)\>=\>vK \;\mbox{\ \ and\ \ }\; K(b)v\>=\>Kv(\zeta)\;.
\end{equation}
In all cases, $f$ can be chosen separable, provided that the
valuation $v$ is non-trivial. On the other hand, if $\chara K=
\chara Kv=p>0$ and $\zeta$ is purely inseparable over $Kv$,
then $b$ can be chosen purely inseparable over $K$.

If $v$ is non-trivial, $\chara Kv=p>0$ and $\zeta^p\in Kv$, $\zeta\notin
Kv$, then there is also an Artin-Schreier extension $K(b)|K$ such that
(\ref{degvgrf}) holds and $db$ is the root of an Artin-Schreier
polynomial over $K$, for a suitable $d\in K$.
\end{lemma}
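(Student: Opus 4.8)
The plan is to follow the same pattern as the proof of Lemma~\ref{constrva}, using the fundamental inequality to force equality everywhere. First I would pick a root $b\in\tilde K$ of $f$ and choose any extension of $v$ from $K$ to $K(b)$. Since $f\in{\cal O}_K[X]$ is monic, $b$ is integral over ${\cal O}_K$, so $vb\geq 0$ and $bv$ is a root of $fv$ in the residue field extension. Because $fv$ is the minimal polynomial of $\zeta$ over $Kv$ and is irreducible, after possibly composing the chosen extension of the residue map with a $Kv$-automorphism of $\widetilde{Kv}$ (equivalently, replacing $b$ by a conjugate root of $f$) we may assume $bv=\zeta$. Then $[K(b)v:Kv]\geq[Kv(\zeta):Kv]=\deg fv=\deg f=[K(b):K]$, while the fundamental inequality gives $[K(b):K]\geq(vK(b):vK)[K(b)v:Kv]\geq[K(b)v:Kv]$. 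Hence equality holds throughout: $vK(b)=vK$, $K(b)v=Kv(\zeta)$, $[K(b):K]=[Kv(\zeta):Kv]$, and the extension of $v$ is unique and the residue map extends as claimed. This gives (\ref{degvgrf}).

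Next I would handle the separability refinements. For the first: if $v$ is non-trivial, I want to choose $f$ separable with $fv$ still the minimal polynomial of $\zeta$. Write $g\in Kv[X]$ for that minimal polynomial and lift it to a monic $f_0\in{\cal O}_K[X]$. If $f_0$ is already separable we are done; otherwise, exploiting non-triviality pick $t\in K$ with $vt>0$ and replace $f_0$ by $f_0(X)+t\,X^{j}$ for a suitable exponent $j<\deg f_0$ (or add $t$ times a lower-degree monomial) so that the derivative no longer vanishes identically modulo the ideal of relations; since the perturbation has positive value, its reduction is still $g$, so $fv=g$ is preserved. The second refinement is immediate: if $\chara K=\chara Kv=p>0$ and $\zeta$ is purely inseparable over $Kv$, then $\zeta^{p^m}\in Kv$ for some $m$, i.e.\ the minimal polynomial of $\zeta$ is $X^{p^m}-c_0v$ for some $c_0\in{\cal O}_K$; take $f=X^{p^m}-c_0$, which is purely inseparable over $K$, and $b$ a root.

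Finally, the Artin--Schreier refinement. Here $v$ is non-trivial, $\chara Kv=p>0$, and $\zeta^p\in Kv$ but $\zeta\notin Kv$, so $[Kv(\zeta):Kv]=p$ and the minimal polynomial of $\zeta$ over $Kv$ is $X^p-\zeta^p$. The idea is to arrange $\zeta=\bar d^{-1}\,\overline{db}$ for a root $b$ of an Artin--Schreier polynomial. Pick $c\in K$ with $vc<0$ (possible as $v$ is non-trivial) and let $b$ be a root of $X^p-X-c$; by Lemma~\ref{vASr}, $vb=\frac1p vc<0$ and $v(b^p)=vc$, so choosing $d\in K$ with $vd=-vb>0$ we get $v(db)=0$ and $v((db)^p)=0$, with $(db)^p v=(d^p c)v$. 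Now I want $(db)v$ to be a prescribed $p$-th root of $\zeta^p$ in the residue field; the reduction of $(db)^p-d^pc$ is zero, so $(db)v$ is a root of $X^p-(d^pc)v$, and by choosing $c$ (equivalently adjusting $d^pc$ inside its residue class, which is free since we only constrained $vc$) so that $(d^pc)v=\zeta^p$, we get $(db)v=\zeta$. Applying the already-established part with $f=X^p-d^pc$ — whose reduction is the minimal polynomial of $\zeta$ — yields (\ref{degvgrf}), and $db$ is the root of the Artin--Schreier polynomial $X^p-X-c$. The main obstacle I anticipate is the bookkeeping in this last step: one must check that $c$ can simultaneously be chosen with $vc<0$ \emph{and} with $(d^pc)v=\zeta^p$ for an appropriate $d$ — i.e.\ that the map $c\mapsto (d^pc)v$ hits $\zeta^p$ while keeping $vc$ negative — which works because once $vc$ is fixed negative, $d$ is determined by $vd=-\frac1p vc$ up to a unit, and multiplying $c$ by a unit of ${\cal O}_K$ lets $(d^pc)v$ range over all of $(Kv)^\times$, in particular over $\zeta^p$.
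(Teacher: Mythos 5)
Your first paragraph and the two refinements are essentially the paper's own proof: integrality of $b$, twisting the residue map by an element of $\Gal Kv$ so that $bv=\zeta$, and the fundamental-inequality squeeze giving (\ref{degvgrf}) together with uniqueness; likewise the purely inseparable lift $X^{p^m}-c_0$ and the idea of perturbing an inseparable lift by a term of positive value when $v$ is non-trivial (here note that a non-vanishing derivative alone does not give separability -- one must choose the perturbing coefficient so that $f$ and $f'$ acquire no common root, which non-triviality of $v$ permits since only finitely many values of the perturbation are bad; the paper is equally brief on this point). The Artin--Schreier paragraph, however, has a genuine gap exactly at the spot you flagged. Having chosen an arbitrary $c\in K$ with $vc<0$, you need $d\in K$ with $vd=-vb=-\frac{1}{p}vc$; such a $d$ exists in $K$ only if $vc\in p\,vK$, which your choice of $c$ does not guarantee (take $vK=\Z$ and $vc=-1$: then $\frac{1}{p}vc\notin vK$). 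Your closing remark that ``$d$ is determined by $vd=-\frac{1}{p}vc$ up to a unit'' silently assumes precisely this divisibility. The repair is to reverse the order of choices, which is what the paper does: first fix $d\in K$ with $vd>0$ and $c_0\in K$ with $c_0v=\zeta^p$, and take the Artin--Schreier constant to be $c=d^{-p}c_0$, so that $vc=-p\,vd\in p\,vK$ by construction; then your computation of $v(db)$ and of the residue $(db)v=\zeta$ goes through unchanged, and no further unit adjustment is needed.

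A second flaw is the step ``applying the already-established part with $f=X^p-d^pc$'': the element $db$ is not a root of that polynomial, since $(db)^p-d^pc=d^p(b^p-c)=d^pb\neq 0$ (it merely has positive value), and the Artin--Schreier polynomial $X^p-X-c$ cannot serve as the $f$ of the first part either, because $vc<0$ means it does not lie in ${\cal O}_K[X]$. So (\ref{degvgrf}) cannot be obtained by citing the first part; instead one reruns the fundamental-inequality squeeze directly, which is what the paper's ``as before'' refers to: from $(db)v=\zeta$ one gets $p\geq [K(b):K]\geq (vK(b):vK)[K(b)v:Kv]\geq [K(b)v:Kv]\geq [Kv(\zeta):Kv]=p$, forcing equality everywhere, hence $vK(b)=vK$, $K(b)v=Kv(\zeta)$, $[K(b):K]=p$ and uniqueness of the extension of $v$. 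With these two repairs your argument coincides with the paper's.
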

\begin{proof}
We choose an extension of $v$ from $K$ to $K(b)$. Since $f$ is monic
with integral coefficients, $b$ must also be integral for this
extension, and $bv$ must be a root of $fv$. We may compose the residue
map with an isomorphism in $\Gal Kv$ which sends this root to $\zeta$.
Doing so, we obtain a residue map (still associated with $v$) that
satisfies $bv=\zeta$. Now $\zeta\in K(b)v$ and consequently,
$[K(b)v:Kv]\geq [Kv(\zeta):Kv] =\deg fv=\deg f$. On the other hand, the
fundamental inequality shows that
\[\deg f= [K(b):K]\geq (vK(b):vK)\cdot [K(b)v:Kv]
\geq [K(b)v:Kv]\geq \deg f\;.\]
Hence, equality holds everywhere, and we find that
$[K(b)v:Kv]=[Kv(\zeta):Kv]= [K(b):K]$ and $(vK(b):vK)=1$. That is,
$vK(b)=vK$ and $K(b)v= Kv(\zeta)$. Further, the uniqueness of $v$ on
$K(a)$ follows from the fundamental inequality.

If $fv$ is separable, then so is $f$. Even if $fv$ is not separable but
$v$ is nontrivial on $K$, then $f$ can still be chosen separable since
we can add a summand $cX$ with $c\not=0$, $vc>0$ (we use that $v$ is
non-trivial) without changing the reduction of $f$. On the other hand,
if $fv$ is purely inseparable and hence of the form $X^{p^\nu}-cv$, then
we can choose $f= X^{p^\nu}-c$ which also is purely inseparable if
$\chara K=p$.

Now suppose that $\chara Kv=p>0$ and $\zeta^p\in Kv$, $\zeta\notin Kv$.
Choose $c\in K$ such that $cv=\zeta^p$. To construct an Artin-Schreier
extension, choose any $d\in K$ with $vd<0$, and let $b_0$ be a root of
the Artin-Schreier polynomial $X^p-X-d^p c$. Since $vd^p c=pvd<0$,
Lemma~\ref{vASr} shows that $v(b_0^p-d^p c)=vb_0> vb_0^p$. Consequently,
$v((b_0/d)^p-c)>v(b_0/d)^p=vc=0$, whence $(b_0/d)^pv=cv$ and $(b_0/d)v=
(cv)^{1/p}=\zeta$. We set $b=b_0/d$; so $K(b)=K(b_0)$. As before, it
follows that $vK(b)=vK$ and $K(b)v=Kv(\zeta)$.
\end{proof}

\begin{theorem}                             \label{extprvgrf}
Let $(K,v)$ be an arbitrary valued field. For every extension
$\Gamma|vK$ of ordered abelian groups and every field extension $k|Kv$,
there is an extension $(L,v)$ of $(K,v)$ such that $vL= \Gamma$ and
$Lv=k$. If $\Gamma|vK$ and $k|Kv$ are algebraic, then $L|K$ can be
chosen to be algebraic, with a unique extension of $v$ from $K$ to $L$.
If $\rho=\rr\Gamma/vK$ and $\tau=\trdeg k|Kv$ are finite, then $L|K$ can
be chosen of transcendence degree $\rho+\tau$. If $\Gamma\not=\{0\}$,
then $L$ can be chosen to be a separable extension of $K$.

If both $\Gamma|vK$ and $k|Kv$ are finite, then $L|K$ can be taken to be
a finite extension such that $[L:K]= (\Gamma:vK)[k:Kv]$. If in addition
$v$ is non-trivial on $K$, then $L|K$ can be chosen to be a simple
separable-algebraic extension.

If $\Gamma/vK$ is countable and $k|Kv$ is countably generated, then
$L|K$ can be taken to be a countably generated extension.
\end{theorem}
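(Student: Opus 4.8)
The plan is to build $L$ step by step, handling the value group and the residue field essentially independently, and combining the three basic constructive lemmas (Lemma~\ref{constrva}, Lemma~\ref{constrra}, Lemma~\ref{prelBour}) according to which of the hypotheses we are in. First I would fix, inside a suitable ordered abelian group containing $\Gamma$, a maximal rationally independent family $(\gamma_i)_{i\in I}$ of elements of $\Gamma$ whose classes modulo $vK$ are rationally independent; adjoining, via Lemma~\ref{prelBour}, transcendental elements $x_i$ with $vx_i=\gamma_i$ gives a field $K_1=K(x_i\mid i\in I)$ with $vK_1=vK\oplus\bigoplus\Z\gamma_i$ and $K_1v=Kv$, and $K_1|K$ is purely transcendental (hence separable) of transcendence degree $\rho$ when $\rho$ is finite. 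Then $\Gamma/vK_1$ is a torsion group. Similarly, choosing a transcendence basis $(\bar y_j)_{j\in J}$ of $k|Kv$ and adjoining transcendental $y_j$ with $y_jv=\bar y_j$ (again Lemma~\ref{prelBour}), I get $K_2=K_1(y_j\mid j\in J)$ with $vK_2=vK_1$ and $K_2v=Kv(\bar y_j\mid j\in J)$; now $k|K_2v$ is algebraic. This already gives the transcendence-degree claim when $\rho,\tau$ are finite, and the separability claim when $\Gamma\ne\{0\}$ (there is at least one $x_i$), modulo the final algebraic step.

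Next I would kill the remaining torsion part $\Gamma/vK_2$ and the remaining algebraic part $k|K_2v$ by a transfinite (or just iterated, in the finite case) sequence of \emph{algebraic} extensions. For the value group: enumerate $\Gamma/vK_2$ and repeatedly pick $\alpha$ with $p\alpha$ already in the current value group but $\alpha$ not, and apply Lemma~\ref{constrva} to adjoin a single element with $va=\alpha$, not changing the residue field and with unique extension of $v$; for the residue field, enumerate $k$ over $K_2v$ and repeatedly apply Lemma~\ref{constrra} to adjoin a root realizing the next generator, not changing the value group and again with unique extension. Interleaving these (or just running one after the other at each limit stage) produces an algebraic extension $L|K_1.K_2$ with $vL=\Gamma$, $Lv=k$, and a unique extension of $v$; when $vK=\Gamma$ and $Kv=k$ the transcendental stages are empty and we get the "$\Gamma|vK$, $k|Kv$ algebraic $\Rightarrow$ $L|K$ algebraic with unique extension" assertion directly. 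For countability: if $\Gamma/vK$ is countable and $k|Kv$ countably generated, all index sets and all enumerations are countable, so only countably many elements are adjoined in total and $L|K$ is countably generated.

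For the two finite/separable refinements I would argue as follows. If $\Gamma|vK$ and $k|Kv$ are both finite, then $I$ and $J$ are empty, so $L|K$ is the purely algebraic tower above; using the $(\mathrm{e}_i,\mathrm{f}_i)$-bookkeeping (each step of Lemma~\ref{constrva} multiplies the ramification index by $p$ and fixes the inertia degree, each step of Lemma~\ref{constrra} does the reverse, and the fundamental inequality is an equality throughout since the extension of $v$ is unique at every step) one gets $[L:K]=(\Gamma:vK)[k:Kv]$. If moreover $v$ is non-trivial on $K$, then Lemma~\ref{constrva} lets us take each value-group step to be Artin--Schreier or (when the residue characteristic is $0$) a Kummer-type extension, and Lemma~\ref{constrra} lets us take each residue-field step \emph{separable}; so every step is separable-algebraic, hence $L|K$ is separable-algebraic, and by the primitive element theorem it is simple. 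The main obstacle, and the place needing the most care, is the separability claim in the non-trivial finite case together with the bookkeeping that the fundamental inequality stays an equality at every stage: one must check that Lemma~\ref{constrra}'s "add $cX$ with $vc>0$" trick genuinely keeps the reduction equal to the prescribed minimal polynomial while making $f$ separable, and that composing these steps never introduces defect (it cannot, since uniqueness of the extension of $v$ is preserved at each step, so by the remark after Theorem~\ref{piltant} there is no defect). Everything else is a routine assembly of the preliminary lemmas.
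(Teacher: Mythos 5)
Your construction is essentially the paper's: Lemma~\ref{prelBour} to realize a maximal rationally independent part of $\Gamma$ and a transcendence basis of $k$ by a purely transcendental (hence separable) extension, then Lemmas~\ref{constrva} and~\ref{constrra} to exhaust the remaining torsion part of $\Gamma$ and the remaining algebraic part of $k$; the only organizational difference is that you run an explicit (transfinite) enumeration where the paper invokes a Zorn's Lemma maximality argument, and this changes nothing of substance (your version even makes the uniqueness-of-extension claim in the algebraic case more transparent, since every step is one of the two lemmas). However, two of your justifications are incorrect as stated, though the conclusions they support are immediate from the lemmas you cite. For the degree formula you argue that the fundamental inequality is an equality at each step ``since the extension of $v$ is unique at every step'' and that no defect can arise ``since uniqueness \dots is preserved''. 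This inference is backwards: uniqueness of the extension does not exclude defect --- by the remark following Theorem~\ref{piltant}, for a finite immediate extension maximal defect is \emph{equivalent} to uniqueness. What actually yields $[L:K]=(\Gamma:vK)[k:Kv]$ is that Lemmas~\ref{constrva} and~\ref{constrra} each assert the fundamental equality for the single step they produce, combined with multiplicativity of degree, ramification index and inertia degree in towers; you state this per-step bookkeeping correctly, so simply delete the uniqueness/defect reasoning. This is exactly the paper's argument.

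Second, for separability when $\Gamma\ne\{0\}$, your parenthetical ``there is at least one $x_i$'' is not what $\Gamma\ne\{0\}$ gives: if $\Gamma/vK$ is a torsion group (for instance $\Gamma=vK\ne\{0\}$), then $I=\emptyset$. The relevant consequence of $\Gamma\ne\{0\}$ --- and the one the paper uses --- is that $v$ is non-trivial on the field at which the algebraic steps are performed: either $vK\ne\{0\}$ already, or $vK=\{0\}$ forces $\rho\ge 1$ and hence non-triviality after adjoining the $x_i$. Non-triviality is what allows Lemma~\ref{constrra} to be applied with a separable polynomial $f$ (the $cX$-perturbation) and Lemma~\ref{constrva} with an Artin--Schreier root, so that \emph{every} algebraic step, not only in the finite refinement, can be taken separable; you should say this explicitly in the general case. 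With these two repairs your proof coincides with the paper's.
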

\begin{proof}
For the proof, we assume that $\Gamma\not=\{0\}$ (the other case is
trivial). Let $\alpha_i\,$, $i\in I$, be a maximal set of elements in
$\Gamma$ rationally independent over $vK$. Then by Lemma~\ref{prelBour}
there is an extension $(K_1,v):=(K(x_i\mid i\in I), v)$ of $(K,v)$ such
that $vK_1=vK\oplus\bigoplus_{i\in I} \Z\alpha_i$ and $K_1 v=Kv$.
Next, choose a transcendence basis $\zeta_j\,$, $j\in J$, of
$k|Kv$. Then by Lemma~\ref{prelBour} there is an extension
$(K_2,v):=(K_1(y_j\mid j\in J),v)$ of $(K_1,v)$ such that $vK_2= vK_1$
and $K_2 v= Kv (\zeta_j\mid j\in J)$. If $\Gamma|vK$ and $k|Kv$ are
algebraic, then $I=J=\emptyset$ and $K_2=K$.

If we have an ascending chain of valued fields whose value groups
are subgroups of $\Gamma$ and whose residue fields are subfields of $k$,
then the union over this chain is again a valued field whose value group
is a subgroup of $\Gamma$ and whose residue field is a subfield of $k$.
So a standard argument using Zorn's Lemma together with the transitivity
of separable extensions shows that there are maximal separable-algebraic
extension fields of $(K_2,v)$ with these properties. Choose one of them
and call it $(L,v)$. We have to show that $vL=\Gamma$ and $Lv=k$. Since
already $\Gamma|vK_2$ and $k|K_2 v$ are algebraic, the same holds for
$\Gamma|vL$ and $k|Lv$. If $vL$ is a proper subgroup of $\Gamma$, then
there is some prime $p$ and some element $\alpha\in\Gamma\setminus vL$
such that $p\alpha\in vL$. But then, Lemma~\ref{constrva} shows that
there exists a proper separable-algebraic extension $(L',v)$ of $(L,v)$
such that $vL'=vL+\Z\alpha \subset \Gamma$ and $L'v= Lv\subset k$, which
contradicts the maximality of $L$. If $Lv$ is a proper subfield of $k$,
then there is some element $\zeta\in k\setminus Lv$ algebraic
over $Lv$. But then, Lemma~\ref{constrra} shows that there exists a
proper separable-algebraic extension $(L',v)$ of $(L,v)$ such that
$vL'=vL\subset \Gamma$ and $L'v= Lv(\zeta)\subset k$, which again
contradicts the maximality of $L$ (here we have used $\Gamma\not=
\{0\}$, which implies that $v$ is not trivial on $L$). This proves that
$vL=\Gamma$ and $Lv=k$, and $(L,v)$ is the required extension of
$(K,v)$. Since $K_2$ is generated over $K$ by a set of elements which
are algebraically independent over $K$, we know that $K_2|K$ is
separable. Since also $L|K_2$ is separable, we find that $L|K$ is
separable. Since $L|K_2$ is algebraic, $\{x_i\,,\,y_j\mid i\in I\,,\,
j\in J\}$ is a transcendence basis of $(K_2|K,v)$. If $\Gamma|vK$ and
$k|Kv$ are algebraic, then $I=J=\emptyset$ and $L$ is an algebraic
extension of $K=K_2\,$.

\pars
If $\Gamma|vK$ and $k|Kv$ are finite, then $L$ can be constructed
by a finite number of applications of Lemma~\ref{constrva} and
Lemma~\ref{constrra}. Since extension degree, ramification index and
inertia degree are multiplicative, we obtain that $[L:K]=(vL:vK)[Lv:Kv]$
$=(\Gamma:vK)[k:Kv]$. If in addition $v$ is non-trivial, then $L|K$ can
be chosen to be a separable extension. Since it is finite, it is simple.

If $\Gamma|vK$ and $k|Kv$ are countably generated algebraic, then
they are unions over a countable chain of algebraic extensions. Hence
also $L$ can be constructed as a union over a countable chain of
algebraic extensions and will thus be a countably generated extension of
$K$.

If $\Gamma|vK$ and $k|Kv$ are countably generated, then the sets
$I$ and $J$ in our above construction are both countable, that is,
$K_2|K$ is countably generated. Moreover, the extensions $\Gamma|vK_2$
and $k|K_2 v$ are countably generated algebraic. Hence by what we have
just shown, $L$ can be taken to be a countably generated extension of
$K_2\,$, and thus also of $K$.
\end{proof}

\pars
Every ordered abelian group is an extension of the trivial group
$\{0\}$ as well as of the ordered abelian group $\Z$. Every field of
characteristic $0$ is an extension of $\Q$, and every field of
characteristic $p>0$ is an extension of $\Fp$. Let $\Gamma\ne 0$ be an
ordered abelian group and $k$ a field. If $\chara k=0$, then $\Q$
endowed with the trivial valuation $v$ will satisfy $v\Q=\{0\}\subset
\Gamma$ and $\Q v=\Q\subset k$. If $\chara k=p>0$, then we can choose
$v$ to be the $p$-adic valuation on $\Q$ to obtain that $v\Q=\Z\subset
\Gamma$ and $\Q v=\Fp\subset k$. But also $\Fp$ endowed with the trivial
valuation $v$ will satisfy $v\Fp=\{0\}\subset \Gamma$ and $\Fp v=
\Fp\subset k$. An application of the foregoing theorem now proves:
\begin{corollary}
Let $\Gamma\ne 0$ be an ordered abelian group and $k$ a field. Then
there is a valued field $(L,v)$ with $vL=\Gamma$ and $Lv=k$. If $\chara
k=p >0$, then $L$ can be chosen to be of characteristic $0$ (mixed
characteristic case) or of characteristic $p$ (equal characteristic
case).
\end{corollary}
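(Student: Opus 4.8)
The plan is to deduce the statement directly from Theorem~\ref{extprvgrf}. The idea is simply: in each of the three characteristic cases, exhibit a ``small'' valued field $(K,v)$ whose value group order-embeds into $\Gamma$ and whose residue field embeds into $k$, and then apply Theorem~\ref{extprvgrf} to extend $v$ to a field $L$ with $vL=\Gamma$ and $Lv=k$, noting that such an $L$ has the same characteristic as $K$.

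First I would choose the base field according to $\chara k$. If $\chara k=0$, take $K=\Q$ with the trivial valuation $v$; then $vK=\{0\}$ is contained in $\Gamma$ trivially, and $Kv=\Q$ embeds into $k$ since every field of characteristic $0$ contains a copy of $\Q$; here $\chara K=0$. If $\chara k=p>0$ and we want the equal characteristic case, take $K=\Fp$ with the trivial valuation: then $vK=\{0\}\subseteq\Gamma$, $Kv=\Fp\subseteq k$, and $\chara K=p$. If $\chara k=p>0$ and we want the mixed characteristic case, take $K=\Q$ with the $p$-adic valuation: then $vK=\Z$, $Kv=\Fp\subseteq k$, and $\chara K=0$. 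In this last case one must check that $\Z$ order-embeds into $\Gamma$: since $\Gamma\ne\{0\}$ is ordered, hence torsion-free, pick any $\gamma>0$ in $\Gamma$; then $n\mapsto n\gamma$ is an injective order-preserving homomorphism, so $\Gamma$ is indeed an ordered abelian group extension of $\Z=vK$.

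Next I would invoke Theorem~\ref{extprvgrf}. In each of the cases above, $\Gamma|vK$ is an extension of ordered abelian groups and $k|Kv$ is a field extension, so the theorem yields an extension $(L,v)$ of $(K,v)$ with $vL=\Gamma$ and $Lv=k$. Since $K\subseteq L$, we have $\chara L=\chara K$, which is precisely the characteristic asserted in each alternative of the corollary. (One could in addition take $L|K$ separable by the last-but-one sentence of Theorem~\ref{extprvgrf}, since $\Gamma\ne\{0\}$, but this is not needed here.)

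I do not anticipate a genuine obstacle: the whole argument is a matter of assembling the right base fields and quoting Theorem~\ref{extprvgrf}. The only point requiring a moment's attention is the order-embedding $\Z\hookrightarrow\Gamma$ in the mixed characteristic case, and this is immediate from the torsion-freeness of ordered abelian groups.
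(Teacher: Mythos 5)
Your proposal is correct and follows essentially the same route as the paper: the paragraph preceding the corollary chooses exactly these base fields ($\Q$ trivially valued, $\Fp$ trivially valued, or $\Q$ with the $p$-adic valuation) and applies Theorem~\ref{extprvgrf}. Your explicit verification that $\Z$ order-embeds into $\Gamma$ via $n\mapsto n\gamma$ for some $\gamma>0$ is a correct filling-in of the paper's remark that every nontrivial ordered abelian group is an extension of $\Z$.
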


For the sake of completeness, we add the following information.
From the fundamental inequality it follows that $v\tilde{K}|vK$,
$vK\sep|vK$, $\tilde{K}v|Kv$ and $K\sep v|Kv$ are algebraic extensions.
On the other hand, Lemma~\ref{constrva} shows that the value group of a
separable-algebraically closed field must be divisible. Similarly, it
follows from Lemma~\ref{constrra} that the residue field of a
separable-algebraically closed non-trivially valued field must be
algebraically closed. This proves:
\begin{lemma}                               \label{valgclo}
Let $(K,v)$ be a non-trivially valued field and extend $v$ to
$\tilde{K}$. Then the value groups $v\tilde{K}$ and $vK\sep$ are equal
to the divisible hull of $vK$, and the residue fields $\tilde{K}v$ and
$K\sep v$ are equal to the algebraic closure of $Kv$.
\end{lemma}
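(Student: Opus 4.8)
The plan is to deduce Lemma~\ref{valgclo} directly from the three ingredients already assembled: the fundamental inequality (\ref{fiq}), Lemma~\ref{constrva}, and Lemma~\ref{constrra}. The structure is a ``no proper obstruction'' argument in each of the four cases. First I would fix an extension of $v$ to $\tilde{K}$; this restricts to an extension of $v$ to $K\sep$, and all value groups and residue fields in sight are computed with respect to this fixed valuation. Since $\tilde{K}|K$ and $K\sep|K$ are algebraic, the fundamental inequality gives at once that $v\tilde{K}|vK$, $vK\sep|vK$, $\tilde{K}v|Kv$ and $K\sep v|Kv$ are algebraic extensions (each finite subextension has $\mathrm{e},\mathrm{f}$ bounded by the degree); in particular $v\tilde{K}$ and $vK\sep$ are contained in the divisible hull of $vK$, and $\tilde{K}v$, $K\sep v$ are contained in the algebraic closure of $Kv$. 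So only the reverse inclusions need work.

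For the value-group statement, suppose $vK\sep$ were a proper subgroup of the divisible hull of $vK$. Then there is a prime $p$ and an element $\alpha$ in the divisible hull with $p\alpha\in vK\subseteq vK\sep$ but $\alpha\notin vK\sep$. Apply Lemma~\ref{constrva} to the valued field $(K\sep, v)$ (non-trivially valued, since $v$ is non-trivial on $K$): it produces a proper extension generated by some $a$ with $a^p\in K\sep$ — hence $a$ is algebraic, indeed the lemma records that in residue characteristic $p$ one may even take $a$ separable over $K\sep$ via an Artin--Schreier polynomial, and in residue characteristic $\ne p$ the extension $K\sep(a)|K\sep$ is automatically separable since $[K\sep(a):K\sep]=p\ne\chara K\sep v$ forces... — more carefully: if $\chara K\sep = p$ the extension is purely inseparable and gives nothing, but then $\chara K\sep v$ is either $0$ or $p$; if $\chara K\sep v=p$ use the Artin--Schreier alternative of Lemma~\ref{constrva}, while if $\chara K\sep v=0\ne p=\chara K\sep$ this case cannot occur. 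In every admissible configuration we obtain a proper separable-algebraic extension of $K\sep$, contradicting $K\sep$ being separable-algebraically closed; hence $vK\sep$ is the full divisible hull, and then $v\tilde{K}\supseteq vK\sep$ is divisible too and sandwiched between the divisible hull of $vK$ and itself, so equals it.

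For the residue-field statement, suppose $K\sep v$ is not algebraically closed. Then some $\zeta$ algebraic over $K\sep v$ lies outside $K\sep v$; choose the minimal polynomial of $\zeta$ and, if needed, take $\zeta$ to generate a separable extension of $K\sep v$, or else (in characteristic $p$) of the form $\zeta^p\in K\sep v$, $\zeta\notin K\sep v$. Apply Lemma~\ref{constrra} to $(K\sep, v)$: since $v$ is non-trivial on $K\sep$, the lemma furnishes a proper algebraic extension $K\sep(b)$ with $bv=\zeta$ which can be chosen separable (the lemma explicitly allows $f$ separable when $v$ is non-trivial, and in the purely inseparable residue case offers an Artin--Schreier substitute). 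This again contradicts separable-algebraic closedness of $K\sep$. Therefore $K\sep v$ is algebraically closed; since $\tilde{K}v\supseteq K\sep v$ is algebraic over $Kv$, it is contained in the algebraic closure of $Kv$, hence equals $K\sep v$, the algebraic closure of $Kv$.

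**The main obstacle** is bookkeeping the characteristic cases cleanly rather than any deep difficulty: one must make sure that in each situation Lemma~\ref{constrva} or Lemma~\ref{constrra} actually yields a \emph{separable} proper extension (so as to contradict separable-algebraic closedness), which is exactly why those lemmas were stated with their Artin--Schreier refinements. Once one checks that the purely inseparable outputs are never forced when the residue characteristic is the relevant prime — and that when $\chara K = p \ne \chara Kv$ the degree-$p$ constructions are separable for trivial degree reasons — the argument closes with no computation.
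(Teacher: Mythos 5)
Your argument is correct and is essentially the paper's own proof: the paper deduces the lemma, in the remark immediately preceding it, from the fundamental inequality (which makes $v\tilde{K}|vK$, $vK\sep|vK$, $\tilde{K}v|Kv$, $K\sep v|Kv$ algebraic) together with Lemmas~\ref{constrva} and~\ref{constrra} applied to the separable-algebraically closed field $K\sep$, with exactly the Artin--Schreier case distinctions you spell out. One small adjustment: if $vK\sep$ is not divisible you can only guarantee a prime $p$ and an $\alpha$ with $p\alpha\in vK\sep$ and $\alpha\notin vK\sep$ (not necessarily $p\alpha\in vK$), but that is precisely the hypothesis Lemma~\ref{constrva} needs when applied to $(K\sep,v)$, so your argument goes through unchanged.
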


A valued field $(K,v)$ of residue characteristic $p>0$ is called
\bfind{Artin-Schreier closed} if every Artin-Schreier polynomial with
coefficients in $K$ admits a root in $K$. Recall that if $\chara K=p$,
then this means that every Artin-Schreier polynomial with coefficients
in $K$ splits into linear factors over $K$. If $K$ is Artin-Schreier
closed, then so is $Kv$. As a corollary to Lemmas~\ref{constrva}
and~\ref{constrra}, we obtain:
\begin{corollary}                           \label{AScvgrf}
Every Artin-Schreier closed non-trivially valued field of residue
characteristic $p>0$ has $p$-divisible value group and perfect
Artin-Schreier closed residue field.
\end{corollary}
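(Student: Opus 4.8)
The plan is to reduce this to the two construction lemmas via the contrapositive, treating the value group and the residue field separately. Let $(K,v)$ be Artin-Schreier closed with residue characteristic $p>0$ and non-trivial valuation.

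First I would show that $vK$ is $p$-divisible. Suppose not: then there is some $\alpha$ in the divisible hull of $vK$ with $p\alpha\in vK$ but $\alpha\notin vK$. Since $v$ is non-trivial, Lemma~\ref{constrva} applies (its hypotheses are exactly a non-trivially valued field together with such an $\alpha$); the final sentence of that lemma, in the case $\chara Kv=p$, produces an Artin-Schreier extension $K(a)|K$ with $vK(a)=vK+\Z\alpha\supsetneq vK$. In particular $[K(a):K]=p>1$, so $K(a)\neq K$, and $a$ (or a suitable translate, but in any case the root of an Artin-Schreier polynomial is involved in building $a$) witnesses that some Artin-Schreier polynomial over $K$ has no root in $K$ — contradicting Artin-Schreier closedness. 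Hence $vK$ is $p$-divisible.

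Next I would show the residue field $Kv$ is perfect and Artin-Schreier closed. Perfectness: it suffices to rule out $\zeta\in\widetilde{Kv}$ with $\zeta^p\in Kv$, $\zeta\notin Kv$. Given such a $\zeta$, the last paragraph of Lemma~\ref{constrra} (again using that $v$ is non-trivial and $\chara Kv=p$) furnishes an Artin-Schreier extension $K(b)|K$ with $K(b)v=Kv(\zeta)\supsetneq Kv$, so $[K(b):K]\geq[Kv(\zeta):Kv]=p>1$, and $db$ is the root of an Artin-Schreier polynomial over $K$ for suitable $d\in K$; this contradicts Artin-Schreier closedness just as before. So $Kv$ is perfect. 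For Artin-Schreier closedness of $Kv$: this is the already-noted elementary fact recorded just above the corollary (``if $K$ is Artin-Schreier closed, then so is $Kv$''), which one checks directly — given an Artin-Schreier polynomial $X^p-X-\bar c$ over $Kv$, lift $\bar c$ to $c\in\mathcal O_K$, take a root $a\in K$ of $X^p-X-c$, and observe $v a\geq 0$ (else $pva=va$ forces $va=0$, still $\geq 0$) so $av$ is a root of $X^p-X-\bar c$ in $Kv$.

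The main obstacle — really the only subtle point — is making sure the two lemmas genuinely produce the extension via an Artin-Schreier \emph{polynomial over $K$} (not merely a separable extension of the right numerical type), so that a genuine counterexample to Artin-Schreier closedness is obtained; but both Lemma~\ref{constrva} and Lemma~\ref{constrra} explicitly include that clause under the hypothesis $\chara Kv=p$, so no extra work is needed. Everything else is bookkeeping with the fundamental inequality already done inside those lemmas. \qed
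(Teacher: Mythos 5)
Your proposal is correct and follows exactly the route the paper intends (the paper gives no explicit proof, deriving the corollary directly from Lemmas~\ref{constrva} and~\ref{constrra} by the contrapositive, just as you do, with the Artin-Schreier closedness of $Kv$ taken as the elementary remark preceding the corollary). The only point worth making explicit is that the Artin-Schreier polynomial furnished by each lemma has \emph{no} root in $K$ at all --- which holds because it is irreducible of degree $p$ (equivalently, by Lemma~\ref{vASr} every root has value, resp.\ residue, outside $vK$, resp.\ $Kv$) --- and your closing remark correctly identifies that the lemmas' statements already secure this.
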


%
%
\subsection{Orderings and valuations}
We will assume the reader to be familiar with the basic theory of
convex valuations, which can be found in [L] and [PR].
\begin{proposition}                         \label{<ext}
Suppose that $(K,<)$ is an ordered field with convex valuation $v$, and
denote by $<_r$ the ordering induced by $<$ through $v$ on $Kv$. Let
$(L|K,v)$ be an extension of valued fields. If $2vL\cap vK=2vK$, then
each extension $<'_r$ of $<_r$ to an ordering of $Lv$ can be lifted
through $v$ to an ordering of $L$ which extends the ordering $<$ of $K$.
\end{proposition}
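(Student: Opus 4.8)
The plan is to build the ordering on $L$ by pulling back, via $v$, the ordering $<'_r$ on $Lv$ together with positivity data on a group-complement of $vK$ inside $vL$, using the standard correspondence between orderings of a field carrying a convex valuation and pairs (ordering of the residue field, ordering of the value group, plus a choice of signs of ``unit representatives''). More precisely, first I would reduce to the case where $L|K$ is generated by elements whose residues or values already ``live'' in $Lv$ and $vL$, and treat the general case by a Zorn's Lemma argument: the set of intermediate fields $K\subseteq M\subseteq L$ carrying an ordering that lifts $<'_r|_{Mv}$ and extends $<$ is inductively ordered, so it suffices to show one cannot stop at a proper $M$; hence we may assume $L=M(a)$ for a single $a$. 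The hypothesis $2vL\cap vK=2vK$ is inherited by every intermediate step (since $vM\subseteq vL$ forces $2vM\cap vK\subseteq 2vL\cap vK=2vK$), so the reduction is legitimate.

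Next, for the one-generator step I would distinguish cases according to how $a$ sits over $(M,v)$. If $va\in vM$, multiply $a$ by a suitable element of $M$ to assume $va=0$; then $av\in Mv$ need not hold, but $Mv(av)|Mv$ is an (at most algebraic, possibly transcendental) residue extension, and since $<'_r$ is already given on all of $Lv\supseteq Mv(av)$ we simply declare $a$ positive or negative according to the sign of $av$ in $(Lv,<'_r)$ (if $av=0$, i.e. $va>0$ after the normalization fails, we are in the next case). The real content is the case $va\notin vM$: pick the smallest $e\ge 1$ with $eva\in vM$; by the hypothesis applied inside $vL$, $e$ is odd, which is exactly what is needed so that $a^e$ determines the sign of $a$ consistently — an element of odd degree over the value group does not create a ``sign ambiguity'', whereas an even $e$ would (one would be forced to decide the sign of a square root of a positive element, which is not dictated by the data). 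Concretely, write $e\,va = v m$ for some $m\in M$ with $m>0$ (replacing $m$ by $-m$ if needed, using that $<$ is already defined on $M$), set $u=a^e/m$, so $vu=0$; then $uv\in Lv$ and we prescribe $\mathrm{sign}(a)$ by requiring $a^e$ to have the sign of $m\cdot(\text{a lift of }uv)$, i.e. $a>0$ iff $uv>_r' 0$ — this is well defined precisely because $e$ is odd.

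Finally I would verify that these sign prescriptions extend uniquely and coherently to an ordering (equivalently, a positive cone) on all of $L=M(a)$: one checks the cone axioms on the monomial basis furnished by Lemma~\ref{algvind} or Lemma~\ref{algtransc} (the value of a sum of monomials with distinct values is the least value, so the sign of a general element is the sign of its ``leading'' monomial, and leading monomials multiply and add in the expected way), that $v$ is convex for it (immediate, since elements of positive value are infinitesimal with respect to units of value $0$), and that it induces $<$ on $M$ and $<'_r$ on $Lv$ by construction. The main obstacle — and the only place the hypothesis is used — is the parity argument in the case $va\notin vM$: without $2vL\cap vK=2vK$ one could have $e$ even, and then there is genuinely no canonical, or even consistent, way to lift the sign, so the proposition would fail; making the odd-$e$ case airtight (in particular checking that the prescribed cone is closed under addition when two summands have the same value but lie in different residue classes) is the step I expect to require the most care.
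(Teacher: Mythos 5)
Your proposal has two genuine gaps, both stemming from a misreading of where the hypothesis $2vL\cap vK=2vK$ acts. That hypothesis says precisely that the natural map $vK/2vK\rightarrow vL/2vL$ is injective (if $2\gamma\in vK$ for $\gamma\in vL$, then $2\gamma=2\delta$ with $\delta\in vK$, whence $\gamma=\delta\in vK$); it says nothing about $2$-divisibility of $vL$ over an \emph{intermediate} field $M$. Consequently your central parity claim --- that the least $e$ with $e\,va\in vM$ is odd ``by the hypothesis applied inside $vL$'' --- is false. Take $K=\R$ with the trivial valuation (so the hypothesis holds vacuously), $L=\R(y)$ with the $y$-adic valuation, and $M=\R(x)$ with $x=y^2$: then $vM=2\Z\subset\Z=vL$ and the least $e$ with $e\,vy\in vM$ is $e=2$. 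What your one-generator step would need is $2vL\cap vM=2vM$; what you verified, $2vM\cap vK=2vK$, is the hypothesis for $M|K$, not for $L|M$, and the former does not follow from the global hypothesis.

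The Zorn reduction breaks at the same point. Your poset consists of pairs (intermediate field, lifting ordering), and a maximal element need not be defined on all of $L$: in the example above, the lifting of the ordering of $\R$ to $M=\R(x)$ in which $x<0$ is convex, induces the standard ordering on $Mv=\R$ and extends $<$ on $K=\R$, yet it extends to no ordering of $L$, since $x=y^2$ forces $x>0$ there; as $[L:M]=2$, this pair is maximal in your poset although $M\neq L$, the hypothesis holds, and a lifting on $L$ exists. So the obstruction in the even case is not a ``sign ambiguity'' for $a$ but a constraint on signs you already fixed further down, which a stepwise construction with free local choices cannot foresee. This is exactly what the paper's proof avoids by working globally: since $2vL\cap vK=2vK$, a section $vK/2vK\rightarrow K^{\times}/(K^{\times})^2$ extends to a section $vL/2vL\rightarrow L^{\times}/(L^{\times})^2$; the correspondence of [PR], (7.5)--(7.9), then identifies liftings of $<_r$ (resp.\ $<'_r$) with characters of $vK/2vK$ (resp.\ $vL/2vL$) compatibly with restriction, and one extends the character corresponding to $<$ from $vK/2vK$ to $vL/2vL$ (possible by the injectivity above); the lifting of $<'_r$ attached to the extended character restricts to $<$. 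If you prefer an elementary route, you must likewise fix the square-class data on all of $L$ at once, e.g.\ show that the preordering generated by the positive elements of $K$, the squares of $L$ and the units of ${\cal O}_L$ with $<'_r$-positive residue does not contain $-1$, and only then apply Zorn to get an ordering; making sign choices field-by-field, as in your plan, cannot be repaired without such a global consistency argument.
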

\begin{proof}
We fix a section from $vK/2vK$ to $K^\times/K^\times{}^2$. Since
$2vL\cap vK=2vK$, this section can be extended to a section from
$vL/2vL$ to $L^\times/L^\times{}^2$. Now there is a bijection between
the set of all liftings of $<_r$ through $v$ to orderings of $K$,
and the set of all group characters of $vK/2vK$; see [PR], (7.5) to
(7.9). The same construction yields a bijection between the set of all
liftings of $<'_r$ through $v$ to orderings of $L$, and the set of all
group characters of $vL/2vL$. Since we use an extension of the section
$vK/2vK\>\rightarrow\>K^\times/K^\times{}^2$ to a section
$vL/2vL\>\rightarrow\>L^\times/L^\times{}^2$, the bijection maps
commute with the restriction from $L$ to $K$ of any lifting. That is,
if a lifting $<'$ of $<'_r$ to $L$ corresponds to a character $\chi$
of $vL/2vL$, then the restriction of $<'$ to $K$ is the unique lifting
of $<_r$ to $K$ which corresponds to the restriction of $\chi$ to
$vK/2vK$.

As the given ordering $<$ of $K$ is a lifting of $<_r\,$, it corresponds
to a unique group character of $vK/2vK$. Since $2vL\cap vK=2vK$, we can
extend it to a group character of $vL/2vL$. Take the lifting of $<'_r$
through $v$ to an ordering $<'$ of $L$ which corresponds to this
group character of $vL/2vL$. Then its restriction to $K$ is $<$.
\end{proof}

%

The following was proved by Knebusch and Wright [KW] and by Prestel
(cf.\ [PR]); see Theorem~5.6 of [L].
\begin{theorem}                             \label{kmwrpr}
Let $v$ be a convex valuation on the ordered field $(K,<)$, $<_r$ the
ordering induced by $<$ on $Kv$, and $R$ a real closure of $(K,<)$. Then
there exists a unique extension of $v$ to a convex valuation of $R$.
Denoting this extension again by $v$, we have that $(R,v)$ is henselian,
$vR$ is divisible, and $Rv$ with the ordering induced by $<$ is a real
closure of $(Kv,<_r)$.
\end{theorem}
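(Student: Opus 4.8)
The plan is to construct the extension concretely, as the convex hull of the valuation ring of $v$ inside $R$, and then to read off each assertion from the fact that $R$ is real closed. Since $R$ is a real closure of $(K,<)$, it carries a unique ordering, again written $<$, extending the one on $K$. Put
\[{\cal O}\>:=\>\{x\in R\mid |x|\leq a \text{ for some }a\in{\cal O}_K\}\;,\]
the convex hull of ${\cal O}_K$ in $R$. A one-line computation shows that ${\cal O}$ is a subring of $R$; being a convex subring containing $1$, it is automatically a valuation ring of $R$, since for $x\in R^\times$ either $|x|\leq 1$ and $x\in{\cal O}$, or $|x|>1$ and $x^{-1}\in{\cal O}$. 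As ${\cal O}_K$ is convex in $(K,<)$ we get ${\cal O}\cap K={\cal O}_K$, so the valuation associated with ${\cal O}$ (henceforth denoted again by $v$) extends $v$ on $K$ and is convex on $R$ by construction; this gives existence.

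For henselianity: $R$ has characteristic $0$, and its only proper algebraic extension is $\tilde K=R(\sqrt{-1})$, of degree $2$, so $(R,v)$ is henselian if and only if $v$ has a unique extension to $\tilde K$. Let $w$ extend $v$ to $\tilde K$. Then $w(\sqrt{-1})=0$, so $(\sqrt{-1})\,w$ is a square root of $-1$ in the residue field $\tilde Kw$. But $Rv$, carrying the ordering induced by the convex valuation $v$, is formally real, so $X^2+1$ stays irreducible over $Rv$ and $[\tilde Kw:Rv]\geq 2$. Since $[\tilde K:R]=2$, the fundamental inequality~(\ref{fiq}) forces ${\rm g}=1$; hence the extension is unique and $(R,v)$ is henselian. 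Divisibility of $vR$ is then immediate: for $a\in R$, $a>0$, and each $n$ the element $a$ has an $n$-th root in $R$, so $v(a)$ is $n$-divisible.

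For the residue field: $Rv|Kv$ is algebraic by Lemma~\ref{fin}, and $Rv$ is formally real under the ordering induced by $<$, which extends $<_r$ because ${\cal O}\cap K={\cal O}_K$. To see that $Rv$ is real closed I would check the two remaining Artin--Schreier conditions by lifting through the residue map: a positive $\bar a\in Rv$ lifts to a positive unit $a\in{\cal O}$, and with $\sqrt a>0$ in $R$ the residue $(\sqrt a)v$ is the positive square root of $\bar a$; a monic $\bar g\in Rv[X]$ of odd degree lifts to a monic $g\in{\cal O}[X]$, any real root $r$ of $g$ automatically lies in ${\cal O}$ (monicity bounds $|r|$ by the sum of the absolute values of the coefficients, an element of ${\cal O}$), and then $rv$ is a root of $\bar g$. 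Thus $Rv$ is a real closed algebraic extension of $(Kv,<_r)$ whose ordering extends $<_r$, i.e.\ a real closure of $(Kv,<_r)$.

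Finally, uniqueness. If $w$ is a convex valuation on $R$ with $w|_K=v|_K$, then ${\cal O}_w$ is a convex subring of $R$ containing ${\cal O}_K$, hence containing its convex hull ${\cal O}$, so $w$ is a coarsening of $v$. Let $\Delta\leq vR$ be the convex subgroup with $wR=vR/\Delta$. Since $w|_K=v|_K$, the composite $vK\hookrightarrow vR\to wR$ is the natural inclusion, whence $\Delta\cap vK=\{0\}$; but $vR/vK$ is a torsion group (Lemma~\ref{fin}, as $R|K$ is algebraic), so each $\delta\in\Delta$ has a multiple $n\delta\in vK\cap\Delta=\{0\}$, and torsion-freeness of $vR$ gives $\delta=0$. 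So $\Delta=\{0\}$ and $w=v$. I expect the one genuinely non-routine step to be the henselianity argument --- concretely, the observation that the residue field of a convex valuation on a real closed field stays formally real, which is what excludes a second extension to $R(\sqrt{-1})$; everything else reduces to the slogan ``a convex subring of an ordered field is a valuation ring'' together with lifting roots through the residue map.
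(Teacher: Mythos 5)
Your proof is correct. Note that the paper itself gives no proof of this statement: it is quoted from Knebusch--Wright [KW] and Prestel [PR] (Theorem~5.6 of [L]), so there is no in-paper argument to compare against; your write-up is essentially the standard proof one finds in those sources. The decomposition you use is the natural one: existence via the convex hull of ${\cal O}_K$ in $R$ (a convex subring of an ordered field is a valuation ring), henselianity via the Artin--Schreier fact that $R(\sqrt{-1})$ is the only proper algebraic extension of $R$ together with the fundamental inequality (\ref{fiq}) and the formal reality of $Rv$, divisibility of $vR$ from the existence of $n$-th roots of positive elements, real closedness of $Rv$ by lifting square roots of positive units and real roots of monic odd-degree polynomials (the bound $|r|\leq 1+\sum_i|c_i|$ plus convexity of ${\cal O}$ keeping the root integral), and uniqueness by observing that any convex extension is a coarsening of the convex hull valuation whose associated convex subgroup $\Delta$ meets $vK$ trivially, hence is trivial because $vR/vK$ is torsion (Lemma~\ref{fin}) and $vR$ is torsion free. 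The only ingredients you use without proof --- that henselianity of $(R,v)$ follows from uniqueness of the extension to $\widetilde{R}=R(\sqrt{-1})$, and that the residue field of a convex valuation on an ordered field carries a canonical induced ordering compatible with residues of positive units --- belong to the basic theory of convex valuations which the paper explicitly assumes ([L], [PR]), so no gap results from invoking them.
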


\begin{corollary}                           \label{ainrc}
Let $v$ be a convex valuation on the ordered field $(K,<)$
and $R$ a real closure of $(K,<)$, endowed with the unique convex
extension of $v$. Further, let $\Gamma|vK$ be an algebraic extension of
ordered abelian groups, and $k|Kv$ a subextension of some real closure
of $Kv$. Then there is a (separable-algebraic) subextension $(L|K,v)$ of
$(R|K,v)$ such that $vL= \Gamma$ and $Lv=k$. If both $\Gamma|vK$ and
$k|Kv$ are finite, then $L|K$ can be taken to be a finite simple
extension of the form $K(a)|K$ such that $[K(a):K]= (\Gamma:vK)[k:Kv]$.
\end{corollary}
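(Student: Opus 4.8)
The goal is to prove Corollary~\ref{ainrc}, which asks for a subextension of the real closure $R$ realizing a prescribed algebraic value group extension $\Gamma|vK$ and a prescribed residue field $k|Kv$ lying inside a real closure of $Kv$. The natural approach is to mimic the proof of Theorem~\ref{extprvgrf}, but carried out \emph{inside} $R$ rather than inside $\tilde K$. First I would invoke Theorem~\ref{kmwrpr}: the unique convex extension of $v$ to $R$ is henselian, has divisible value group $vR$, and has residue field $Rv$ equal to a real closure of $(Kv,<_r)$. Since $\Gamma|vK$ is algebraic, $\Gamma$ embeds into the divisible hull of $vK$, hence into $vR$; and since $k|Kv$ is a subextension of \emph{some} real closure of $Kv$, and all real closures of $(Kv,<_r)$ are isomorphic over $Kv$, we may fix an embedding of $k$ into $Rv$. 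So there is genuinely ``room'' inside $(R,v)$ for the desired data.

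The next step is a Zorn's Lemma argument exactly parallel to the one in the proof of Theorem~\ref{extprvgrf}: consider the family of all separable-algebraic subextensions $(L|K,v)$ of $(R|K,v)$ with $vL\subseteq\Gamma$ and $Lv\subseteq k$. Unions of chains again lie in this family (a union of subfields of $R$ is a subfield of $R$, and value groups/residue fields pass to the union), so there is a maximal such $(L,v)$. Then I would argue $vL=\Gamma$ and $Lv=k$ by contradiction, using Lemmas~\ref{constrva} and~\ref{constrra} in their Artin--Schreier (hence separable) incarnations. If $vL\subsetneq\Gamma$, pick a prime $p$ and $\alpha\in\Gamma\setminus vL$ with $p\alpha\in vL$; Lemma~\ref{constrva} produces a separable-algebraic extension $L(a)$ with $vL(a)=vL+\Z\alpha$ and $L(a)v=Lv$ --- but I must check that $L(a)$ can be realized \emph{inside} $R$. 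If $Lv\subsetneq k$, pick $\zeta\in k\setminus Lv$ algebraic over $Lv$; since $\zeta$ lies in a real closure of $Kv$ (hence of $Lv$) and $Rv$ is such a real closure, $\zeta\in Rv$, and Lemma~\ref{constrra} produces the required residue extension --- again I must check it embeds in $R$.

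The main obstacle is precisely this realization-inside-$R$ point, and it is where henselianity does the work. Because $(R,v)$ is henselian, for any finite extension $(L|K,v)$ inside $R$ the valuation on $L$ extends uniquely, so the extensions $(L',v)$ produced by Lemmas~\ref{constrva} and~\ref{constrra} (each of which also has a \emph{unique} extension of $v$, and for which $[L':L]=(vL':vL)[L'v:Lv]$) are the ``defectless, unramified-or-totally-ramified'' building blocks that $R$ must contain: concretely, $L'=L(a)$ is generated by a root of an Artin--Schreier polynomial (or of $X^p - c$) over $L$, and one checks that the relevant polynomial has a root in $R$. For the value-group step: by Lemma~\ref{constrva}, when $\alpha<0$ one may take $a$ to be a root of an Artin--Schreier polynomial $X^p-X-c$ with $vc<0$; such a polynomial, being of odd-or-handled degree with appropriate sign behaviour, has a root in the real closure $R$ (and for $\alpha>0$ one passes to $1/a$ as in the lemma). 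For the residue step, Lemma~\ref{constrra} likewise gives an Artin--Schreier generator when $\zeta^p\in Lv$, and in the general separable case one uses that $\zeta\in Rv$ together with henselian lifting to pull $\zeta$ back to an element of $R$ generating the right residue extension. So at each stage the required finite extension already sits inside $R$, contradicting maximality unless $vL=\Gamma$ and $Lv=k$. Finally, in the ``both finite'' case I would simply perform finitely many such steps rather than invoking Zorn, and multiplicativity of ramification index and inertia degree gives $[L:K]=(\Gamma:vK)[k:Kv]$; since $L|K$ is finite and separable it is simple, $L=K(a)$. The delicate part to write carefully is checking, in each of the two inductive steps, that the Artin--Schreier (or $p$-th root) generator can be chosen with its value and residue as prescribed \emph{and} lying in $R$ --- this is exactly the content one imports from Lemmas~\ref{vASr}, \ref{constrva}, \ref{constrra} combined with henselianity of $(R,v)$.
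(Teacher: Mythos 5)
Your overall strategy is exactly the route the paper sketches in its hint: identify $\Gamma$ inside $vR$ and $k$ inside $Rv$ via Theorem~\ref{kmwrpr} and the uniqueness of real closures of $(Kv,<_r)$, run the Zorn argument of Theorem~\ref{extprvgrf} over subextensions of $R$, and at a maximal $L$ realize the one-step extensions of Lemmas~\ref{constrva} and~\ref{constrra} inside $R$ using henselianity of $(R,v)$. However, the step you yourself single out as delicate is where your justification does not work. Since $v$ is convex on the ordered field $K$, the valuation ring is convex and contains $\Q$, so $\chara Kv=0$; hence the Artin--Schreier incarnations of Lemmas~\ref{constrva} and~\ref{constrra} never apply in this corollary, and your argument that the Artin--Schreier polynomial ``has a root in the real closure $R$ by sign behaviour'' is both beside the point and, as stated, false in general (for $p=2$, $X^p-X-c$ need not have any real root). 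Likewise, the remark that the building blocks are defectless with unique valuation extension does not by itself imply that $R$ contains them: $R$ is one fixed algebraic extension, and containment has to be checked by exhibiting a real generator.

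What actually puts the two steps inside $R$ is real closedness plus Hensel's Lemma, with one small care you omit. For the value-group step, choose $c\in L$ with $vc=p\alpha$ and $c>0$ (replace $c$ by $-c$ if necessary; the value is unchanged); then the positive $p$-th root $a=c^{1/p}$ exists in the real closed field $R$ --- the sign adjustment is essential when $p$ is even --- and Lemma~\ref{constrva} gives $va=\alpha$, $vL(a)=vL+\Z\alpha\subseteq\Gamma$ and $L(a)v=Lv\subseteq k$. For the residue step, lift the minimal polynomial of $\zeta$ over $Lv$ (automatically separable in residue characteristic $0$) to a monic $f\in{\cal O}_L[X]$; since $\zeta\in Rv$ is a simple root of $fv$ and $(R,v)$ is henselian, Hensel's Lemma yields $b\in{\cal O}_R$ with $f(b)=0$ and $bv=\zeta$, and the degree count in the proof of Lemma~\ref{constrra} gives $vL(b)=vL\subseteq\Gamma$ and $L(b)v=Lv(\zeta)\subseteq k$. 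With these two replacements your Zorn argument, and the finite case via multiplicativity of ramification index and inertia degree together with the primitive element theorem in characteristic $0$, go through exactly as you describe.
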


We leave the proof of the corollary as an exercise to the reader. It is
a straightforward application of Hensel's Lemma, using the fact that
$(R,v)$ is henselian. One also uses the fact that all real closures of
$Kv$ are isomorphic over $Kv$, so by passing to an equivalent residue
map (place), one passes from $Rv$ to the real closure given in the
hypothesis.

%
%
\subsection{A version of Krasner's Lemma}
Let $(K,v)$ be any valued field. If $a\in \tilde{K}\setminus K$ is not
purely inseparable over $K$, we choose some extension of $v$
from $K$ to $\tilde{K}$ and define
\[\mbox{\rm kras}(a,K)\;:=\; \max\{v(\tau a-\sigma a)\mid
\sigma,\tau\in \Gal K \mbox{\ \ and\ \ } \tau a\ne \sigma a\}
\;\in\>v\tilde{K}\]
and call it the \bfind{Krasner constant of $a$ over $K$}. Since all
extensions of $v$ from $K$ to $\tilde{K}$ are conjugate, this does not
depend on the choice of the particular extension of $v$. For the
same reason, over a henselian field $(K,v)$ our Krasner constant
$\mbox{\rm kras}(a,K)$ coincides with the Krasner constant
\[\max\{v(a-\sigma a)\mid \sigma\in \Gal K
\mbox{\ \ and\ \ } a\ne \sigma a\}\]
as defined by S.~K.~Khanduja in [KH11,12]. The following is a
variant of the well-known Krasner's Lemma (cf.\ [R]).
\begin{lemma}                        \label{vkras}
Take $K(a)|K$ to be a separable-algebraic extension, and
$(K(a,b),v)$ to be any valued field extension of $(K,v)$ such that
\begin{equation}
v(b-a)\;>\;\mbox{\rm kras}(a,K)\;.
\end{equation}
Then for every extension of $v$ from $K(a,b)$ to its algebraic closure
$\widetilde{K(a,b)}=\widetilde{K(b)}$, the element $a$ lies in the
henselization of $(K(b),v)$ in $(\widetilde{K(b)},v)$.
\end{lemma}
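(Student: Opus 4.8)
The plan is to reduce this to the standard Krasner Lemma over a henselian field by passing to the henselization of $(K(b),v)$ and showing that $a$ is fixed by the relevant part of the Galois action. Fix an extension of $v$ from $K(a,b)$ to $\widetilde{K(b)}$, and write $K(b)^h$ for the henselization of $(K(b),v)$ inside $(\widetilde{K(b)},v)$. Since $K(a)|K$ is separable-algebraic, $a$ is separable over $K(b)$, so it suffices to show that $a$ is fixed by every $\sigma\in\Gal(K(b)^h\sep|K(b)^h)$; then $a\in K(b)^h$ because $K(b)^h$ is relatively algebraically closed in its own separable closure up to the Galois correspondence — more precisely, an element of $\widetilde{K(b)}$ separable over $K(b)^h$ and fixed by $\Gal(K(b)^h)$ lies in $K(b)^h$.

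The key computation is the following. Let $\sigma\in\Gal(K(b)^h)$; since $\sigma$ fixes $b$ and fixes (a chosen extension of) $v$ on $\widetilde{K(b)}$ — because the henselization's valuation is the unique extension and $\sigma$ restricts to an automorphism of $(\widetilde{K(b)},v)$ over $(K(b)^h,v)$ — we have $v(\sigma a - a) = v(\sigma a - \sigma b) = v(\sigma(a-b)) = v(a-b) > \mathrm{kras}(a,K)$ if $\sigma a \ne a$. But $\sigma a$ is a conjugate of $a$ over $K$ (as $\sigma$ fixes $K$), and $a$ is also a conjugate of $a$ over $K$, so by the definition of the Krasner constant, $v(\sigma a - a) \le \mathrm{kras}(a,K)$ whenever $\sigma a \ne a$. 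This contradiction forces $\sigma a = a$. Hence $a$ is fixed by all of $\Gal(K(b)^h)$, so $a\in K(b)^h$ as claimed.

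I would present it in that order: first set up the henselization and the chosen prolongation of $v$; then recall that $\Gal(K(b)^h)$ acts on $(\widetilde{K(b)},v)$ preserving $v$ (this is where henselianity and uniqueness of the extension of $v$ enter); then run the two-line inequality comparing $v(\sigma a - a)$ computed via $b$ against the Krasner bound; then conclude with the Galois-theoretic statement that an element separable over $K(b)^h$ and fixed by its absolute Galois group lies in $K(b)^h$.

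The main obstacle, and the only point needing care, is the claim that every $\sigma\in\Gal(K(b)^h)$ can be taken to preserve the chosen extension of $v$ on $\widetilde{K(b)}$ — equivalently that $v(\sigma c) = v(c)$ for all $c\in\widetilde{K(b)}$. This is exactly the henselian property: over a henselian base the extension of the valuation to any algebraic extension is unique, so it is automatically Galois-invariant. One should state this explicitly (or cite the henselization's defining property from the preliminaries) rather than leave it implicit, since the definition of $\mathrm{kras}(a,K)$ was made with respect to an arbitrary but fixed extension of $v$ to $\tilde K$, and one must check compatibility between that choice and the prolongation to $\widetilde{K(b)}$; by the remark following the definition of the Krasner constant, the value $\mathrm{kras}(a,K)$ is independent of this choice, so no conflict arises.
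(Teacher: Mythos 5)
Your proof is correct, and it is essentially the paper's argument with the classical Krasner's Lemma unfolded rather than cited: the paper shows $\mbox{\rm kras}(a,K(b)^h)\leq\mbox{\rm kras}(a,K)$ --- using exactly your observation that each $\sigma\in\Gal K(b)^h$ restricts on $\tilde{K}$ to an element of $\Gal K$ --- and then invokes the usual Krasner's Lemma over the henselian field $K(b)^h$, whereas you run the fixed-point argument directly (Galois-invariance of $v$ over the henselian base, then the ultrametric estimate, then the fixed-field conclusion). What your version buys is self-containedness; what the paper's buys is brevity, since the step $v\sigma=v$ and the Galois-theoretic conclusion are absorbed into the cited lemma. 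One small correction: the chain $v(\sigma a-a)=v(\sigma a-\sigma b)=\cdots$ is an overstatement; since $\sigma b=b$ you only get the ultrametric inequality $v(\sigma a-a)\geq\min\{v(\sigma(a-b)),\,v(b-a)\}=v(a-b)>\mbox{\rm kras}(a,K)$, which is all that is needed for the contradiction with $v(\sigma a-a)\leq\mbox{\rm kras}(a,K)$. Your closing remark about the independence of $\mbox{\rm kras}(a,K)$ from the chosen extension of $v$ to $\tilde{K}$ is exactly the right compatibility point to make explicit.
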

\begin{proof}
Take any extension of $v$ from $K(a,b)$ to $\widetilde{K(b)}$ and denote
by $K(b)^h$ the henselization of $(K(b),v)$ in $(\widetilde{K(b)},v)$.
Since $a$ is separable-algebraic over $K$, it is also
separable-algebraic over $K(b)^h$. Since for every $\rho\in \Gal K(b)^h$
we have that $\rho a=\rho|_{\tilde{K}}a$ and $\rho|_{\tilde{K}}\in \Gal
K$, we find that
\begin{eqnarray*}
\lefteqn{\{v(a-\rho a)\mid\rho\in\Gal K(b)^h \mbox{ and } a\ne\rho a\}}
&&\\
& \subseteq & \{v(a-\sigma a)\mid \sigma\in \Gal K \mbox{ and }
a\ne \sigma a\}\\
& \subseteq & \{v(\tau a-\sigma a)\mid \sigma,\tau\in \Gal K
\mbox{ and } \tau a\ne \sigma a\}\;.
\end{eqnarray*}
This implies that
\[\mbox{\rm kras}(a,K(b)^h)\;\leq\;\mbox{\rm kras}(a,K)\;,\]
and consequently, $v(b-a)\,>\,\mbox{\rm kras}(a,K(b)^h)$. Now
$a\in K(b)^h$ follows from the usual Krasner's Lemma.
\end{proof}

%
%
\section{Valuations on $K(x)$}
%
%
%
\subsection{A basic classification}            \label{sectbc}
In this section, we wish to classify all extensions of the valuation $v$
of $K$ to a valuation of the rational function field $K(x)$. As
\begin{equation}                            \label{111}
1\>=\>\trdeg K(x)|K\>\geq\>\rr vK(x)/vK\,+\,\trdeg K(x)v|Kv
\end{equation}
holds by Lemma~\ref{prelBour}, there are the following mutually
exclusive cases:
\sn
$\bullet$ \ $(K(x)|K,v)$ is \bfind{valuation-algebraic}: \par
$vK(x)/vK$ is a torsion group and $K(x)v|Kv$ is algebraic,
\sn
$\bullet$ \ $(K(x)|K,v)$ is \bfind{value-transcendental}: \par
$vK(x)/vK$ has rational rank 1, but $K(x)v|Kv$ is algebraic,
\sn
$\bullet$ \ $(K(x)|K,v)$ is \bfind{residue-transcendental}: \par
$K(x)v|Kv$ has transcendence degree 1, but $vK(x)/vK$ is a torsion
group.
\mn
We will combine the value-transcendental case and the
residue-transcendental case by saying that
\sn
$\bullet$ \ $(K(x)|K,v)$ is \bfind{valuation-transcendental}: \par
$vK(x)/vK$ has rational rank 1, or $K(x)v|Kv$ has transcendence
degree 1.
\mn
A special case of the valuation-algebraic case is the following:
\sn
$\bullet$ \ $(K(x)|K,v)$ is {\bf immediate}: \par
$vK(x)=vK$ and $K(x)v=Kv$.

\begin{remark}                              \label{limitrem}
It was observed by several authors that a valuation-algebraic extension
of $v$ from $K$ to $K(x)$ can be represented as a limit of an infinite
sequence of residue-transcendental extensions. See, e.g., [APZ3], where
the authors also derive the assertion of our Theorem~\ref{count} from
this fact. A ``higher form'' of this approach is found in [S]. The
approach is particularly important because residue-transcendental
extensions behave better than valuation-algebraic extensions: the
corresponding extensions of value group and residue field are finitely
generated (Corollary~\ref{fingentb}), and they do not generate a defect:
see the Generalized Stability Theorem (Theorem~3.1) and its application
in [KKU1].
\end{remark}

\parb
If $K$ is algebraically closed, then the residue field $Kv$ is
algebraically closed, and the value group $vK$ is divisible. So we see
that for an extension $(K(x)|K,v)$ with algebraically closed $K$, there
are only the following mutually exclusive cases:
\sn
$(K(x)|K,v)$ is {\bf immediate}: \ $vK(x)=vK$ and $K(x)v=Kv$,\n
$(K(x)|K,v)$ is \bfind{value-transcendental}: \ $\rr vK(x)/vK=1$,
but $K(x)v=Kv$,\n
$(K(x)|K,v)$ is \bfind{residue-transcendental}: \ $\trdeg K(x)v|Kv=1$,
but $vK(x)=vK$.

\parm
Let us fix an arbitrary extension of $v$ to $\tilde{K}$. Every valuation
$w$ on $K(x)$ can be extended to a valuation on $\tilde{K}(x)$. If $v$
and $w$ agree on $K$, then this extension can be chosen in such a way
that its restriction to $\tilde{K}$ coincides with $v$. Indeed, if $w'$
is any extension of $w$ to $\tilde{K}(x)$ and $v'$ is its restriction to
$\tilde{K}$, then
%
%
there is an automorphism $\tau$ of $\tilde{K}|K$ such that $v'\tau=v$ on
$\tilde{K}$. We choose $\sigma$ to be the (unique) automorphism of
$\tilde{K}(x)|K(x)$ whose restriction to $\tilde{K}$ is $\tau$ and which
satisfies $\sigma x=x$. Then $w'\sigma$ is an extension of $w$ from
$K(x)$ to $\tilde{K}(x)$ whose restriction to $\tilde{K}$ is
$v$. We conclude:
\begin{lemma}                               \label{restrall}
Take any extension of $v$ from $K$ to its algebraic closure
$\tilde{K}$. Then every extension of $v$ from $K$ to $K(x)$ is the
restriction of some extension of $v$ from $\tilde{K}$ to
$\tilde{K}(x)$.
\end{lemma}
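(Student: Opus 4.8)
The plan is to start from an arbitrary extension $w$ of $v$ from $K$ to $K(x)$, first extend it \emph{somehow} to $\tilde K(x)$, and then compose with a suitable automorphism of $\tilde K(x)$ to make its restriction to $\tilde K$ agree with the fixed extension. Concretely, since $\tilde K(x)=\tilde K.K(x)$ with $\tilde K|K$ algebraic, the field extension $\tilde K(x)|K(x)$ is algebraic, so by the general extension theorem for valuations (quoted at the beginning of Section~\ref{sectprel}) there is at least one extension $w'$ of $w$ from $K(x)$ to $\tilde K(x)$. Let $v'$ be the restriction of $w'$ to $\tilde K$; then $v'$ is some extension of $v$ from $K$ to $\tilde K$, which a priori need not be the one we have fixed.

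The only genuine input is the conjugacy of all extensions of a valuation to an algebraic (here, normal) field extension: since both $v'$ and the fixed extension of $v$ restrict to $v$ on $K$, there is a field automorphism $\tau\in\Gal K$ with $v'\circ\tau=v$ on $\tilde K$. Because $\tilde K(x)|\tilde K$ is purely transcendental with transcendence basis $\{x\}$, the automorphism $\tau$ of $\tilde K$ extends uniquely to a field automorphism $\sigma$ of $\tilde K(x)$ with $\sigma x=x$; observe that $\sigma$ fixes $K(x)$ pointwise, since it fixes $K$ pointwise and fixes $x$.

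It then remains to set $w'':=w'\circ\sigma$ and verify the two restrictions. Being the composition of a valuation with a field automorphism, $w''$ is a valuation on $\tilde K(x)$. For $y\in K(x)$ we have $w''(y)=w'(\sigma y)=w'(y)=w(y)$ since $\sigma$ fixes $K(x)$; and for $a\in\tilde K$ we have $w''(a)=w'(\sigma a)=w'(\tau a)=v'(\tau a)=v(a)$ since $w'|_{\tilde K}=v'$ and $v'\circ\tau=v$. Thus $w''$ is an extension of $v$ from $\tilde K$ to $\tilde K(x)$ whose restriction to $K(x)$ is the given $w$, which is exactly the assertion. I do not expect any real obstacle here: once the conjugacy of valuation extensions over $\tilde K|K$ is invoked, the remaining work is the routine bookkeeping of lifting $\tau$ to $\sigma$ and computing restrictions. (This argument is precisely the one sketched in the paragraph preceding the lemma, so the author's proof presumably just refers back to that discussion.)
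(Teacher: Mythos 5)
Your proposal is correct and is essentially the paper's own argument, which appears in the paragraph immediately preceding the lemma: extend $w$ arbitrarily to $\tilde K(x)$, use conjugacy of the extensions of $v$ to $\tilde K$ to find $\tau\in\Gal K$ with $v'\tau=v$, lift $\tau$ to the automorphism $\sigma$ of $\tilde K(x)|K(x)$ fixing $x$, and take $w'\sigma$. Your explicit verification of the two restrictions just spells out what the paper leaves implicit.
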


Now extend $v$ to $\widetilde{K(x)}$.
We know that $v\widetilde{K(x)}/vK(x)$ and
$v\tilde{K}/vK$ are torsion groups, and also
$v\tilde{K}(x)/vK(x)\subset v\widetilde{K(x)}/vK(x)$
is a torsion group. Hence,
\[\rr v\tilde{K}(x)/v\tilde{K}\>=\>\rr vK(x)/vK\;.\]
Since $v\tilde{K}$ is divisible,
$vK(x)/vK$ is a torsion group if and only if
$v\tilde{K}(x) = v\tilde{K}$.

Further, the extensions $\widetilde{K(x)}v|K(x)v$ and
$\tilde{K}v|Kv$ are algebraic, and also the subextension
$\tilde{K}(x)v|K(x)v$ of $\widetilde{K(x)}v|K(x)v$ is
algebraic. Hence,
\[\trdeg \tilde{K}(x)v|\tilde{K}v\>=\>
\trdeg K(x)v|Kv\;.\]
Since $\tilde{K}v$ is algebraically closed,
$K(x)v|Kv$ is algebraic if and only if
$\tilde{K}(x)v = \tilde{K}v$. We have proved:
\begin{lemma}                               \label{aavavtrt}
$(K(x)|K,v)$ is valuation-algebraic if and only if
$(\tilde{K}(x)|\tilde{K},v)$ is immediate.
$(K(x)|K,v)$ is valuation-transcendental if and only if
$(\tilde{K}(x)|\tilde{K},v)$ is not immediate.
$(K(x)|K,v)$ is value-transcendental if and only if
$(\tilde{K}(x)|\tilde{K},v)$ is value-transcendental.
$(K(x)|K,v)$ is residue-transcendental if and only if
$(\tilde{K}(x)|\tilde{K},v)$ is residue-transcendental.
\end{lemma}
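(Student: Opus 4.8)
The plan is to reduce each of the four stated equivalences to a comparison of value groups and residue fields, exploiting that $v\tilde{K}$ is divisible and $\tilde{K}v$ is algebraically closed (Lemma~\ref{valgclo}). First I would fix an extension of $v$ to $\widetilde{K(x)}$, so that $v$ is simultaneously defined on $K(x)$, on $\tilde{K}$ and on $\tilde{K}(x)$; since all extensions of a valuation along an algebraic extension are conjugate, the type of $(\tilde{K}(x)|\tilde{K},v)$ (valuation-algebraic, value-transcendental, residue-transcendental) does not depend on this choice, so it is legitimate to work with a single fixed extension.

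Next I would record the elementary torsion/algebraicity facts. By the fundamental inequality~(\ref{fiq}) and Lemma~\ref{fin}, the quotients $v\widetilde{K(x)}/vK(x)$ and $v\tilde{K}/vK$ are torsion groups; since $v\tilde{K}(x)/vK(x)$ embeds into $v\widetilde{K(x)}/vK(x)$ it is torsion as well, whence $\rr v\tilde{K}(x)/v\tilde{K}=\rr vK(x)/vK$. Dually, $\widetilde{K(x)}v|K(x)v$ and $\tilde{K}v|Kv$ are algebraic, hence so is $\tilde{K}(x)v|K(x)v$, and therefore $\trdeg\tilde{K}(x)v|\tilde{K}v=\trdeg K(x)v|Kv$. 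The two reductions I would then isolate are: (i) because $v\tilde{K}$ is divisible and ordered abelian groups are torsion-free, any $\gamma\in v\tilde{K}(x)$ with $n\gamma\in v\tilde{K}$ already lies in $v\tilde{K}$, so $v\tilde{K}(x)/v\tilde{K}$ is torsion iff it is trivial; combined with the rational-rank identity this gives that $vK(x)/vK$ is a torsion group iff $v\tilde{K}(x)=v\tilde{K}$, and likewise $\rr vK(x)/vK=1$ iff $\rr v\tilde{K}(x)/v\tilde{K}=1$; (ii) because $\tilde{K}v$ is algebraically closed it has no proper algebraic extension, so $\tilde{K}(x)v|\tilde{K}v$ is algebraic iff $\tilde{K}(x)v=\tilde{K}v$; combined with the transcendence-degree identity this gives that $K(x)v|Kv$ is algebraic iff $\tilde{K}(x)v=\tilde{K}v$, and $\trdeg K(x)v|Kv=1$ iff $\trdeg\tilde{K}(x)v|\tilde{K}v=1$.

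The four equivalences then fall out by matching definitions. ``$(K(x)|K,v)$ valuation-algebraic'' means $vK(x)/vK$ torsion and $K(x)v|Kv$ algebraic, which by (i) and (ii) says $v\tilde{K}(x)=v\tilde{K}$ and $\tilde{K}(x)v=\tilde{K}v$, i.e.\ $(\tilde{K}(x)|\tilde{K},v)$ is immediate. Since by~(\ref{111}) exactly one of the three basic cases holds and ``valuation-transcendental'' is by definition the negation of ``valuation-algebraic'', the second equivalence follows from the first without further work. For the third, ``$(K(x)|K,v)$ value-transcendental'' means $\rr vK(x)/vK=1$ and $K(x)v|Kv$ algebraic; by the rank identity and (ii) this says $\rr v\tilde{K}(x)/v\tilde{K}=1$ and $\tilde{K}(x)v=\tilde{K}v$, and since over the algebraically closed base $\tilde{K}$ the condition ``$\tilde{K}(x)v|\tilde{K}v$ algebraic'' coincides with ``$\tilde{K}(x)v=\tilde{K}v$'', this is precisely ``$(\tilde{K}(x)|\tilde{K},v)$ value-transcendental''. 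The residue-transcendental case is symmetric, using the transcendence-degree identity and (i).

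I do not expect a genuinely hard step here: once the reductions (i) and (ii) are in place the argument is bookkeeping. The only points deserving a word of care are the implications ``torsion over a divisible ordered group implies contained in it'' and ``algebraic over an algebraically closed field implies equal to it'', and the correct use of the classification from~(\ref{111}), so that ``valuation-transcendental'' is handled as the complement of ``valuation-algebraic'' rather than re-derived from scratch.
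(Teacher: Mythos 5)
Your proposal is correct and follows essentially the same route as the paper: fix an extension of $v$ to $\widetilde{K(x)}$, use that $v\tilde{K}(x)/vK(x)$ and $v\tilde{K}/vK$ are torsion (resp.\ that the corresponding residue field extensions are algebraic) to get $\rr v\tilde{K}(x)/v\tilde{K}=\rr vK(x)/vK$ and $\trdeg \tilde{K}(x)v|\tilde{K}v=\trdeg K(x)v|Kv$, and then invoke divisibility of $v\tilde{K}$ and algebraic closedness of $\tilde{K}v$ to translate ``torsion'' into ``equal value groups'' and ``algebraic'' into ``equal residue fields''. The extra remarks you add (independence of the chosen extension by conjugacy, and treating the valuation-transcendental case as the complement via~(\ref{111})) are consistent with, and only slightly more explicit than, the paper's argument.
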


The proof can easily be generalized to show:
\begin{lemma}                               \label{a-i}
Let $(F|K,v)$ be any valued field extension. Then $vF|vK$ and
$Fv|Kv$ are algebraic if and only if $(F.\tilde{K}|\tilde{K},v)$ is
immediate, for some (or any) extension of $v$ from $F$ to $F.\tilde{K}$.
\end{lemma}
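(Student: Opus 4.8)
The plan is to mimic the argument already carried out for Lemma~\ref{aavavtrt}, which is exactly the case $F = K(x)$, and to observe that the only feature of that proof which was actually used is that the passage from $K$ to $\tilde{K}$ forces value groups and residue fields into their (separable-)algebraic closures. So first I would fix an extension of $v$ from $F$ to $F.\tilde{K}$ and restrict it to $\tilde{K}$; since all extensions of $v$ from $K$ to $\tilde{K}$ are conjugate, there is no loss in assuming this restriction is the originally given one (this is the only place where ``for some (or any) extension'' plays a role, and it should be spelled out in one sentence). Then I would record the two torsion/algebraicity facts that come from the fundamental inequality via Lemma~\ref{fin}: $v\widetilde{F}/vF$ and $v\tilde{K}/vK$ are torsion (equivalently, $v(F.\tilde{K})/vF$ is torsion, since $F.\tilde{K} \subseteq \widetilde{F}$), and likewise $\widetilde{F}v|Fv$ and $\tilde{K}v|Kv$ are algebraic. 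In particular $\tilde{K}v$ is the algebraic closure of $Kv$ and $v\tilde{K}$ is the divisible hull of $vK$ by Lemma~\ref{valgclo}.

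Next I would run the two implications. For the forward direction: assume $vF/vK$ and $Fv|Kv$ are algebraic. Since $v(F.\tilde{K}) \supseteq v\tilde{K}$ is divisible and $v(F.\tilde{K})/v\tilde{K}$ is a quotient of $v(F.\tilde{K})/vK$, which is torsion (it is an extension of the torsion group $vF/vK$ by the torsion group $v(F.\tilde K)/vF$), a torsion group over a divisible group is trivial, so $v(F.\tilde{K}) = v\tilde{K}$. Similarly $(F.\tilde{K})v|\tilde{K}v$ is algebraic over the algebraically closed field $\tilde{K}v$, hence equals $\tilde{K}v$; so $(F.\tilde{K}|\tilde{K},v)$ is immediate. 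For the converse: if $(F.\tilde{K}|\tilde{K},v)$ is immediate, then $v(F.\tilde{K}) = v\tilde{K}$ is torsion over $vK$, hence $vF/vK$, being a subgroup, is torsion; and $(F.\tilde{K})v = \tilde{K}v$ is algebraic over $Kv$, hence $Fv|Kv$, being a subextension, is algebraic. This is exactly the chain of inclusions/quotients used in the paragraph preceding Lemma~\ref{aavavtrt}, just with $F$ in place of $K(x)$ and without using the rational-rank-$1$ bound, so no new inequality is needed.

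I do not expect a genuine obstacle here: the proof of Lemma~\ref{aavavtrt} already ``easily generalizes,'' as the text itself says, and the only thing requiring a moment's care is the conjugacy remark that makes the ``some (or any)'' clause legitimate — namely that if $w_1, w_2$ are two extensions of $v$ from $F$ to $F.\tilde{K}$, an automorphism of $\tilde{K}|K$ conjugating their restrictions to $\tilde K$ need not extend to $F.\tilde{K}$, so instead one should argue directly that the statements ``$vF/vK$ algebraic'' and ``$Fv|Kv$ algebraic'' do not depend on the chosen extension to $F.\tilde K$ (they are intrinsic to $F$), while immediacy of $(F.\tilde{K}|\tilde{K},v)$ follows for \emph{every} such extension precisely by the argument above applied to that extension. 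So the cleanest write-up is: prove ``algebraic $\Longrightarrow$ immediate for every extension'' and ``immediate for some extension $\Longrightarrow$ algebraic,'' and note these two together give the biconditional with either quantifier. That is a couple of lines and finishes the lemma.
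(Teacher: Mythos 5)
Your proposal is correct and follows essentially the same route as the paper, which simply remarks that the proof of Lemma~\ref{aavavtrt} generalizes: you use Lemma~\ref{fin} to get that $v(F.\tilde K)/vF$ is torsion and $(F.\tilde K)v|Fv$ is algebraic, then exploit divisibility of $v\tilde K$ and algebraic closedness of $\tilde K v$ for the forward direction, and the inclusions $vF\subseteq v(F.\tilde K)$, $Fv\subseteq (F.\tilde K)v$ for the converse. Your handling of the ``some (or any)'' quantifier (proving ``algebraic $\Rightarrow$ immediate for every extension'' and ``immediate for some $\Rightarrow$ algebraic'', noting the algebraicity conditions are intrinsic to $(F,v)$) is a sound and slightly more careful write-up of the same argument.
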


%
%
\subsection{Pure and weakly pure extensions}    \label{sectpure}
Take $t\in K(x)$. If $vt$ is not a torsion element modulo $vK$, then
$t$ will be called a \bfind{value-transcendental element}. If $vt=0$
and $tv$ is transcendental over $Kv$, then $t$ will be called a
\bfind{residue-transcendental element}. An element will be called a
\bfind{valuation-transcendental element} if it is value-transcendental
or residue-transcendental. We will call the extension $(K(x)|K,v)$
\bfind{pure} ({\bf in} $x$) if one of the following cases holds:
\sn
$\bullet$ \ for some $c,d\in K$, $d\cdot (x-c)$ is
valuation-transcendental,
\sn
$\bullet$ \ $x$ is the pseudo limit of some pseudo Cauchy sequence
in $(K,v)$ of transcendental type.
\sn
We leave it as an exercise to the reader to prove that $(K(x)|K,v)$ is
pure in $x$ if and only if it is pure in any other generator of $K(x)$
over $K$; we will not need this fact in the present paper.

If $(K(x)|K,v)$ is pure in $x$ then it follows from Lemma~\ref{prelBour}
and Lemma~\ref{limtpcs} that $x$ is transcendental over $K$. If $d\cdot
(x-c)$ is value-transcendental, then $vK(x)=vK\oplus\Z v(x-c)$ and
$K(x)v=Kv$ by Lemma~\ref{prelBour} (in this case, we may in fact choose
$d=1$). If $d\cdot (x-c)$ is residue-transcendental, then again by
Lemma~\ref{prelBour}, we have $vK(x)=vK$ and that $K(x)v=
Kv(d(x-c)v)$ is a rational function field over $Kv$. If $x$ is the
pseudo limit of some pseudo Cauchy sequence in $(K,v)$ of transcendental
type, then $(K(x)|K,v)$ is immediate by Lemma~\ref{limtpcs}. This
proves:
\begin{lemma}                               \label{purevgrf}
If $(K(x)|K,v)$ is pure, then $vK$ is pure in $vK(x)$ (i.e., $vK(x)/vK$
is torsion free), and $Kv$ is relatively algebraically closed in $K(x)v$.
\end{lemma}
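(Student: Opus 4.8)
The plan is to verify the two conclusions of Lemma~\ref{purevgrf} by checking them case by case according to the definition of purity, using only the facts that have already been recorded just above the statement. We are given that $(K(x)|K,v)$ is pure in $x$, so one of three situations occurs: $d(x-c)$ is value-transcendental for suitable $c,d\in K$; $d(x-c)$ is residue-transcendental for suitable $c,d\in K$; or $x$ is a pseudo limit of a pseudo Cauchy sequence in $(K,v)$ of transcendental type. The preceding paragraph already tells us exactly what $vK(x)$ and $K(x)v$ look like in each case, so the proof is essentially a matter of reading off the conclusions from those descriptions.

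First I would handle the value-transcendental case. Here the paragraph before the lemma records (via Lemma~\ref{prelBour}, taking $I=\{1\}$, $J=\emptyset$, $x_1=x-c$) that $vK(x)=vK\oplus\Z v(x-c)$ and $K(x)v=Kv$. The direct-sum decomposition shows immediately that the quotient $vK(x)/vK\cong \Z v(x-c)$ is torsion free, i.e.\ $vK$ is pure in $vK(x)$; and since $K(x)v=Kv$, the field $Kv$ is trivially relatively algebraically closed in $K(x)v$. Next, in the residue-transcendental case, the same paragraph records that $vK(x)=vK$ and that $K(x)v=Kv(d(x-c)v)$ is a rational function field in one variable over $Kv$. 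Then $vK(x)/vK=0$ is torsion free, and $Kv$ is relatively algebraically closed in the rational function field $Kv(d(x-c)v)$ because a purely transcendental extension contains no new algebraic elements over the base field — this is a standard fact about rational function fields which I will simply invoke. Finally, in the pseudo-limit case, Lemma~\ref{limtpcs} gives that $(K(x)|K,v)$ is immediate, i.e.\ $vK(x)=vK$ and $K(x)v=Kv$, so both conclusions hold trivially, exactly as in the residue-transcendental case but with equality of residue fields.

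I do not expect any genuine obstacle: the lemma is a bookkeeping consequence of the definition of purity together with the value-group and residue-field computations that have already been carried out in the paragraph immediately preceding it. The only point requiring a small external input is the observation that the base field is relatively algebraically closed in a rational function field in one variable over it; everything else is either a quotient of a direct sum (hence torsion free) or an instance of $vK(x)=vK$, $K(x)v=Kv$. So the write-up amounts to listing the three cases, citing Lemma~\ref{prelBour}, Lemma~\ref{limtpcs}, and the standard rational-function-field fact, and reading off the two assertions in each case.
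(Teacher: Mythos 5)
Your proposal is correct and follows essentially the same route as the paper: the paper's ``proof'' is precisely the paragraph preceding the lemma, which lists the same three cases, cites Lemma~\ref{prelBour} and Lemma~\ref{limtpcs} to read off $vK(x)$ and $K(x)v$ in each, and then the two assertions follow just as you describe (including the standard fact that $Kv$ is relatively algebraically closed in the rational function field $Kv(d(x-c)v)$).
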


\parm
Here is the ``prototype'' of pure extensions:
\begin{lemma}                               \label{acpure}
If $K$ is algebraically closed and $x\notin K$, then $(K(x)|K,v)$ is
pure.
\end{lemma}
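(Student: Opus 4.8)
The plan is to show that for an algebraically closed field $K$ and $x \notin K$, the extension $(K(x)|K,v)$ falls into one of the three cases in the definition of purity. First I would fix an extension of $v$ to $\widetilde{K(x)}$ and recall the basic trichotomy from Section~\ref{sectbc}: since $K$ is algebraically closed, $(K(x)|K,v)$ is either immediate, value-transcendental, or residue-transcendental. The value-transcendental and residue-transcendental cases should be handled by exhibiting the required valuation-transcendental element of the form $d\cdot(x-c)$, and the immediate case should be handled by producing a pseudo Cauchy sequence in $(K,v)$ of transcendental type having $x$ as a pseudo limit.

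In the value-transcendental case, $vK(x)/vK$ has rational rank $1$, so there is some $g(x)\in K(x)$ with $vg(x)$ not torsion modulo $vK$; writing $g$ as a quotient of polynomials which (since $K=\tilde K$) split into linear factors, and using that values add, one of the linear factors $x-c$ (times a constant from $K$, which we can absorb or take as $d=1$) must already be value-transcendental. In the residue-transcendental case, $K(x)v/Kv$ has transcendence degree $1$, so there is $g(x)\in K(x)$ with $vg(x)=0$ and $g(x)v$ transcendental over $Kv$; again factoring $g$ into linear factors $x-c_i$ over $K$ and pulling out constants $d_i\in K$ to normalize each $v(d_i(x-c_i))=0$, one checks that if every $d_i(x-c_i)v$ were algebraic over $Kv$ then $g(x)v$ would be too, a contradiction; hence some $d(x-c)$ is residue-transcendental.

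The immediate case is the one I expect to be the main obstacle. Here $vK(x)=vK$ and $K(x)v=Kv$, and I want to produce a pseudo Cauchy sequence in $(K,v)$ of transcendental type with pseudo limit $x$. The standard approach: since $x$ is transcendental over $K$ and the extension is immediate, for every $c\in K$ the value $v(x-c)$ lies in $vK$, so there is $c'\in K$ with $v(x-c-c')$ strictly larger (using that the residue of $(x-c)/d$ for suitable $d$ already lies in $Kv$, so can be approximated by an element of $K$). Iterating this by transfinite recursion — at limit stages using that $K$ need not be complete but we only need a pseudo Cauchy \emph{sequence}, not a convergent one, so the recursion can be pushed until the values $v(x-c_\nu)$ become cofinal in the relevant initial segment — yields a pseudo Cauchy sequence $(c_\nu)$ in $(K,v)$ with $x$ as a pseudo limit. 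It must be of transcendental type: if it were of algebraic type, Theorem~\ref{Ka2}'s algebraic-type analogue (Theorem~3 of [KA]) would give a proper algebraic immediate extension generated by a pseudo limit, and since $x$ is a pseudo limit and transcendental over $K$ this forces $x$ algebraic over $K$, contradicting $x\notin K=\tilde K$. The delicate point to get right is the limit-stage argument showing the recursion genuinely produces a pseudo Cauchy sequence (i.e. that the values strictly increase and that we never get "stuck" with $x$ already in $K$ before exhausting the construction); this is where I would spend the most care, invoking immediateness at each successor step to guarantee the value can always be strictly increased as long as $x\notin K$.
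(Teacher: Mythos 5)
Your three-case strategy does work, but it is a genuinely different (and somewhat longer) route than the paper's. You first invoke the trichotomy of Section~\ref{sectbc} for algebraically closed $K$ and then treat the value-transcendental and residue-transcendental cases by factoring a witness $g(x)$ into linear factors over $K=\tilde K$ (using divisibility of $vK$, resp.\ algebraic closedness of $Kv$, to normalize and to see that some single factor $d(x-c)$ must already carry the non-torsion value, resp.\ the transcendental residue); this is correct. The paper instead makes a single dichotomy on the set $v(x-K)=\{v(x-b)\mid b\in K\}$: if it has a maximum $v(x-c)$, then maximality forces $x-c$ to be value-transcendental or some $d(x-c)$ to be residue-transcendental (otherwise one could approximate the residue by an element of $K$ and strictly increase the value); if it has no maximum, a well-ordered cofinal choice of values immediately yields a pseudo Cauchy sequence in $(K,v)$ with pseudo limit $x$ and \emph{without pseudo limit in} $K$. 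This dichotomy avoids the trichotomy and the factorization argument altogether, and it dissolves exactly the point you yourself identify as delicate: no transfinite recursion with limit-stage bookkeeping is needed, since one simply picks preimages $c_\nu$ of a cofinal well-ordered subset of the maximless set $v(x-K)$.

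There is one step in your immediate case that does not work as written: your justification that the sequence is of transcendental type. The clause ``since $x$ is a pseudo limit and transcendental over $K$ this forces $x$ algebraic over $K$'' is a non sequitur -- the pseudo limit $y$ produced by Theorem~3 of [KA] need not coincide with $x$, and the existence of two pseudo limits of the same sequence imposes no algebraic relation between them. Moreover, Theorem~3 of [KA] only applies to a sequence of algebraic type \emph{having no pseudo limit in} $K$, a hypothesis you never verify (and never use). The repair is available from your own construction: if the values $v(x-c_\nu)$ are cofinal in $v(x-K)$, which has no maximum, then no $b\in K$ can be a pseudo limit, since otherwise $v(x-b)\geq v(x-c_\nu)$ for all $\nu$ would contradict cofinality. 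Once ``no pseudo limit in $K$'' is in hand, algebraic type would by Theorem~3 of [KA] produce a proper immediate algebraic extension of $K$, which is absurd because $K=\tilde K$ admits no proper algebraic extension at all; that is the contradiction, not anything about $x$ becoming algebraic. So you should make the cofinality of the chosen values explicit and route the transcendental-type argument through it, exactly as the paper's no-maximum case does.
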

\begin{proof}
Suppose that the set
\begin{equation}                            \label{v(x-K)}
v(x-K)\;:=\;\{v(x-b)\mid b\in K\}
\end{equation}
has no maximum. Then there is a pseudo Cauchy sequence in $(K,v)$
with pseudo limit $x$, but without a pseudo limit in $K$. Since $K$ is
algebraically closed, Theorem~3 of [KA] shows that this pseudo Cauchy
sequence must be of transcendental type. The extension therefore
satisfies the third condition for being pure.

\pars
Now assume that the set $v(x-K)$ has a maximum, say, $v(x-c)$ with $c\in
K$. If $v(x-c)$ is a torsion element over $vK$, then $v(x-c)\in vK$
because $vK$ is divisible as $K$ is algebraically closed. It then
follows that there is some $d\in K$ such that $vd(x-c)=0$. If $d(x-c)v$
were algebraic over $Kv$, then it were in $Kv$ since $K$ and thus
also $Kv$ is algebraically closed. But then, there were some $b_0\in
K$ such that $v(d(x-c)-b_0)>0$. Putting $b:=c+d^{-1}b_0\,$, we would
then obtain that $v(x-b)=v((x-c)-d^{-1}b_0)>-vd=v(x-c)$, a contradiction
to the maximality of $v(x-c)$. So we see that either $v(x-c)$ is
non-torsion over $vK$, or there is some $d\in K$ such $vd(x-c)=0$ and
$d(x-c)v$ is transcendental over $Kv$. In both cases, this shows that
$(K(x)|K,v)$ is pure.
\end{proof}

\pars
We will call the extension $(K(x)|K,v)$ \bfind{weakly pure} ({\bf in}
$x$) if it is pure in $x$ or if there are $c,d\in K$ and $e\in\N$ such
that $vd(x-c)^e=0$ and $d(x-c)^ev$ is transcendental over $Kv$.

\begin{lemma}                               \label{icpure}
Assume that the extension $(K(x)|K,v)$ is weakly pure. If we take any
extension of $v$ to $\widetilde{K(x)}$ and take $K^h$ to be the
henselization of $K$ in $(\widetilde{K(x)},v)$, then $K^h$ is the
implicit constant field of this extension:
\[K^h\;=\;\ic (K(x)|K,v)\;.\]
\end{lemma}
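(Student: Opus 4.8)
The plan is to show the two inclusions $K^h \subseteq \ic(K(x)|K,v)$ and $\ic(K(x)|K,v) \subseteq K^h$ separately. The first inclusion is immediate from the discussion in the introduction: $\ic(K(x)|K,v)$ is by definition the relative algebraic closure of $K$ in a henselization $K(x)^h$ of $(K(x),v)$, and since $K(x)^h$ contains a henselization of the subfield $K$, and that henselization is algebraic over $K$, we get $K^h \subseteq \ic(K(x)|K,v)$. (Here I use that the henselization $K^h$ taken inside $(\widetilde{K(x)},v)$ sits inside $K(x)^h$, which is the henselization of $K(x)$ inside the same $(\widetilde{K(x)},v)$.) So the real content is the reverse inclusion: every element of $\widetilde{K(x)} \cap K(x)^h$ that is algebraic over $K$ already lies in $K^h$. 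Equivalently, $K(x)^h | K^h$ has no nontrivial algebraic subextension, i.e.\ $K^h$ is relatively algebraically closed in $K(x)^h$.

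To prove this, I would first reduce to the pure case. If $(K(x)|K,v)$ is weakly pure via $vd(x-c)^e = 0$ with $d(x-c)^e v$ transcendental over $Kv$, set $x' = d(x-c)^e$; then $(K(x')|K,v)$ is residue-transcendental, hence pure, and $K(x)|K(x')$ is finite (of degree dividing $e$). Passing to henselizations, $K(x)^h = K(x') {}^h . K(x)$ is a finite extension of $K(x')^h$, and since finite extensions don't change the relative algebraic closure of $K^h$ being all of $K^h$ — wait, that's not automatic. Instead I would argue directly in each of the three pure cases using valuation independence (Lemma~\ref{prelBour}) together with the structure of the henselization, or better, use Lemma~\ref{persimm}: the point is that after base change to $K^h$, the extension $(K^h(x)|K^h,v)$ is still "pure of the same type" (e.g.\ $x$ remains the pseudo limit of a pseudo Cauchy sequence of transcendental type over $K^h$, which is still of transcendental type by Lemma~\ref{persist}), and such an extension is immediate over $K^h$ in the pseudo-limit case (Lemma~\ref{limtpcs}), or has $v K^h(x) = vK^h \oplus \Z v(x-c)$ with torsion-free quotient and $K^h$ relatively algebraically closed in the residue field in the transcendental-element cases (Lemma~\ref{purevgrf}). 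In all cases $(K^h(x)|K^h,v)$ is pure, so by Lemma~\ref{purevgrf}, $vK^h$ is pure in $vK^h(x)$ and $K^h v$ is relatively algebraically closed in $K^h(x)v$; since $K^h$ is henselian, $K^h(x)^h = K(x)^h$.

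Now suppose $a \in K(x)^h$ is algebraic over $K^h$, so $(K^h(a)|K^h,v)$ is a finite extension, with $v$ unique on it (as $K^h$ is henselian) and $K^h(a)$ inside $K(x)^h = K^h(x)^h$. Then $vK^h(a)/vK^h$ is finite, hence torsion, hence trivial since $vK^h(x)/vK^h$ is torsion-free and $vK^h(a) \subseteq vK^h(x)^h$, whose torsion-over-$vK^h$ part is the torsion part of $vK^h(x)/vK^h$, namely trivial — more precisely $vK^h(x)^h = vK^h(x)$ and $v\widetilde{K^h(x)}/vK^h(x)$ being torsion forces $vK^h(a) = vK^h$. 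Similarly $K^h(a)v|K^h v$ is finite algebraic, but $K^h(a)v \subseteq K^h(x)^h v = K^h(x)v$, and $K^h v$ is relatively algebraically closed in $K^h(x)v$, so $K^h(a)v = K^h v$. Thus $(K^h(a)|K^h,v)$ is immediate; but a henselian field has no nontrivial finite immediate extension (by the fundamental inequality, since $[K^h(a):K^h] = \mathrm e \cdot \mathrm f = 1$), so $a \in K^h$. This gives $\ic(K(x)|K,v) \subseteq K^h$ and completes the proof.

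The main obstacle I anticipate is the reduction to the pure case and, within it, verifying that purity is preserved under base change from $K$ to its henselization $K^h$ — this is where one needs Lemma~\ref{persist} for the pseudo-Cauchy case and a direct check (using that $v(x-c) \notin vK^h$ still, resp.\ $d(x-c)v$ still transcendental over $K^h v$) for the valuation-transcendental-element cases, which in turn relies on $vK^h = vK$ and $K^h v = Kv$ (henselization is immediate). Once purity over $K^h$ is in hand, the extraction of $a \in K^h$ from the immediacy of $(K^h(a)|K^h,v)$ is routine via the fundamental inequality.
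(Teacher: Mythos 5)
Your first inclusion $K^h\subseteq\ic(K(x)|K,v)$ is fine, and in the pure case your reduction to the immediacy of $(K^h(a)|K^h,v)$ is correct; but the final step is a genuine gap. It is simply not true that a henselian field has no proper finite immediate extension: the fundamental inequality (\ref{fiq}) is only an inequality, and for an immediate extension of a henselian field it gives $[K^h(a):K^h]\geq {\rm e}\,{\rm f}=1$, which is no contradiction. When $\chara Kv=p>0$ such extensions do exist --- this is exactly the defect phenomenon, and the paper itself produces henselian fields with proper immediate Galois extensions of degree $p$ (Theorem~\ref{piltant}; in the second proof the extension $N.K(x,y)^h|K(x,y)^h$ is immediate, Galois of degree $p$ over a henselian field). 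Moreover, Lemma~\ref{icpure} is invoked in Section~\ref{sectne} precisely over fields of residue characteristic $p$, so the situation your closing argument does cover (defectless fields, e.g.\ residue characteristic $0$) is not the one the lemma is needed for. Ruling out an immediate algebraic extension of $K^h$ inside $K(x)^h$ is the actual content of the lemma and cannot be obtained from the fundamental inequality alone.

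There is a second gap: the lemma concerns weakly pure extensions, and your argument in the end only treats the pure case. If $vd(x-c)^e=0$ with $e>1$ and $d(x-c)^ev$ transcendental over $Kv$, then by Lemma~\ref{algtransc} one has $vK(x)=vK+\Z v(x-c)$ with $(vK(x):vK)=e$, so $vK^h(x)/vK^h$ has nontrivial torsion, Lemma~\ref{purevgrf} does not apply, and your deduction $vK^h(a)=vK^h$ breaks down (the example with $\Q(x^n,xy)$ at the end of Section~\ref{sectne} shows this torsion genuinely occurs); the substitution $x'=d(x-c)^e$ that you began and then abandoned is indeed not an automatic reduction. The paper's proof avoids both problems by a different route: it describes $\ic(K(x)|K,v)$ as the fixed field of the restrictions to $K^{\rm sep}$ of the decomposition group of $(K(x)^{\rm sep}|K(x),v)$, and shows that every valuation-preserving automorphism $\tau$ of $K^{\rm sep}|K$ lifts to a valuation-preserving automorphism $\sigma$ of $K^{\rm sep}(x)|K(x)$ with $\sigma x=x$ --- using the monomial formula for $v$ on $K^{\rm sep}[x]$ (Lemmas~\ref{prelBour} and~\ref{algtransc}) in the valuation-transcendental case, and the uniqueness of the extension determined by a pseudo Cauchy sequence of transcendental type (Theorem~\ref{Ka2} together with Lemma~\ref{persist}) in the immediate case. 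That Galois-theoretic lifting is the missing idea; it yields $\ic(K(x)|K,v)\subseteq K^h$ without any appeal to the nonexistence of immediate algebraic extensions.
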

\begin{proof}
As noted already in the introduction, $K^h$ is contained in $\ic
(K(x)|K,v)$. Since $K(x)^h$ is the fixed field of the decomposition
group $G\dec_x:= G\dec(K(x)\sep|K(x),v)$ in the separable-algebraic
closure $K(x)\sep$ of $K(x)$, we know that $\ic (K(x)|K,v)$ is the fixed
field in $K\sep$ of the subgroup
\[G_{\rm res}\;:=\;\{\sigma|_{K\sep}\mid \sigma\in G\dec_x\}\]
of $\Gal K$. In order to show our assertion, it suffices to
show that $\ic (K(x)|K,v)\subseteq K^h$, that is, that the decomposition
group $G\dec:=G\dec(K\sep|K,v)$ is contained in $G_{\rm res}\,$. So we
have to show: if $\tau$ is an automorphism of $K\sep|K$ such that
$v\tau=v$ on $K\sep$, then $\tau$ can be lifted to an automorphism
$\sigma$ of $K(x)\sep|K(x)$ such that $v\sigma=v$ on $K(x)\sep$. In
fact, it suffices to show that $\tau$ can be lifted to an automorphism
$\sigma$ of $K\sep(x)|K(x)$ such that $v\sigma=v$ on $K\sep(x)$. Indeed,
then we take any extension $\sigma'$ of $\sigma$ from $K\sep (x)$ to
$K(x)\sep$. Since the extensions $v\sigma'$ and $v$ of $v$ from $K\sep
(x)$ to $K(x)\sep$ are conjugate, there is some $\rho\in\Gal
(K(x)\sep|K\sep (x))$ such that $v\sigma'\rho=v$ on $K(x)\sep$. Thus,
$\sigma:=\sigma'\rho\in G\dec$ is the desired lifting of $\tau$ to
$K(x)\sep$.

We take $\sigma$ on $K\sep (x)$ to be the unique automorphism which
satisfies $\sigma x=x$ and $\sigma|_{K\sep}=\tau$. Using that $(K(x)
|K,v)$ is weakly pure, we have to show that $v\sigma=v$ on $K\sep(x)$.
Assume first that for some $c,d\in K$ and $e\in\N$, $d(x-c)^e$ is
valuation-transcendental. Since $K(x-c)=K(x)$, we may assume w.l.o.g.\
that $c=0$. Every element of $K\sep(x)$ can be written as a quotient
of polynomials in $x$ with coefficients from $K\sep$. For every
polynomial $f(x)=a_nx^n+\ldots+a_1x+a_0\in K\sep[x]$,
\begin{eqnarray*}
v\sigma f(x) & = & v\left(\sigma(a_n)x^n+ \ldots+\sigma(a_1)x+
\sigma(a_0)\right)\\
 & = & \min_i (v\sigma(a_i)+ivx) \;=\; \min_i (v\tau(a_i)+ivx)\\
 & = & \min_i (va_i+ivx) \;=\; vf(x)\;,
\end{eqnarray*}
where the second equality holds by Lemma~\ref{prelBour} and
Lemma~\ref{algtransc}. This shows that $v\sigma=v$ on $K\sep(x)$.

Now assume that $x$ is the pseudo limit of a pseudo Cauchy sequence in
$(K,v)$ of transcendental type. By Lemma~\ref{persist}, this pseudo
Cauchy sequence is also of transcendental type over $(K\sep,v)$. Observe
that $x$ is still a pseudo limit of this pseudo Cauchy sequence in
$(K\sep(x),v\sigma)$, because $v\sigma(x-a)=v(\sigma x-\sigma a)=
v(x-a)$ for all $a\in K$. But $v\sigma=v\tau=v$ on $K\sep$, and the
extension of $v$ from $K\sep$ to $K\sep(x)$ is uniquely determined by
the pseudo Cauchy sequence (cf.\ Theorem~\ref{Ka2}). Consequently,
$v\sigma=v$ on $K\sep(x)$.
\end{proof}

%
%
\subsection{Construction of nasty examples}          \label{sectne}
We are now able to give the
\sn
{\bf Proof of Theorem~\ref{piltant}:}
\sn
Let $K$ be any algebraically closed field of characteristic $p>0$. On
$K(x)$, we take $v$ to be the $x$-adic valuation. We work in the power
series field $K((\frac{1}{p^\infty}\Z))$ of all power series in $x$ with
exponents in $\frac{1}{p^\infty}\Z$, the $p$-divisible hull of $\Z$. We
choose $y$ to be a power series
\begin{equation}                            \label{yps}
y\>=\>\sum_{i=1}^{\infty} x^{-p^{-e_i}}
\end{equation}
where $e_i$ is any increasing sequence of natural numbers such that
$e_{i+1}\geq e_i+i$ for all $i$. We then restrict the canonical
($x$-adic) valuation of $K((\frac{1}{p^\infty}\Z))$ to $K(x,y)$ and call
it again $v$. We show first that $vK(x,y)=\frac{1}{p^\infty}\Z$. Indeed,
taking $p^{e_j}$-th powers and using that the characteristic of $K$ is
$p$, we find
\[y^{p^{e_j}}-\sum_{i=1}^{j} x^{-p^{e_j-e_i}}\>=\>
\sum_{i=j+1}^{\infty} x^{-p^{e_j-e_i}}\;.\]
Since $e_j-e_i\geq 0$ for $i\leq j$, the left hand side is an element in
$K(x,y)$. The right hand side has value
\[-p^{e_j-e_{j+1}} vx\;;\]
since $e_j-e_{j+1}\leq -j$, we see that $\frac{1}{p^j}vx$ lies in
$vK(x,y)$. Hence, $\frac{1}{p^\infty}\Z\subseteq vK(x,y)$. On the other
hand, $vK(x,y)\subseteq vK((\frac{1}{p^\infty}\Z))=\frac{1}{p^\infty}\Z$
and therefore, $vK(x,y)=\frac{1}{p^\infty}\Z$.

By definition, $y$ is a pseudo limit of the pseudo Cauchy sequence
\[\left(\sum_{i=1}^{\ell} x^{-p^{-e_i}}\right)_{\ell\in\N}\]
in the field $L=K(x^{1/p^i}\mid i\in\N)\subset
K((\frac{1}{p^\infty}\Z))$.
Suppose it were of algebraic type. Then by [KA], Theorem~3, there would
exist some algebraic extension $(L(b)|L,v)$ with $b$ a pseudo limit of
the sequence. But then $b$ would also be algebraic over $K(x)$ and hence
the extension $K(x,b)|K(x)$ would be finite. On the other hand, since
$b$ is a pseudo limit of the above pseudo Cauchy sequence, it can be
shown as before that $vK(x,b)=\frac{1}{p^\infty}\Z$ and thus,
$(vK(x,b):vK(x))= \infty$. This contradiction to the fundamental
inequality shows that the sequence must be of transcendental type. Hence
by Lemma~\ref{icpure}, $L^h$ is relatively algebraically closed in
$L(y)^h$. Since $L^h=L.K(x)^h$ is a purely inseparable extension of
$K(x)^h$ and $K(x,y)^h|K(x)^h$ is separable, this shows that $K(x)^h$ is
relatively algebraically closed in $K(x,y)^h$.

Now we set $\eta_0:=\frac{1}{x}$, and by induction on $i$ we choose
$\eta_i\in \widetilde{K(x)}$ such that $\eta_i^p-\eta_i=\eta_{i-1}\,$.
Then we have
\[v\eta_i\>=\> -\frac{1}{p^i} vx\]
for every $i$. Since $vK(x)^h=vK(x)=\Z vx$,
this shows that $K(x)^h(\eta_i)|K(x)^h$
has ramification index at least $p^i$. On the other hand, it has degree
at most $p^i$ and therefore, it must have degree and ramification index
equal to $p^i$. Note that for all $i\geq 0$, $K(x,\eta_i)=K(x,\eta_1,
\ldots,\eta_i)$ and every extension $K(x,\eta_{i+1})|K(x,\eta_i)$
is a Galois extension of degree and ramification index $p$. By what we
have shown, the chain of these extensions is linearly disjoint from
$K(x)^h|K(x)$. Since $K(x)^h$ is relatively algebraically closed in
$K(x,y)^h$ and the extensions are separable, it follows that the chain
is also linearly disjoint from $K(x,y)^h|K(x)$.

We will now show that all extensions $K(x,y,\eta_i)|K(x,y)$ are
immediate. First, we note that $K(x,y)v=K$ since $K\subset K(x,y)
\subset K((\frac{1}{p^\infty}\Z))$ and $K((\frac{1}{p^\infty}\Z))v=K$.
Since $K$ is algebraically closed, the inertia degree of the extensions
must be $1$. Further, as the ramification index of a Galois extension is
always a divisor of the extension degree, the ramification index of
these extensions must be a power of $p$. But the value group of $K(x,y)$
is $p$-divisible, which yields that the ramification index of the
extensions is $1$. By what we have proved above, they are linearly
disjoint from $K(x,y)^h|K(x,y)$, that is, the extension of the
valuation is unique. This shows that the defect of each extension
$K(x,y,\eta_i)|K(x,y)$ is equal to its degree $p^i$.           \QED

\sn
\begin{remark}
Instead of defining $y$ as in (\ref{yps}), we could also
use any power series
\begin{equation}                            \label{yps1}
y\>=\>\sum_{i=1}^{\infty} x^{n_i p^{-e_i}}
\end{equation}
where $n_i\in \Z$ are prime to $p$ and the sequence $n_i p^{-e_i}$ is
strictly increasing. The example in [CP] is of this form. But in this
example, the field $K(x,y)$ is an extension of degree $p^2$ of a field
$K(u,v)$ such that the extension of the valuation from $K(u,v)$ to
$K(x,y)$ is unique. Since the value group of $K(x,y)$ is
$\frac{1}{p^\infty}\Z$, it must be equal to that of $K(u,v)$. Since $K$
is algebraically closed, both have the same residue field. Therefore,
the extension has defect $p^2$. This shows that we can also use
subfields instead of field extensions to produce defect extensions, in
quite the same way.

A special case of (\ref{yps1}) is the power series
\begin{equation}                            \label{yps2}
y\>=\>\sum_{i=1}^{\infty} x^{i-p^{-e_i}}
\>=\>\sum_{i=1}^{\infty} x^i x^{-p^{-e_i}}
\end{equation}
which now has a support that is cofinal in $\frac{1}{p^\infty}\Z$.
\end{remark}

\parm
To conclude this section, we use Lemma~\ref{icpure} to construct an
example about relatively closed subfields in henselian fields. The
following fact is well known:
\n
{\it Suppose that $K$ is relatively closed in a henselian valued field
$(L,v)$ of residue characteristic $0$ and that $Lv|Kv$ is algebraic.
Then $vL/vK$ is torsion free.}
\n
We show that the assumption ``$Lv|Kv$ is algebraic'' is necessary.
\sn
\begin{example}
On the rational function field $\Q(x)$, we take $v$ to
be the $x$-adic valuation. We extend $v$ to the rational function field
$\Q(x,y)$ in such a way that $vy=0$ and $yv$ is transcendental over
$\Q(x)v=\Q$. So by Lemma~\ref{prelBour} we have $v\Q(x,y)=v\Q(x)=\Z vx$
and $\Q(x,y)v=\Q (yv)$. We pick any integer $n>1$. Then $v\Q(x^n)=\Z
nvx$ and $\Q(x^n)v=\Q$. Further, $v\Q(x^n,xy)=\Z vx$ since $vx=vxy\in
v\Q(x^n,xy)\subseteq \Z vx$. Also, $\Q(x^n,xy)v=\Q(y^nv)$ by
Lemma~\ref{algtransc}. From Lemma~\ref{icpure} we infer that $\Q(x^n)^h$
is relatively algebraically closed in $\Q(x^n,xy)^h$. But
\[v\Q(x^n,xy)^h/v\Q(x^n)^h \>=\> v\Q(x^n,xy)/v\Q(x^n) \>=\> \Z vx/
\Z nvx \>\isom\> \Z/n\Z\]
is a non-trivial torsion group.
\end{example}

%
%
\subsection{All valuations on $K(x)$}            \label{sectae}
In this section, we will explicitly define extensions of a given
valuation on $K$ to a valuation on $K(x)$. First, we define
valuation-transcendental extensions, using the idea of valuation
independence. Let $(K,v)$ be an arbitrary valued field, and $x$
transcendental over $K$. Take $a\in K$ and an element $\gamma$ in some
ordered abelian group extension of $vK$. We define a map
$v_{a,\gamma}:
\, K(x)^{\times} \rightarrow vK+ \Z\gamma$\glossary{$v_{a,\gamma}$} as
follows. Given any $g(x)\in K[x]$ of degree $n$, we can write
\begin{equation}                            \label{g(x)dev}
g(x)=\sum_{i=0}^{n} c_i (x-a)^i\;.
\end{equation}
Then we set
\begin{equation}                            \label{defvag}
v_{a,\gamma}\,g(x)\>:=\>\min_{0\leq i\leq n} vc_i + i\gamma\;.
\end{equation}
We extend $v_{a,\gamma}$ to $K(x)$ by setting
$v_{a,\gamma}(g/h):=v_{a,\gamma}g-v_{a,\gamma} h$.

\pars
For example, the valuation $v_{0,0}$ is called \bfind{Gau{\ss}
valuation} or
\bfind{functional valuation} and is given by
\[v_{0,0}\,(c_nx^n+\ldots+c_1x+c_0)\>=\>\min_{0\leq i\leq n}
vc_i\;.\]

\begin{lemma}                               \label{vag}
$v_{a,\gamma}$ is a valuation which extends $v$ from $K$ to $K(x)$.
It satisfies:\sn
1) If $\gamma$ is non-torsion over $vK$, then $v_{a,\gamma}K(x)=vK
\oplus\Z\gamma$ and $K(x)v_{a,\gamma}=Kv$.
\sn
2) If $\gamma$ is torsion over $vK$, $e$ is the smallest positive
integer such that $e\gamma\in vK$ and $d\in K$ is some element such that
$vd=-e\gamma$, then $d(x-a)^ev_{a,\gamma}$ is transcendental over $Kv$,
$K(x)v_{a,\gamma}=Kv(d(x-a)^ev_{a,\gamma})$ and $v_{a,\gamma}K(x)=
vK+\Z\gamma$. In particular, if $\gamma=0$ then $(x-a)v_{a,\gamma}$ is
transcendental over $Kv$, $K(x)v_{a,\gamma}=Kv\,((x-a)v_{a,\gamma})$ and
$v_{a,\gamma}K(x)=vK$.
\end{lemma}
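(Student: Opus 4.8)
My plan is to deduce both parts from results already available: the converse half of Lemma~\ref{prelBour} for the non‑torsion case, and Lemma~\ref{algtransc} for the torsion case. Since $K(x)=K(x-a)$ and, read off in the generator $x-a$, the recipe (\ref{defvag}) for $v_{a,\gamma}$ is exactly the recipe for $v_{0,\gamma}$, I would first reduce to $a=0$ and write $w$ for the map $w\big(\sum_i c_ix^i\big)=\min_i(vc_i+i\gamma)$.

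For part~1), $\gamma$ non‑torsion over $vK$ means precisely that $\{\gamma\}$ is rationally independent over $vK$. Applying the converse statement of Lemma~\ref{prelBour} to the single indeterminate $x$, with $vx$ prescribed to be $\gamma$ and with no residue‑transcendental generators, shows that formula (\ref{value}) --- which in this one‑variable situation is (\ref{defvag}) with $a=0$ --- defines a valuation on $K(x)$ extending $v$, with $wK(x)=vK\oplus\Z\gamma$ and $K(x)w=Kv$. That is assertion~1).

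For part~2), let $e\ge 1$ be minimal with $e\gamma\in vK$ and pick $d\in K$ with $vd=-e\gamma$. The idea is to manufacture a valued overfield in which an element of value exactly $\gamma$ is available together with a residue transcendental, and then to recognise $w$ there. By Theorem~\ref{extprvgrf} there is a valued extension $(M_0|K,v)$ with $vM_0=vK+\Z\gamma$ and $M_0v=Kv$; choose $\pi\in M_0$ with $v\pi=\gamma$. By Lemma~\ref{prelBour} (residue‑transcendental case) extend $v$ from $M_0$ to $M:=M_0(\sigma)$ with $v\sigma=0$ and $\sigma v=:u$ transcendental over $M_0v=Kv$, so that $vM=vM_0$ and $Mv=Kv(u)$. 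Put $x:=\pi\sigma\in M$. Then $vx=\gamma$, $v(dx^e)=vd+e\gamma=0$, and $(dx^e)v=(d\pi^e)v\cdot u^e$ is transcendental over $Kv$, since $(d\pi^e)v$ is a nonzero element of $M_0v=Kv$ and $u^e$ is transcendental over $Kv$; in particular $x\notin\tilde K$, so $x$ is transcendental over $K$. Now restrict $v$ to $K(x)\subseteq M$ and apply Lemma~\ref{algtransc} with this $e$ and $d$: its minimality clause holds, because $v(d'x^{e'})=0$ with $1\le e'<e$ would give $e'\gamma=-vd'\in vK$, against the choice of $e$. Lemma~\ref{algtransc} then yields $v\big(\sum_i c_ix^i\big)=\min_i(vc_i+i\,vx)=\min_i(vc_i+i\gamma)=w\big(\sum_i c_ix^i\big)$ for every polynomial, so $w$ is the restriction of $v$ and hence a valuation; moreover it gives $wK(x)=vK+\Z\gamma$ with $(wK(x):vK)=e$ and $K(x)w=Kv\big((dx^e)w\big)$ a rational function field over $Kv$ --- which is assertion~2). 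The ``in particular'' clause is the case $\gamma=0$, $e=1$, with $d=1$.

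The only place where anything beyond a citation is needed is the construction in part~2): one must produce the overfield $M$ realising an element of value $\gamma$ and a residue transcendental simultaneously, exhibit the single element $x=\pi\sigma$ with $vx=\gamma$ and $dx^e$ residue‑transcendental, and verify the minimality of $e$ required by Lemma~\ref{algtransc}. Alternatively one can verify the valuation axioms for $v_{a,\gamma}$ directly --- the ultrametric inequality is immediate, and multiplicativity follows because the monomial degrees attaining the minimum in $f$ and in $g$ each lie in a single residue class modulo $e$, so no cancellation occurs in the product --- and then compute the value group and residue field by hand; the route above is preferable because it avoids the residue‑field computation.
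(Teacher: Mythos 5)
Your proof is correct, and in the torsion case it follows a genuinely different route from the paper's. For assertion 1) you do exactly what the paper does: invoke the converse half of Lemma~\ref{prelBour} to assign the non-torsion value $\gamma$ to $x-a$ and identify the resulting valuation with $v_{a,\gamma}$ through formula (\ref{defvag}). For assertion 2) the paper stays inside $K(x)$: it applies Lemma~\ref{prelBour} to assign a transcendental residue to $d(x-a)$ when $e=1$, and for $e>1$ it first gets a valuation on the subfield $K(d(x-a)^e)$, then takes an arbitrary extension $w$ to $K(x)$, observes $w(x-a)=\gamma$, and uses Lemma~\ref{algvind} to see that $w$ obeys (\ref{defvag}) and hence equals $v_{a,\gamma}$, reading off the value group and residue field from Lemma~\ref{algvind} as well. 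You instead work from the outside: Theorem~\ref{extprvgrf} produces an extension realizing $vK+\Z\gamma$ (giving $\pi$ with $v\pi=\gamma$), the converse of Lemma~\ref{prelBour} adjoins a residue-transcendental unit $\sigma$, and for $x=\pi\sigma$ you verify the minimality hypothesis and let Lemma~\ref{algtransc} do all the work, showing both that the restriction of the ambient valuation to $K(x)$ is given by (\ref{defvag}) and that $vK(x)=vK+\Z\gamma$ with index $e$ and $K(x)v=Kv(d(x-a)^ev)$; your transcendence argument for $x$ (via Lemma~\ref{fin}) is also sound. What your route buys is uniformity in $e$ (no case split $e=1$ versus $e>1$) and a one-stop residue-field computation; what it costs is the heavier Theorem~\ref{extprvgrf} and an auxiliary algebraic extension of $(K,v)$, which the paper's argument avoids, needing only the two valuation-independence lemmas. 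There is no circularity, since Theorem~\ref{extprvgrf} and Lemmas~\ref{prelBour}, \ref{algtransc} all precede Lemma~\ref{vag} and do not depend on it. One small caveat: your closing aside on verifying multiplicativity directly (``no cancellation occurs because the minimizing degrees lie in one residue class modulo $e$'') is not by itself a proof --- one still needs, say, the lowest-degree monomial attaining the minimum in each factor to see that the corresponding coefficient of the product has exactly the minimal value --- but since your main argument never uses this aside, it does not affect the correctness of your proof.
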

\begin{proof}
It is a straightforward exercise to prove that $v_{a,\gamma}$ is a
valuation and that 1) and 2) hold. However, one can also deduce this
from Lemma~\ref{prelBour}. It says that if we assign a non-torsion value
$\gamma$ to $x-a$ then we obtain a unique valuation which satisfies
(\ref{defvag}). Since this defines a unique map $v_{a,\gamma}$ on
$K[x]$, we see that $v_{a,\gamma}$ must coincide with the valuation
given by Lemma~\ref{prelBour}, which in turn satisfies assertion 1).
Similarly, if $\gamma\in vK$, $d\in K$ with $vd=-\gamma$ and we assign a
transcendental residue to $d(x-a)$, then Lemma~\ref{prelBour} gives us a
valuation on $K(x)$ which satisfies (\ref{defvag}) and hence must
coincide with $v_{a,\gamma}\,$. This shows that $v_{a,\gamma}$ is a
valuation and satisfies 2).

If $e>1$, then we can first use Lemma~\ref{prelBour} to see that
$v_{a,\gamma}$ is a valuation on the subfield $K(d(x-a)^e)$ of $K(x)$
and that $v_{a,\gamma}K(d(x-a)^e)=vK$ and $K(d(x-a)^e)
v_{a,\gamma}= Kv(d(x-a)^ev_{a,\gamma})$ with $d(x-a)^ev_{a,\gamma}$
transcendental over $Kv$. We know that there is an extension $w$ of
$v_{a,\gamma}$ to $K(x)$. It must satisfy $w(x-a)=-vd/e=\gamma$. So
$0,w(x-a)$, $w(x-a)^2, \ldots,w(x-a)^{e-1}$ lie in distinct cosets
modulo $vK$. From Lemma~\ref{algvind} it follows that $w$ satisfies
(\ref{defvag}) on $K(x)$, hence it must coincide with the valuation
$v_{a,\gamma}$ on $K(x)$. Assertion 2) for this case follows from
Lemma~\ref{algvind}.
\end{proof}

Now we are able to prove:

\begin{theorem}                             \label{allext}
Take any valued field $(K,v)$. Then all extensions of $v$ to the
rational function field $\tilde{K}(x)$ are of the form
\sn
$\bullet$ \ $\tilde{v}_{a,\gamma}$ where $a\in \tilde{K}$ and $\gamma$
is an element of some ordered group extension of $vK$, or
\n
$\bullet$ \ $\tilde{v}_{\bA}$ where \bA\ is a pseudo Cauchy
sequence in $(\tilde{K},\tilde{v})$ of transcendental type,
\sn
where $\tilde{v}$ runs through all extensions of $v$ to $\tilde{K}$.
The extension is of the form $\tilde{v}_{a,\gamma}$ with $\gamma\notin
\tilde{v}\tilde{K}$ if and only if it is value-transcendental, and
with $\gamma\in\tilde{v}\tilde{K}$ if and only if it is
residue-transcendental. The extension is of the form $\tilde{v}_{\bA}$
if and only if it is valuation-algebraic.

\pars
All extensions of $v$ to $K(x)$ are obtained by restricting the
above extensions, already from just one fixed extension $\tilde{v}$ of
$v$ to $\tilde{K}$.
\end{theorem}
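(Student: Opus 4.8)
The plan is to take an arbitrary extension $w$ of $v$ to $\tilde{K}(x)$, restrict it to $\tilde{K}$ to obtain an extension $\tilde{v}$ of $v$, and then identify $w$ with one of the explicitly defined valuations. The trichotomy is handed to us by Lemma~\ref{acpure}: since $\tilde{K}$ is algebraically closed and $x\notin\tilde{K}$, the extension $(\tilde{K}(x)|\tilde{K},w)$ is pure, so we may split into three cases: (A) some $d(x-c)$ with $c,d\in\tilde{K}$ is value-transcendental; (B) some $d(x-c)$ is residue-transcendental; (C) neither (A) nor (B) holds, in which case purity forces $x$ to be a pseudo limit of a pseudo Cauchy sequence $\bA$ in $(\tilde{K},\tilde{v})$ of transcendental type.

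In case (A) I would set $\gamma:=w(x-c)$; since $wd\in\tilde{v}\tilde{K}$ and $w(d(x-c))$ is non-torsion over $\tilde{v}\tilde{K}$, so is $\gamma$, and in particular $\gamma\notin\tilde{v}\tilde{K}$. Hence for $g=\sum_i c_i(x-c)^i\in\tilde{K}[x]$ the monomial values $wc_i+i\gamma$ are pairwise distinct, so $wg=\min_i(wc_i+i\gamma)=\tilde{v}_{c,\gamma}g$ and therefore $w=\tilde{v}_{c,\gamma}$. In case (B), $w(d(x-c))=0$ forces $\gamma:=w(x-c)=-\tilde{v}d\in\tilde{v}\tilde{K}$; writing elements of $\tilde{K}(x)=\tilde{K}(t)$ in the variable $t:=d(x-c)$, which satisfies $wt=0$ with $tw$ transcendental over $\tilde{K}v$, Lemma~\ref{prelBour} shows $w$ is the Gauss valuation $\sum_j a_jt^j\mapsto\min_j wa_j$ on $\tilde{K}[t]$, and a direct inspection of \eqref{defvag} shows that $\tilde{v}_{c,\gamma}$ is the very same Gauss valuation (the exponent of $t$ contributes $j(\tilde{v}d+\gamma)=0$); passing to fraction fields gives $w=\tilde{v}_{c,\gamma}$. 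In case (C), Theorem~\ref{Ka2} produces $\tilde{v}_{\bA}$ on $\tilde{K}(x)$ with $x$ a pseudo limit of $\bA$, and since $x$ is a pseudo limit of $\bA$ also with respect to $w$, the uniqueness clause of Theorem~\ref{Ka2} applied to the identity $x\mapsto x$ yields $w=\tilde{v}_{\bA}$.

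For the three equivalences, Lemma~\ref{vag} shows that $\tilde{v}_{a,\gamma}$ with $\gamma\notin\tilde{v}\tilde{K}$ is value-transcendental over $\tilde{K}$ (value group $\tilde{v}\tilde{K}\oplus\Z\gamma$, residue field $\tilde{K}v$), that $\tilde{v}_{a,\gamma}$ with $\gamma\in\tilde{v}\tilde{K}$ is residue-transcendental (value group unchanged, residue field a rational function field over $\tilde{K}v$), and that $\tilde{v}_{\bA}$ is immediate, hence valuation-algebraic, by Theorem~\ref{Ka2}. Since $\tilde{K}v$ is algebraically closed and $\tilde{v}\tilde{K}$ is divisible, the attributes value-transcendental, residue-transcendental and valuation-algebraic partition all extensions of $v$ to $\tilde{K}(x)$ (Section~\ref{sectbc}), so matching the two partitions gives the stated ``if and only if'' assertions. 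The claim about extensions to $K(x)$ is then exactly Lemma~\ref{restrall}: fixing one extension $\tilde{v}$ of $v$ to $\tilde{K}$, every extension of $v$ to $K(x)$ is the restriction to $K(x)$ of one of the extensions of $\tilde{v}$ to $\tilde{K}(x)$ classified above. The step needing the most care is case (C): one has to be sure that, once (A) and (B) are ruled out, purity really supplies a pseudo Cauchy sequence of \emph{transcendental} (not algebraic) type over $\tilde{K}$, and that the uniqueness clause of Theorem~\ref{Ka2} is legitimately invoked in the degenerate situation where the second generator coincides with $x$ itself.
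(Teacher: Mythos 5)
Your proof is correct and follows essentially the same route as the paper's: Lemma~\ref{acpure} gives purity of $(\tilde{K}(x)|\tilde{K},w)$, the valuation-transcendental case is identified with $\tilde{v}_{c,\gamma}$ by the monomial-value computation underlying Lemma~\ref{prelBour}, the remaining case is settled by the uniqueness clause of Theorem~\ref{Ka2} (applying it with $y=x$ is indeed legitimate), and the final claim follows from Lemma~\ref{restrall} combined with the classification just proved. The only cosmetic differences are that you split the valuation-transcendental case into two sub-cases where the paper treats both at once via one application of Lemma~\ref{prelBour}, and that the paper additionally records the explicit conjugation formulas $\tilde{w}_{a,\gamma}=\tilde{v}_{\sigma a,\gamma}$ and $\tilde{w}_{\bA}=\tilde{v}_{\bA_{\sigma}}$ on $K(x)$, which your shorter appeal to Lemma~\ref{restrall} makes unnecessary for the statement as phrased.
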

\begin{proof}
By Lemma~\ref{vag} and Theorem~\ref{Ka2}, $\tilde{v}_{a,\gamma}$ and
$\tilde{v}_{\bA}$ are extensions of $\tilde{v}$ to $\tilde{K}(x)$. For
the converse, let $w$ be any extension of $v$ to $\tilde{K}(x)$ and set
$\tilde{v}=w|_{\tilde{K}}\,$. From Lemma~\ref{acpure} we know that
$(\tilde{K}(x)|\tilde{K},w)$ is always pure. Hence,
%
%
either $d(x-c)$ is valuation-transcendental for some $c,d\in
\tilde{K}$, or $x$ is the pseudo limit of some pseudo Cauchy sequence
\bA\ in $(\tilde{K},\tilde{v})$ of transcendental type. In the first
case, Lemma~\ref{prelBour} shows that
\[w \sum_{i=0}^{n} d_i (d (x-c))^i\>=\>\min_{0\leq i\leq n}
vd_i+iwd (x-c) \>=\>\min_{0\leq i\leq n} vd_i d^i + iw(x-c)\]
for all $d_i\in\tilde{K}$. This shows that $w=\tilde{v}_{c,\gamma}$
for $\gamma=w(x-c)$. If this value is not in $\tilde{v}\tilde{K}$, then
it is non-torsion over $\tilde{v}\tilde{K}$ and thus, the extension of
$\tilde{v}$ to $\tilde{K}(x)$, and hence also the extension of $v$ to
$K(x)$, is value-transcendental. If it is in $\tilde{v}\tilde{K}$, then
the residue of $d (x-c)$ is not in $\tilde{K}\tilde{v}$, and the
extension of $\tilde{v}$ to $\tilde{K}(x)$, and hence also the extension
of $v$ to $K(x)$, is residue-transcendental.

In the second case, we know from Theorem~\ref{Ka2} that \bA\ induces
an extension $\tilde{v}_{\bA}$ of $\tilde{v}$ to $\tilde{K}(x)$ such
that $x$ is a pseudo limit of \bA\ in $(\tilde{K}(x),\tilde{v}_{\bA})$.
Since $x$ is also a pseudo limit of \bA\ in $(\tilde{K}(x),w)$, we can
infer from Lemma~\ref{limtpcs} that $w=\tilde{v}_{\bA}\,$. It also
follows from Theorem~\ref{Ka2} that $(\tilde{K}(x)|\tilde{K},
\tilde{v}_{\bA})$ is immediate and consequently, $(\tilde{K}(x)|K,
\tilde{v}_{\bA})$ is valuation-algebraic.

\pars
For the last assertion, we invoke Lemma~\ref{restrall}. Now it just
remains to show that it suffices to take the restrictions of the
valuations $\tilde{v}_{a,\gamma}$ and $\tilde{v}_{\bA}$ for one fixed
$\tilde{v}$. Suppose that $\tilde{w}$ is another extension of $v$ to
$\tilde{K}$. Since all such extensions are conjugate, there is $\sigma
\in \Gal K$ such that $\tilde{w}=\tilde{v}\circ\sigma$. Let $g(x)\in
K[x]$ be given as in (\ref{g(x)dev}). Extend $\sigma$ to an
automorphism of $\tilde{K}(x)$ which satisfies $\sigma x=x$. Since
$g$ has coefficients in $K$, we then have
\[g(x)\>=\>\sigma g(x)\>=\>\sum_{i}\sigma c_i (x-\sigma a)^i\]
and therefore,
\[\tilde{w}_{a,\gamma}\,g(x)\>=\>\min_i (\tilde{w}c_i +i\gamma)
\>=\>\min_i (\tilde{v}\sigma c_i +i\gamma) \>=\>
\tilde{v}_{\sigma a,\gamma}\,g(x)\;.\]
This shows that $\tilde{w}_{a,\gamma}=\tilde{v}_{\sigma a,\gamma}$
on $K(x)$.

Given a pseudo Cauchy sequence \bA\ in $(\tilde{K},\tilde{w})$, we set
$\bA_{\sigma}=(\sigma a_{\nu})_{\nu<\lambda}\,$. This is a
pseudo Cauchy sequence in $(\tilde{K},\tilde{v})$ since $\tilde{v}
(\sigma a_{\mu}-\sigma a_{\nu})=\tilde{v}\sigma (a_{\mu}-
a_{\nu})=\tilde{w} (a_{\mu}- a_{\nu})$. For every polynomial
$f(x)\in \tilde{K}[x]$, we have $\tilde{v} f(\sigma a_{\nu})=
\tilde{w}\sigma^{-1} (f(\sigma a_{\nu}))=\tilde{w}(\sigma^{-1}
(f))(a_{\nu})$, where $\sigma^{-1}(f)$ denotes the polynomial obtained
from $f(x)$ by applying $\sigma^{-1}$ to the coefficients. So we see
that $\bA_{\sigma}$ is of transcendental type if and only if \bA\ is. If
$g(x)\in K[x]$, then $\sigma^{-1}(g)=g$ and the above computation shows
that $\tilde{v} g(\sigma a_{\nu})=\tilde{w} g(a_{\nu})$. This implies
that $\tilde{w}_{\bA}=\tilde{v}_{\bA_{\sigma}}$ on $K(x)$.
\end{proof}

\begin{remark}                              \label{allextrem}
If $v$ is trivial on $K$, hence $Kv=K$ (modulo an isomorphism), and if
we choose $\gamma>0$, then the restriction $w$ of $\tilde{v}_{a,\gamma}$
to $K(x)$ will satisfy $xw=aw=a$. It follows that $K(x)w=K(a)$. Further,
$wK(x)\subseteq \Z\gamma$ and thus, $wK(x)\isom\Z$.
\end{remark}

%
%
\subsection{Prescribed implicit constant fields}       \label{sectMT}
If not stated otherwise, we will always assume that $(K,v)$ is any
valued field. The following is an immediate consequence of our version
of Krasner's Lemma:
\begin{lemma}                               \label{lvx-a}
Assume that $K(a)|K$ is a separable-algebraic extension. Assume further
that $K(x)|K$ is a rational function field and $v$ is a valuation on
$\widetilde{K(x)}$ such that
\begin{equation}                            \label{vx-a}
v(x-a)\> > \> \mbox{\rm kras}(a,K)\;.
\end{equation}
Then $K(a)\subseteq (K(x)|K,v)$, and consequently,
\[vK(a)\subseteq vK(x)\;\mbox{\ \ and\ \ }\; K(a)v\subseteq K(x)v\;.\]
\end{lemma}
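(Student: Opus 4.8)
The plan is to deduce everything from the version of Krasner's Lemma already proved as Lemma~\ref{vkras}. The hypotheses of that lemma ask for a separable-algebraic extension $K(a)|K$ and a valued extension $(K(a,b),v)$ of $(K,v)$ with $v(b-a) > \mbox{\rm kras}(a,K)$; its conclusion is that, for every extension of $v$ from $K(a,b)$ to $\widetilde{K(b)}$, the element $a$ lies in the henselization of $(K(b),v)$ inside $(\widetilde{K(b)},v)$. So the first step is simply to apply Lemma~\ref{vkras} with $b=x$: since $K(x)|K$ is a rational function field, $x$ is transcendental over $K$, so $K(a,x)=K(a)(x)$ is an honest valued extension of $K$, and $\widetilde{K(x)}=\widetilde{K(b)}$ in the notation of that lemma. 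Condition~(\ref{vx-a}) is exactly the hypothesis $v(x-a)>\mbox{\rm kras}(a,K)$. Hence Lemma~\ref{vkras} gives $a\in K(x)^h$, the henselization of $(K(x),v)$ inside $(\widetilde{K(x)},v)$.

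The second step is to translate ``$a\in K(x)^h$'' into ``$K(a)\subseteq \ic(K(x)|K,v)$'', i.e.\ into the assertion $K(a)\subseteq (K(x)|K,v)$ in the notation fixed in the introduction. By definition, $\ic(K(x)|K,v)$ is the relative algebraic closure of $K$ in $K(x)^h$. Since $a$ is algebraic over $K$ (indeed separable-algebraic) and $a\in K(x)^h$ by the previous step, $a$ lies in this relative algebraic closure; as $\ic(K(x)|K,v)$ is a field containing $K$, it then contains all of $K(a)$. This is precisely $K(a)\subseteq (K(x)|K,v)$.

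The final step records the consequence for value groups and residue fields. Since $K(a)\subseteq \ic(K(x)|K,v)\subseteq K(x)^h$ and the henselization $(K(x)^h|K(x),v)$ is immediate (the canonical embeddings $vK(x)\hookrightarrow vK(x)^h$ and $K(x)v\hookrightarrow K(x)^hv$ are isomorphisms), we have $vK(x)^h=vK(x)$ and $K(x)^hv=K(x)v$. Restricting $v$ to the intermediate field $K(a)$ therefore yields $vK(a)\subseteq vK(x)^h=vK(x)$ and $K(a)v\subseteq K(x)^hv=K(x)v$, which is the displayed conclusion. There is no real obstacle here: the only slightly delicate point is bookkeeping about which extension of $v$ to $\widetilde{K(x)}$ is used, but this is already handled inside Lemma~\ref{vkras}, whose conclusion is uniform over all such extensions, so the argument goes through verbatim for the extension fixed in the statement. $\QED$
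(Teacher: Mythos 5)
Your proof is correct and follows the same route as the paper: apply Lemma~\ref{vkras} with $b=x$ to get $a\in K(x)^h$, conclude $K(a)\subseteq\ic(K(x)|K,v)$ (correctly reading the statement's ``$K(a)\subseteq (K(x)|K,v)$'' as membership in the implicit constant field), and then use the immediacy of the henselization $K(x)^h|K(x)$ for the value group and residue field inclusions. Your version just spells out the bookkeeping that the paper leaves implicit.
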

\begin{proof}
By Lemma~\ref{vkras}, condition (\ref{vx-a}) implies that $K(a)\subseteq
K(x)^h$, the henselization being chosen in $(\widetilde{K(x)},v)$.
Consequently, $K(a)\subseteq \ic (K(x)|K,v)$, $vK(a)\subseteq
vK(x)^h=vK(x)$ and $K(a)v\subseteq K(x)^hv=K(x)v$.
\end{proof}

\begin{proposition}                         \label{finhcf}
Assume that $(K(a)|K,v)$ is a separable-algebraic extension of valued
fields. Further, take $\Gamma$ to be the abelian group $vK(a)\oplus\Z$
endowed with any extension of the ordering of $vK(a)$, and take $k$
to be the rational function field in one variable over $K(a)v$. Then
there exists an extension $v_1$ of $v$ from $K(a)$ to
$\widetilde{K(x)}$ such that
\begin{equation}                        \label{v1}
v_1K(x)\>=\>\Gamma\mbox{\ \ and\ \ }K(x)v_1\>=\>K(a)v
\mbox{\ \ and\ \ } K(a)^h\>=\>\ic(K(x)|K,v_1)\;.
\end{equation}
If $v$ is non-trivial on $K$, then there exists an extension $v_2$ of
$v$ from $K(a)$ to $\widetilde{K(x)}$ such that
\begin{equation}                        \label{v2}
v_2K(x)\>=\>vK(a)\mbox{\ \ and\ \ }K(x)v_2\>=\>k
\mbox{\ \ and\ \ } K(a)^h\>=\>\ic(K(x)|K,v_2)\;.
\end{equation}
If $(K(a),v)$ admits a transcendental immediate extension, then there
is also an extension $v_3$ such that
\begin{equation}                                  \label{v3}
v_3K(x)=vK(a)\mbox{\ \ and\ \ }K(x)v_3=K(a)v\;.
\end{equation}
If in addition $(K(a),v)$ admits a pseudo Cauchy sequence of
transcendental type, then $v_3$ can be chosen such that
$K(a)^h=\ic(K(x)|K,v_3)$.

Suppose that $(K(a),<)$ is an ordered field and that $v$ is convex.
Denote by $<_r$ the ordering induced by $<$ on the residue field
$K(a)v$. Then $<_r$ can be lifted through $v_1$ and through $v_3$ to
$K(x)$ in such a way that the lifted orderings extend $<$ (from $K$). If
$<'_r$ is an extension of $<_r$ to $k$, then $<'_r$ can be lifted
through $v_2$ to $K(x)$ in such a way that the lifted ordering extends
$<$ (from $K$).
\end{proposition}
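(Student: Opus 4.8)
The plan is to construct each $v_i$ first as an extension of $v$ from $K(a)$ to the rational function field $K(a)(x)$, and then to extend it arbitrarily to $\widetilde{K(a)(x)}=\widetilde{K(x)}$; in every case I will arrange that $v_i(x-a)>\mbox{\rm kras}(a,K)$ (if $a\in K$ this is vacuous and the argument simplifies, since no Krasner step is needed). By Lemma~\ref{vkras} this forces $a\in K(x)^h$, so $K(a)(x)^h=K(x)^h$; since a henselization is an immediate extension, this gives $v_iK(x)=v_iK(a)(x)$ and $K(x)v_i=K(a)(x)v_i$, and since $[K(a):K]<\infty$ an element of $K(x)^h$ is algebraic over $K$ if and only if it is algebraic over $K(a)$, whence $\ic(K(x)|K,v_i)=\ic(K(a)(x)|K(a),v_i)$. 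Moreover, whenever the extension $(K(a)(x)|K(a),v_i)$ is weakly pure, Lemma~\ref{icpure} identifies the latter with $K(a)^h$. So everything reduces to producing, over $K(a)$, an extension with the prescribed value group and residue field \emph{and} with $x$ close enough to $a$, together with weak purity for the assertions about implicit constant fields.

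For $v_1$ I will set $v_1=v_{a,\gamma'}$, where $\gamma'\in\Gamma$ is a non-torsion generator of $\Gamma/vK(a)$ chosen with $\gamma'>\mbox{\rm kras}(a,K)$. Such a $\gamma'$ exists: $\mbox{\rm kras}(a,K)$ lies in the divisible hull of $vK$, hence of $vK(a)$, and an elementary order-theoretic argument (distinguishing whether a fixed lift $\gamma_0$ of a generator lies in the convex hull of $vK(a)$, and using that a non-trivial $vK(a)$ is cofinal and coinitial in its divisible hull) shows that one of the coset representatives $\pm\gamma_0+vK(a)$ exceeds $\mbox{\rm kras}(a,K)$; if $vK(a)=\{0\}$ then $v$ is trivial, $\mbox{\rm kras}(a,K)=0$, and one takes $\gamma'$ positive. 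Then Lemma~\ref{vag}(1) gives $v_1K(a)(x)=vK(a)\oplus\Z\gamma'=\Gamma$ (as ordered groups, since $\gamma'\in\Gamma$) and $K(a)(x)v_1=K(a)v$, and $x-a$ being value-transcendental shows this extension is pure; with the reduction above this yields (\ref{v1}). For $v_2$ — here $v$ is non-trivial on $K$, so $vK(a)\neq\{0\}$ — I will pick $\gamma\in vK(a)$ with $\gamma>\mbox{\rm kras}(a,K)$ (cofinality) and set $v_2=v_{a,\gamma}$; by Lemma~\ref{vag}(2) (with $e=1$ and $d\in K(a)$, $vd=-\gamma$) the value group is $vK(a)$ and the residue field is $K(a)v\bigl(d(x-a)v_2\bigr)$, a rational function field in one variable over $K(a)v$, i.e.\ $k$, and the extension is pure since $d(x-a)$ is residue-transcendental; with the reduction this gives (\ref{v2}).

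For $v_3$ I will exploit the hypothesis on immediate extensions. If $(K(a),v)$ has a transcendental immediate extension $M$, pick $z\in M$ transcendental over $K(a)$; then $K(a)(z)\subseteq M$, so $(K(a)(z)|K(a),v)$ is immediate, and $vK(a)\neq\{0\}$. Choosing $\pi\in K(a)$ with $v\pi+vz>\mbox{\rm kras}(a,K)$, the element $z'=a+\pi z$ again generates $K(a)(z)$ over $K(a)$, is transcendental, and satisfies $v(z'-a)>\mbox{\rm kras}(a,K)$; transporting $v$ along the $K(a)$-isomorphism sending $x$ to $z'$ defines $v_3$ on $K(a)(x)$ with $v_3K(a)(x)=vK(a)$ and $K(a)(x)v_3=K(a)v$, so the reduction yields (\ref{v3}). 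If in addition $(K(a),v)$ admits a pseudo Cauchy sequence $\bA=(a_\nu)$ of transcendental type, I will instead adjust the sequence: for a suitable $\pi\in K(a)$ the sequence $(a+\pi a_\nu)_\nu$ is again a pseudo Cauchy sequence of transcendental type, and by Theorem~\ref{Ka2} its associated valuation on $K(a)(x)$ is immediate, makes $x$ a pseudo limit (so that $v_3(x-a)$ is $v\pi$ plus the eventual value of $va_\nu$, which we arrange to exceed $\mbox{\rm kras}(a,K)$), and makes $(K(a)(x)|K(a),v_3)$ pure; the reduction then also gives $K(a)^h=\ic(K(x)|K,v_3)$.

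Finally, for the ordering statements, with $v$ convex for $<$ on $K(a)$ and $<_r$ the induced ordering on $K(a)v$, I will apply Proposition~\ref{<ext} to the extension $(K(a)(x)|K(a),v_i)$. Its hypothesis $2v_iK(a)(x)\cap vK(a)=2vK(a)$ holds: for $v_1$ because $\gamma'$ is non-torsion over $vK(a)$ in the direct sum $vK(a)\oplus\Z\gamma'$, and for $v_2,v_3$ because $v_iK(a)(x)=vK(a)$. Hence $<_r$ — respectively the given extension $<'_r$ to $k=K(a)(x)v_2$ — lifts through $v_i$ to an ordering of $K(a)(x)$ extending $<$ on $K(a)$; restricting that ordering to $K(x)$ gives an ordering that still lifts $<_r$ (resp.\ $<'_r$) through $v_i|_{K(x)}$, since $K(x)v_i=K(a)v$ for $i=1,3$ and $K(x)v_2=k$, and that restricts to $<$ on $K$. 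I expect the main obstacle throughout to be the bookkeeping needed to force $v_i(x-a)>\mbox{\rm kras}(a,K)$ without disturbing the prescribed value group and residue field: concretely, the order-theoretic choice of the generator $\gamma'$ for $v_1$, and turning an \emph{abstract} transcendental immediate extension (or pseudo Cauchy sequence of transcendental type) into a concrete valuation on $K(a)(x)$ in which $x$ sits near $a$ for $v_3$; the conceptual core is the reduction in the first paragraph.
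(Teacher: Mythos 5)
Your proposal is correct and follows essentially the same route as the paper: push $v(x-a)$ above $\mbox{\rm kras}(a,K)$ (via $v_{a,\gamma}$ in the transcendental cases, via a rescaled transcendental element or pseudo Cauchy sequence in the immediate case), invoke the Krasner variant to get $a\in K(x)^h$ and hence equality of value groups, residue fields and implicit constant fields with those of $(K(a)(x)|K(a),v_i)$, then use purity with Lemma~\ref{icpure} and Proposition~\ref{<ext} for the ordering statements. The only differences are cosmetic: you choose the generator $\gamma'$ inside $\Gamma$ directly (the paper takes $\gamma=\alpha+\beta$ with $\alpha\in vK$, $\alpha\geq\mbox{\rm kras}(a,K)$ and $\beta$ a positive generator) and you transform the pseudo Cauchy sequence rather than the pseudo limit.
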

\begin{proof}
If $K(a)=K$, then we do the following. We take a generator $\gamma$ of
$\Gamma$ over $vK$ and let $v_1$ be any extension of $v_{0,\gamma}$ to
$\widetilde{K(x)}$. Further, we take $v_2$ to be any extension of
$v_{0,0}$ to $\widetilde{K(x)}$. Then the desired properties of $v_1$
and $v_2$ follow from Lemma~\ref{vag} and Lemma~\ref{icpure}. To
construct $v_3$, we send $x$ to some transcendental element in the given
immediate extension of $(K,v)$. The so obtained embedding induces an
extension $v_3$ of $v$ to $K(x)$ such that $(K(x)|K,v_3)$ is immediate.
We extend $v_3$ further to $\widetilde{K(x)}$. If $(K,v)$ admits a
pseudo Cauchy sequence of transcendental type, then we can use
Theorem~\ref{Ka2} to construct an immediate extension $v_3$ of $v$ to
$K(x)$ such that $x$ is a pseudo limit of this pseudo Cauchy sequence.
Then by Lemma~\ref{icpure}, $K^h\>=\>\ic(K(x)|K,v_3)$.

\pars
Now assume that $a\notin K$.
If $v'$ is any extension of $v$ to $\widetilde{K(x)}$ such that $v'(x-a)
>\mbox{\rm kras}(a,K)$, then Lemma~\ref{lvx-a} shows that $a\in K(x)^h$,
where the henselization is taken in $(\widetilde{K(x)},v')$.
Thus,
\begin{equation}                            \label{inhcf}
K(x)^h\;=\;K(a,x)^h\;,
\end{equation}
and consequently,
\begin{equation}                            \label{v'vgrf}
\left.\begin{array}{lcl}
v'K(x) & = & v'K(x)^h\;=\;v'K(a,x)^h\;=\;v'K(a,x)\\
K(x)v' & = & K(x)^hv'\;=\;K(a,x)^hv'\;=\;K(a,x)v'\;.
\end{array}\right\}
\end{equation}
If in addition we know that
\begin{equation}                            \label{icKa}
K(a)^h\>=\>\ic(K(a,x)|K(a),v')\;,
\end{equation}
then
\[K(a)^h\>\subseteq\>\ic(K(x)|K,v')\>\subseteq\>\ic(K(a,x)|K(a),v')\>=\>
K(a)^h\;,\]
which yields that $K(a)^h=\ic(K(x)|K,v')$.

\parm
As $vK$ is cofinal in its divisible hull $v\tilde{K}$, we can choose
some $\alpha\in vK$ such that $\alpha\geq\mbox{\rm kras}(a,K)$.

\pars
To construct $v_1\,$, we choose any positive generator $\beta$ of
$\Gamma$ over $vK(a)$. Then also $\gamma:=\alpha+\beta$ is a generator
of $\Gamma$ over $vK(a)$. Now we take $v_1$ to be any extension of
$v_{a,\gamma}$ to $\widetilde{K(x)}$. From Lemma~\ref{vag} we know that
$v_{a,\gamma}K(a,x)=vK(a)\oplus\Z\gamma=\Gamma$ and
$K(a,x)v_{a,\gamma} =K(a)v\,$. Since
\begin{equation}                            \label{vgx-a}
v_{a,\gamma}(x-a)\;=\;\gamma\;>\; \mbox{\rm kras}(a,K)\;,
\end{equation}
equations (\ref{v'vgrf}) hold for $v'=v_{a,\gamma}$, and we obtain:
\begin{eqnarray*}
v_1K(x) & = & v_{a,\gamma}K(a,x)\;=\;vK(a)+\Z\gamma\;=\;\Gamma\\
K(x)v_1 & = & K(a,x)v_{a,\gamma}\;=\;K(a)v\;.
\end{eqnarray*}
From Lemma~\ref{icpure} we infer that (\ref{icKa})
holds for $v'=v_{a,\gamma}$. Consequently, $K(a)^h=\ic(K(x)|K,v_1)$.

\pars
To construct $v_2\,$, we make use of our assumption that $v$ is
non-trivial on $K$ and choose some positive $\beta\in vK$. We set
$\gamma:=\alpha+\beta$ and take $v_2$ to be any extension of
$v_{a,\gamma}$ to $\widetilde{K(x)}$. From Lemma~\ref{vag} we know
that $v_{a,\gamma}K(a,x)=vK(a)=\Gamma$ and $K(a,x)v_{a,\gamma}$
is a rational function field in one variable over $K(a)v$; in this
sense, $K(a,x)v_{a,\gamma}=k\,$. Again we have (\ref{vgx-a}), and
equations (\ref{v'vgrf}) hold for $v'=v_{a,\gamma}$; so we obtain
\begin{eqnarray*}
v_2K(x) & = & v_{a,\gamma}K(a,x)\;=\;vK(a)\\
K(x)v_2 & = & K(a,x)v_{a,\gamma}\;=\;k\;.
\end{eqnarray*}
From Lemma~\ref{icpure} we infer that (\ref{icKa}) holds for
$v'=v_{a,\gamma}$. Consequently, $K(a)^h=\ic(K(x)|K,v_2)$.

\pars
To construct $v_3\,$, we take $(L|K(a),v)$ to be the transcendental
immediate extension which exists by hypothesis. We choose some $y\in L$,
transcendental over $K(a)$ and such that $\gamma:=vy>\mbox{\rm kras}
(a,K)$. Then $(K(a,y)|K(a),v)$ is immediate. Now the isomorphism
$K(a,y)\isom K(a,x)$ induced by $y\mapsto x-a$ induces on $K(a,x)$
a valuation $v'$ such that $(K(a,x)|K(a),v')$ is immediate and
$v'(x-a)=\gamma$. We take $v_3$ to be any extension of $v'$ to
$\widetilde{K(x)}$. Again, we have (\ref{v'vgrf}), and we obtain
\begin{eqnarray*}
v_3K(x) & = & v'K(a,x)\;=\;vK(a)\\
K(x)v_3 & = & K(a,x)v'\;=\;K(a)v\;.
\end{eqnarray*}
If $(K(a),v)$ admits a pseudo Cauchy sequence of transcendental type,
then we can use Theorem~\ref{Ka2} to construct an immediate extension
$(K(a,z)|K(a),v)$. Multiplying $z$ with a suitable element from $K$
will give us $y$ such that $vy>\mbox{\rm kras}(a,K)$. By our above
construction, also $x$ will be a pseudo limit of a pseudo Cauchy
sequence of transcendental type. Then by Lemma~\ref{icpure},
$K(a)^h\>=\>\ic(K(x)|K,v_3)$.

\parm
Finally, suppose that $(K(a),<)$ is an ordered field and that $<_r$ is
the ordering induced by $<$ on $K(a)v$. We have that $vK(a)=v_2 K(a,x)
=v_3 K(a,x)$, so it is trivially true that $2v_2 K(a,x)\cap vK(a)=
2vK(a)=2v_3 K(a,x)\cap vK(a)$. Further, $v_1 K(a,x)=vK(a)\oplus
\Z\gamma$, hence also in this case, $2v_1 K(a,x)\cap vK(a)=2vK(a)$.
Therefore, Proposition~\ref{<ext} shows that the given orderings
on $K(a)v$ and $k$ can be lifted through $v_1\,$, $v_2\,$ and
$v_3$ respectively, to orderings on $K(a,x)$ which extend the
ordering $<$ of $K(a)$.
\end{proof}

\pars
This proposition proves Theorem~\ref{MT} in the case where $K_1^h|K^h$
is finite since then, there is some $a\in K_1$ such that $K_1^h=K(a)^h$.
Further, we obtain:

\begin{proposition}                             \label{5.4}
Take any finite ordered abelian group extension $\Gamma_0$ of $vK$ and
any finite field extension $k_0$ of $Kv$. Further, take $\Gamma$ to be
the abelian group $\Gamma_0 \oplus\Z$ endowed with any extension of the
ordering of $\Gamma_0\,$, and take $k$ to be the rational function field
in one variable over $k_0\,$. If $v$ is trivial on $K$, then assume in
addition that $k_0|Kv$ is separable. Then there exists an extension
$v_1$ of $v$ from $K$ to $K(x)$ such that
\begin{equation}                                   \label{vv1}
v_1K(x)\>=\>\Gamma\mbox{\ \ and\ \ }K(x)v_1\>=\>k_0\;.
\end{equation}
If $v$ is non-trivial on $K$, then there exists an extension $v_2$ of
$v$ from $K$ to $K(x)$ such that
\begin{equation}                                   \label{vv2}
v_2K(x)\>=\>\Gamma_0\mbox{\ \ and\ \ }K(x)v_2\>=\>k\;.
\end{equation}
If $(K,v)$ admits a transcendental immediate extension, then
there is also an extension $v_3$ such that
\begin{equation}                                  \label{vv3}
v_3K(x)=\Gamma_0\mbox{\ \ and\ \ }K(x)v_3=k_0\;.
\end{equation}

Suppose that $(K,<)$ is an ordered field and that $v$ is convex.
Suppose further that $k_0$ and $k$ are equipped with extensions of the
ordering induced by $<$ on $Kv$. Then these extensions can be lifted
through $v_1\,$, $v_2\,$, $v_3$ to $K(x)$ in such a way that the lifted
orderings extend $<$.
\end{proposition}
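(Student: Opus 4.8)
The plan is to reduce the statement to Proposition~\ref{finhcf} and, in the ordered case, to Corollary~\ref{ainrc}. The point is that once one has a separable-algebraic extension $K(a)|K$ with $vK(a)=\Gamma_0$ and $K(a)v=k_0$, Proposition~\ref{finhcf} applies directly: $vK(a)\oplus\Z=\Gamma$ as ordered groups, because the ordering of $\Gamma$ was chosen to extend that of $\Gamma_0$, and the rational function field in one variable over $K(a)v=k_0$ is $k$.

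First I would produce such an $a$. If $v$ is non-trivial on $K$, Theorem~\ref{extprvgrf} supplies a simple separable-algebraic extension $K(a)|K$ with $vK(a)=\Gamma_0$, $K(a)v=k_0$ and $[K(a):K]=(\Gamma_0:vK)[k_0:Kv]$. If $v$ is trivial on $K$, then $\Gamma_0/vK=\Gamma_0$ is a finite torsion-free group, so $\Gamma_0=\{0\}=vK$, and the hypothesis that $k_0|Kv=k_0|K$ be finite separable lets me pick, by the primitive element theorem, an $a$ with $K(a)=k_0$; then $K(a)|K$ is separable-algebraic, $vK(a)=\{0\}=\Gamma_0$, $K(a)v=k_0$, and $[K(a):K]=[k_0:Kv]=(vK(a):vK)[K(a)v:Kv]$. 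In either case Proposition~\ref{finhcf}, applied to this $K(a)$, gives $v_1$ with $v_1K(x)=vK(a)\oplus\Z=\Gamma$ and $K(x)v_1=K(a)v=k_0$, which is~(\ref{vv1}); and if $v$ is non-trivial on $K$ it also gives $v_2$ with $v_2K(x)=vK(a)=\Gamma_0$ and $K(x)v_2=k$, which is~(\ref{vv2}).

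For $v_3$ I would first transfer the hypothesis from $K$ to $K(a)$. If $(M|K,v)$ is a transcendental immediate extension, then, since $K(a)$ was chosen with $[K(a):K]=(vK(a):vK)[K(a)v:Kv]$, Lemma~\ref{persimm} shows that for a suitable $K$-embedding of $K(a)$ into $\widetilde M$ and extension of $v$ the compositum $(M.K(a)|K(a),v)$ is immediate; and it is transcendental over $K(a)$, since it contains $M$, which is transcendental over $K$, while $K(a)|K$ is algebraic. Hence $(K(a),v)$ admits a transcendental immediate extension, and Proposition~\ref{finhcf} produces $v_3$ with $v_3K(x)=vK(a)=\Gamma_0$ and $K(x)v_3=K(a)v=k_0$, which is~(\ref{vv3}). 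When $v$ is trivial on $K$, $(K,v)$ has no transcendental immediate extension, so the hypotheses of the $v_2$ and $v_3$ assertions fail and those assertions are vacuous.

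The ordered case is where the real work lies, because one must now equip $K(a)$ with an ordering extending $<$ for which $v$ is convex and which induces on $K(a)v=k_0$ a prescribed ordering. Here $\chara K=0$, so all the extensions above are automatically separable. I would fix a real closure $R$ of $(K,<)$ carrying the unique convex extension of $v$; by Theorem~\ref{kmwrpr}, $(R,v)$ is henselian and $Rv$, with the ordering induced by $<$, is a real closure of $(Kv,<_r)$, where $<_r$ is the ordering induced by $<$ on $Kv$. Since the orderings prescribed on $k_0$ and on $k$ are independent extensions of $<_r$, I would treat $v_1,v_3$ and $v_2$ separately. For $v_1$ and $v_3$: embed $k_0$, with its prescribed ordering, order-preservingly into a real closure of $(Kv,<_r)$; as all such real closures are isomorphic over $Kv$, after composing the residue map of $(R,v)$ with such an isomorphism we may assume $k_0\subseteq Rv$ with its prescribed ordering, and then Corollary~\ref{ainrc} (applied with the finite extensions $\Gamma_0|vK$ and $k_0|Kv$) gives $K(a)\subseteq R$ with $vK(a)=\Gamma_0$ and $K(a)v=k_0$. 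The ordering that $K(a)$ inherits from $R$ extends $<$, makes $v$ convex, and induces on $K(a)v=k_0$ the prescribed ordering; and since this $K(a)$ again satisfies $[K(a):K]=(vK(a):vK)[K(a)v:Kv]$, the construction of $v_1$, and of $v_3$ via Lemma~\ref{persimm} when the hypothesis for $v_3$ holds, goes through as above. The ordering part of Proposition~\ref{finhcf} then lifts the prescribed ordering of $k_0=K(a)v$ through $v_1$ and through $v_3$ to orderings of $K(x)$ extending $<$, convex for $v_1$ resp.\ $v_3$ by Proposition~\ref{<ext}. For $v_2$: write $k=k_0(t)$, let $<'$ be the prescribed ordering of $k$ and $<_0$ its restriction to $k_0$, and run the same construction with $<_0$ in place of the prescribed ordering of $k_0$; this yields $K(a)\subseteq R$ whose inherited ordering induces $<_0$ on $K(a)v=k_0$, so that $<'$ is an extension to $k$ of the ordering that $<$ induces on $K(a)v$, and Proposition~\ref{finhcf} lifts $<'$ through $v_2$ to an ordering of $K(x)$ extending $<$, convex for $v_2$ by Proposition~\ref{<ext}. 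When $v$ is trivial on $K$, the same argument works with $R$ the real closure of $(K,<)$ carrying the trivial valuation, and only $v_1$ is at stake.

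I expect the main obstacle to be exactly the bookkeeping in this last step: keeping the two independent prescribed orderings apart, and matching each of them, through the canonical $Kv$-isomorphism of real closures of $(Kv,<_r)$, with the ordering that $v$ induces on $K(a)v$ inside $R$, so that Corollary~\ref{ainrc} delivers a $K(a)$ to which the ordering part of Proposition~\ref{finhcf} can be applied.
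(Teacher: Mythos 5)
Your proposal is correct and follows essentially the same route as the paper: construct a separable extension $K(a)|K$ realizing $\Gamma_0$ and $k_0$ via Theorem~\ref{extprvgrf} (or directly, using the primitive element theorem, when $v$ is trivial), transfer the transcendental immediate extension to $K(a)$ by Lemma~\ref{persimm}, place $K(a)$ inside a real closure via Corollary~\ref{ainrc} in the ordered case, and then invoke Proposition~\ref{finhcf}. Your extra bookkeeping for the two independently prescribed orderings on $k_0$ and $k$ (choosing $K(a)$ separately for $v_2$ so that the ordering induced on $K(a)v$ is the restriction of the ordering of $k$) is a detail the paper leaves implicit, but it is the same argument.
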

\begin{proof}
We choose any finite separable extension $(K(a)|K,v)$ such that $vK(a)=
\Gamma_0\,$, $K(a)v=k_0\,$ and $[K(a):K]=(vK(a):vK)[K(a)v:Kv]$. If $v$
is non-trivial on $K$, then such an extension exists by
Theorem~\ref{extprvgrf}; otherwise, $K(a)$ is just equal to $k_0\,$, up
to the isomorphism induced by the residue map of the trivial valuation
$v$. If $(K,v)$ admits a transcendental immediate extension, then by
Lemma~\ref{persimm}, also $(K(a),v)$ admits a transcendental immediate
extension. Now the first part of our theorem follows from
Proposition~\ref{finhcf}.

Suppose that $(K,<)$ is an ordered field with $v$ convex. Then by
Corollary~\ref{ainrc} we can choose the field $K(a)$ to be a subfield of
a real closure $(R,<)$ of $K$, equipped with a convex extension of $v$,
in such a way that the given ordering on $k_0$ is induced by $<$ through
this extension of $v$. Now again, our assertion for the ordered case
follows from Proposition~\ref{finhcf}.
\end{proof}

\parm
We turn to the realization of countably infinite
separable-algebraic extensions as implicit constant fields.
\begin{proposition}                         \label{inf-i}
Let $(K_1|K,v)$ be a countably infinite separable-algebraic extension of
non-trivially valued henselian fields. Then $(K_1,v)$ admits a pseudo
Cauchy sequence of transcendental type. In particular, there is an
extension $v_3$ of $v$ to $\widetilde{K(x)}$ such that $(K_1(x)|K_1,v)$
is immediate, with $x$ being the pseudo limit of this pseudo Cauchy sequence.
Moreover, $K_1=\ic(K(x)|K,v_3)$.
\end{proposition}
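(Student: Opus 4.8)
The plan is to realize $K_1$ as $\ic(K(x)|K,v_3)$ by building a pseudo Cauchy sequence \emph{inside} $K_1$ that climbs through the defining tower of $K_1$ so tightly that its pseudo limit $x$ drags all of $K_1$ into $K(x)^h$, while at the same time being forced to be of transcendental type by a degree count. A preliminary remark: since $K_1|K$ is algebraic, $vK_1/vK$ is a torsion group, and as $vK_1$ is torsion free this forces $vK\ne\{0\}$; hence $K$ is infinite and $vK$ is cofinal in $v\tilde K$. Write $K_1=\bigcup_{n\geq 1}F_n$ with $K=F_0\subsetuneq F_1\subsetuneq F_2\subsetuneq\cdots$ a strictly increasing chain of finite separable extensions of $K$; each $F_n$, being a finite extension of the henselian field $K$, is henselian.

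Next I would construct a pseudo Cauchy sequence $(c_n)_{n\geq 1}$ in $K_1$ by recursion on $n$, keeping $K(c_n)=F_n$. Take $c_1$ to be any primitive element of $F_1|K$. Given $c_n$ with $K(c_n)=F_n$, set $\delta_n:=\mbox{\rm kras}(c_n,K)$ for $n=1$ and $\delta_n:=\max\{v(c_n-c_{n-1}),\,\mbox{\rm kras}(c_n,K)\}$ for $n\geq 2$, fix a generator $a_{n+1}$ of $F_{n+1}|F_n$, and pick $t_n\in K$ such that $K(c_n+t_n a_{n+1})=F_{n+1}$ and $v(t_n a_{n+1})>\delta_n$. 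Such a $t_n$ exists: by the primitive element theorem only finitely many $t\in K$ violate the first requirement (here $F_{n+1}|K$ is finite separable and $c_n$ is a primitive element of $F_n|K$), while infinitely many $t\in K$ satisfy the second, because $v$ is non-trivial on $K$ and $vK$ is cofinal in $v\tilde K$. Put $c_{n+1}:=c_n+t_n a_{n+1}$. Then $v(c_{n+1}-c_n)=v(t_n a_{n+1})>\delta_n$, so these values strictly increase, $(c_n)_{n\geq 1}$ is a pseudo Cauchy sequence with widths $\gamma_n:=v(c_{n+1}-c_n)$ satisfying $\gamma_n>\mbox{\rm kras}(c_n,K)$, and $\bigcup_n K(c_n)=K_1$. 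In particular, any pseudo limit $b$ of $(c_n)$ in any valued extension satisfies $v(b-c_n)=\gamma_n>\mbox{\rm kras}(c_n,K)$ for all $n$.

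The heart of the proof, and where I expect the main difficulty, is to show that $(c_n)$ is of transcendental type. Suppose it were of algebraic type. By Theorem~3 of [KA] there is then an algebraic (hence finite) extension $K_1(b)|K_1$ carrying an extension of $v$ in which $b$ is a pseudo limit of $(c_n)$. Then $b$ is algebraic over $K$, so $[K(b):K]<\infty$ and $K(b)$ is henselian. For each $n$, $K(c_n)|K$ is separable-algebraic and $v(b-c_n)=\gamma_n>\mbox{\rm kras}(c_n,K)$, so Lemma~\ref{vkras} places $c_n$ in the henselization of $K(b)$, which is $K(b)$ itself; thus $F_n=K(c_n)\subseteq K(b)$ for every $n$, and therefore $K_1=\bigcup_n F_n\subseteq K(b)$, contradicting $[K_1:K]=\infty>[K(b):K]$. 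Hence $(c_n)$ is of transcendental type, which is the first assertion. The key move is that ``algebraic type'' is first converted into a single finite extension $K(b)$, and then the Krasner gap built into the recursion, together with the henselianity of $K(b)$, traps the whole infinite tower inside $K(b)$.

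It remains to finish. By Theorem~\ref{Ka2} the transcendental-type sequence $(c_n)$ induces an immediate extension of $v$ from $K_1$ to $K_1(x)$ in which $x$ is a pseudo limit of $(c_n)$; extend this valuation in any way to $\widetilde{K(x)}=\widetilde{K_1(x)}$ and call it $v_3$. Then $(K_1(x)|K_1,v_3)$ is immediate and, by the second alternative in the definition of purity, pure in $x$; this is the second assertion. For the equality $K_1=\ic(K(x)|K,v_3)$: since $x$ is a pseudo limit of $(c_n)$ we have $v_3(x-c_n)=\gamma_n>\mbox{\rm kras}(c_n,K)$, so Lemma~\ref{lvx-a} gives $c_n\in K(x)^h$, hence $K_1=\bigcup_n K(c_n)\subseteq K(x)^h$, and since $K_1|K$ is algebraic this shows $K_1\subseteq\ic(K(x)|K,v_3)$. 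Conversely, Lemma~\ref{icpure} applies to the (weakly) pure extension $(K_1(x)|K_1,v_3)$ and gives $\ic(K_1(x)|K_1,v_3)=K_1^h=K_1$, the last equality since $K_1$ is henselian; as $K(x)^h\subseteq K_1(x)^h$ inside $\widetilde{K(x)}$, every element of $K(x)^h$ that is algebraic over $K$ is an element of $K_1(x)^h$ algebraic over $K_1$ and hence lies in $\ic(K_1(x)|K_1,v_3)=K_1$. Therefore $\ic(K(x)|K,v_3)\subseteq K_1$, and equality holds.
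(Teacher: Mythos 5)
Your proof is correct and follows essentially the same route as the paper: a pseudo Cauchy sequence climbing the tower of finite subextensions with a Krasner-gap condition, the variant Krasner Lemma~\ref{vkras} to trap the tower in the field generated by any pseudo limit, Kaplansky's Theorem~3 to rule out algebraic type, and then Theorem~\ref{Ka2} together with Lemma~\ref{icpure} to conclude $K_1=\ic(K(x)|K,v_3)$. The only (harmless) difference is bookkeeping: you take $c_n$ to be primitive elements of $F_n|K$ (via the primitive element theorem, with Krasner constants measured over the fixed base $K$, so each $c_n$ is captured by a single application of Lemma~\ref{vkras}), whereas the paper uses partial sums $b_i=\sum_{j\leq i}a_j$ of scaled generators with Krasner constants over $K(a_1,\ldots,a_{i-1})$ and an induction to get $a_i\in K(x)^h$.
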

\begin{proof}
$K_1|K$ is a countably infinite union of finite subextensions. Thus,
we can choose a sequence $a_i\,$, $i\in\N$ such that $K(a_i)|K$ is
separable-algebraic and $K(a_i)\subsetuneq K(a_{i+1})$ for all $i$, and
such that $K_1=\bigcup_{i\in\N} K(a_i)$. Through multiplication with
elements from $K$ it is possible to choose each $a_i$ in such a way that
\begin{equation}                            \label{vai+1>k}
va_{i+1}\;>\; \max\{va_i,\mbox{\rm kras}(a_i,K(a_1,\ldots,a_{i-1}))\}\;.
\end{equation}
We set
\[b_i\;:=\;\sum_{j=1}^{i} a_j\;.\]
By construction, $(b_i)_{i\in\N}$ is a pseudo Cauchy sequence.
Suppose that $x$ is a pseudo limit of it, for some
extension of $v$ from $K_1$ to $K_1(x)$. Then by induction on $i$,
we show that $a_i\in K(x)^h$, where $K(x)^h$ is chosen in some
henselization of $(K_1(x),v)$. First, by Lemma~\ref{vkras}, $v(x-a_1)
=v(x-b_1)=va_2> \mbox{\rm kras}(a_1,K)$ implies that $a_1\in K(x)^h$. If
we have already shown that $a_1,\ldots,a_i\in K(x)^h$, then $b_i\in
K(x)^h$ and $v(x-b_i-a_{i+1})=v(x-b_{i+1})=va_{i+2}> \mbox{\rm kras}
(a_{i+1},K(a_1,\ldots,a_i))$ implies that $a_{i+1}\in
K(a_1,\ldots,a_i)(x-b_i)^h=K(a_1,\ldots,a_i)(x)^h=K(x)^h$.

This proves that $K_1\subseteq K(x)^h$. Since $K_1|K$ is infinite, this
also proves that $x$ must be transcendental. As a pseudo Cauchy sequence
of algebraic type would admit an algebraic pseudo limit ([KA],
Theorem~3), this yields that $(b_i)_{i\in\N}$ is a pseudo Cauchy
sequence of transcendental type in $(K_1,v)$.

By Theorem~\ref{Ka2} we can now extend $v$ to $K_1(x)$ such
that $(K_1(x)|K_1,v)$ is immediate and $x$ is a pseudo limit of
$(b_i)_{i\in\N}$. We choose any extension $v_3$ of $v$ from $K_1(x)$
to $\widetilde{K(x)}$. By Lemma~\ref{icpure} we know that $K_1=
\ic(K_1(x)|K_1,v_3)$. Hence,
\[K_1\>\subseteq\>K(x)^h\>\subseteq\>\ic(K(x)|K,v_3)\>\subseteq\>
\ic(K_1(x)|K_1,v_3) \>=\> K_1\;,\]
which shows that $K_1=\ic(K(x)|K,v_3)$.
\end{proof}

\pars
Since $\ic(K(x)|K,v)=\ic(K^h(x)|K^h,v)$, this proposition proves
Theorem~\ref{MT} in the case where $K_1^h|K^h$ is countably infinite.

\begin{proposition}                             \label{cgcg}
Take any non-trivially valued field $(K,v)$, any countably generated
ordered abelian group extension $\Gamma_0$ of $vK$ such that
$\Gamma_0/vK$ is a torsion group, and any countably generated algebraic
field extension $k_0$ of $Kv$. Assume that $\Gamma_0/vK$ or $k_0|Kv$ is
infinite. Then there exists an extension $v_3$ of $v$ from $K$ to $K(x)$
such that equations (\ref{vv3}) hold.

Suppose that $(K,<)$ is an ordered field and that $v$ is convex. If
$k_0$ is equipped with an extension of the ordering induced by $<$ on
$Kv$, then this can be lifted through $v_3$ to an ordering of $K(x)$
which extends $<$.
\end{proposition}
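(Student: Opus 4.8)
The plan is to realize $\Gamma_0$ and $k_0$ as the value group and residue field of a countably infinite separable-algebraic extension of $K$, pass to henselizations, and then apply Proposition~\ref{inf-i}. First I would use Theorem~\ref{extprvgrf} to choose an extension $(L|K,v)$ with $vL=\Gamma_0$ and $Lv=k_0$. Since $v$ is non-trivial on $K$ we have $\Gamma_0\ne\{0\}$, and since $\Gamma_0|vK$ and $k_0|Kv$ are countably generated algebraic, that theorem lets us take $L|K$ to be separable-algebraic and countably generated. By hypothesis $\Gamma_0/vK$ or $k_0|Kv$ is infinite, so $L|K$ is infinite by Lemma~\ref{fin}. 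Now I fix an extension of $v$ to $\widetilde{K(x)}$ and let $K^h\subseteq L^h$ be the henselizations of $K$ and of $L$ taken inside it. Then $L^h|K^h$ is separable-algebraic (it equals $L.K^h|K^h$), countably generated, and still infinite — a finite $L^h|K^h$ would force $vL^h/vK^h=\Gamma_0/vK$ and $L^hv|K^hv=k_0|Kv$ to be finite, by Lemma~\ref{fin} — while $K^h$ and $L^h$ are non-trivially valued henselian fields.

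Applying Proposition~\ref{inf-i} with $K$ replaced by $K^h$ and $K_1$ replaced by $L^h$ gives an extension $v_3$ of $v$ to $\widetilde{K^h(x)}=\widetilde{K(x)}$ such that $(L^h(x)|L^h,v_3)$ is immediate and $L^h=\ic(K^h(x)|K^h,v_3)$. I then restrict $v_3$ to $K(x)$ and check (\ref{vv3}). On the one hand $K(x)\subseteq L^h(x)$, and since the latter extension is immediate $v_3L^h(x)=vL^h=\Gamma_0$ and $L^h(x)v_3=L^hv=k_0$, so $v_3K(x)\subseteq\Gamma_0$ and $K(x)v_3\subseteq k_0$. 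On the other hand $\ic(K^h(x)|K^h,v_3)$ is by definition contained in the henselization $K^h(x)^h=K(x)^h$, so $L^h\subseteq K(x)^h$; as $K(x)^h|K(x)$ is immediate this yields $\Gamma_0=vL^h\subseteq v_3K(x)^h=v_3K(x)$ and $k_0=L^hv\subseteq K(x)v_3$. Hence $v_3K(x)=\Gamma_0$ and $K(x)v_3=k_0$, which is (\ref{vv3}).

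For the ordered case I would build $L$ inside a real closure instead. Let $<_r$ be the ordering induced by $<$ on $Kv$ and let $<'_r$ be the given extension of $<_r$ to $k_0$. Embedding $(k_0,<'_r)$ order-preservingly into a real closure of $(Kv,<_r)$ and using Corollary~\ref{ainrc} (taking the subextension countably generated, as in the construction behind Theorem~\ref{extprvgrf}), I choose a countably generated separable-algebraic subextension $(L|K,v)$ of a real closure $(R,<)$ of $(K,<)$ equipped with the convex extension of $v$, such that $vL=\Gamma_0$, $Lv=k_0$, and $<$ induces $<'_r$ on $Lv=k_0$ through $v$; again $L|K$ is infinite. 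Henselizing inside $R$, which is henselian by Theorem~\ref{kmwrpr}, produces $K^h\subseteq L^h\subseteq R$ carrying the restricted ordering, with $v$ convex and inducing $<'_r$ on $L^hv=k_0$. Running the construction above yields $v_3$ with $(L^h(x)|L^h,v_3)$ immediate, so that $2v_3L^h(x)\cap vL^h=2vL^h$ holds trivially since the value group does not grow; Proposition~\ref{<ext} then lifts $<'_r$ through $v_3$ to an ordering of $L^h(x)$ extending $<$ on $L^h$, and its restriction to $K(x)$ is the required lifting of $<'_r$ through $v_3$ extending $<$ on $K$.

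The value group and residue field bookkeeping under henselization is routine. One point that needs care is that $2\Gamma_0\cap vK$ may be strictly larger than $2vK$ when $\Gamma_0/vK$ has $2$-torsion; this is why in the ordered case Proposition~\ref{<ext} is applied to the immediate extension $(L^h(x)|L^h,v_3)$ — where the value group is unchanged — rather than directly to $(K(x)|K,v_3)$. The main obstacle is the (easy but essential) verification that $L^h|K^h$ is genuinely infinite, so that Proposition~\ref{inf-i} is applicable; this is exactly where the hypothesis that $\Gamma_0/vK$ or $k_0|Kv$ is infinite enters.
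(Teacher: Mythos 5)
Your proposal is correct and follows essentially the same route as the paper's own proof: realize $\Gamma_0$ and $k_0$ by a countably generated infinite separable-algebraic extension via Theorem~\ref{extprvgrf}, pass to henselizations, apply Proposition~\ref{inf-i} and read off (\ref{vv3}) from the immediacy of $(L^h(x)|L^h,v_3)$ together with $L^h\subseteq K(x)^h$, and in the ordered case place the algebraic extension inside a real closure via Corollary~\ref{ainrc} and lift the ordering with Proposition~\ref{<ext} applied to the immediate extension. The only cosmetic difference is that you build $L$ over $K$ and compose with $K^h$ (checking that $L^h|K^h$ stays infinite), whereas the paper constructs the extension directly over $K^h$; this changes nothing of substance.
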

\begin{proof}
We fix an extension of $v$ to $\tilde{K}$. By Theorem~\ref{extprvgrf}
there is a countably generated separable-algebraic extension $K_1|K^h$
such that $vK_1=\Gamma_0$ and $K_1v=k_0\,$. Since at least one of the
extensions $\Gamma_0|vK$ and $k_0|Kv$ is infinite, $K_1^h|K^h$ is
infinite too, taking the henselizations in $(\tilde{K},v)$. Hence by
Proposition~\ref{inf-i} there is an extension $v_3$ of $v$ to $K_1(x)$
such that $K_1\subset K(x)^h$ and that $(K_1(x)|K_1,v_3)$ is immediate.
Consequently, $K_1(x)^h=K(x)^h$, which gives us $v_3K(x)= v_3K(x)^h=
v_3K_1(x)=vK_1=\Gamma_0$ and $K(x)v_3=K(x)^hv_3=K_1(x)v_3=K_1v =k_0\,$.

Suppose that $(K,<)$ is ordered, $v$ is convex and $k_0$ is equipped
with an extension of the ordering induced by $<$ on $Kv$. Take any
real closure $(R,<)$ of $(K,<)$ with an extension of $v$ to a convex
valuation of $R$. By Corollary~\ref{ainrc} we can choose the extension
$K_1|K$ as a subextension of $R|K$, with the ordering on $k_0$ induced
by $<$ through $v$. Since $(K_1(x)|K_1, v_3)$ is immediate,
Proposition~\ref{<ext} shows that there is a lifting of the ordering of
$k_0$ through $v$ to an ordering of $K_1(x)$ which extends the ordering
$<$ of $K_1\,$. Its restriction to $K(x)$ is an extension of the
ordering $<$ of $K$.
\end{proof}

\parm
To conclude this section, we now give an alternative
\sn
{\bf Proof of Theorem~\ref{piltant}:}
\sn
Let $K$ be an algebraically closed field of characteristic $p>0$. On
$K(x)$, we again take $v$
to be the $x$-adic valuation. We assume that $K$ contains an element $t$
which is transcendental over the prime field of $K$. Then it can be
proved that $K(x)^h$ admits two infinite linearly disjoint towers of
Galois extensions of degree $p$ and ramification index $p$. They can be
defined as follows. For the first tower, we set $\eta_0=x^{-1}$, take
$\eta_{i+1}$ to be a root of $X^p-X-\eta_i\,$, and set $L_i:=
K(\eta_i)$. For the second tower, we set $\vartheta_0=tx^{-1}$,
take $\vartheta_{i+1}$ to be a root of $X^p-X-t\vartheta_i\,$, and set
$N_i:=K(\vartheta_i)$. Note that $L_0=N_0=K(x)$, $\eta_i\in L_{i+1}$
and $\vartheta_i\in N_{i+1}\,$. For each $i\geq 0$, we have that
$v\eta_i=v\vartheta_i=-\frac{1}{p^i}vx$, and the extensions
$L_{i+1}|L_i$ and $N_{i+1}|N_i$ are Galois of degree $p$ with
ramification index $p$. Consequently, the same is true for the
extensions $L_{i+1}^h|L_i^h$ and $N_{i+1}^h|N_i^h$ (note that
$L_i^h=L_i.K(x)^h$ and $N_i^h=N_i.K(x)^h$).

We set $L:=\bigcup_{i\in\N}L_i$ and $N:=\bigcup_{i\in\N}N_i\,$. By the
above, $L$ and $N$ are linearly disjoint from $K(x)^h$ over $K(x)$.
Thus, $L^h=L.K(x)^h$ and $N^h=N.K(x)^h$ are countably infinite
separable-algebraic extensions of $K(x)^h$, and it can be proved that
they are linearly disjoint over $K(x)^h$. We use Proposition~\ref{inf-i}
(with $K$ replaced by $K(x)^h$ and $x$ replaced by $y$) to obtain an
extension of $v$ from $K(x)^h$ to $K(x)^h (y)$ such that $vK(x)^h(y)=
vL^h= \frac{1}{p^\infty}\Z$ and $K(x)^h(y)v= L^hv= K(x)^hv=K$, and such
that the extension $(K(x)^h(y)|K(x)^h,v)$ has implicit constant field
$L^h$ (i.e., $L^h$ is relatively algebraically closed in
$K(x,y)^h= (K(x)^h(y))^h\,$). Since
$K(x)$ is relatively algebraically closed in $K(x,y)$, we see that
$N$ is linearly disjoint from $K(x,y)$ over $K(x)$ and therefore,
$N.K(x,y)| K(x,y)$ is again an infinite tower of Galois extensions of
degree $p$. Since $N$ is linearly disjoint from $K(x)^h$ over $K(x)$
and $N^h=N.K(x)^h$ is linearly disjoint from $L^h$ over $K(x)^h$, we see
that $N$ is linearly disjoint from $L^h$ over $K(x)$. Since
$L^h$ is relatively algebraically closed in $K(x,y)^h$, this implies
that $N$ is linearly disjoint from $K(x,y)^h$ over $K(x)$ and hence,
$N.K(x,y)$ is linearly disjoint from $K(x,y)^h$ over $K(x,y)$.
Therefore, the extension of $v$ from $K(x,y)$ to $N.K(x,y)$ is unique.
Since $(K(x)^h(y))^h=K(x,y)^h$, we see that $(K(x)^h(y)|K(x,y),v)$ is
immediate. So we have
$vK(x,y)=\frac{1}{p^\infty}\Z$, which is $p$-divisible, and
$K(x,y)v=K$, which is algebraically closed. Hence, the extension
$(N.K(x,y)|K(x,y),v)$ is immediate, and so it is an infinite tower of
Galois extensions of degree $p$ and defect $p$.            \QED

\mn
\begin{remark}
For the above defined $\eta_i$, we have that the roots of
$X^p-X-\eta_{i-1}$ are $\eta_i,\eta_i+1,\ldots,\eta_i+p-1$. Therefore,
\[\mbox{\rm kras}(\eta_i,K(\eta_{i-1}))\>=\>0\]
for all $i$. Setting $a_i\>:=\> x^i\eta_i$, we obtain that
\begin{eqnarray*}
\mbox{\rm kras}(a_i,K(x)^h(a_1,\ldots,a_{i-1})) & = &
\mbox{\rm kras}(a_i,K(x)^h(\eta_{i-1})) \\
& = & ivx+\mbox{\rm kras}(\eta_i,K(x)^h(\eta_{i-1})) \>=\>ivx
\end{eqnarray*}
and $va_{i+1}=(i+1)vx-\frac{1}{p^{i+1}}vx>ivx$. This shows that
$va_{i+1} >va_i$ and that (\ref{vai+1>k}) is satisfied. So we can take
$y$ to be a pseudo limit of the Cauchy sequence $(\sum_{j=1}^{i}
a_j)_{i\in\N}\,$. That is,
\begin{equation}
y\>=\>\sum_{i=1}^{\infty} x^i \eta_i\;.
\end{equation}
Comparing this with (\ref{yps2}), we see that we have replaced the term
$x^{-p^{-e_i}}$ by $\eta_i$ which has an infinite expansion in powers of
$x$, starting with $x^{-p^{-i}}$.
\end{remark}

%
%
\section{Rational function fields of higher transcendence
degree}                                     \label{secthtd}
This section is devoted to the proof of Theorems~\ref{MTsevvar}
and~\ref{extord}. We will make use of the following theorem which we
prove in [KU5]:
\begin{theorem}                             \label{MTai}
Let $(L|K,v)$ be a valued field extension of finite transcendence
degree $\geq 0$, with $v$ non-trivial on $L$. Assume that one of the
following two cases holds:\sn
\underline{transcendental case}: \
$vL/vK$ has rational rank at least 1 or $Lv|Kv$ is transcendental;
\sn
\underline{separable-algebraic case}: \
$L|K$ contains a separable-algebraic subextension $L_0|K$
such that within some henselization of $L$, the corresponding
extension $L_0^h|K^h$ is infinite.
\sn
Then each maximal immediate extension of $(L,v)$ has infinite
transcendence degree over~$L$.
\end{theorem}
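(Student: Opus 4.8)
The plan is to reduce the statement, in both cases, to producing a pseudo Cauchy sequence of transcendental type in $(L,v)$ or in a suitable subfield, and then to build infinitely many algebraically independent elements inside a given maximal immediate extension $M$ of $(L,v)$. Two facts drive this. First, $M$ is spherically complete: if a pseudo Cauchy sequence in $M$ had no pseudo limit in $M$, then by Theorem~\ref{Ka2} or by Theorem~3 of [KA] it would generate a proper immediate extension of $M$, hence of $L$, contradicting maximality. Second, by Zorn's Lemma together with these same two theorems, $M$ contains a maximal immediate extension of every subfield $F\subseteq L$. From the latter and the additivity of transcendence degree one gets, for any $F\subseteq L$ with $\trdeg L|F<\infty$, that $\trdeg M|L=\trdeg M|F-\trdeg L|F$; so it suffices to prove the theorem with $L$ replaced by $F$. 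Together with Corollary~\ref{fingentb} and Lemma~\ref{prelBour}, this lets me cut the transcendence degree down to the minimum in each case.

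\textbf{Separable-algebraic case.} If $v$ is trivial on $L_0$ (hence on $K$), then $v$ is non-trivial on $L$ only through $L|L_0$, which then has rational rank at least $1$ and finite transcendence degree, so this is subsumed under the transcendental case with $L_0$ as base. Otherwise $v$ is non-trivial on $L_0$ and, since $\trdeg L|L_0=\trdeg L|K<\infty$, the reduction above leaves the pair $(L_0|K,v)$; replacing $L_0$ and $K$ by their henselizations (harmless for both the hypothesis and a maximal immediate extension of $L_0$) I may assume $L_0,K$ henselian with $[L_0:K]=\infty$ separable-algebraic. Inside $L_0$ I build a strictly increasing tower $K=K(a_0)\subsetneq K(a_1)\subsetneq\cdots$ with $\bigcup_i K(a_1,\dots,a_i)$ countably infinite over $K$ and, by scaling, $va_{i+1}>\max\{va_i,\mbox{\rm kras}(a_i,K(a_1,\dots,a_{i-1}))\}$ for all $i$, exactly as in Proposition~\ref{inf-i}. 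Then the partial sums $\sum_{j\le i}a_j$ form a pseudo Cauchy sequence which, by the argument of Proposition~\ref{inf-i}, is of transcendental type over $\bigcup_i K(a_1,\dots,a_i)$, hence, since $L_0$ is algebraic over that field, of transcendental type over $L_0$ by Lemma~\ref{persist}. By Lemma~\ref{limtpcs} its pseudo limit $x_1$, which lies in the spherically complete $M$, is transcendental over $L_0$ and $L_0(x_1)|L_0$ is immediate. One then checks that $(L_0(x_1)|K(x_1),v)$ again satisfies the hypothesis (the separable-algebraic extension persists after adjoining the transcendental $x_1$), so the same step applies; iterating yields infinitely many algebraically independent elements of $M$, whence $\trdeg M|L_0=\infty$ and the reduction gives $\trdeg M|L=\infty$.

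\textbf{Transcendental case.} Using Corollary~\ref{fingentb} and Lemma~\ref{prelBour} I pick value- or residue-transcendental elements $t_1,\dots,t_s\in L$ $(s\ge 1)$ accounting for $\rr(vL/vK)$ and $\trdeg Lv|Kv$; then $L$ is algebraic over $K':=K(t_1,\dots,t_s)$, which is again of transcendental type over $K$ with $v$ non-trivial, and by Lemma~\ref{persist} it is enough to produce a pseudo Cauchy sequence of transcendental type in $K'$. As $K'$ is algebraic over $K(t_1)$, a further application of Lemma~\ref{persist} reduces this to the case of $(K(t),v)$ with $t$ a single valuation-transcendental element over $K$ and $v$ non-trivial. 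Once such a sequence is in hand, its pseudo limit gives $x_1\in M$ transcendental over $L$; the extension $L(x_1)|L$ is immediate, so $(L(x_1)|K,v)$ is again of transcendental type of finite transcendence degree, and iterating produces infinitely many algebraically independent elements of $M$.

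The hard part will be this last reduction: showing that $(K(t),v)$ with $t$ valuation-transcendental over $K$ and $v$ non-trivial admits a pseudo Cauchy sequence of transcendental type. Unlike in the examples built in Section~\ref{sectne}, the transcendence of the limit cannot here be forced by a jump in the value group or residue field, since both $vK(t)$ and $K(t)v$ stay fixed along every immediate extension; it must instead come from genuine transcendence of a lacunary power series. Concretely, I would take partial sums $b_i=\sum_{j\le i}c_j t^{m_j}$ with $c_j\in K^\times$ and a rapidly increasing exponent sequence chosen so that the values $v(c_j t^{m_j})$ strictly increase (possible because $v$ is non-trivial and $t$ is valuation-transcendental), obtaining a pseudo Cauchy sequence in $K(t)$, and then choose the $m_j$ so that it is not of algebraic type, i.e.\ so that every pseudo limit is transcendental over $K(t)$. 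When $v$ is trivial on $K$ this is just the classical fact that $K((t))$ has infinite transcendence degree over $K(t)$; in general one passes to a rank-one coarsening of $v$ in which $t$ still plays a transcendental role and invokes the transcendence of series such as $\sum t^{n!}$ over a rational function field, in positive characteristic via a specialization argument reducing to Christol's theorem over $\overline{\F_p}$. Arranging such a coarsening when $vK$ has rank $>1$, together with the attendant bookkeeping, is where the real work lies; the rest is the assembly above.
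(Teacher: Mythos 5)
A preliminary remark: the paper does not prove Theorem~\ref{MTai} at all -- it is quoted from the reference [KU5] (``which we prove in [KU5]''), so there is no internal proof to compare with, and your proposal has to stand on its own. As it stands it is a strategy outline with genuine gaps rather than a proof. The top-level skeleton (a maximal immediate extension $M$ of $(L,v)$ is spherically complete; produce pseudo Cauchy sequences of transcendental type and take limits in $M$; iterate) is reasonable, but the two steps that carry the weight are not established. First, your reduction to subfields rests on the claim that $M$ contains a maximal immediate extension of every subfield $F\subseteq L$, justified only by ``Zorn plus Theorems 2 and 3 of [KA]''. Zorn gives a subfield $N\subseteq M$ maximal among the immediate extensions of $F$ lying inside $M$; to conclude that $N$ is maximal one must realize inside $M$ a proper immediate extension of $N$ whenever one exists. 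For a sequence of transcendental type any pseudo limit in $M$ works, but for one of algebraic type Theorem~3 of [KA] only produces an immediate extension generated by a root of the minimal-degree polynomial which is simultaneously a pseudo limit; such an element need not lie in $M$, and an arbitrary pseudo limit $y\in M$ need not generate an immediate extension of $N$, because $M|N$ is not immediate. So the transfer claim is unproved and delicate (the paper itself warns, after Theorem~\ref{conv2}, that the behaviour of maximal immediate extensions under algebraic extensions is poorly understood). Relatedly, in the transcendental case you assert that $L$ is algebraic over $K':=K(t_1,\dots,t_s)$; this is false in general, since the inequality of Corollary~\ref{fingentb} can be strict, so Lemma~\ref{persist} does not carry a sequence of transcendental type from $K'$ up to $L$, and a pseudo limit chosen in $M$ could even lie in $L$ and contribute nothing to $\trdeg M|L$.

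Second, the iteration and the core construction are missing. In the separable-algebraic case, after producing $x_1$ as a pseudo limit of the sequence of Proposition~\ref{inf-i}, the new base $K(x_1)^h$ already contains the entire tower $\bigcup_i K(a_1,\dots,a_i)$; in the typical situation where $L_0$ equals this union one gets $L_0(x_1)^h=K(x_1)^h$, so the hypothesis you want to re-invoke for $(L_0(x_1)|K(x_1),v)$ is destroyed and ``one then checks that the hypothesis persists'' fails as stated. One needs either infinitely many algebraically independent limits extracted from a single tower, or a proof that $L_0(x_1)$ again admits a pseudo Cauchy sequence of transcendental type; either is real work. Finally, the crucial step of the transcendental case -- that $(K(t),v)$ with $t$ valuation-transcendental over an arbitrary valued field, of arbitrary rank and residue characteristic, admits a pseudo Cauchy sequence of transcendental type -- is precisely what you defer to lacunary series, rank-one coarsenings and Christol's theorem. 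For residue-transcendental $t$ the power-series picture does not even apply directly (the strictly increasing values must come from coefficients in $K$, not from powers of $t$), and nothing is said about why the chosen values can be made cofinal enough for a completion-plus-transcendence argument to control all pseudo limits after coarsening. Since this construction is the actual content of the theorem, the proposal cannot yet be accepted as a proof.
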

\n
Note: The assertion need not be true for an infinite purely inseparable
extension $L|K$.

\pars
Now we can give the
\pars
{\bf Proof of Theorem~\ref{MTsevvar}}: \
Assume that $(K,v)$ is a valued field, $n,\rho,\tau,\ell$ are
non-negative integers, $\Gamma$ is an ordered abelian group extension of
$vK$ such that $\Gamma/vK$ is of rational rank $\rho$, and $k|Kv$ is a
field extension of transcendence degree $\tau$. We pick a maximal set of
elements $\alpha_1,\ldots,\alpha_{\rho}$ in $\Gamma$ rationally
independent over $vK$, and a transcendence basis $\zeta_1,\ldots,
\zeta_{\tau}$ of $k|Kv$.

\pars
We prove {\bf Part A} of Theorem~\ref{MTsevvar} first. So assume further
that $n>\rho+\tau$, $\Gamma|vK$ and $k|Kv$ are countably generated, and
at least one of them is infinite. By Lemma~\ref{prelBour} there is a
unique extension of $v$ from $K$ to $K(x_1,\ldots,x_{\rho+\tau})$ such
that $vx_i=\alpha_i$ for $1\leq i \leq\rho$, and $x_{\rho+i}v=\zeta_i$
for $1\leq i\leq\tau$. Since $\Gamma|vK$ and $k|Kv$ are countably
generated, $vK(x_1,\ldots, x_{\rho+\tau})$ contains $\alpha_1,\ldots,
\alpha_{\rho}$ and $K(x_1,\ldots,x_{\rho+\tau})v$ contains $\zeta_1,
\ldots,\zeta_{\tau}$, the extensions $\Gamma|vK(x_1,\ldots,
x_{\rho+\tau})$ and $k|K(x_1,\ldots,x_{\rho+\tau})v$ are countably
generated and algebraic.

Suppose first that at least one of these extensions is infinite.
From our assumption that $\Gamma\ne\{0\}$ it follows that $v$ is
non-trivial on $K(x_1,\ldots,x_{\rho+\tau})$. Hence we can use
Proposition~\ref{cgcg} to find an extension of $v$ from
$K(x_1,\ldots,x_{\rho+\tau})$ to $K(x_1,\ldots,x_{\rho+\tau+1})$ such
that $vK(x_1,\ldots,x_{\rho+ \tau+1})=\Gamma$ and $K(x_1,\ldots,
x_{\rho+\tau+1})v=k$, and that the implicit constant field of
$(K(x_1,\ldots,x_{\rho+\tau+1})|K(x_1,\ldots,x_{\rho+\tau}),v)$ is an
infinite separable-algebraic extension of $K(x_1,\ldots,
x_{\rho+\tau})^h$. That means that $K(x_1,\ldots,x_{\rho+\tau+1})^h|
K(x_1,\ldots,x_{\rho+\tau})^h$ contains an infinite separable-algebraic
sub\-extension. Hence by the separable-algebraic case of
Theorem~\ref{MTai}, each maximal immediate extension of
$K(x_1,\ldots,x_{\rho+\tau+1})^h$ is of infinite transcendence degree.
This shows that we can find an immediate extension of $v$ from
$K(x_1,\ldots,x_{\rho+\tau+1})^h$ to $K(x_1,\ldots,x_{\rho+\tau+1})^h
(x_{\rho+\tau+2},\ldots,x_n)$. Its restriction to $K(x_1,\ldots,x_n)$ is
an immediate extension of $v$ from $K(x_1,\ldots,x_{\rho+\tau+1})$.

\pars
Now suppose that both extensions $\Gamma|vK(x_1,\ldots, x_{\rho+\tau})$
and $k|K(x_1,\ldots,x_{\rho+\tau})v$ are finite. Then from our
hypothesis that at least one of the extensions $\Gamma|vK$ and $k|Kv$ is
infinite, it follows that $\rho>0$ or $\tau>0$. Hence by the
transcendental case of Theorem~\ref{MTai}, any maximal immediate
extension of $(K(x_1,\ldots,x_{\rho+\tau}),v)$ is of infinite
trans\-cendence degree. In combination with Proposition~\ref{5.4},
this shows that there is an extension of $v$ from
$K(x_1,\ldots,x_{\rho+\tau})$ to $K(x_1,\ldots,x_{\rho+\tau+1})$
such that $vK(x_1,\ldots,x_{\rho+ \tau+1})=\Gamma$ and
$K(x_1,\ldots,x_{\rho+\tau+1})v=k$. By the transcendental case of
Theorem~\ref{MTai}, every maximal immediate extension of
$(K(x_1,\ldots,x_{\rho+\tau+1}),v)$ is of infinite transcendence degree.
So we can find an extension of $v$ to $K(x_1,\ldots,x_n)$ as in the
previous case.

\pars
In both cases we have that $vK(x_1,\ldots,x_n)=\Gamma$ and
$K(x_1,\ldots,x_n)v=k$, as required.

\parb
Now we prove {\bf Part B}. So assume that $n\geq\rho+\tau$, and that
$\Gamma|vK$ and $k|Kv$ are finitely generated.

{\bf I)} \ First, we consider the case of $\rho>0$. By
Lemma~\ref{prelBour} there is an extension of $v$ from $K$ to
$K(x_1,\ldots,x_{\rho-1+\tau})$ such that $vx_i=\alpha_i$ for $1\leq i
\leq\rho-1$, and $x_{\rho-1+i}v= \zeta_i$ for $1\leq i\leq\tau$. Since
$\Gamma|vK$ and $k|Kv$ are finitely generated, $vK(x_1,\ldots,
x_{\rho-1+\tau})$ contains $\alpha_1,\ldots,\alpha_{\rho-1}$ and
$K(x_1,\ldots,x_{\rho-1+\tau})v$ contains $\zeta_1,\ldots,\zeta_{\tau}$,
the extension $\Gamma| vK(x_1,\ldots,x_{\rho-1+\tau})$ is finitely
generated of rational rank 1 (and thus, $\Gamma$ is of the form
$\Gamma_0\oplus\Z$ with $\Gamma_0/vK(x_1,\ldots,x_{\rho-1+\tau})$
finite), and the extension $k|K(x_1, \ldots, x_{\rho-1+\tau})v$ is
finite.

If $v$ is not trivial on $K(x_1, \ldots,x_{\rho-1+\tau})$, then
we can apply Proposition~\ref{5.4} to obtain an extension of $v$
from $K(x_1,\ldots,x_{\rho-1+\tau})$ to $K(x_1,\ldots,x_{\rho+\tau})$
such that $vK(x_1,\ldots,x_{\rho+ \tau})=\Gamma$ and
$K(x_1,\ldots,x_{\rho+\tau})v =k$. From the transcendental case of
Theorem~\ref{MTai} we see that any maximal immediate extension of
$K(x_1,\ldots,x_{\rho+\tau}),v)$ is of infinite transcendence degree.
This shows that we can find an immediate extension of $v$ from
$K(x_1,\ldots,x_{\rho+\tau})$ to $K(x_1,\ldots,x_n)$.

Now suppose that $v$ is trivial on $K(x_1,\ldots,x_{\rho-1+\tau})$. Then
$\rho=1$ and $v$ is trivial on $K$. It follows that $\Gamma\isom\Z$, and
we pick a generator $\gamma$ of $\Gamma$.

If $n>1+\tau$, we modify the above procedure in such a way that we
use Lemma~\ref{prelBour} to choose an extension of $v$ from
$K(x_1,\ldots,x_{\tau})$ to $K(x_1,\ldots,x_{1+\tau})$ so that
the latter has value group $\Z\gamma=\Gamma$ and residue field
$Kv(\zeta_1,\ldots,\zeta_{\tau})$. Then we use Proposition~\ref{5.4} in
combination with the transcendental case of Theorem~\ref{MTai} to find
an extension of $v$ to $K(x_1,\ldots,x_n)$ with value group $\Gamma$
and residue field $k$.

If $n=1+\tau$, then by our assumption B2), $k$ is a simple algebraic
extension of a rational function field $k'$ in $\tau$ variables over
$Kv$ (or of $Kv$ itself if $\tau=0$), or a rational function field in
one variable over a finitely generated field extension $k_0$ of $Kv$ of
transcendence degree $\tau-1$.

Assume the first case holds. The extension of $v$ to the rational
function field $K(x_1,\ldots,x_{\tau})$ can be chosen such that
$k'=Kv(x_1v,\ldots,x_{\tau}v)= K(x_1,\ldots, x_{\tau})v$. Then the
extension $k|K(x_1, \ldots, x_{\tau})v$ is simple algebraic, and
according to Remark~\ref{allextrem} there is an
extension of $v$ to the rational function field
$K(x_1,\ldots,x_{\tau})(x_{1+\tau})$ which satisfies
$vK(x_1,\ldots,x_{1+\tau})=\Z\gamma=\Gamma$ and
$K(x_1,\ldots,x_{1+\tau})v =k$.

In the second case, we know that $\tau\geq 1$. The elements $\zeta_i\in
k$ can be chosen in such a way that $\zeta_1,\ldots,\zeta_{\tau-1}$ form
a transcendence basis of $k_0|Kv$. We pick $\gamma\in\Gamma$, $\gamma\ne
0$. By Lemma~\ref{prelBour} there is an extension of $v$ from $K$ to
$K(x_1,\ldots,x_{\tau})$ such that $x_{i}v=\zeta_i$ for $1\leq
i\leq\tau-1$, and $vx_{\tau}=\gamma$. Since $K(x_1,\ldots,x_{\tau})v
=Kv(\zeta_1,\ldots, \zeta_{\tau-1})$, we see that $k$ is a rational
function field in one variable over a finite extension of
$K(x_1,\ldots,x_{\tau})v$. Further, $vK(x_1,\ldots,x_{\tau})
=\Gamma$. Now we use Proposition~\ref{5.4} to find an extension of $v$
to $K(x_1,\ldots,x_{1+\tau})$ with value group $\Gamma$ and residue
field $k$.

\pars
{\bf II)} \ Second, suppose that $\rho=0$. Then because of $\Gamma\ne
\{0\}$, we know that $v$ is non-trivial on $K$. If $\tau>0$, then we
proceed as follows. The case of $n>\tau$ is covered by Part A. So we
assume that $n=\tau$, and that $k$ is a rational function field in one
variable over a finitely generated field extension $k_0$ of $Kv$ of
transcendence degree $\tau-1$. Again we choose
$\zeta_1,\ldots,\zeta_{\tau-1}$ to form a transcendence basis of
$k_0|Kv$, and use Lemma~\ref{prelBour} to find an extension of $v$ from
$K$ to $K(x_1,\ldots,x_{\tau-1})$ such that $x_{i}v=\zeta_i$ for
$1\leq i\leq\tau-1$. Again we obtain that $k$ is a rational function
field in one variable over a finite extension of
$K(x_1,\ldots,x_{\tau-1})v$. By assumption,
$\Gamma/vK(x_1,\ldots,x_{\tau-1})=\Gamma/vK$ is finite. Hence by
Proposition~\ref{5.4} there is an extension of $v$ from
$K(x_1,\ldots,x_{\tau-1})$ to $K(x_1,\ldots,x_{\tau})$ such that
$vK(x_1,\ldots,x_{\tau})=\Gamma$ and $K(x_1,\ldots,x_{\tau})v=k$.

Finally, suppose that $\rho=0=\tau$ and that there is an immediate
extension $(K',v)$ of $(K,v)$ which is either infinite
separable-algebraic or of transcendence degree at least $n$. If the
former holds, then we obtain from the separable-algebraic case of
Theorem~\ref{MTai} that every maximal immediate extension of $(K',v)$
has infinite transcendence degree. But a maximal immediate extension of
$(K',v)$ is also a maximal immediate extension of $(K,v)$. Thus in all
cases, we have the existence of immediate extensions of $(K,v)$ of
transcendence degree at least $n$. Therefore, we can choose an immediate
extension of $v$ from $K$ to $K(x_1,\ldots, x_{n-1})$ such that
$(K(x_1,\ldots, x_{n-1}),v)$ still admits a transcendental immediate
extension. The extensions $\Gamma|vK=\Gamma|vK(x_1,\ldots,x_{n-1})$ and
$k|Kv=k|K(x_1,\ldots, x_{n-1})v$ are both finite since they are finitely
generated and algebraic. Hence by Proposition~\ref{5.4} there is an
extension of $v$ from $K(x_1,\ldots,x_{n-1})$ to $K(x_1,\ldots,x_n)$
such that $vK(x_1,\ldots,x_n)=\Gamma$ and $K(x_1,\ldots,x_n)v=k$.   \QED

\parm
Now we give the
\pars
{\bf Proof of Theorem~\ref{conv2}}: \
Corollary~\ref{fingentb} shows that $n\geq\rho+\tau$. If $n=\rho+\tau$,
then Corollary~\ref{fingentb} tells us that $vF|vK$ and $Fv|Kv$ are
finitely generated extensions. The fact that $vF|vK$ and $Fv|Kv$ are
always countably generated follows from Theorem~\ref{count} by induction
on the transcendence degree $n$.

Suppose that $F$ admits a transcendence basis $x_1,\ldots,x_n$ such that
the residue field extension
$Fv|K(x_1,\ldots,x_{n-1})v$ is of transcendence degree 1. (This is in
particular the case when $n=\tau$.) Then by Ohm's Ruled Residue Theorem,
$Fv$ is a rational function field in one variable over a finite
extension of $K(x_1,\ldots,x_{n-1})v$. Whenever equality holds in
(\ref{wtdgeq}) for an extension $L|K$ of finite transcendence degree, it
will hold in the respective inequality for any subextension of $L|K$.
Hence if $n=\rho+\tau$ then by Corollary~\ref{fingentb}, $K(x_1,\ldots,
x_{n-1})v|Kv$ is a finitely generated field extension, and if $n=\tau$,
this extension is of transcendence degree $\tau-1$. This yields that B3)
holds for $k=Fv$.

Now suppose that there does not exist such a transcendence basis, and
that $n=\rho+\tau$ with $\rho=1$ and $v$ is trivial on $K$. We pick any
transcendence basis $x_1,\ldots,x_n\,$. Since $Fv|K(x_1,\ldots,x_{n-1})v$
is algebraic and $n=\rho+\tau$, we must have that the rational rank of
$vF/vK(x_1,\ldots,x_{n-1})$ is 1. Since $\rho=1$ and $v$ is trivial on
$K$, this means that $v$ is also trivial on $K(x_1,\ldots,x_{n-1})$.
Hence, $Kv=K$ and $K(x_1,\ldots,x_{n-1})v=K(x_1,\ldots,x_{n-1})$ (modulo
an isomorphism). Remark~\ref{allextrem} now shows
that $Fv$ is a simple algebraic extension of $K(x_1,\ldots,x_{n-1})$,
which in turn is a rational function field in $\tau$ variables over
$K=Kv$, or equal to $Kv$ if $\tau=0$. Together with what we have shown
before, this proves that B2) holds for $k=Fv$.

To conclude this proof, assume that $\rho=0=\tau$. Then by
Lemma~\ref{a-i}, $(F.\tilde{K}| \tilde{K},v)$ is immediate, for
any extension of $v$ from $F$ to $F.\tilde{K}$. This shows that
$(\tilde{K},v)$ admits an immediate extension of transcendence
degree $n$.                                              \QED

\parm
Finally, we give the
\pars
{\bf Proof of Theorem~\ref{extord}}: \
We have to show that whenever we construct an extension of the form
$(L_2|L_1,v)$ with $Kv\subseteq L_1v\subseteq L_2v\subseteq k$ in the
previous proof, then the ordering of $L_2v$ induced by the given
ordering of $k$ can be lifted to an extension of the lifting that we
have already obtained on $L_1\,$. Whenever we apply
Propositions~\ref{finhcf}, \ref{5.4} and ~\ref{cgcg}, we obtain this
already from the assertions of these propositions. Whenever we apply
Lemma~\ref{prelBour}, we obtain this from Proposition~\ref{<ext}
because in this case, we always have that $vL_2$ is generated over
$vL_1$ by rationally independent values and therefore, $2vL_2\cap
vL_1=2vL_1\,$. Finally, whenever $(L_2|L_1,v)$ is an immediate
extension, we can also apply Proposition~\ref{<ext} because
$2vL_2\cap vL_1=2vL_1\cap vL_1=2vL_1\,$.               \QED

%
%
\section{Homogeneous sequences}              \label{sectkrseq}
In this section, we will develop special sequences which under certain
tameness conditions can be used to determine implicit constant fields.
%

%
%
\subsection{Homogeneous approximations}
Let $(K,v)$ be any valued field and $a,b$ elements in some
valued field extension $(L,v)$ of $(K,v)$. We will say that $a$ is
\bfind{strongly homogeneous over $(K,v)$} if $a\in K\sep\setminus K$,
the extension of $v$ from $K$ to $K(a)$ is unique (or equivalently,
$K(a)|K$ is linearly disjoint from all henselizations of $(K,v)$), and
\begin{equation}                            \label{stronghom}
va\;=\;\mbox{\rm kras}(a,K)\;.
\end{equation}
Note that in this case, $va=v(\sigma a -a)$ for all automorphisms such
that $\sigma a\ne a$; indeed, we have that $v\sigma a=va$ and therefore,
$va\geq v(\sigma a -a)\geq va$.

We will say that $a$ is \bfind{homogeneous over $(K,v)$} if there is
some $d\in K$ such that $a-d$ is strongly homogeneous over $(K,v)$,
i.e.,
\[v(a-d)\;=\;\mbox{\rm kras}(a-d,K)\;=\;\mbox{\rm kras}(a,K)\;.\]
We call $a\in L$ a \bfind{homogeneous approximation of $b$
over $K$} if $a$ is homogeneous over $K$ and $v(b-a)>vb$ (and
consequently, $va=vb$). From Corollary~\ref{vkras} we obtain:

\begin{lemma}                               \label{krlkra}
If $a\in L$ is a homogeneous approximation of $b$ then $a$ lies in the
henselization of $K(b)$ w.r.t.\ every extension of the valuation $v$
from $K(a,b)$ to $\widetilde{K(b)}$.
\end{lemma}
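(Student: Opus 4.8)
The statement to prove is: if $a\in L$ is a homogeneous approximation of $b$ over $K$, then $a$ lies in the henselization of $K(b)$ with respect to every extension of $v$ from $K(a,b)$ to $\widetilde{K(b)}$. The plan is to reduce this directly to Lemma~\ref{vkras} (the version of Krasner's Lemma proved above). First I would unwind the definitions: by hypothesis $a$ is homogeneous over $(K,v)$, so there is some $d\in K$ with $v(a-d)=\mbox{\rm kras}(a-d,K)=\mbox{\rm kras}(a,K)$, and $a-d$ is strongly homogeneous over $(K,v)$; in particular $a\in K\sep$, so $K(a)|K$ is separable-algebraic, which is exactly the first hypothesis needed to invoke Lemma~\ref{vkras}. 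Moreover $v(b-a)>vb$ forces $vb=va$ (since $v(b-a)>vb$ means $b$ and $a$ have the same value), and likewise $v(a-d)=va$ whenever $\mbox{\rm kras}(a,K)=va$; one should record that $vb=va=v(a-d)$.

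The key step is to produce the Krasner inequality $v(b-a)>\mbox{\rm kras}(a,K)$ so that Lemma~\ref{vkras} applies with the pair $(K(a),v)$ and the valued field $K(a,b)$. This is immediate: $v(b-a)>vb=va=v(a-d)=\mbox{\rm kras}(a-d,K)=\mbox{\rm kras}(a,K)$, where the final equality is part of the definition of ``homogeneous''. (Note that $\mbox{\rm kras}(a,K)$ is well-defined here because $a$, being strongly homogeneous up to a shift by $d\in K$, is not purely inseparable over $K$; indeed it is separable-algebraic over $K$.) Having established $v(b-a)>\mbox{\rm kras}(a,K)$, Lemma~\ref{vkras} tells us directly that for every extension of $v$ from $K(a,b)$ to $\widetilde{K(a,b)}=\widetilde{K(b)}$, the element $a$ lies in the henselization of $(K(b),v)$ inside $(\widetilde{K(b)},v)$. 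That is precisely the assertion.

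There is essentially no obstacle here; the lemma is a bookkeeping corollary of the earlier Krasner variant, and the ``work'' is only in checking that the hypotheses of Lemma~\ref{vkras} are met. The one point requiring a line of care is the chain of value equalities $v(b-a)>vb=va=v(a-d)$ and the identification $v(a-d)=\mbox{\rm kras}(a,K)$: this uses both the definition of a homogeneous approximation (which gives $v(b-a)>vb$ and hence $va=vb$) and the definition of homogeneity (which supplies a $d\in K$ with $v(a-d)=\mbox{\rm kras}(a,K)$, together with $va=v(a-d)$ coming from strong homogeneity of $a-d$, as observed in the remark following the definition of strongly homogeneous). With that chain in hand, the conclusion is a one-line application of Lemma~\ref{vkras}, so the proof is short.
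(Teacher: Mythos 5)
Your reduction to Lemma~\ref{vkras} is exactly the route the paper takes (it offers no argument beyond ``From Lemma~\ref{vkras} we obtain''), and you correctly isolate the crux: one needs $v(b-a)>\mbox{\rm kras}(a,K)$. But the very step you flag as the one needing care is where the argument breaks. The equality $va=v(a-d)$ does \emph{not} follow from strong homogeneity of $a-d$: the remark after the definition of ``strongly homogeneous'' only asserts $v(a-d)=v(\sigma(a-d)-(a-d))$ for conjugates of $a-d$, and says nothing about how $v(a-d)$ compares with $va=v((a-d)+d)$. If $vd<v(a-d)$, then $va=vd<v(a-d)=\mbox{\rm kras}(a,K)$, so your chain $v(b-a)>vb=va$ stops strictly below the Krasner constant and Lemma~\ref{vkras} cannot be invoked. (Your intermediate remark ``$v(a-d)=va$ whenever $\mbox{\rm kras}(a,K)=va$'' is circular, since $\mbox{\rm kras}(a,K)=v(a-d)$ by definition of homogeneity.)

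Moreover, this gap cannot be closed from the stated hypotheses alone: reading ``homogeneous approximation'' literally ($a$ homogeneous and $v(b-a)>vb$), the conclusion can fail. Over $K=\Q_5$ (with $v5=1$) take $a=1+5^{1/3}$ and $b=1+\zeta_3\,5^{1/3}$. Then $a-1=5^{1/3}$ is strongly homogeneous, $\mbox{\rm kras}(a,K)=\frac{1}{3}$, while $va=vb=0$ and $v(b-a)=\frac{1}{3}>vb$; so $a$ is a homogeneous approximation of $b$, yet $a\notin K(b)^h=\Q_5(\zeta_3 5^{1/3})$, since $5^{1/3}\in\Q_5(\zeta_3 5^{1/3})$ would force $\zeta_3\in\Q_5(5^{1/3})$, which is impossible for degree reasons. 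What makes the lemma work wherever the paper actually uses it (Lemma~\ref{KSinL}, via the constructions in Proposition~\ref{tame} and Theorem~\ref{powser}) is that there the stronger inequality $v(b-a)>\mbox{\rm kras}(a,K)$ holds by construction: the shift $d$ is chosen so that $v(b-a)>v(b-d)=v(a-d)$. So your argument is correct precisely when $v(b-a)>\mbox{\rm kras}(a,K)$ is available --- in particular for strongly homogeneous approximations, where $d=0$ and $va=\mbox{\rm kras}(a,K)$ by definition --- but as written the step $va=v(a-d)$ is a genuine gap, and the honest repair is to take $v(b-a)>\mbox{\rm kras}(a,K)$ (rather than $v(b-a)>vb$) as the hypothesis under which Lemma~\ref{vkras} is applied.
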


We will also exploit the following easy observation:
\begin{lemma}                               \label{homup}
Let $(K',v)$ be any henselian extension field of $(K,v)$ such that
$a\notin K'$. If $a$ is homogeneous over $(K,v)$, then it is also
homogeneous over $(K',v)$, and $\mbox{\rm kras}(a,K)=\mbox{\rm kras}
(a,K')$. If $a$ is strongly homogeneous over $(K,v)$, then it is also
strongly homogeneous over $(K',v)$.
\end{lemma}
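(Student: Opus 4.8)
The plan is to reduce the statement to the case of strong homogeneity, and then obtain the homogeneous case by a translation. First I would record that the Krasner constant is translation invariant: for $d$ in the base field one has $\tau(a-d)-\sigma(a-d)=\tau a-\sigma a$, so $\mbox{\rm kras}(a-d,K)=\mbox{\rm kras}(a,K)$, and likewise $\mbox{\rm kras}(a-d,K')=\mbox{\rm kras}(a,K')$ whenever $d\in K'$; this is already used implicitly in the definition of ``homogeneous''. Granting the strongly homogeneous case, suppose $a$ is homogeneous over $(K,v)$, say $a-d$ is strongly homogeneous over $(K,v)$ with $d\in K\subseteq K'$. Since $a\notin K'$, also $a-d\notin K'$, so by the strongly homogeneous case $a-d$ is strongly homogeneous over $(K',v)$; by definition this means $a$ is homogeneous over $(K',v)$, and $\mbox{\rm kras}(a,K)=\mbox{\rm kras}(a-d,K)=v(a-d)=\mbox{\rm kras}(a-d,K')=\mbox{\rm kras}(a,K')$.

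So assume $a$ is strongly homogeneous over $(K,v)$. Because $a\in K\sep$ and $K\subseteq K'$, the element $a$ is separable-algebraic over $K'$; together with the hypothesis $a\notin K'$ this gives $a\in K'\sep\setminus K'$, and since $(K',v)$ is henselian the extension of $v$ from $K'$ to $K'(a)$ is automatically unique. Hence the only thing left to verify for strong homogeneity over $(K',v)$ is $va=\mbox{\rm kras}(a,K')$. I would fix an extension of $v$ to $\widetilde{K'}$; since we may take $\tilde K\subseteq\widetilde{K'}$, its restriction to $\tilde K$ is one of the mutually conjugate extensions of $v$, so $\mbox{\rm kras}(a,K)$ may be computed with it. Every $\sigma\in\Gal K'$ carries $\tilde K$ into itself over $K$ and thus restricts to some $\sigma|_{\tilde K}\in\Gal K$ with $\sigma a=\sigma|_{\tilde K}a$; consequently $v(\tau a-\sigma a)=v(\tau|_{\tilde K}a-\sigma|_{\tilde K}a)$ for all $\sigma,\tau\in\Gal K'$, which yields $\mbox{\rm kras}(a,K')\le\mbox{\rm kras}(a,K)=va$.

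For the reverse inequality I would invoke the observation recorded just after the definition of strong homogeneity: as $a$ is strongly homogeneous over $(K,v)$, $v(\rho a-a)=va$ for \emph{every} $\rho\in\Gal K$ with $\rho a\ne a$, not merely for a maximizing one. Since $a$ is separable-algebraic over $K'$ and $a\notin K'$, there is some $\sigma\in\Gal K'$ with $\sigma a\ne a$; then $\sigma|_{\tilde K}\in\Gal K$ moves $a$, so $v(a-\sigma a)=v(a-\sigma|_{\tilde K}a)=va$, and taking the pair $(\mbox{\rm id},\sigma)$ in the defining maximum gives $\mbox{\rm kras}(a,K')\ge va$. Combining the two bounds gives $\mbox{\rm kras}(a,K')=va=\mbox{\rm kras}(a,K)$, so $a$ is strongly homogeneous over $(K',v)$, which finishes this case and hence the lemma.

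I do not expect a serious obstacle here; the one point that needs care is the bookkeeping with Galois restrictions. Elements of $\Gal K'$ need not preserve $v$, so the comparison of Krasner constants must be phrased through the conjugation-invariant maximum rather than through the values of individual conjugates — which is legitimate precisely because $\mbox{\rm kras}(a,K)$ does not depend on the chosen prolongation of $v$ to $\tilde K$.
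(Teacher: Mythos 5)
Your argument is correct and is essentially the paper's own proof: the inequality $\mbox{\rm kras}(a,K')\leq\mbox{\rm kras}(a,K)$ comes from restricting elements of $\Gal K'$ to $\tilde{K}$, the reverse inequality from the observation that $v(\sigma a-a)=va$ whenever $\sigma a\ne a$, and henselianity of $(K',v)$ supplies the uniqueness of the extension of $v$ to $K'(a)$; the paper merely runs this directly on $a-d$ instead of first isolating the strongly homogeneous case. One small correction to your closing caution: since $(K',v)$ is henselian, every $\sigma\in\Gal K'$ does preserve the (unique) extension of $v$ to $\widetilde{K'}$, though your proof nowhere relies on the contrary.
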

\begin{proof}
Suppose that $a-d$ is separable-algebraic over $K$ and $v(a-d)=
\mbox{\rm kras}(a-d,K)$ for some $d\in K$. Then $a-d$ is also
separable-algebraic over $K'$. Further, $\mbox{\rm kras}(a-d,K')\leq
\mbox{\rm kras}(a-d,K)$ since restriction to $\tilde{K}$ is a map
sending $\{\sigma\in \Gal K'\mid \sigma a\ne a\}$ into $\{\sigma\in\Gal
K\mid \sigma a\ne a\}$. Hence, $v(a-d)\leq \mbox{\rm kras}(a-d,K')\leq
\mbox{\rm kras}(a-d,K)= v(a-d)$, which shows that equality holds
everywhere. Thus, $\mbox{\rm kras}(a,K')=\mbox{\rm kras}(a-d,K')=
\mbox{\rm kras}(a-d,K)= \mbox{\rm kras}(a,K)$. Since $(K',v)$ is
henselian by assumption, the extension of $v$ from $K'$ to $K'(a)$ is
unique. This shows that $a-d$ is strongly homogeneous over $(K',v)$, and
concludes the proof of our assertions.
\end{proof}

The following gives the crucial criterion for an element to be
(strongly) homogeneous over $(K,v)$:
\begin{lemma}                              \label{lstronghom}
Suppose that $a\in\tilde{K}$ and that there is some extension of $v$
from $K$ to $K(a)$ such that if {\rm e} is the least positive integer
for which ${\rm e}va\in vK$, then
\sn
a) \ {\rm e} is not divisible by $\chara Kv$,\n
b) \ there exists some $c\in K$ such that $vca^{\rm e}=0$, $ca^{\rm e}v$
is separable-algebraic over $Kv$, and the degree of $ca^{\rm e}$ over
$K$ is equal to the degree {\rm f} of $ca^{\rm e}v$ over $Kv$.
\sn
Then $[K(a):K]={\rm ef}$ and if $a\notin K$, then $a$ is strongly
homogeneous over $(K,v)$.
\end{lemma}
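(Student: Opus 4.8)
The plan is to read off the degree equality and the uniqueness of the extension from a single application of the fundamental inequality, then check separability, and finally compute $\mbox{\rm kras}(a,K)$ after descending to a henselization.

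First I would note $c\neq 0$ (since $vca^{\rm e}=0$) and $a\neq 0$ (otherwise $a\in K$ and ${\rm e}={\rm f}=1=[K(a):K]$ trivially). Consider the tower $K\subseteq K(a^{\rm e})\subseteq K(a)$. As $c\in K^\times$ we have $K(a^{\rm e})=K(ca^{\rm e})$, so $[K(a^{\rm e}):K]={\rm f}$ by hypothesis b), while $[K(a):K(a^{\rm e})]\leq{\rm e}$ because $a$ is a root of $X^{\rm e}-a^{\rm e}\in K(a^{\rm e})[X]$; hence $[K(a):K]\leq{\rm ef}$. For the reverse inequality I would use the given extension $w$ of $v$ to $K(a)$: minimality of ${\rm e}$ makes the cosets $0,va,\dots,({\rm e}-1)va$ distinct modulo $vK$, so $(wK(a):vK)\geq{\rm e}$, and $ca^{\rm e}w=ca^{\rm e}v\in K(a)w$, so $[K(a)w:Kv]\geq{\rm f}$. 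Applying the fundamental inequality (\ref{fiq}) to $w$ gives $[K(a):K]\geq(wK(a):vK)[K(a)w:Kv]\geq{\rm ef}$. Thus $[K(a):K]={\rm ef}$, equality holds throughout (\ref{fiq}), hence $w$ is the \emph{unique} extension of $v$ from $K$ to $K(a)$, $wK(a)=vK+\Z va$ and $K(a)w=Kv(ca^{\rm e}v)$. Since any extension of $v|_K$ to the subfield $K(ca^{\rm e})$ prolongs to one on $K(a)$, $v$ also has a unique extension to $K(ca^{\rm e})$.

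Next, separability. If $\chara K=0$ there is nothing to prove, so assume $\chara K=p>0$; then $\chara Kv=p$ and, by a), $p\nmid{\rm e}$. In the tower above, $a$ is separable over $K(a^{\rm e})$ since its minimal polynomial divides $X^{\rm e}-a^{\rm e}$, which is separable because $p\nmid{\rm e}$ and $a\neq 0$. For $K(ca^{\rm e})|K$, set $\beta:=ca^{\rm e}$, $\zeta:=\beta v$, and let $g$ be the minimal polynomial of $\beta$ over $K$, of degree ${\rm f}$. By uniqueness of the extension of $v$ to $K(\beta)$, all $K$-conjugates of $\beta$ have value $0$, so $g\in{\cal O}_K[X]$; its reduction $gv$ is monic of degree ${\rm f}$ with root $\zeta$, hence is the minimal polynomial of $\zeta$ over $Kv$, which is separable by hypothesis. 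Then $0\neq(gv)'=(g')v$ forces $g'\neq 0$, so $g$ is separable and $\beta$ is separable over $K$. Consequently $K(a)|K$ is separable, i.e.\ $a\in K\sep$, and if $a\notin K$ then $a\in K\sep\setminus K$.

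Finally, the Krasner constant, where I expect the main difficulty to lie. Assume $a\notin K$, fix the extension of $v$ to $\tilde K$ restricting to $w$, and let $K^h$ be the henselization of $K$ in $(\tilde K,v)$, so $\Gal(K\sep|K^h)$ is the decomposition group and every $\rho\in\Gal K^h$ satisfies $v\rho=v$. The minimal polynomial $h$ of $a$ over $K$ is irreducible over $K^h$ (uniqueness of the extension), so its roots in $\tilde K$ are exactly the $\rho a$, $\rho\in\Gal K^h$, and being conjugate over the henselian field $K^h$ they all have value $va$. The key claim is that $v(\rho a-a)=va$ whenever $\rho a\neq a$. Indeed, were $v(\rho a-a)>va$, then $(\rho a/a)v=1$, hence $(\rho\beta/\beta)v=\bigl((\rho a/a)v\bigr)^{\rm e}=1$ (using $\rho c=c$); but the $K^h$-conjugates of $\beta$ have pairwise distinct residues (by the reduction argument above, $gv$ being separable), so $\rho\beta=\beta$, i.e.\ $(\rho a)^{\rm e}=a^{\rm e}$ and $\rho a=\xi a$ with $\xi^{\rm e}=1$; if $\xi\neq 1$ then, $X^{\rm e}-1$ being separable over $Kv$ as $p\nmid{\rm e}$, we get $\xi v\neq 1$ and $v(\rho a-a)=va+v(\xi-1)=va$, a contradiction, while $\xi=1$ gives $\rho a=a$, also excluded. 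Granting the claim, for any $\sigma,\tau\in\Gal K$ with $\tau a\neq\sigma a$ we write $\tau a=\rho_1 a$, $\sigma a=\rho_2 a$ with $\rho_i\in\Gal K^h$, so $v(\tau a-\sigma a)=v\bigl(\rho_2(\rho_2^{-1}\rho_1 a-a)\bigr)=v(\rho_2^{-1}\rho_1 a-a)=va$; together with $\mbox{\rm kras}(a,K)\geq v(\sigma a-a)\geq va$ for some $\sigma$ with $\sigma a\neq a$ (which exists since $a\in K\sep\setminus K$), this gives $\mbox{\rm kras}(a,K)=va$. Hence $a\in K\sep\setminus K$, $v$ extends uniquely from $K$ to $K(a)$, and $va=\mbox{\rm kras}(a,K)$, i.e.\ $a$ is strongly homogeneous over $(K,v)$. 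The delicate points are the two reduction arguments — that distinct $K^h$-conjugates of $\beta$, and distinct ${\rm e}$-th roots of unity, have distinct residues — and the careful passage to the henselization; once these are in place the identity drops out.
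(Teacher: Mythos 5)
Your proposal is correct and is essentially the paper's own argument: the identical tower-plus-fundamental-inequality computation yields $[K(a):K]={\rm ef}$ together with uniqueness of the extension, and the identical dichotomy --- $\sigma a^{\rm e}\ne\tau a^{\rm e}$, handled by the fact that distinct conjugates of $ca^{\rm e}$ have distinct residues, versus $\sigma a^{\rm e}=\tau a^{\rm e}$, handled by ${\rm e}$-th roots of unity and condition a) --- yields $v(\sigma a-\tau a)=va$ and hence $\mbox{\rm kras}(a,K)=va$. The only deviations are cosmetic: the paper applies this directly to arbitrary $\Gal K$-conjugates (uniqueness of the extension already gives $v\sigma a=va$ for every $\sigma$, so your detour through $K^h$ and the irreducibility of the minimal polynomial over $K^h$ is not needed), while your write-up makes explicit two points the paper leaves implicit, namely the separability of $a$ over $K$ and the distinct-residue claim for the conjugates of $ca^{\rm e}$, both obtained via reduction of the minimal polynomial of $ca^{\rm e}$.
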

\begin{proof}
We have
\begin{eqnarray*}
{\rm ef} & \geq & [K(a):K(a^{\rm e})]\cdot [K(a^{\rm e}):K]\>=\>
[K(a):K] \\
 & \geq & (vK(a):vK)\cdot [K(a)v:Kv]\>\geq\>{\rm ef}\;.
\end{eqnarray*}
So equality holds everywhere, and we obtain $[K(a):K]={\rm ef}$,
$(vK(a):vK)={\rm e}$ and $[K(a)v:Kv]={\rm f}$. By the fundamental
inequality, the latter implies that the extension of $v$ from $K$ to
$K(a)$ is unique.

Now assume that $a\notin K$. Take two distinct conjugates $\sigma a\ne
\tau a$ of $a$ and set $\eta:= \sigma a/\tau a\ne 1$. If $\sigma a^e \ne
\tau a^e$, then $c\sigma a^e= \sigma ca^e$ and $c\tau a^e= \tau ca^e$
are distinct conjugates of $ca^e$. By hypothesis, their residues are
also distinct and therefore, the residue of $\sigma a^e/ \tau a^e=
\eta^e$ is not $1$. It follows that the residue of $\eta$ is not $1$. If
$\sigma a^e = \tau a^e$, then $\eta$ is an e-th root of unity. Since e
is not divisible by the residue characteristic, it again follows that
the residue of $\eta$ is not equal to $1$. Hence in both cases, we
obtain that $v(\eta-1)=0$, which shows that $v(\sigma a - \tau a)= v\tau
a=va$. We have now proved (\ref{stronghom}).
\end{proof}

\begin{lemma}                               \label{exkrap}
Assume that $b$ is an element in some algebraically closed valued field
extension $(L,v)$ of $(K,v)$. Suppose that there is some ${\rm e}\in\N$
not divisible by $\chara Kv$, and some $c\in K$ such that $vcb^{\rm e}
=0$ and $cb^{\rm e}v$ is separable-algebraic over $Kv$. If ${\rm e}>0$
or $cb^{\rm e}v\notin Kv$, then we can find in $L$ a strongly
homogeneous approximation of $b$ over $K$.
\end{lemma}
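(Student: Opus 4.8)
The plan is to produce $a$ as a root of $X^{e}-\beta/c$ for a suitably chosen $\beta\in\tilde K$, to read off strong homogeneity from Lemma~\ref{lstronghom}, and to force the approximation inequality $v(b-a)>vb$ by correcting $a$ by an $e$-th root of unity via Hensel's Lemma in $L$.

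First I would normalize the exponent, because Lemma~\ref{lstronghom} insists that its ``$e$'' be the \emph{exact} period of $va$ modulo $vK$. Since $e\,vb=-vc\in vK$, the least positive integer $e_0$ with $e_0\,vb\in vK$ divides $e$; hence $e_0$ is again prime to $\chara Kv$, and for any $c_0\in K$ with $vc_0=-e_0\,vb$ the residue $\zeta_0:=(c_0b^{e_0})v$ satisfies $\zeta_0^{\,e/e_0}=\lambda\,(cb^{e})v$ with $\lambda\in Kv^{\times}$ (the point being that $c_0^{e/e_0}/c$ has value $0$), so $\zeta_0$ is again separable-algebraic over $Kv$, and the relevant non-degeneracy carries over to this reduced data. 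Thus I may assume from the start that $e$ is the period of $vb$ modulo $vK$; write $\zeta:=(cb^{e})v$ and $f:=[Kv(\zeta):Kv]$.

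Next I apply Lemma~\ref{constrra} to $\zeta$ (its separable branch if $v$ is non-trivial on $K$, otherwise its inseparable branch) to obtain $\beta\in\tilde K\subseteq L$ with $\beta v=\zeta$, $[K(\beta):K]=f$, $vK(\beta)=vK$, $K(\beta)v=Kv(\zeta)$ and a unique extension of $v$ to $K(\beta)$. As $L$ is algebraically closed I pick an $e$-th root $a_0\in L$ of $\beta/c$; then $ca_0^{e}=\beta$, so $v(ca_0^{e})=0$, $(ca_0^{e})v=\zeta$ is separable-algebraic over $Kv$ of degree $f$, and $va_0=-vc/e=vb$, whose period modulo $vK$ is $e$. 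Hence Lemma~\ref{lstronghom}, applied with its ``$e$'' and ``$c$'' equal to ours, gives $[K(a_0):K]=ef$ and, provided $a_0\notin K$, that $a_0$ is strongly homogeneous over $(K,v)$. To obtain the approximation, observe that $\beta$ and $cb^{e}$ are both units with residue $\zeta$, so $(a_0^{e}/b^{e})v=\bigl(\beta/(cb^{e})\bigr)v=1$; since $\chara Lv=\chara Kv\nmid e$, the residue map carries the $e$-th roots of unity of the henselian field $L$ bijectively onto those of $Lv$, so I may choose an $e$-th root of unity $\mu\in L$ whose residue inverts $(a_0/b)v$ and set $a:=\mu a_0$. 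Then $a$ is again a root of $X^{e}-\beta/c$, $ca^{e}=\beta$ (as $\mu^{e}=1$), $va=va_0=vb$, and $(a/b)v=1$, so $v(a-b)=vb+v(a/b-1)>vb$; moreover Lemma~\ref{lstronghom} applies to $a$ exactly as to $a_0$, since the period of $va$, the element $ca^{e}=\beta$, and its degrees over $K$ and $Kv$ are unchanged. So $a$ is a strongly homogeneous approximation of $b$ over $K$, provided $a\notin K$.

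The hard part will be disposing of the degenerate case $a\in K$, equivalently $ef=1$, i.e.\ $e=1$ and $\zeta\in Kv$: one has to argue that this is precisely the configuration excluded by the hypothesis ``$e>0$ or $cb^{e}v\notin Kv$'' once it is carried through the reduction to the true period, so that in every admitted case $ef>1$ and hence $a\notin K$; I expect this to be the only genuinely delicate point. Two smaller matters also need attention: in the trivially-valued case one must check via Lemma~\ref{constrra} that $\zeta\notin Kv$ still yields a non-trivial separable residue extension, so that $f>1$; and one should record that replacing $a_0$ by $a=\mu a_0$ leaves $K(a)v$, $vK(a)$ and $[K(a):K]=ef$ unchanged, whence the fundamental inequality remains an equality for $K(a)|K$ and the extension of $v$ to $K(a)$ is unique — exactly what strong homogeneity requires.
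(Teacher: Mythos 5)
Your construction of $a$ is, in substance, the paper's own: the element $\beta$ you obtain from Lemma~\ref{constrra} is precisely the root $d$ of a monic integral lift of the minimal polynomial of $\zeta:=cb^{e}v$ that the paper picks directly (since $\zeta$ is separable over $Kv$, the lift is automatically separable, so your case split over ``branches'' of Lemma~\ref{constrra} is not needed), and twisting an $e$-th root of $\beta/c$ by a root of unity so that $(a/b)v=1$ is the same as the paper's choice of $a_0$ with $a_0^{e}=d/(cb^{e})$ and residue $1$, followed by $a:=a_0b$; the passage to strong homogeneity via Lemma~\ref{lstronghom} is identical. The genuine problem is the step you yourself flag and leave open: your preliminary reduction to the exact period $e_0$ of $vb$ modulo $vK$, together with the exclusion of the degenerate case $a\in K$. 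The non-degeneracy hypothesis does \emph{not} carry through that reduction: if the hypothesis holds only via $e>1$ while $\zeta\in Kv$, it can well happen that $e_0=1$ and $\zeta_0=(c_0b)v\in Kv$. Indeed, take $K$ algebraically closed, $b\in L\setminus K$ a unit with $bv\in Kv$, $e=2$, $c=1$: all hypotheses (with $e>1$) are satisfied, yet $K\sep=K$ admits no strongly homogeneous elements at all, so the conclusion itself fails. Hence the argument you ``expect'' to exist for carrying the hypothesis through the reduction cannot exist, and the gap is not closable along the route you propose.

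What this shows is that the $e$ of the lemma has to be read as the least positive integer with $e\,vb\in vK$ --- exactly the $e$ of Lemma~\ref{lstronghom}, and exactly what is arranged when the lemma is invoked in the proof of Proposition~\ref{tame} (where also the printed ``$e>0$'' appears as the intended ``$e>1$''). Under that reading your reduction step is vacuous, and the ``hard part'' evaporates: Lemma~\ref{lstronghom} gives $[K(a):K]=e f$ with $f=[Kv(\zeta):Kv]$, and the hypothesis ``$e>1$ or $\zeta\notin Kv$'' forces $ef>1$ (the second disjunct gives $f>1$), hence $a\notin K$ and $a$ is a strongly homogeneous approximation of $b$. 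So: keep your construction, delete the reduction to $e_0$, and replace the deferred degenerate-case discussion by this two-line observation; as written, the proposal is incomplete at precisely its central point.
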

\begin{proof}
Take a monic polynomial $g$ over $K$ with $v$-integral coefficients
whose reduction modulo $v$ is the minimal polynomial of $cb^{\rm e}v$
over $Kv$. Then let $d\in\tilde{K}$ be the root of $g$ whose residue is
$cb^{\rm e}v$. The degree of $d$ over $K$ is the same as that of
$cb^{\rm e}v$ over $Kv$. We have that $v(\frac{d}{cb^{\rm e}}-1)>0$.
So there exists $a_0\in \tilde{K}$ with residue $1$ and such that
$a_0^{\rm e}= \frac{d}{cb^{\rm e}}$. Then for $a:=a_0b$, we find that
$v(a-b)=vb+ v(a_0-1)>vb$ and $ca^{\rm e}=d$. By the foregoing lemma,
this shows that $a$ is a strongly homogeneous approximation of $b$.
\end{proof}

%
%
\subsection{Homogeneous sequences}
Let $(K(x)|K,v)$ be any extension of valued fields. We fix an extension
of $v$ to $\widetilde{K(x)}$.

\pars
Let $S$ be an initial segment of $\N$, that is, $S=\N$ or
$S=\{1,\ldots,n\}$ for some $n\in\N$ or $S=\emptyset$. A sequence
\[{\eu S}\;:=\;(a_i)_{i\in S}\]
of elements in $\tilde{K}$  will be called a \bfind{homogeneous sequence
for $x$} if the following conditions are satisfied for all $i\in S$
(where we set $a_0:=0$):
\sn
{\bf (HS)} \ \ $a_i-a_{i-1}$ is a homogeneous approximation of
$x-a_{i-1}$ over $K(a_0,\ldots,a_{i-1})$.
\sn
Recall that then by definition of ``homogeneous'', $a_i\notin
K(a_0,\ldots,a_{i-1})^h$. We call $S$ the \bfind{support} of the
sequence $\eu S$. We set
\[K_{\eu S}\;:=\;K(a_i\mid i\in S)\;.\]
If ${\eu S}$ is the empty sequence, then $K_{\eu S}=K$.

\pars
From this definition, we obtain:
\begin{lemma}                               \label{kspcs}
If $1\leq i<j\in S$, then
\begin{equation}                            \label{pcs}
v(x-a_j)\> >\>v(x-a_i)\>=\>v(a_{i+1}-a_i)\;.
\end{equation}
If $S=\N$ then $(a_i)_{i\in S}$ is a pseudo Cauchy sequence with pseudo
limit $x$.
\end{lemma}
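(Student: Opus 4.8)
The lemma is essentially an unfolding of the defining condition (HS) together with the standard ultrametric estimates. I would argue as follows. Fix $i<j$ in $S$. By condition (HS) applied at stage $i+1$, the element $a_{i+1}-a_i$ is a homogeneous approximation of $x-a_i$ over $K(a_0,\ldots,a_i)$; by the definition of ``homogeneous approximation'' (see the definition preceding Lemma~\ref{krlkra}), this gives $v((x-a_i)-(a_{i+1}-a_i))>v(x-a_i)$, i.e.\ $v(x-a_{i+1})>v(x-a_i)$, and consequently $v(a_{i+1}-a_i)=v(x-a_i)$ since the value of the larger term drops out of the ultrametric sum $x-a_i=(x-a_{i+1})+(a_{i+1}-a_i)$. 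This already proves the equality $v(x-a_i)=v(a_{i+1}-a_i)$ in (\ref{pcs}).

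\textbf{The strict inequality.} For $v(x-a_j)>v(x-a_i)$ when $j>i$, I would induct on $j$: we have just seen $v(x-a_{i+1})>v(x-a_i)$, and the same reasoning gives $v(x-a_{k+1})>v(x-a_k)$ for every $k$ with $i\le k<j$, so the values $v(x-a_k)$ are strictly increasing in $k$ along the range $i\le k\le j$, whence $v(x-a_j)>v(x-a_i)$. This is the whole content of the first displayed assertion.

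\textbf{The pseudo Cauchy sequence claim.} Suppose $S=\N$. Recall that a sequence $(a_i)_{i\in\N}$ is a pseudo Cauchy sequence precisely when for all sufficiently large indices $i<j<k$ one has $v(a_j-a_k)>v(a_i-a_j)$. Given $i<j<k$, write $a_i-a_j=(x-a_j)-(x-a_i)$ and $a_j-a_k=(x-a_k)-(x-a_j)$. By the strict monotonicity just established, $v(x-a_j)>v(x-a_i)$, so $v(a_i-a_j)=v(x-a_i)$; likewise $v(x-a_k)>v(x-a_j)$, so $v(a_j-a_k)=v(x-a_j)$. Since $v(x-a_j)>v(x-a_i)$, this yields $v(a_j-a_k)>v(a_i-a_j)$, which is the pseudo Cauchy condition. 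Moreover $v(x-a_i)=v(a_i-a_{i+1})$ is exactly the value $v(x-a_i)$ that is required to be the ``breakpoint'' value of the sequence, and since these values strictly increase without an upper bound being asserted (we only need them cofinal in the relevant sense of pseudo limits), $x$ is by definition a pseudo limit of $(a_i)_{i\in\N}$: indeed $v(x-a_i)$ is strictly increasing, which is precisely what it means for $x$ to be a pseudo limit.

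\textbf{Expected obstacle.} There is essentially no obstacle here; the only point requiring a little care is to make sure the ``breakpoint values'' in the pseudo Cauchy sequence definition are matched correctly with the $v(x-a_i)$, i.e.\ that one invokes the ultrametric equality $v(a_i-a_j)=\min\{v(x-a_i),v(x-a_j)\}=v(x-a_i)$ with the strict inequality in hand so that the minimum is attained uniquely. Once the strict monotonicity of $(v(x-a_i))_i$ is in place, both assertions of the lemma are immediate.
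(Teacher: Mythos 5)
Your argument is correct and follows essentially the same route as the paper: unfold (HS) and the definition of homogeneous approximation to get $v(x-a_{i+1})>v(x-a_i)$ and $v(a_{i+1}-a_i)=v(x-a_i)$, use induction for the strict monotonicity, and then derive the pseudo Cauchy property and the pseudo limit claim from the same ultrametric equalities. No gaps.
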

\begin{proof}
If $1\leq i\in S$, then $a_i-a_{i-1}$ is a homogeneous approximation of
$x-a_{i-1}\,$. Hence by definition,
\[v(x-a_i)\>=\>v(x-a_{i-1}-(a_i-a_{i-1}))\> >\>v(x-a_{i-1})\;,\]
whence $v(a_i-a_{i-1})=\min\{v(x-a_i),v(x-a_{i-1})\}=v(x-a_{i-1})$.
If $i<j\in S$, then by induction, $v(x-a_j)>v(x-a_i)$.

Suppose that $S=\N$. Then it follows by induction that for all
$k>j>i\geq 1$,
\[v(x-a_k)\> >\>v(x-a_j) \> >\>v(x-a_i)\]
and therefore,
\begin{eqnarray*}
v(a_k-a_j) & = & \min\{v(x-a_k),v(x-a_j)\}\>=\>v(x-a_j)> v(x-a_i)\\
 & = & \min\{v(x-a_j),v(x-a_i)\}\>=\>v(a_j-a_i)\;.
\end{eqnarray*}
This shows that $(a_i)_{i\in S}$ is a pseudo Cauchy sequence. The
equality in (\ref{pcs}) shows that $x$ is a pseudo limit of this sequence.
\end{proof}

Let us also observe the following:
\begin{lemma}                               \label{xx}
Take $x,x'\in L$. If $a\in L$ is a homogeneous approximation of $x$ over
$K$ and if $v(x-x')>vx$, then $a$ is also a homogeneous approximation of
$x'$ over $K$. If $(a_i)_{i\in S}$ is a homogeneous sequence for $x$
over $K$ and if $v(x-x')>v(x-a_k)$ for all $k\in S$, then $(a_i)_{i\in
S}$ is also a homogeneous sequence for $x'$ over $K$.

In particular, for each $k\in S$ such that $k>1$, $(a_i)_{i<k}$ is a
homogeneous sequence for $a_k$ over $K$.
\end{lemma}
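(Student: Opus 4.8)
The plan is to reduce all three assertions to the ultrametric inequality, since they are essentially bookkeeping with values. For the first assertion I would first observe that whether $a$ is \emph{homogeneous over} $K$ is a property of $a$ alone (it involves only the conjugates of $a-d$ for a suitable $d\in K$), so it is untouched by replacing $x$ with $x'$; thus only the approximation inequality remains to be checked. From $v(x-x')>vx$ one gets $vx'=vx$, and then
\[
v(x'-a)\;=\;v\bigl((x'-x)+(x-a)\bigr)\;\geq\;\min\{v(x'-x),\,v(x-a)\}\;>\;vx\;=\;vx'\,,
\]
because $v(x'-x)>vx$ by hypothesis and $v(x-a)>vx$ since $a$ is a homogeneous approximation of $x$. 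Hence $a$ is a homogeneous approximation of $x'$ over $K$.

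For the second assertion I would verify condition (HS) for $x'$ directly from condition (HS) for $x$, applying the first assertion over the field $K_i:=K(a_0,\ldots,a_{i-1})$ with $x-a_{i-1}$, $x'-a_{i-1}$ and $a_i-a_{i-1}$ in place of $x$, $x'$ and $a$. The only thing to check is that $v\bigl((x-a_{i-1})-(x'-a_{i-1})\bigr)=v(x-x')>v(x-a_{i-1})$ for every $i\in S$. When $i-1\in S$ this is exactly the hypothesis; when $i=1$ we have $a_0=0$, and since $S$ is a nonempty initial segment, $1\in S$, so $v(x-x')>v(x-a_1)$, and combining this with $v(x-a_1)>v(x-a_0)=vx$ — which is condition (HS) at $i=1$, cf.\ Lemma~\ref{kspcs} — gives $v(x-x')>v(x-a_0)$. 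In all cases the first assertion applies and yields (HS) for $x'$, so $(a_i)_{i\in S}$ is a homogeneous sequence for $x'$ over $K$.

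The last statement is then immediate: for $k\in S$ with $k>1$, the truncation $(a_i)_{i<k}$ (support $\{1,\ldots,k-1\}$) is a homogeneous sequence for $x$ over $K$, since its defining conditions (HS) for $i\leq k-1$ are among those for the full sequence. Applying the second assertion with $x':=a_k$, it suffices to note $v(x-a_k)>v(x-a_i)$ for all $i$ with $1\leq i<k$, which is precisely the inequality (\ref{pcs}) of Lemma~\ref{kspcs}. I do not expect a genuine obstacle here; the only point needing a moment's care is the bookkeeping at the initial index $a_0=0$, handled as above.
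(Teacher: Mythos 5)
Your proof is correct and follows essentially the same route as the paper: homogeneity of $a$ (resp.\ of $a_i-a_{i-1}$) is a property independent of the element being approximated, so everything reduces to the ultrametric estimates $v(x'-a)\geq\min\{v(x'-x),v(x-a)\}>vx=vx'$ and $v(x'-a_{k-1})=v(x-a_{k-1})$, exactly as in the paper's argument; your explicit care at the initial index $a_0=0$ fills in a step the paper leaves implicit.
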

\begin{proof}
Suppose that $a$ is a homogeneous approximation of $x$ over $K$. Then
$v(x-a)>vx$. If also $v(x-x')>vx$, then $vx'=vx$ and $v(x'-a)\geq
\max\{v(x-x'),v(x-a)\}>vx=vx'$. This yields the first assertion.

Now assume that $(a_i)_{i\in S}$ is a homogeneous sequence for $x$
over $K$ and that $v(x-x')>v(x-a_k)$ for all $k\in S$. Then for all
$k\in S$, $v(x'-a_k)=\min\{v(x'-x),v(x-a_k)\}=v(x-a_k)$. Hence,
\begin{eqnarray*}
v(x'-a_{k-1}-(a_k-a_{k-1})) & = & v(x'-a_k)\>=\>v(x-a_k)\\
 & > & v(x-a_{k-1})\>=\>v(x'-a_{k-1})
\end{eqnarray*}
showing that $a_k-a_{k-1}$ is also a homogeneous approximation of
$x'-a_{k-1}\,$. Hence, $(a_i)_{i\in S}$ is a homogeneous sequence for
$x'$ over $K$.
\end{proof}

What is special about homogeneous sequences is described by the
following lemma:
\begin{lemma}                               \label{KSinL}
Assume that $(a_i)_{i\in S}$ is a homogeneous sequence for $x$ over $K$.
Then
\begin{equation}                            \label{KSin}
K_{\eu S} \>\subset\> K(x)^h \;.
\end{equation}
For every $k\in S$, $a_1,\ldots,a_k\in K(a_k)^h$. If
$S=\{1,\ldots,n\}$, then
\begin{equation}                            \label{in}
K_{\eu S}^h \>=\> K(a_n)^h\;.
\end{equation}
\end{lemma}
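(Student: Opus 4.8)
The plan is to prove the three assertions in turn, each built on the previous one, with Lemma~\ref{krlkra} as essentially the only input; throughout, all henselizations are understood to be formed inside the fixed $(\widetilde{K(x)},v)$.

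First I would prove $(\ref{KSin})$ by induction on $i\in S$, the claim being that $a_i\in K(x)^h$. For $i=1$, since $a_0=0$, condition (HS) says precisely that $a_1$ is a homogeneous approximation of $x$ over $K$, so Lemma~\ref{krlkra} places $a_1$ in the henselization of $K(x)$, i.e.\ $a_1\in K(x)^h$. For the inductive step, assume $a_1,\ldots,a_{i-1}\in K(x)^h$ and put $F:=K(a_0,\ldots,a_{i-1})(x)=K(a_1,\ldots,a_{i-1},x)$. Then $F\subseteq K(x)^h$; since $F|K(x)$ is algebraic we have $\widetilde F=\widetilde{K(x)}$, so the henselization of $F$ formed inside $\widetilde{K(x)}$ coincides with the henselization of $F$ contained in the henselian field $K(x)^h$ (using the fact, recalled in the introduction, that a henselian field contains a unique henselization of each of its subfields), whence $F^h\subseteq K(x)^h$. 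Now (HS) says $a_i-a_{i-1}$ is a homogeneous approximation of $x-a_{i-1}$ over $K(a_0,\ldots,a_{i-1})$, and $K(a_0,\ldots,a_{i-1})(x-a_{i-1})=F$, so Lemma~\ref{krlkra} gives $a_i-a_{i-1}\in F^h\subseteq K(x)^h$; adding $a_{i-1}\in K(x)^h$ yields $a_i\in K(x)^h$. This completes the induction and shows $K_{\eu S}\subseteq K(x)^h$, which is $(\ref{KSin})$.

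Next I would derive the statement about $K(a_k)^h$. For $k=1$ it is the trivial $a_1\in K(a_1)^h$. For $k>1$, the last assertion of Lemma~\ref{xx} tells us that $(a_i)_{i<k}$ is itself a homogeneous sequence, this time for $a_k$ over $K$; applying the result just proved to this sequence (with $a_k$ in the role of $x$, noting that $\widetilde{K(a_k)}=\widetilde K$ carries the restriction of the fixed valuation) gives $K(a_1,\ldots,a_{k-1})\subseteq K(a_k)^h$. Together with $a_k\in K(a_k)^h$ this yields $a_1,\ldots,a_k\in K(a_k)^h$.

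Finally, when $S=\{1,\ldots,n\}$, the previous point gives $K_{\eu S}=K(a_1,\ldots,a_n)\subseteq K(a_n)^h$, hence $K_{\eu S}^h\subseteq K(a_n)^h$, while the inclusion $K(a_n)\subseteq K_{\eu S}$ gives $K(a_n)^h\subseteq K_{\eu S}^h$; combining these proves $(\ref{in})$. I do not anticipate a serious obstacle: the single delicate point is the bookkeeping with henselizations --- keeping them all inside the fixed $(\widetilde{K(x)},v)$ and invoking ``a henselian field contains the henselizations of its subfields'' to obtain $F^h\subseteq K(x)^h$ --- and the rest is a direct application of Lemmas~\ref{krlkra} and~\ref{xx}.
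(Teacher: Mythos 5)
Your proposal is correct and follows essentially the same route as the paper: induction via Lemma~\ref{krlkra} to get $K_{\eu S}\subseteq K(x)^h$, then the last assertion of Lemma~\ref{xx} to rerun the argument with $a_k$ (resp.\ $a_n$) in place of $x$, and the two obvious inclusions for (\ref{in}). Your bookkeeping is in fact slightly more explicit than the paper's (you correctly take the henselization of $K(a_1,\ldots,a_{i-1},x)$, the field over which the homogeneous approximation is given, where the paper abbreviates to $K(x-a_{k-1})^h$), but the substance is identical.
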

\begin{proof}
We use induction on $k\in S$. Suppose that we have already shown that
$a_{k-1}\>\in\>K(x)^h$. As $a_k-a_{k-1}$ is a homogeneous approximation
of $x-a_{k-1}\,$, we know from Lemma~\ref{krlkra} that
\[a_k-a_{k-1}\>\in\>K(x-a_{k-1})^h \subseteq K(x)^h\;.\]
This proves (\ref{KSin}). Now all other assertions follow when we
replace $x$ by $a_k$ in the above argument, using the fact that
by the previous lemma, $(a_i)_{i<n}$ is a homogeneous sequence for $a_n$
over $K$.
\end{proof}

\begin{proposition}                         \label{SNpure}
Assume that ${\eu S}=(a_i)_{i\in S}$ is a homogeneous sequence for
$x$ over $K$ with support $S=\N$. Then $(a_i)_{i\in\N}$ is a pseudo
Cauchy sequence of transcendental type in $(K_{\eu S},v)$ with pseudo
limit $x$, and $(K_{\eu S}(x)|K_{\eu S},v)$ is immediate and pure.
\end{proposition}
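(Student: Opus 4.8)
The plan is to first establish that $(a_i)_{i\in\N}$ is a pseudo Cauchy sequence with pseudo limit $x$ — but this is already Lemma~\ref{kspcs}, so nothing new is needed there. The real content is twofold: (1) showing that this pseudo Cauchy sequence is of \emph{transcendental} type over $(K_{\eu S},v)$, and (2) deducing that $(K_{\eu S}(x)|K_{\eu S},v)$ is immediate and pure. Step (2) will be essentially immediate once (1) is in hand: if $(a_i)_{i\in\N}$ is of transcendental type in $(K_{\eu S},v)$ and $x$ is a pseudo limit of it, then Lemma~\ref{limtpcs} gives that $(K_{\eu S}(x)|K_{\eu S},v)$ is immediate and $x$ is transcendental over $K_{\eu S}$, and purity follows directly from the definition of ``pure'' (the second bullet: $x$ is the pseudo limit of a pseudo Cauchy sequence in $(K_{\eu S},v)$ of transcendental type). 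So the crux is step (1).

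For step (1), I would argue by contradiction: suppose $(a_i)_{i\in\N}$ is of algebraic type over $(K_{\eu S},v)$. By Theorem~3 of [KA] (invoked exactly as in the proofs of Lemma~\ref{persist} and Proposition~\ref{inf-i}), there is an \emph{algebraic} extension $(K_{\eu S}(b)|K_{\eu S},v)$ with $b$ a pseudo limit of $(a_i)_{i\in\N}$. Since each $a_i\in\tilde K$, we have $K_{\eu S}\subseteq\tilde K$, hence $b$ is algebraic over $K$, so $K(b)|K$ is finite, say of degree $m$. The idea is then to use condition {\bf (HS)} together with Krasner-type estimates to show that $K(a_1,\ldots,a_n)^h$ (equivalently $K(a_n)^h$ by Lemma~\ref{KSinL}) is contained in the henselization of $K(b)$ for every $n$ — which forces $[K(a_n)^h:K^h]$ to be bounded, contradicting the fact that the $a_i$ generate a strictly increasing tower (recall from the remark after {\bf (HS)} that $a_i\notin K(a_0,\ldots,a_{i-1})^h$, so the degrees $[K(a_1,\ldots,a_i)^h:K^h]$ strictly increase). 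Concretely: since $b$ is a pseudo limit of $(a_i)$ and $x$ is too, and the values $v(x-a_i)$ are cofinal among $\{v(x-a_k)\}$ in the appropriate sense, one checks $v(b-a_k)=v(x-a_k)$ for all $k$ — so by Lemma~\ref{xx}, $(a_i)_{i\in\N}$ is also a homogeneous sequence for $b$, and then Lemma~\ref{KSinL} applied with $x$ replaced by $b$ yields $K_{\eu S}\subseteq K(b)^h$, whence $K(a_n)^h\subseteq K(b)^h$ for every $n$. Since $[K(b)^h:K^h]\le m<\infty$ but $[K(a_n)^h:K^h]\to\infty$, this is the desired contradiction.

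The main obstacle I anticipate is the bookkeeping around henselizations and the verification that $v(b-a_k)=v(x-a_k)$ for all $k$ — that is, confirming that $b$, being a pseudo limit of the same pseudo Cauchy sequence, ``approximates'' the $a_k$ exactly as well as $x$ does. This requires the standard fact that any two pseudo limits of a pseudo Cauchy sequence $(a_\nu)$ satisfy $v(b-a_\nu)=v(x-a_\nu)$ eventually (indeed for all $\nu$ here, since the sequence is indexed by $\N$ and the breadth is strictly decreasing), which is part of the basic theory in [KA]; once that is granted, Lemma~\ref{xx} and Lemma~\ref{KSinL} do the rest mechanically. A secondary point to handle carefully is that the contradiction uses $[K(b):K]<\infty$ to bound $[K(b)^h:K^h]$, which is legitimate since henselization does not increase degree beyond that of the finite extension. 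With these pieces assembled, the proof is short.
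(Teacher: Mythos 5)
Your proposal is correct and follows essentially the same route as the paper: assume algebraic type, get an algebraic pseudo limit $b$ via Theorem~3 of [KA], use Lemma~\ref{xx} (the paper checks $v(x-b)>v(x-a_k)$, equivalent to your $v(b-a_k)=v(x-a_k)$) and Lemma~\ref{KSinL} to force $K_{\eu S}^h\subseteq K(b)^h=K^h(b)$, and contradict finiteness since the tower $K(a_1,\ldots,a_k)^h$ strictly increases. The deduction of purity from the definition and immediacy from Lemma~\ref{limtpcs} also matches the paper.
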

\begin{proof}
By Lemma~\ref{kspcs}, $(a_i)_{i\in\N}$ is a pseudo Cauchy sequence with
pseudo limit $x$. Suppose it were of algebraic type. Then by [KA],
Theorem~3, there would exist some algebraic extension $(K_{\eu S}
(b)|K_{\eu S},v)$ with $b$ a pseudo limit of the sequence. But then
$v(x-b) >v(x-a_k)$ for all $k\in S$ and by Lemma~\ref{xx}, $(a_i)_{i\in
S}$ is also a homogeneous sequence for $b$ over $K$. Hence by
Lemma~\ref{KSinL}, $K_{\eu S}^h \subset K(b)^h=K^h(b)$. Since $b$ is
algebraic over $K$, the extension $K^h(b)|K^h$ is finite. On the other
hand, $K_{\eu S}^h|K^h$ is infinite since by the definition of
homogeneous elements, $a_k\notin K(a_i\mid 1\leq i< k)^h$ for every
$k\in\N$ and therefore, each extension $K(a_i\mid 1\leq i \leq k+1)^h|
K(a_i\mid 1\leq i \leq k)^h$ is non-trivial. This contradiction shows
that the sequence is of transcendental type. Hence by definition,
$(K_{\eu S}(x)|K_{\eu S} ,v)$ is pure. Further, it follows from
Lemma~\ref{limtpcs} that $(K_{\eu S}(x)| K_{\eu S},v)$ is immediate.
\end{proof}

This proposition leads to the following definition. A homogeneous
sequence $\eu S$ for $x$ over $K$ will be called \bfind{(weakly) pure
homogeneous sequence} if $(K_{\eu S}(x)|K_{\eu S},v)$ is (weakly) pure
in $x$. Hence if $S=\N$, then $\eu S$ is always a pure homogeneous
sequence. The empty sequence is a (weakly) pure homogeneous sequence for
$x$ over $K$ if and only if already $(K(x)|K,v)$ is (weakly) pure in
$x$.

\begin{theorem}                             \label{thIC}
Suppose that $\eu S$ is a (weakly) pure homogeneous sequence for $x$
over $K$. Then
\[K_{\eu S}^h \;=\; \ic (K(x)|K,v)\;.\]
Further, $K_{\eu S}v$ is the relative algebraic closure of $Kv$ in
$K(x)v$, and the torsion subgroup of $vK(x)/vK_{\eu S}$ is finite.
If $\eu S$ is pure, then $vK_{\eu S}$ is the relative divisible
closure of $vK$ in $vK(x)$.
\end{theorem}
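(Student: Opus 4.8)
The plan is to reduce every assertion to the known structure of (weakly) pure extensions by passing from $K$ up to the intermediate field $K_{\eu S}$. Two preliminary observations make this work. First, from the definition of ``homogeneous'' each $a_i$ lies in $K(a_0,\ldots,a_{i-1})\sep$, so $K_{\eu S}|K$ is separable-algebraic; in particular, by Lemma~\ref{fin}, $K_{\eu S}v|Kv$ is algebraic and $vK_{\eu S}/vK$ is a torsion group. Second, Lemma~\ref{KSinL} gives $K_{\eu S}\subset K(x)^h$, whence $K_{\eu S}(x)^h\subseteq K(x)^h$; since trivially $K(x)^h\subseteq K_{\eu S}(x)^h$, we obtain the key identity $K(x)^h=K_{\eu S}(x)^h$. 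Because henselizations are immediate, this also yields $vK(x)=vK_{\eu S}(x)$ and $K(x)v=K_{\eu S}(x)v$.

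For the first assertion, note that $\eu S$ being a (weakly) pure homogeneous sequence means exactly that $(K_{\eu S}(x)|K_{\eu S},v)$ is (weakly) pure; so Lemma~\ref{icpure}, applied with $K$ replaced by $K_{\eu S}$, says that $K_{\eu S}^h$ is the relative algebraic closure of $K_{\eu S}$ in $K_{\eu S}(x)^h=K(x)^h$. On one hand $K_{\eu S}^h\subseteq K(x)^h$ is algebraic over $K$, hence contained in $\ic(K(x)|K,v)$; on the other hand $\ic(K(x)|K,v)$, being the relative algebraic closure of $K$ in $K(x)^h$, is contained in the relative algebraic closure of the larger field $K_{\eu S}$ in $K(x)^h$, which is $K_{\eu S}^h$. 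Thus $K_{\eu S}^h=\ic(K(x)|K,v)$. For the residue field and value-group-torsion assertions I would now split cases over $K_{\eu S}$. If $(K_{\eu S}(x)|K_{\eu S},v)$ is pure, Lemma~\ref{purevgrf} gives that $K_{\eu S}v$ is relatively algebraically closed in $K_{\eu S}(x)v=K(x)v$ and that $vK_{\eu S}(x)/vK_{\eu S}$ is torsion free. If it is weakly pure but not pure, apply Lemma~\ref{algtransc} to the relevant element $d(x-c)^e$ (with $c,d\in K_{\eu S}$ and $e$ minimal): it shows $K_{\eu S}(x)v=K_{\eu S}v(d(x-c)^ev)$ is a rational function field over $K_{\eu S}v$, and that $(vK_{\eu S}(x):vK_{\eu S})=e$, so $vK_{\eu S}(x)/vK_{\eu S}$ is finite cyclic. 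In both cases $K_{\eu S}v$ is relatively algebraically closed in $K(x)v$; combined with $K_{\eu S}v|Kv$ being algebraic, this shows $K_{\eu S}v$ is precisely the relative algebraic closure of $Kv$ in $K(x)v$. And in both cases the torsion subgroup of $vK(x)/vK_{\eu S}=vK_{\eu S}(x)/vK_{\eu S}$ is finite.

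Finally, if $\eu S$ is pure we are in the first subcase, so $vK(x)/vK_{\eu S}$ is torsion free; together with $vK_{\eu S}/vK$ being a torsion group this identifies $vK_{\eu S}$ with the relative divisible closure of $vK$ in $vK(x)$: every element of $vK_{\eu S}$ has a multiple in $vK$, and conversely if $n\gamma\in vK\subseteq vK_{\eu S}$ for some $\gamma\in vK(x)$ and $n\geq 1$, then $\gamma+vK_{\eu S}$ is a torsion element of the torsion-free group $vK(x)/vK_{\eu S}$, forcing $\gamma\in vK_{\eu S}$. The step I expect to require the most care is the identity $K(x)^h=K_{\eu S}(x)^h$ together with the bookkeeping of relative algebraic closures over $K$ versus over $K_{\eu S}$; once that is settled, the remaining assertions follow by direct appeal to Lemmas~\ref{icpure}, \ref{purevgrf}, \ref{algtransc} and~\ref{fin}.
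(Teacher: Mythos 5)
Your proposal is correct and follows essentially the same route as the paper, whose proof is just a citation of Lemmas~\ref{purevgrf}, \ref{algtransc}, \ref{icpure} and~\ref{fin}; you supply exactly the intended bookkeeping, namely $K_{\eu S}\subset K(x)^h$ from Lemma~\ref{KSinL}, hence $K(x)^h=K_{\eu S}(x)^h$, the two-sided inclusion identifying $\ic(K(x)|K,v)$ with $K_{\eu S}^h$ via Lemma~\ref{icpure} over $K_{\eu S}$, and the case split pure/weakly-pure handled by Lemmas~\ref{purevgrf} and~\ref{algtransc}.
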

\begin{proof}
The assertions follow from Lemma~\ref{purevgrf}, Lemma~\ref{algtransc}
and Lemma~\ref{icpure}, together with the fact that because $K_{\eu S}
|K$ is algebraic, the same holds for $vK_{\eu S}|vK$ and $K_{\eu S}v|Kv$
by Lemma~\ref{fin}.
\end{proof}

%
%
\subsection{Conditions for the existence of homogeneous
sequences}                                             \label{sectcond}
Now we have to discuss for which extensions $(K(x)|K,v)$ there exist
homogeneous sequences.

An algebraic extension $(L|K,v)$ of henselian fields is called
\bfind{tame} if the following conditions hold:
\sn
(TE1) \ $Lv|Kv$ is separable,\n
(TE2) \ if $\chara Kv=p>0$, then the order of each element in $vL/vK$
is prime to $p$,\n
(TE3) \ $[K':K]=(vK':vK)[K'v:Kv]$ holds for every finite subextension
$K'|K$ of $L|K$.
\sn
Condition (TE3) means that equality holds in the fundamental inequality
(\ref{fiq}). If $L'|K$ is any subextension of $L|K$, then $(L|K,v)$ is a
tame extension if and only if $(L|L',v)$ and $(L'|K,v)$ are (this is
easy to prove if $L|K$ is finite). Further, it is well known that for
$(K,v)$ henselian, the ramification field of the extension $(K\sep|K,v)$
is the unique maximal tame extension of $(K,v)$ (cf.\ [E]). A henselian
valued field $(K,v)$ is called a \bfind{tame field} if all its algebraic
extensions are tame, or equivalently, the following conditions hold:
\sn
(T1) \ $Kv$ is perfect,\n
(T2) \ if $\chara Kv=p>0$, then $vK$ is $p$-divisible,\n
(T3) \ for every finite extension $K'|K$, $\;[K':K]=(vK':vK)[K'v:Kv]$.
\sn
Note that every valued field with a residue field of characteristic zero
is tame; this is a consequence of the Lemma of Ostrowski (cf.\ [R]). It
follows directly from the definition together with the multiplicativity
of ramification index and inertia degree that every finite extension of
a tame field is again a tame field. If $(K,v)$ is a tame field, then
condition (T3) shows that $(K,v)$ does not admit any proper immediate
algebraic extensions; hence by Theorem~3 of [KA], every pseudo Cauchy
sequence in $(K,v)$ without a pseudo limit in $K$ must be of
transcendental type.

If an element $a\in \tilde{K}$ satisfies the conditions of
Lemma~\ref{lstronghom}, then $(K(a)|K,v)$ is a tame extension. The
following implication is also true, as was noticed by Sudesh K.\
Khanduja (cf.\ [KH11], Theorem 1.2):
\begin{proposition}                         \label{hit}
Suppose that $(K,v)$ is henselian. If $a$ is homogeneous over $(K,v)$,
then $(K(a)|K,v)$ is a tame extension. If $\eu S$ is a homogeneous
sequence over $(K,v)$, then $K_{\eu S}$ is a tame extension of $K$.
\end{proposition}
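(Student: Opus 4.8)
The plan is to reduce the whole statement to its first assertion: \emph{if $(K,v)$ is henselian and $a$ is homogeneous over $(K,v)$, then $(K(a)|K,v)$ is tame.} Granting this, the claim about a homogeneous sequence $\eu S=(a_i)_{i\in S}$ follows by working up the tower $K=K_0\subseteq K_1\subseteq\cdots$, $K_i:=K(a_1,\ldots,a_i)$. By (HS) the element $b_i:=a_i-a_{i-1}$ is homogeneous over $K(a_0,\ldots,a_{i-1})=K_{i-1}$ (recall $a_0=0$); since $K_{i-1}|K$ is algebraic and $(K,v)$ is henselian, $(K_{i-1},v)$ is again henselian, and $K_i=K_{i-1}(a_i)=K_{i-1}(b_i)$. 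Hence the first assertion, applied over $K_{i-1}$, shows that $K_i|K_{i-1}$ is tame, and transitivity of tameness in towers gives that $K_i|K$ is tame for every $i\in S$. Since an algebraic extension is tame as soon as all of its finite subextensions are, and every finite subextension of $K_{\eu S}|K$ lies in some $K_i$, it follows that $K_{\eu S}|K$ is tame. So it remains to prove the first assertion.

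For that I would first pass to the strongly homogeneous case: by definition of ``homogeneous'' there is $d\in K$ with $a-d$ strongly homogeneous over $(K,v)$, and $K(a)=K(a-d)$, so we may as well assume $a$ itself is strongly homogeneous. Because $(K,v)$ is henselian, $v$ has a unique extension to $K\sep$, so $v\sigma=v$ on $K\sep$ for every $\sigma\in\Gal K$ and in particular every $K$-conjugate of $a$ has value $va$. Combining this with $va=\mbox{\rm kras}(a,K)$, for each $\sigma\in\Gal K$ with $\sigma a\ne a$ we get
\[va\;\leq\;v(\sigma a-a)\;\leq\;\mbox{\rm kras}(a,K)\;=\;va\;,\]
hence $v(\sigma a-a)=va$ whenever $\sigma a\ne a$.

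Now let $V$ be the ramification group of $(K\sep|K,v)$, so that its fixed field $K^r:=(K\sep)^V$ is the maximal tame extension of $(K,v)$ --- the fact recalled just above (cf.\ [E]). I would then invoke the standard description of $V$: every $\sigma\in V$ satisfies $v(\sigma x-x)>vx$ for all $x\in(K\sep)^\times$. Taking $x=a$, this says that if $\sigma\in V$ then $v(\sigma a-a)>va$, which by the previous paragraph is incompatible with $\sigma a\ne a$; therefore $\sigma a=a$ for all $\sigma\in V$, i.e.\ $a\in K^r$. As $K(a)|K$ is then a subextension of the tame extension $K^r|K$, it is itself tame, completing the argument.

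The only genuinely non-formal ingredient is ramification-theoretic: identifying $(K\sep)^V$ with the maximal tame extension of the henselian field $(K,v)$ and using the characterization of the ramification group by the inequality $v(\sigma x-x)>vx$. Once these standard facts are at hand, the homogeneity hypothesis is consumed in a single line. A more computational alternative would be to verify that a strongly homogeneous $a$ meets hypotheses (a) and (b) of Lemma~\ref{lstronghom} --- that the ramification index of $K(a)|K$ is prime to $\chara Kv$ and that a suitable $c\in K$ with $ca^{\rm e}v$ separable-algebraic of the right degree exists --- and then read off tameness from that lemma's conclusion; but I would favour the shorter argument above, which bypasses those calculations.
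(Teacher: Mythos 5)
Your proof is correct and takes essentially the same route as the paper: reduce to a strongly homogeneous $a$, note that $v(\sigma a-a)=va=\mbox{\rm kras}(a,K)$ whenever $\sigma a\ne a$, and use the defining inequality of the ramification group to conclude that $a$ lies in the ramification field, i.e.\ in the maximal tame extension of the henselian field $(K,v)$, so that $K(a)|K$ is tame. Your handling of $K_{\eu S}$ just spells out the transitivity/directed-union fact about (possibly infinite) towers of tame extensions that the paper invokes, so there is nothing to add.
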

\begin{proof}
Since $K(a-d)=K(a)$ for $d\in K$, we may assume w.l.o.g.\ that $a$ is
strongly homogeneous over $(K,v)$. If $(K(a)|K,v)$ were not a tame
extension, then $a$ would not lie in the ramification field of the
extension $(K\sep|K,v)$. So there would exist an automorphism $\sigma$
in the ramification group such that $\sigma a\ne a$. But by the
definition of the ramification group,
\[v(\sigma a- a)\;>\;va \;=\; \mbox{\rm kras}(a,K)\;,\]
a contradiction.

The second assertion is proved using the first assertion and the fact
that a (possibly infinite) tower of tame extensions is itself a tame
extension.
\end{proof}

\n
In fact, it can also be shown that $va\,=\,\mbox{\rm kras}(a,K)$
implies that $a$ satisfies the conditions of Lemma~\ref{lstronghom}.

\pars
We can give the following characterization of elements in tame
extensions:
\begin{proposition}
An element $b\in \tilde{K}$ belongs to a tame extension of the henselian
field $(K,v)$ if and only if there is a finite homogeneous sequence
$a_1,\ldots,a_k$ for $a$ over $(K,v)$ such that $b\in K(a_k)$.
\end{proposition}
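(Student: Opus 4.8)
The plan is to prove the two directions separately, the forward one by induction on $n=[K(b):K]$. For ``$\Leftarrow$'', suppose a finite homogeneous sequence $a_1,\dots,a_k$ for $b$ over $(K,v)$ is given with $b\in K(a_k)$. Each $a_i$ is algebraic over $K$, so $K_{\eu S}=K(a_1,\dots,a_k)$ is a finite extension of $K$, and by Proposition~\ref{hit} it is a tame extension of the henselian field $(K,v)$. Since $K(b)$ is an intermediate field of the tame extension $K_{\eu S}|K$ and subextensions of finite tame extensions are tame, $(K(b)|K,v)$ is tame. (For $k=0$ the sequence is empty and $b\in K$, so there is nothing to prove.)

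For ``$\Rightarrow$'', assume $(K(b)|K,v)$ is tame; then $b$ lies in the ramification field of $(K\sep|K,v)$, hence $b\in K\sep$ and $b$ is separable over $K$. If $n=1$ then $b\in K$ and the empty sequence works, so let $n>1$ and $b\notin K$. The key normalisation I will make is to arrange that $v(b-c)\le vb$ for all $c\in K$: if $v(b-K)$ has a maximum $v(b-c_0)$, I replace $b$ by $b-c_0$ (changing neither $n$ nor tameness) and translate the resulting sequence back by $c_0$ at the end, a routine verification of (HS). The step I expect to be the main obstacle is showing that $v(b-K)$ \emph{always} attains its maximum, and here tameness enters essentially. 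If it did not, there would be a pseudo Cauchy sequence $(c_\nu)$ in $(K,v)$ with pseudo limit $b$ and no pseudo limit in $K$, necessarily of algebraic type by Lemma~\ref{limtpcs} (else $b$ would be transcendental); by [KA], Theorem~3, its associated minimal polynomial $q$ (of least degree among monic $g$ with $vg(c_\nu)$ not eventually constant) is irreducible and has a root $b_0$, again a pseudo limit, with $(K(b_0)|K,v)$ immediate. A Taylor expansion of the minimal polynomial $p$ of $b$ around $b$ (using $b$ separable, so $p'(b)\ne0$) shows $vp(c_\nu)$ is eventually strictly increasing; Euclidean division of $p$ by $q$ then forces $q\mid p$, for otherwise the remainder — of degree $<\deg q$, hence of eventually constant value on $(c_\nu)$ — would make $vp(c_\nu)$ eventually constant. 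Thus $b_0$ is a conjugate of $b$, so $(K(b_0)|K,v)$ is tame; being also immediate, it has degree $1$, contradicting $b\notin K$.

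Granting $v(b-c)\le vb$ for all $c\in K$, let $e$ be the least positive integer with $e\,vb\in vK$; by (TE2), $e$ is prime to $\chara Kv$. Pick $c\in K$ with $vc=-e\,vb$ and set $\zeta=(cb^e)v\in K(b)v$, which is separable over $Kv$ by (TE1). If $e=1$ then $\zeta\notin Kv$, since otherwise a lift $c'\in K$ of $\zeta$ would give $v(b-c'/c)>vb$. Hence, whether $e>1$ or $e=1$, Lemma~\ref{exkrap} applies (its hypothesis ``$e>0$ or $\zeta\notin Kv$'' holds) and yields $a_1\in\tilde K$, a strongly homogeneous approximation of $b$ over $K$ with $[K(a_1):K]=ef$, where $f=[Kv(\zeta):Kv]$; in both cases $ef>1$, so (via Lemma~\ref{lstronghom}) $a_1\notin K$ is genuinely strongly homogeneous over $K$, and $v(b-a_1)>vb$, i.e.\ (HS) holds for the first term. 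A somewhat delicate point is that $a_1$ can be taken \emph{inside} $K(b)$, not merely inside some tame extension: from the construction in Lemma~\ref{exkrap}, $ca_1^e$ equals a root $d$ of the monic lift of the minimal polynomial of $\zeta$; since $K(b)$ is henselian and $\zeta$ is a simple root of that separable reduction over $K(b)v$, Hensel's Lemma puts $d\in K(b)$, and then $a_1/b$, an $e$-th root of residue $1$ of $d/(cb^e)\in K(b)$ with $e$ prime to the residue characteristic, lies in $K(b)$ as well; hence $K(a_1)\subseteq K(b)$.

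Finally I recurse. The extension $(K(b)|K(a_1),v)$ is tame over the henselian field $K(a_1)$ with $[K(b):K(a_1)]=n/ef<n$, so the induction hypothesis, applied to $b-a_1$ over $K(a_1)$, provides a finite homogeneous sequence $b_1,\dots,b_m$ for $b-a_1$ over $K(a_1)$ with $b-a_1\in K(a_1)(b_m)$. Putting $a_{1+j}:=a_1+b_j$, the concatenation $a_1,a_2,\dots,a_{1+m}$ is a homogeneous sequence for $b$ over $K$: its $(1+j)$-th condition --- that $a_{1+j}-a_j$ is a homogeneous approximation of $b-a_j$ over $K(a_0,\dots,a_j)$ --- is verbatim the $j$-th condition of the sub-sequence, because $K(a_0,\dots,a_j)=K(a_1,b_1,\dots,b_{j-1})$ and $b-a_j=(b-a_1)-b_{j-1}$. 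Moreover $b\in K(a_1,b_1,\dots,b_m)=K_{\eu S}$, and $K_{\eu S}=K(a_{1+m})$ by Lemma~\ref{KSinL} together with the henselianity of $K$ (so that $K(a_{1+m})$ equals its own henselization). Hence $b\in K(a_{1+m})$, which closes the induction.
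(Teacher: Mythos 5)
Your ``$\Leftarrow$'' direction and the general design of ``$\Rightarrow$'' (a first homogeneous approximation obtained from Lemma~\ref{exkrap}, placed inside $K(b)$ via Hensel's Lemma, then induction on $[K(b):K]$ and concatenation of sequences) are sound and run parallel to the paper's proof, which simply iterates Lemma~\ref{exkrap} together with Proposition~\ref{hit} and Lemma~\ref{KSinL} and rules out an infinite sequence via Proposition~\ref{SNpure}. You have also correctly isolated the delicate point, which the paper's own proof passes over quickly: before Lemma~\ref{exkrap} can produce a homogeneous approximation (an element \emph{not} lying in the base field), one must know that $v(b-K)$ attains a maximum.

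Your argument for precisely this point has a genuine gap. The Taylor (Hasse) expansion does show that $vp(c_\nu)$ is eventually strictly increasing (eventually the minimum of the values $v\partial_i p(b)+i\,v(c_\nu-b)$ is attained for a unique $i$, since $v(c_\nu-b)$ is strictly increasing). The problem is the next step: from $p=hq+r$ with $r\ne 0$, $\deg r<\deg q$, you conclude that the eventually constant value of $vr(c_\nu)$ ``would make $vp(c_\nu)$ eventually constant''. This does not follow: one only gets $vp(c_\nu)=\min\{v(hq)(c_\nu),\,vr(c_\nu)\}$ once these two values differ, and $v(hq)(c_\nu)$, although eventually strictly increasing, may stay below the constant value of $vr(c_\nu)$ for \emph{all} $\nu$ --- the values $v(b-c_\nu)$ need not be cofinal in the value group --- in which case $vp(c_\nu)=v(hq)(c_\nu)$ is strictly increasing and no contradiction arises. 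In fact the assertion you are proving there (the minimal polynomial of every algebraic pseudo limit of the sequence is divisible by $q$) is false in general: take a henselian field admitting a proper immediate separable-algebraic extension (such fields exist, e.g.\ the henselizations of the fields constructed for Theorem~\ref{piltant}), let $(c_\nu)$ be a pseudo Cauchy sequence of algebraic type in $K$ without pseudo limit in $K$, let $q$ be of minimal degree with non-eventually-constant values and $b_0$ a root of $q$ which is a pseudo limit ([KA], Theorem~3); for $d\in K$ with $vd$ larger than all $v(b_0-c_\nu)$, the element $b=b_0+d$ is again a separable algebraic pseudo limit, its minimal polynomial $p(X)=q(X-d)$ has $vp(c_\nu)$ eventually strictly increasing, yet for a suitable such $d$ the polynomial $q$ does not divide $p$. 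Your argument invokes the tameness of $(K(b)|K,v)$ only \emph{after} the divisibility claim, so as written it would prove this false general statement; tameness has to enter already in the proof that $\max v(b-K)$ exists. That fact is true, but it requires a different argument (for instance via the theory of tame elements and Krasner constants, cf.\ [KH11], [KH12], or an induction on the degree using that tame extensions are defectless); as it stands, this step --- and with it your ``$\Rightarrow$'' direction --- is not established.
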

\begin{proof}
Suppose that such a sequence exists. By the foregoing proposition,
$K_{\eu S}$ is a tame extension of $K$. Since $K(a_1,\ldots,a_k)=K(a_k)$
by Lemma~\ref{KSinL}, it contains $b$.

For the converse, let $b$ be an element in some tame extension of
$(K,v)$. Then $b$ satisfies the assumptions of Lemma~\ref{exkrap} and
hence there is a homogeneous approximation $a_1\in\tilde{K}$ of $b$ over
$K$. By the foregoing proposition, $K(a_1)$ is a tame extension of $K$
and therefore, by the general facts we have noted following the
definition of tame extensions, $K(a_1, b-a_1)$ is a tame extension of
$K(a_1)$. We repeat this step, replacing $b$ by $b-a_1\,$. By induction,
we build a homogeneous sequence for $b$ over $K$. It cannot be infinite
since $b$ is algebraic over $K$ (cf.\ Proposition~\ref{SNpure}). Hence
it stops with some element $a_k\,$. Our construction shows that this can
only happen if $b\in K(a_1,\ldots,a_k)=K(a_k)$.
\end{proof}

\begin{proposition}                         \label{tame}
Assume that $(K,v)$ is a henselian field. Then $(K,v)$ is a tame field
if and only if for every element $x$ in any extension $(L,v)$ of $(K,v)$
there exists a weakly pure homogeneous sequence for $x$ over $K$,
provided that $x$ is transcendental over $K$.
\end{proposition}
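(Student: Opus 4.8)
The plan is to prove the two implications separately. For ``tame $\Rightarrow$ existence property'', suppose $(K,v)$ is a tame field and $x$ is transcendental over $K$ in some $(L,v)$, and build a homogeneous sequence greedily: put $F_0:=K$, $x_0:=x$, and at stage $i$, with $F_{i-1}=K(a_0,\ldots,a_{i-1})$ (where $a_0:=0$) and $x_{i-1}:=x-a_{i-1}$, ask whether $x_{i-1}$ has a homogeneous approximation over $F_{i-1}$; by Lemmas~\ref{lstronghom} and~\ref{exkrap} this is equivalent to an explicit condition on $vx_{i-1}$ and the relevant residue, and when it holds Lemma~\ref{exkrap} provides a strongly homogeneous approximation $a$, in which case set $a_i:=a_{i-1}+a$ and continue, otherwise stop. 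If the construction never stops the support is $\N$ and Proposition~\ref{SNpure} already gives that the sequence is pure. If it stops at $n$, then $K_{\eu S}=K(a_n)$ by Lemma~\ref{KSinL}, a finite extension of the henselian tame field $K$, hence again a henselian tame field; since the property of being weakly pure is unchanged when the generator $x$ is replaced by $x-d$ with $d$ in the base field, it remains to show $(K(a_n)(x_n)|K(a_n),v)$ is weakly pure, given that $x_n$ has no homogeneous approximation over $F:=K(a_n)$.

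This reduction is the heart of the forward direction; I would analyse $v(x_n-F)$. If it has no maximum it yields a pseudo Cauchy sequence in $(F,v)$ with pseudo limit $x_n$ and no pseudo limit in $F$; as $F$ is a tame field it has no proper immediate algebraic extension, so this sequence is of transcendental type and the extension is pure. If the maximum is attained at $d_0\in F$, replace $x_n$ by $y:=x_n-d_0$, so $F(y)=F(x_n)$ and $vy=\max v(y-F)$. Now either $vy$ is nontorsion modulo $vF$, and $y$ is value-transcendental; or $e\cdot vy\in vF$ for a least $e\geq 1$, necessarily prime to $\chara Fv$ since $vF$ is $p$-divisible, and, taking $c\in F$ with $vcy^e=0$, the residue $(cy^e)v$ is transcendental over $Fv$ --- whence the extension is weakly pure by definition --- or $(cy^e)v$ is algebraic, hence separable-algebraic over the perfect field $Fv$: then if $e>1$ or $(cy)v\notin Fv$ Lemma~\ref{exkrap} would produce a strongly homogeneous approximation of $y$, and thus (translating back by $d_0$) of $x_n$, contradicting that the construction stopped, while if $e=1$ and $(cy)v\in Fv$ some $c'\in F$ satisfies $v(y-c'/c)>vy$, contradicting maximality. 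So in every case the stopped sequence is weakly pure.

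For the converse I argue by contraposition: assuming $(K,v)$ is not a tame field, I produce a transcendental $x$ with no weakly pure homogeneous sequence. The obstruction is Theorem~\ref{thIC}: a weakly pure homogeneous sequence $\eu S$ for $x$ over $K$ would force $\ic(K(x)|K,v)=K_{\eu S}^h$ with $K_{\eu S}v$ the relative algebraic closure of $Kv$ in $K(x)v$, and since $K_{\eu S}|K$ is tame by Proposition~\ref{hit}, $K_{\eu S}$ lies in the maximal tame extension $K^{\mathrm r}$ of $(K,v)$, hence so does the henselization $K_{\eu S}^h$, so $\ic(K(x)|K,v)|K$ would be a tame extension and $K_{\eu S}v|Kv$ separable. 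Since $K$ is not tame, one of (T1), (T2), (T3) fails. When $v$ is nontrivial one finds a separable finite non-tame extension $K(b)|K$ --- an Artin--Schreier extension from Lemma~\ref{constrva} if $vK$ is not $p$-divisible, an Artin--Schreier extension from Lemma~\ref{constrra} if $Kv$ is imperfect, and (using known structural results on defectless fields: a henselian field which is not defectless carries a Galois, hence separable, defect extension of degree $p$) a separable defect extension if $(K,v)$ is not defectless; in mixed characteristic every extension is separable anyway --- and then, choosing $\epsilon$ transcendental over $K(b)$ with $v\epsilon>\mathrm{kras}(b,K)$ (a value-transcendental extension via Lemma~\ref{vag} with the new value taken above $\mathrm{kras}(b,K)$) and setting $x:=b+\epsilon$, Lemma~\ref{lvx-a} gives $b\in\ic(K(x)|K,v)$, so $\ic(K(x)|K,v)$ contains the non-tame subextension $K(b)|K$ and cannot be a tame extension --- contradiction. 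The one remaining case is $v$ trivial on $K$, where $K$ fails to be tame exactly when it is imperfect; then pick $c\in K\setminus K^p$, put $a:=c^{1/p}$, and let $w$ be the restriction to $K(x)$ of $\tilde v_{a,\gamma}$ with $\gamma>0$: by Remark~\ref{allextrem}, $K(x)w=K(c^{1/p})$, whose relative algebraic closure of $Kv=K$ is all of it and purely inseparable over $K$, contradicting the separability of $K_{\eu S}v|Kv$.

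The hard part is the converse. First, the identification needed there --- that $K_{\eu S}^h$, being a henselization of a subextension of $K^{\mathrm r}|K$, is again (isomorphic over $K$ to) a subextension of $K^{\mathrm r}|K$, hence a tame extension --- has to be checked with care, as does the equivalence between ``the greedy construction stops at $x_n$'' and ``the hypotheses of Lemma~\ref{exkrap} fail for $x_n$'' used in the forward direction. More seriously, the defect case (T3) is delicate: since $\ic(K(x)|K,v)$ is always separable-algebraic over $K$, a defect that is visible only through a purely inseparable extension (with $Kv$ already perfect and $vK$ already $p$-divisible) cannot be captured by the $\ic$-argument, and here one must invoke the finer structure theory of tame and defectless fields to produce the required bad transcendental element. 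I expect this purely-inseparable-defect subcase to be the main obstacle.
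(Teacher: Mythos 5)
Your forward direction is essentially the paper's own argument: the same greedy construction, the same use of (T1), (T2), of the fact that a tame field admits no proper immediate algebraic extension, of the maximality of $v(x-a_k-d)$, and of Lemma~\ref{exkrap} to produce the next homogeneous approximation. The point you worry about there is harmless, because (as in your second paragraph, and as in the paper) one only needs the implication ``not weakly pure $\Rightarrow$ the hypotheses of Lemma~\ref{exkrap} hold'', not a genuine equivalence.

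The converse, however, has a real gap, and it is exactly the one you flag. Your strategy forces a \emph{separable} non-tame finite extension $K(b)|K$ into $\ic(K(x)|K,v)$, so it can only succeed when the failure of tameness is witnessed by a separable extension. If $Kv$ is perfect and $vK$ is $p$-divisible but $(K,v)$ is not defectless, then (TE1) is automatic and (TE2) is automatic as well, since $p$-divisibility of $vK$ excludes $p$-torsion in $vL/vK$ for algebraic $L|K$ (if $p\alpha\in vK$, write $p\alpha=p\beta$ with $\beta\in vK$; then $\alpha=\beta$ because value groups are torsion free); hence a finite separable extension is non-tame precisely when it has defect. Your argument therefore rests on the claim that a non-defectless henselian field always admits a separable (Galois of degree $p$) defect extension. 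That claim is not proved in this paper, is precisely the delicate point when the defect is visible only through purely inseparable extensions, and you offer no proof of it -- so the contrapositive is not established in this case. The paper's proof avoids the issue by not routing through the implicit constant field at all: it takes \emph{any} $b\in\tilde{K}$ (possibly inseparable over $K$) with $K(b)|K$ not tame, equips $K(b,x)$ with $v_{b,\gamma}$ where $\gamma>vK$, hence $\gamma>v\tilde{K}$, so that $(\tilde{K}(x)|\tilde{K},v)$ is value-transcendental. If a weakly pure homogeneous sequence $\eu S$ for $x$ existed, Lemma~\ref{aavavtrt} makes $(K_{\eu S}(x)|K_{\eu S},v)$ value-transcendental, and weak purity then forces some $c\in K_{\eu S}$ with $v(x-c)$ non-torsion over $vK_{\eu S}$; since $\gamma>v\tilde{K}$, any $c\ne b$ would give $v(x-c)=v(b-c)\in v\tilde{K}$, so $c=b\in K_{\eu S}$, contradicting Proposition~\ref{hit}. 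Because the contradiction is membership of $b$ in $K_{\eu S}$ itself rather than in the (necessarily separable over $K$) implicit constant field, the purely inseparable defect case causes no difficulty. To repair your proof you would either have to adopt this argument or supply the missing ``separable defect extension exists'' statement, which is a substantial result not available here.
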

\begin{proof}
First, let us assume that $(K,v)$ is a tame field and that $x$ is an
element in some extension $(L,v)$ of $(K,v)$, transcendental over $K$.
We set $a_0=0$. We assume that $k\geq 0$ and that $a_i$ for $i\leq k$
are already constructed. Like $K$, also the finite extension $K_k:=
K(a_0,\ldots,a_k)$ is a tame field. Therefore, if $x$ is the pseudo
limit of a pseudo Cauchy sequence in $K_k\,$, then this pseudo Cauchy
sequence must be of transcendental type, and $K_k(x)|K_k$ is pure and
hence weakly pure in $x$.

If $K_k(x)|K_k$ is weakly pure in $x$, then we take $a_k$ to be the last
element of ${\eu S}$ if $k>0$, and ${\eu S}$ to be empty if $k=0$.

Assume that this is not the case. Then $x$ cannot be the pseudo limit of
a pseudo Cauchy sequence without pseudo limit in $K_k\,$. So the set
$v(x-a_k- K_k)$ must have a maximum, say $x-a_k-d$ with $d\in K_k$.
Since we assume that $K_k(x)|K_k$ is not weakly pure in $x$, there exist
${\rm e}\in\N$ and $c\in K_k$ such that $vc(x-a_k-d)^{\rm e}=0$ and
$c(x-a_k-d)^{\rm e}$ is algebraic over $K_kv$. Conditions (T1) and (T2)
that e can be chosen to be prime to $\chara Kv$ and that
$c(x-a_k-d)^{\rm e}v$ is separable-algebraic over $K_kv$. Since
$v(x-a_k-d)$ is maximal in $v(x-a_k- K_k)$, we must have that
${\rm e}>1$ or $c(x-a_k-d)^{\rm e}v\notin K_kv$.

Now
Lemma~\ref{exkrap} shows that there exists a homogeneous approximation
$a\in\tilde{K}$ of $x-a_k-d$ over $K_k\,$; so $a+d$ is a homogeneous
approximation of $x-a_k$ over $K_k$, and we set $a_{k+1}:=a_k+a+d$. This
completes our induction step. If our construction stops at some $k$,
then $K_k(x)|K_k$ is weakly pure in $x$ and we have obtained a weakly
pure homogeneous sequence. If the construction does not stop, then
$S=\N$ and the obtained sequence is pure homogeneous.

\pars
For the converse, assume that $(K,v)$ is not a tame field. We
choose an element $b\in\tilde{K}$ such that $K(b)|K$ is not a tame
extension. On $K(b,x)$ we take the valuation $v_{b,\gamma}$ with
$\gamma$ an element in some ordered abelian group extension such that
$\gamma>vK$. Choose any extension of $v$ to $\tilde{K}(x)$. Since $vK$
is cofinal in in $v\tilde{K}$, we have that $\gamma>v\tilde{K}$. Since
$b\in \tilde{K}$, we find $\gamma\in v\tilde{K}(x)$. Hence,
$(\tilde{K}(x)|\tilde{K},v)$ is value-transcendental.

Now suppose that there exists a weakly pure homogeneous sequence
${\eu S}$ for $x$ over $K$. By Lemma~\ref{aavavtrt}, also $(K_{\eu S}(x)
|K_{\eu S},v)$ is value-transcendental. Since $(K_{\eu S}(x) |K_{\eu S},
v)$ is also weakly pure, it follows that there must be some $c\in
K_{\eu S}$ such that $x-c$ is a value-transcendental element (all
other cases in the definition of ``weakly pure'' lead to immediate or
residue-transcendental extensions). But if $c\ne b$ then $v(b-c)\in
v\tilde{K}$ and thus, $v(c-b)<\gamma$. This implies
$v(x-c)=\min\{v(x-b),v(b-c)\}=v(b-c) \in v\tilde{K}$, a contradiction.
This shows that $b=c\in K_{\eu S}$. On the other hand, $K_{\eu S}$ is a
tame extension of $K$ by Proposition~\ref{hit} and cannot contain $b$.
This contradiction shows that there cannot exist a weakly pure
homogeneous sequence for $x$ over~$K$.
\end{proof}

%
%
\section{Applications}                      \label{sectappl}
Let us show how to apply our results to power series fields. We denote
by $k((G))$ the field of power series with coefficients in the field $k$
and exponents in the ordered abelian group $G$.
\begin{theorem}                             \label{powser}
Let $(K,v)$ be a henselian subfield of a power series field $k((G))$
such that $v$ is the restriction of the canonical valuation of $k((G))$.
Suppose that $K$ contains all monomials of the form $ct^\gamma$ for
$c\in Kv \subseteq k$ and $\gamma\in vK\subseteq \Gamma$. Consider a
power series
\[z\>=\>\sum_{i\in\N} c_it^{\gamma_i}\]
where $(\gamma_i)_{i\in\N}$ is a strictly increasing sequence in $G$,
all $c_i\in k$ are separable-algebraic over $Kv$, and for each $i$
there is an integer ${\rm e}_i>0$ prime to the characteristic of
$Kv$ and such that ${\rm e}_i\gamma_i\in vK$. Then, upon taking
the henselization of $K(z)$ in $k((G))$, we obtain that
$K(c_it^{\gamma_i}\mid i\in\N)\subseteq K(z)^h$. Consequently, $vK(z)$
contains all $\gamma_i\,$, and if $\gamma_i\in vK$ for all $i\in\N$,
then $K(z)v$ contains all $c_i\,$.

If $vK+\sum_{i=1}^{\infty}\Z\gamma_i/vK$ or $Kv(c_i\mid i\in\N)|Kv$ is
infinite, then $z$ is transcendental over $K$, we have $\ic
(K(z)|K,v)=K(c_it^{\gamma_i}\mid i\in\N)$, and
\sn
a) \ $vK(z)$ is the group generated over $vK$ by the elements
$\gamma_i\,$,
\sn
b) \ if $\gamma_i\in vK$ for all $i\in\N$, then $K(z)v=Kv(c_i\mid
i\in\N)$.
\end{theorem}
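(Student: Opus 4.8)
The plan is to reduce everything to the machinery of homogeneous sequences from Section~\ref{sectkrseq}. Write $u_i:=c_it^{\gamma_i}$ and $z_n:=\sum_{i=1}^{n}u_i$, so that $vu_i=\gamma_i$, $v(z-z_n)=\gamma_{n+1}$, and $(z_n)_{n\in\N}$ is a pseudo Cauchy sequence with pseudo limit $z$; note also that each $u_i$ is separable-algebraic over $K$ (it satisfies $X^{e_i}-c_i^{e_i}t^{e_i\gamma_i}$, which is separable since $e_i$ is prime to $\chara Kv$, and $c_i$ is separable over $Kv$).

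The first and crucial step is to show that each $u_i$ is strongly homogeneous over $K$, and more generally over each of the subfields $K(u_1,\dots,u_{i-1})$ — which are again henselian, being algebraic over the henselian field $K$ — unless it already belongs to that subfield, in which case there is nothing to prove. I would verify the hypotheses of Lemma~\ref{lstronghom}: if $e$ is the least positive integer with $e\gamma_i$ in the value group of the base, then $e$ divides $e_i$, hence is prime to $\chara Kv$, giving (a); for (b) I choose $c:=t^{-e\gamma_i}$, which lies in $K$ (and hence in the base) by the monomial hypothesis since $e\gamma_i\in vK$, so that $cu_i^{e}=c_i^{e}$ is a constant, separable-algebraic over $Kv$ because $c_i$ is. The one place where the hypothesis that $K$ contains all monomials $ct^{\gamma}$ ($c\in Kv$, $\gamma\in vK$) is genuinely used is the remaining degree clause of (b): taking $\gamma=0$, this hypothesis gives $Kv\subseteq K$ as a field of constants, and likewise each $K(u_1,\dots,u_{i-1})$ contains its own residue field as constants (the relevant residues are power products $\prod c_j^{m_j}$ with $\sum m_j\gamma_j\in vK$, realized as the corresponding products of the $u_j$ divided by a monomial of $K$); therefore these henselian fields have trivial residue-field extension over their residue fields, so a separable residue element generates an unramified extension of the same degree, which is precisely the equality required in (b). Lemma~\ref{lstronghom} then yields strong homogeneity.

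Part~1 now follows by induction on $n$: assuming $z_{n-1}\in K(z)^{h}$, the element $u_n$ is a homogeneous approximation of $z-z_{n-1}$ over $K$ (it has value $\gamma_n=v(z-z_{n-1})$, and the error $z-z_n$ has strictly larger value $\gamma_{n+1}$), so by our version of Krasner's Lemma, Lemma~\ref{krlkra}, $u_n$ lies in the henselization of $K(z-z_{n-1})$; since $z_{n-1}$ is separable-algebraic over $K$ and lies in $K(z)^{h}$ by the inductive hypothesis, that henselization is contained in $K(z)^{h}$, so $u_n\in K(z)^{h}$ and hence $K(c_it^{\gamma_i}\mid i\in\N)\subseteq K(z)^{h}$. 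The ``consequently'' clauses are then immediate: $vK(z)=vK(z)^{h}$ contains every $vu_i=\gamma_i$, and if all $\gamma_i\in vK$ then $c_i=u_i/t^{\gamma_i}\in K(z)^{h}$ is a unit, whence $c_i\in K(z)v$.

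For Part~2, set $E:=K(c_it^{\gamma_i}\mid i\in\N)$. Since $vE\supseteq vK+\sum_i\Z\gamma_i$ and $Ev\supseteq Kv(c_i\mid i\in\N)$, the infiniteness hypothesis forces $E|K$ to be infinite separable-algebraic; being algebraic over the henselian $K$, the field $E$ is itself henselian, so $E=E^{h}$. By Part~1, $E\subseteq K(z)^{h}$, and an easy degree argument inside $K(z)^{h}$ (if $z$ were algebraic over $K$, then $K(z)^{h}|K^{h}$, hence also $E^{h}|K^{h}$, would be finite) shows $z$ is transcendental over $K$, hence over $E$. The plan is then to extract from the partial sums an honest homogeneous sequence $\eu S$ for $z$ over $K$ with support $\N$: let $n_1<n_2<\cdots$ be the indices at which $u_{n}$ does not lie in $K(u_1,\dots,u_{n-1})$ (there are infinitely many, since otherwise $E|K$ would be finite), and put $a_k:=z_{n_k}$. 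Using the first step one checks that the increment $a_k-a_{k-1}$ is a homogeneous approximation of $z-a_{k-1}$ over $K(a_0,\dots,a_{k-1})$: the skipped terms $u_i$ ($n_{k-1}<i<n_k$) already lie in $K(u_1,\dots,u_{n_{k-1}})$, so $K(a_0,\dots,a_{k-1})=K(u_1,\dots,u_{n_{k-1}})$, over which $u_{n_k}$ is strongly homogeneous and the skipped block is a shift by an element of that field; in particular $K_{\eu S}=E$. By Proposition~\ref{SNpure}, $\eu S$ is then a pseudo Cauchy sequence of transcendental type with pseudo limit $z$ and $(K_{\eu S}(z)|K_{\eu S},v)$ is immediate and pure, so Theorem~\ref{thIC} gives $\ic(K(z)|K,v)=K_{\eu S}^{h}=E$, which is the asserted identification of the implicit constant field; assertions a) and b) then follow from the immediacy of $(K_{\eu S}(z)|K_{\eu S},v)$ (so that $vK(z)=vK_{\eu S}=vE$ and, when all $\gamma_i\in vK$, $K(z)v=K_{\eu S}v=Ev$) together with the value-group and residue-field computations already used in Part~1 and Lemma~\ref{algtransc}. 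The main obstacle I anticipate is the verification in the first step of the degree/fundamental-inequality condition for the monomials — the sole point at which the hypothesis on $K$ containing all the $ct^{\gamma}$ is needed — followed by the (routine but fiddly) bookkeeping in the extraction of the homogeneous sequence, namely confirming that after discarding the degenerate terms one still has $z$ as the pseudo limit and $K_{\eu S}=E$.
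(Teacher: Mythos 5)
Your overall strategy is the paper's own: show the monomials $u_i=c_it^{\gamma_i}$ are strongly homogeneous via Lemma~\ref{lstronghom}, get Part~1 from the Krasner-type Lemma~\ref{krlkra}, extract a homogeneous sequence from the partial sums, and conclude with Proposition~\ref{SNpure} and Theorem~\ref{thIC}. But three steps as written have gaps. First, the inclusion $Ev\supseteq Kv(c_i\mid i\in\N)$ is false in general: already for one monomial, take $K=\Q(t)^h$ with $vK=\Z$, $u_1=\sqrt{p}\,t^{1/2}$, $e_1=2$; then Lemma~\ref{lstronghom} gives $K(u_1)v=Kv(p)=Kv$, which does not contain $\sqrt{p}$. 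The conclusion you want (that $E|K$ is infinite under the hypothesis) is true, but needs an argument: if $E|K$ were finite, then $vE/vK$ is finite, so the $\gamma_i$ lie in finitely many cosets modulo $vK$; within a coset with representative $\gamma_{i_0}$ the ratios $c_i/c_{i_0}=(u_i/u_{i_0})\,t^{-(\gamma_i-\gamma_{i_0})}$ are constants lying in $E$, hence in the finite extension $Ev|Kv$, and adjoining the finitely many algebraic elements $c_{i_0}$ shows $Kv(c_i\mid i)|Kv$ is finite — contradicting the hypothesis. (The paper is terse at the corresponding point, but your stated justification is an incorrect inclusion.) Second, your verification of the degree clause (b) of Lemma~\ref{lstronghom} over the intermediate fields $K(u_1,\ldots,u_{i-1})$ rests on the claim that these fields contain their residue fields as constants; as written this is circular, since describing the residue field by power products is exactly what the induction is supposed to deliver. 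It can be repaired by propagating $K_jv=K_{j-1}v(cu_j^{\rm e}v)$ with the constant $cu_j^{\rm e}\in K_j$, but note the paper sidesteps this entirely: it applies Lemma~\ref{lstronghom} only over $K$, where the degree clause is immediate from $Kv\subseteq K$, and then transfers strong homogeneity to the henselian intermediate fields by Lemma~\ref{homup}.

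Third, assertions a) and b) require the reverse inclusions $vE\subseteq vK+\sum_i\Z\gamma_i$ and (when all $\gamma_i\in vK$) $Ev\subseteq Kv(c_i\mid i\in\N)$, and your proposal never actually proves them: the ``computations already used in Part~1'' do not occur there — Part~1 only gives containments in the other direction. They would fall out of a fully executed version of your first step (the identities $vK_j=vK_{j-1}+\Z\gamma_j$ and $K_jv=K_{j-1}v(cu_j^{\rm e}v)$ implicit in the proof of Lemma~\ref{lstronghom}), but since that step is the shaky one, you should either carry it out in detail or do what the paper does: prove a) and b) separately by two fundamental-inequality sandwiches for the auxiliary field $K(c_i,t^{\gamma_i}\mid i\in\N)\supseteq E$, showing its value group is $vK+\sum_i\Z\gamma_i$ and its residue field is $Kv(c_i\mid i\in\N)$. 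The remaining parts — the induction for Part~1 via Lemma~\ref{krlkra}, the extraction of the homogeneous sequence (your selected indices agree with the paper's), the transcendence of $z$, and $\ic(K(z)|K,v)=E$ via Proposition~\ref{SNpure} and Theorem~\ref{thIC} — match the paper and are fine.
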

\begin{proof}
We derive a homogeneous sequence ${\eu S}$ from $z$ as follows. We set
$a_0=0$. If all $c_i$ are in $Kv$ and all $\gamma_i$ are in $vK$, then
we take ${\eu S}$ to be the empty sequence. Otherwise, having chosen
$i_j\in\N$ and defined
\[a_j \>:=\>\sum_{1\leq i\leq i_j} c_it^{\gamma_i}\]
for all $j<m$, we proceed as follows. We let $\ell$ be the first
index in the power series $z-a_{m-1}$ for which $c_\ell t^{\gamma_\ell}
\notin K(a_1, \ldots,a_{m-1})$; if such an index does not exist, we let
$a_{m-1}$ be the last element of ${\eu S}$. Otherwise, we set $i_m:=
\ell$ and $\displaystyle a_m:=\sum_{1\leq i\leq i_m} c_it^{\gamma_i}$.
We have that
\[c_\ell t^{\gamma_\ell}\>=\> a_m-a_{m-1}-d \ \ \mbox{\ \ with \ \ } \ \
d\>=\>\sum_{i_{m-1}< i<\ell} c_it^{\gamma_i}\;\in\; K(a_1,\ldots,
a_{m-1})\;.\]
By assumption, ${\rm e}_\ell vc_\ell t^{\gamma_\ell} = {\rm e}_\ell
\gamma_\ell\in vK$ and hence, $c:=t^{-{\rm e}_\ell\gamma_\ell}\in
K$. We have that $c(c_\ell t^{\gamma_\ell})^{{\rm e}_\ell}=
c_\ell^{{\rm e}_\ell}$. Since $c_\ell$ is separable-algebraic over $Kv$,
the same holds for $c_\ell^{{\rm e}_\ell}$. Since $v$ is trivial on $k$,
the degree of $c_\ell^{{\rm e}_\ell}v$ over $Kv$ is equal to that of
$c_\ell^{{\rm e}_\ell}$ over $K$. It now follows by
Lemma~\ref{lstronghom} that $c_\ell t^{\gamma_\ell}$ is strongly
homogeneous over $K$. By Lemma~\ref{homup} it follows that it is also
strongly homogeneous over the henselian field $K(a_1, \ldots, a_{m-1})$.
Therefore, $a_m-a_{m-1}$ is homogeneous over $K(a_1, \ldots, a_{m-1})$.
Further, $v(z-a_{m-1}-(a_m-a_{m-1}))=v(z-a_m)=\gamma_{\ell+1}>
\gamma_{\ell}\geq v(z-a_{m-1})$. This proves that $a_m-a_{m-1}$ is a
homogeneous approximation of $z-a_{m-1}$ over $K(a_0,\ldots,a_{m-1})$.
By induction, we obtain a homogeneous sequence ${\eu S}$ for $z$ in
$k((G))$. It now follows from Lemma~\ref{KSinL} that
$K(c_it^{\gamma_i}\mid i\in\N)=K(a_j\mid j\in S) \subseteq K(z)^h$;
thus, $\gamma_i= vc_it^{\gamma_i}\in vK(z)$ for all $i\in\N$. If
$\gamma_i\in vK$ and hence $t^{\gamma_i}\in K$, then $c_i\in K(z)^h$;
since the residue map is the identity on elements of $k$, this implies
that $c_i\in K(z)v$.

\pars
Now assume that $vK+\sum_{i=1}^{\infty}\Z\gamma_i/vK$ or $Kv(c_i\mid
i\in\N)|Kv$ is infinite. Then ${\eu S}$ must be infinite, and it follows
from Proposition~\ref{SNpure} that $z$ is a pseudo limit of the pseudo
Cauchy sequence
$(a_i)_{i\in\N}$ of transcendental type. Thus, $z$ is transcendental
over $K$ by Lemma~\ref{limtpcs}. Theorem~\ref{thIC} now shows that $\ic
(K(z)|K,v)=K(c_it^{\gamma_i}\mid i\in\N)$, $vK(z)=vK(c_it^{\gamma_i}
\mid i\in\N)$ and $K(z)v= K(c_it^{\gamma_i} \mid i\in\N)v$. Since
$K(c_it^{\gamma_i}\mid i\in\N)\subseteq K(c_i,t^{\gamma_i}\mid i\in\N)$,
for the proof of assertion a) it now suffices to show that the value
group of the latter field is generated over $vK$ by the $\gamma_i\,$,
and that its residue field is generated over $Kv$ by the $c_i\,$. As
$t^{\gamma}\in K$ for every $\gamma\in vK$, we have that
\[(vK+\sum_{i=1}^{\ell}\Z \gamma_i:vK)\>=\>[K(t^{\gamma_i}\mid
1\leq i\leq\ell):K]\;.\]
On the other hand,
\[[K(t^{\gamma_i}\mid 1\leq i\leq\ell):K] \>\geq\>(vK(t^{\gamma_i}\mid
1\leq i\leq\ell):vK) \geq (vK+\sum_{i=1}^{\ell}\Z \gamma_i:vK)\;,\]
where the last inequality holds since $\gamma_i\in vK(t^{\gamma_i}\mid
1\leq i\leq\ell)$ for $i=1,\ldots,\ell$. We obtain that
$vK(t^{\gamma_i}\mid 1\leq i\leq\ell)=vK+\sum_{i=1}^{\ell}\Z\gamma_i$
and that $K(t^{\gamma_i}\mid 1\leq i\leq\ell)v=Kv$ for all $\ell\in\N$.
This implies that $vK(t^{\gamma_i}\mid i\in\N)= vK+\sum_{i=1}^{\infty}
\Z\gamma_i$ and $K(t^{\gamma_i}\mid i\in\N)v=Kv$.

Set $K':=K(t^{\gamma_i}\mid i\in\N)$. As $K'v=Kv\subset K\subset K'$,
we have that
\[[K'v(c_i\mid 1\leq i\leq\ell):K'v]\>\geq\>
[K'(c_i\mid 1\leq i\leq\ell):K']\;.\]
On the other hand,
\[[K'(c_i\mid 1\leq i\leq\ell):K']\>\geq\>[K'(c_i\mid 1\leq
i\leq\ell)v:K'v]\geq [K'v(c_i\mid 1\leq i\leq\ell):K'v]\;,\]
where the last inequality holds since $c_i\in K'(c_i\mid 1\leq
i\leq\ell)v$ for $i=1,\ldots,\ell$. We obtain that
$K'(c_i\mid 1\leq i\leq\ell)v=K'v(c_i\mid 1\leq i\leq\ell)$ and that
$vK'(c_i\mid 1\leq i\leq\ell)=vK'$. This implies that
$vK(c_i,t^{\gamma_i}\mid i\in\N)= vK'(c_i\mid i\in\N)=
vK'=vK+\sum_{i=1}^{\infty}\Z\gamma_i$ and $K(c_i,t^{\gamma_i}\mid
i\in\N)v=K'(c_i\mid i\in\N)v=K'v(c_i\mid i\in\N)=Kv(c_i\mid i\in\N)$.
This proves assertions a) and b).
\end{proof}

\begin{remark}
Assertions a) and b) of the previous theorem will also hold if $K=
Kv((vK))$. Indeed, if $G_0$ denotes the subgroup of $G$ generated by the
$\gamma_i$ over $Kv$, and $k_0=Kv(c_i\mid i\in\N)$, then $K(z)\subset
k_0((G_0))$ and therefore, $vK(z)\subseteq G_0$ and $K(z)v\subseteq
k_0\,$.
\end{remark}

\parm
Our methods also yield an alternative proof of the following well known
fact:
\begin{theorem}
The algebraic closure of the field $\Q_p$ of p-adic numbers is not
complete.
\end{theorem}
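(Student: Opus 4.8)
The plan is to exhibit an explicit Cauchy sequence in $\widetilde{\Q_p}$ (with respect to the unique extension of the $p$-adic valuation, which I also call $v$) whose would-be limit is transcendental over $\Q_p$. I will realize this sequence as an infinite homogeneous sequence for a transcendental element $x$ over $\Q_p$; Proposition~\ref{SNpure} then identifies it as a pseudo Cauchy sequence of transcendental type, which (by Lemma~\ref{limtpcs}) cannot have a pseudo limit algebraic over $\Q_p$. Since a limit of a Cauchy sequence is in particular a pseudo limit, $\widetilde{\Q_p}$ cannot contain one.

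Concretely, I would fix an enumeration $q_1<q_2<\cdots$ of the primes different from $p$, choose for each $i$ a root $\pi_i\in\widetilde{\Q_p}$ of $X^{q_i}-p^{iq_i+1}$, so that $v\pi_i=i+1/q_i=:\gamma_i$, and set $a_0:=0$ and $a_i:=\pi_1+\cdots+\pi_i$. Writing $K_{i-1}:=\Q_p(\pi_1,\dots,\pi_{i-1})$, a routine induction using the fundamental inequality (\ref{fiq}) and the pairwise coprimality of the $q_j$ shows that $vK_{i-1}=\tfrac1{q_1\cdots q_{i-1}}\Z$, that $[K_{i-1}(\pi_i):K_{i-1}]=q_i$, and that $q_i$ is the least positive integer with $q_i v\pi_i\in vK_{i-1}$. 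Since moreover $q_i$ is prime to $p=\chara K_{i-1}v$ and $p^{-iq_i-1}\pi_i^{q_i}=1$ has residue $1\in K_{i-1}v$, Lemma~\ref{lstronghom} (applied with ${\rm e}=q_i$, ${\rm f}=1$, $c=p^{-iq_i-1}$) shows that $\pi_i$ is strongly homogeneous over $(K_{i-1},v)$.

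Next I would verify condition (HS). Take $x$ to be the limit of $(a_i)_{i\in\N}$ in $\C_p$ (or the pseudo limit furnished by Theorem~\ref{Ka2}); since $\gamma_1<\gamma_2<\cdots$, one gets $v(x-a_{i-1})=v\bigl(\sum_{j\ge i}\pi_j\bigr)=\gamma_i=v\pi_i$ and $v\bigl((x-a_{i-1})-\pi_i\bigr)=v(x-a_i)=\gamma_{i+1}>\gamma_i$, so $\pi_i=a_i-a_{i-1}$ is a homogeneous approximation of $x-a_{i-1}$ over $K_{i-1}=\Q_p(a_0,\dots,a_{i-1})$. Hence $\eu S=(a_i)_{i\in\N}$ is a homogeneous sequence for $x$ over $\Q_p$ with support $\N$, and Proposition~\ref{SNpure} tells us that $(a_i)_{i\in\N}$ is a pseudo Cauchy sequence of transcendental type in $K_{\eu S}=\Q_p(\pi_i\mid i\in\N)$, with pseudo limit $x$; by Lemma~\ref{limtpcs}, $x$ is transcendental over $K_{\eu S}$, hence over $\Q_p$, hence $x\notin\widetilde{\Q_p}$.

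Finally I would assemble the contradiction. All $a_i$ lie in $\widetilde{\Q_p}$, and $v(a_{i+1}-a_i)=\gamma_{i+1}\to\infty$ is cofinal in $v\widetilde{\Q_p}=\Q$, so $(a_i)_{i\in\N}$ is a Cauchy sequence in $\widetilde{\Q_p}$. If $\widetilde{\Q_p}$ were complete, $(a_i)$ would converge to some $a\in\widetilde{\Q_p}$, and then $v(a-a_i)=\gamma_{i+1}$ for all $i$, so $a$ would be a pseudo limit of $(a_i)$. But $\widetilde{\Q_p}|K_{\eu S}$ is algebraic, so by Lemma~\ref{persist} the sequence $(a_i)$ remains of transcendental type over $(\widetilde{\Q_p},v)$, and a pseudo Cauchy sequence of transcendental type admits no pseudo limit in the field itself: applying the defining condition to $g(X)=X-a$ would force $v(a-a_i)$ to be eventually constant, whereas it strictly increases. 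This contradiction shows that $\widetilde{\Q_p}$ is not complete. The only genuinely nontrivial point is the middle step --- checking that each $\pi_i$ is strongly homogeneous over the field generated by the earlier terms --- and there the coprimality of the $q_i$ (so ramification indices multiply cleanly and ${\rm e}=q_i$) together with $q_i\nmid p$ (so that the conjugates $\zeta_{q_i}^{\,j}\pi_i$ stay at distance exactly $v\pi_i$, which is precisely what Lemma~\ref{lstronghom} packages) do all the work; the rest is bookkeeping with the values $\gamma_i$.
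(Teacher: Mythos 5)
Your proof is correct and follows essentially the same route as the paper: the paper also takes partial sums of monomials of value $i+1/n_i$ (its option $c_i=1$, $n_i$ strictly increasing, of which your distinct primes $q_i$ are a special case) and shows, via the homogeneous-sequence machinery (Lemma~\ref{lstronghom}, Proposition~\ref{SNpure}, Lemma~\ref{limtpcs}, as in the proof of Theorem~\ref{powser}), that any limit of this Cauchy sequence would be transcendental over $\Q_p$, contradicting completeness of $\widetilde{\Q_p}$. Your explicit coprimality bookkeeping and the final pseudo-limit contradiction are just a spelled-out version of the paper's appeal to that same argument.
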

\begin{proof}
Choose any compatible system of $n$-th roots $p^{1/n}$ of $p$, that is,
such that $(p^{1/mn})^m=p^{1/n}$ for all $m,n\in\N$. For $i\in\N$,
choose any $n_i\in\N$ not divisible by $p$, and set $\gamma_i:=i+
\frac{1}{n_i}$ if $n_i>1$, and $\gamma_i=i$ otherwise. Further, choose
$c_i$ in some fixed set of representatives in $\widetilde{\Q_p}$ of its
residue field $\widetilde{\F_p}$ such that the degree of $c_i^{n_i}$
over $\Q_p$ is equal to the degree of $c_i^{n_i} v_p$ over $\Q_p
v_p=\F_p\,$. Then set
\begin{equation}                            \label{b_i}
b_i\;:=\;\sum_{1\leq j\leq i} c_j p^{\gamma_j} \;.
\end{equation}
Since $\gamma_i<\gamma_{i+1}$ for all $i$ and the sequence
$(\gamma_i)_{i\in\N}$ is cofinal in the value group $\Q$ of
$\widetilde{\Q_p}$, the sequence $(b_i)_{i\in\N}$ is a Cauchy sequence
in $(\widetilde{\Q_p},v_p)$. If this field were complete, it would
contain a pseudo limit $z$ to every such Cauchy sequence.
On the other hand, as in the proof of Theorem~\ref{powser} one shows
that $\Q_p(c_i p^{\gamma_i}\mid i\in\N)\subseteq \Q_p(z)^h$, and that
\sn
a) \ $v\Q_p(z)$ is the group generated over $\Z$ by the elements
$\gamma_i\,$,
\sn
b) \ if $\gamma_i\in\Z$ for all $i\in\N$, then $\Q_p(z)v=\F_p(c_i v_p
\mid i\in\N)$,
\sn
c) \ if $v\Q_p(z)/\Z$ or $\F_p(c_i v_p\mid i\in\N)|\F_p$
is infinite, then $z$ is transcendental over $\Q_p\,$.
\sn
Hence, if we choose $(n_i)_{i\in\N}$ to be a strictly increasing
sequence and $c_i=1$ for all $i$, or if we choose $n_i=1$ for all $i$
and the elements $c_i$ of increasing degree over $\Q_p\,$, then $z$ will
be transcendental over $\Q_p\,$. Since $z$ lies in the completion of
$\widetilde{\Q_p}$, we have now proved that this completion is
transcendental over $\widetilde{\Q_p}$.
\end{proof}

With the same method, we can also prove another well known result:
\begin{theorem}
The completion $\C_p$ of $\widetilde{\Q_p}$ admits a pseudo Cauchy
sequence without a pseudo limit in $\C_p\,$. Hence, $\C_p$ is not
maximal and not spherically complete.
\end{theorem}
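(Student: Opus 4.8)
The plan is to produce an explicit pseudo Cauchy sequence in $\C_p$ whose ``breadth'' is a \emph{bounded} cut of the value group $\Q$. Fix a prime $q\neq p$, a compatible system of $n$-th roots $p^{1/n}$ of $p$ in $\widetilde{\Q_p}$ (as in the previous proof), and set $\gamma_i:=\sum_{l=1}^{i}q^{-l}$, so that $(\gamma_i)_{i\in\N}$ is strictly increasing, bounded above by $(q-1)^{-1}$, and $\gamma_i$ has exact denominator $n_i=q^i$, which is prime to $p$. Put $b_i:=\sum_{j=1}^{i}p^{\gamma_j}\in\widetilde{\Q_p}\subseteq\C_p$. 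Since the exponents are distinct, $v(b_\mu-b_\nu)=\gamma_{\mu+1}$ for $\mu<\nu$, and this is strictly increasing; hence $(b_i)_{i\in\N}$ is a pseudo Cauchy sequence in $\C_p$. It is \emph{not} a Cauchy sequence, because the values $\gamma_i$ never exceed $(q-1)^{-1}$.

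The heart of the argument is to show that $(b_i)$ has no pseudo limit in $\C_p$. Suppose $z\in\C_p$ were such a pseudo limit, so $v(z-b_j)=\gamma_{j+1}$ for all $j$. Because $\C_p$ is the \emph{completion} of $\widetilde{\Q_p}$, there is $w\in\widetilde{\Q_p}$ with $v(z-w)$ exceeding the rational bound $(q-1)^{-1}$; then for every $j$ we get $v(w-b_j)=\min\{v(w-z),v(z-b_j)\}=v(z-b_j)=\gamma_{j+1}$, so $w$ is itself a pseudo limit of $(b_i)$, and $w\in\widetilde{\Q_p}$. This is where the boundedness of the cut $\{\gamma_i\}$ is essential: it lets us replace the (necessarily transcendental) element $z$, over which the homogeneous-sequence machinery is powerless since $\C_p$ is an algebraically closed tame field, by an \emph{algebraic} element $w$.

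Next I would run the argument of the proof of Theorem~\ref{powser} over $\Q_p$ itself (just as the preceding two theorems do in place of a power series field): from $w$ one derives a homogeneous sequence ${\eu S}=(a_m)$ over $\Q_p$ whose elements are the partial sums $b_\ell$ at the indices where a new field element enters. Each $p^{\gamma_i}$ is strongly homogeneous over $\Q_p$ by Lemma~\ref{lstronghom} (take $e=n_i$, which is prime to $p$, and $c=p^{-n_i\gamma_i}\in\Q_p$, so that $c(p^{\gamma_i})^{n_i}=1$), hence strongly homogeneous over the henselian finite extensions $\Q_p(a_1,\dots,a_{m-1})$ by Lemma~\ref{homup}, and therefore $a_m-a_{m-1}$ is a homogeneous approximation of $w-a_{m-1}$. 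By Proposition~\ref{SNpure} together with Lemma~\ref{limtpcs}, an infinite such sequence would force $w$ to be transcendental over $\Q_p$, which it is not; so ${\eu S}$ is finite, say ${\eu S}=(a_1,\dots,a_N)$, and its termination means $\Q_p(a_1,\dots,a_N)$ contains every $p^{\gamma_i}$. But then $\Q_p(p^{\gamma_i}\mid i\in\N)\subseteq\Q_p(a_1,\dots,a_N)$ is finite over $\Q_p$, whereas $[\Q_p(p^{\gamma_i}\mid i\in\N):\Q_p]\geq(v\Q_p(p^{\gamma_i}\mid i\in\N):\Z)=\infty$ because the denominators $n_i=q^i$ are unbounded. (Alternatively, quote Lemma~\ref{KSinL}: $K_{\eu S}\subseteq\Q_p(w)^h=\Q_p(w)$, finite over $\Q_p$.) This contradiction shows that no such $z$ exists.

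Consequently $(b_i)_{i\in\N}$ is a pseudo Cauchy sequence in $\C_p$ with no pseudo limit in $\C_p$; by the standard characterisations (cf.\ [KA]) this means $\C_p$ is not maximal and not spherically complete, as claimed. The one delicate point, and the step I expect to be the real obstacle, is the passage $z\rightsquigarrow w$ in the second paragraph: everything else is a routine re-use of the homogeneous-sequence construction already carried out for power series and for $\widetilde{\Q_p}$. I would also take care to state explicitly that the derived homogeneous sequence must be finite, so that the contradiction goes through whether or not the construction ``stops''.
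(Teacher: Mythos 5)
Your proposal is correct and follows essentially the same route as the paper: a bounded, strictly increasing sequence of values $\gamma_i$ with $p$-prime denominators yields a pseudo Cauchy sequence of partial sums $b_i$ in $\widetilde{\Q_p}$, a hypothetical pseudo limit in $\C_p$ is replaced, via density of $\widetilde{\Q_p}$ and the boundedness of the $\gamma_i$, by an algebraic pseudo limit $w\in\widetilde{\Q_p}$, and the homogeneous-sequence machinery of Theorem~\ref{powser} (as already invoked for the incompleteness of $\widetilde{\Q_p}$) shows such a $w$ would generate an extension of unbounded ramification, a contradiction. The only differences are cosmetic: your exponents $\sum_{l\le i}q^{-l}$ versus the paper's $1-\frac{1}{n_i}$, and your explicit finiteness/degree-count formulation of the contradiction where the paper simply cites that $w$ would have to be transcendental.
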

\begin{proof}
In the same setting as in the foregoing proof, we now choose $c_i=1$
for all $i$. Further, we choose $(n_i)_{i\in\N}$ to be a strictly
increasing sequence and set $\gamma_i:= 1-\frac{1}{n_i}$. Then
$(b_i)_{i\in\N}$ is a pseudo Cauchy sequence. Suppose it would admit
a pseudo limit $y$ in $\C_p\,$. Then, using that $\widetilde{\Q_p}$ is
dense in $\C_p\,$, we could choose $z\in \widetilde{\Q_p}$ such that
$v_p(y-z)\geq 1$. Since $1>\gamma_i$ for all $i$, it would follow that
also $z$ is a pseudo limit of $(b_i)_{i\in\N}\,$. But as in the
foregoing proof one shows that $z$ must be transcendental over $\Q_p\,$.
This contradiction shows that $(b_i)_{i\in\N}$ cannot have a pseudo
limit in $\C_p\,$. By the results of [KA], this implies that $\C_p$
admits a proper immediate extension, which shows that $\C_p$ is not
maximal. With the elements $b_i$ defined as in (\ref{b_i}), we also
find that the intersection of the nest $(\{a\in\C_p\mid v(a-b_i)\geq
\gamma_{i+1}\})_{i\in\N}$ of balls is empty. Hence, $\C_p$ is not
spherically complete.
\end{proof}

\newcommand{\lit}[1]{\bibitem #1{#1}}

\end{document}